\newcommand{\Proj}{\textup{Proj}}
\newcommand{\calA}{\mathcal{A}}
\newcommand{\calS}{\mathcal{S}}
\newcommand{\calT}{\mathcal{T}}
\newcommand{\calF}{\mathcal{F}}
\newcommand{\calG}{\mathcal{G}}
\newcommand{\E}{\mathbb{E}}
\renewcommand{\P}{\mathbb{P}}
\newcommand{\ls}{\left}
\newcommand{\rs}{\right}
\newcommand{\inner}[2]{\ls\langle #1, \; #2 \rs\rangle}
\newcommand\numberthis{\addtocounter{equation}{1}\tag{\theequation}}
\newcommand{\Rmnum}[1]{\textup{\expandafter\@slowromancap\romannumeral #1@}}
\theoremstyle{plain}
\newtheorem{theorem}{Theorem}[section]
\newtheorem{proposition}[theorem]{Proposition}
\newtheorem{lemma}[theorem]{Lemma}
\newtheorem{corollary}[theorem]{Corollary}
\newtheorem{remark}{Remark}[section]
\theoremstyle{definition}
\newtheorem{assumption}[theorem]{Assumption}
\title{Policy Mirror Descent with Temporal Difference Learning: Sample Complexity under Online Markov Data}
\author{Wenye Li, Hongxu Chen, Jiacai Liu, Ke Wei}
\affil{School of Data Science, Fudan University, Shanghai, China.}
\begin{document}
\maketitle
\begin{abstract}
    This paper studies the policy mirror descent (PMD) method, which is a general policy optimization framework in reinforcement learning and can cover a wide range of policy gradient methods by specifying difference mirror maps. Existing sample complexity analysis for policy mirror descent either focuses on the generative sampling  model, or the Markovian sampling model but with the action values being explicitly approximated to certain pre-specified accuracy. In contrast, we consider the sample complexity of policy mirror descent with temporal difference (TD) learning under the Markovian sampling model. Two algorithms called Expected TD-PMD and Approximate TD-PMD have been presented, which are off-policy and mixed policy algorithms respectively. Under a small enough constant policy update step size, the $\tilde{O}(\varepsilon^{-2})$ (a logarithm factor about $\varepsilon$ is hidden in $\tilde{O}(\cdot)$) sample complexity can be established for them to achieve average-time $\varepsilon$-optimality. The sample complexity is further improved to $O(\varepsilon^{-2})$ (without the hidden logarithm factor) to achieve the last-iterate $\varepsilon$-optimality based on adaptive policy update step sizes.
\end{abstract}
\section{Introduction}
The goal of {Reinforcement learning} (RL) is to solve  a sequential decision problem via interacting with the environment~\citep{suttonRL}, and it plays an important role in various application fields including robotics~\citep{robot1,robot2,robot3,LZ-robot}, structure explorations~\citep{ref-AlphaFold,ref-AlphaTensor}, and the post-training in Large Language Models (LLMs)~\citep{Zhang2025ASO}. {Markov decision process (MDP)} is a fundamental model for RL. Let $\Delta(\mathcal{X})$ denotes the probabilistic simplex over set $\mathcal{X}$. An MDP can be described as a tuple $(\calS, \calA, P, r, \gamma)$, where $\calS$ is the state space, $\calA$ is the action space, $P: \calS\times \calA \to \Delta(\calS)$ is the state transition model, $r: \calS \times \calA \to [0,1]$ is the reward function, and $\gamma \in [0,1)$ is the discounted factor. To interact with the environment, the agent is equipped with a policy $\pi: \calS \to \Delta(\calA)$. At each time, the agent chooses an action $a\in\calA$ at state $s\in\calS$ with probability $\pi(a|s)$, then it transfers to a new state $s^\prime$ with the probability $P(s^\prime|s,a)$ with a reward $r(s,a)$. By repeating the procedure above, the agent obtains a trajectory $\{ (s_t, a_t, r_t) \}_{t\geq 0}$, where $s_t\sim P(\cdot|s_{t-1},a_{t-1})$, $a_t\sim\pi(\cdot|s_t)$, and $r_t = r(s_t, a_t)$. The state value  induced by policy $\pi$ at  state $s\in\calS$ is defined as the expectation of the discounted cumulative rewards over trajectories,
\begin{align*}
    V^\pi(s) := {\E} \ls[ \sum_{t=0}^{\infty} \gamma^t r(s_t, a_t) \, \bigg | \, s_0=s,\pi \rs].
\end{align*}
 Similarly,  the action value function induced by $\pi$ is defined as
\begin{align*}
    Q^\pi(s,a) := {\E} \ls[ \sum_{t=0}^{\infty} \gamma^t r(s_t, a_t) \, \bigg | \, s_0=s, \; a_0 = a,\pi \rs].
\end{align*}
The goal of RL is to seek the optimal policy which maximizes the state values or action values, 
\begin{align}
    \max_{\pi \in \Pi} \; V^{\pi}(\mu) := \E_{s\sim\mu}[V^\pi(s)] \quad \mbox{or} \quad \max_{\pi\in\Pi} \; Q^\pi(\rho) := \E_{(s,a)\sim\rho} \ls[ Q^\pi(s,a) \rs],
    \label{eq:RL-goal}
\end{align}
where $\mu \in \Delta(\calS)$, $\rho \in \Delta(\calS\times\calA)$ are some distribution and $\Pi$ denotes the set of all admissible policies. 

{Policy mirror descent} (PMD) is an instance of policy optimization methods for solving~\eqref{eq:RL-goal}. Let $h$ be a function that is proper, closed, strictly convex, essentially smooth, and satisfying $\mbox{rint dom}(h) \cap \Delta(\calA) \neq \emptyset$ (see for example \citet{beck2017first} for related definitions). The policy update of PMD is given by~\citep{Xiao_2022,Lan_2021}
\begin{align*}
    \forall\, s\in\calS: \quad \pi^+(\cdot|s) = \underset{p\in\Delta(\calA)}{\arg\max} \ls\{ \eta  \ls\langle p, \, Q^{\pi}(s,\cdot) \rs\rangle - D_h(p \, \| \, \pi(\cdot|s)) \rs\},
\end{align*}
where $\eta > 0$ is the policy update step size and $D_h(p \, \|\, q) = h(p) - h(q) - \ls\langle \nabla h(q), \, p-q \rs\rangle$ is the Bregman divergence induced by $h$. The PMD update fits into the general mirror descent (MD) framework in optimization \citet{Xiao_2022} and covers a wide range of policy gradient methods by specifying different $h$. Thus, it has received  intensive investigations recently. 

\subsection{Related works}
\paragraph{PMD} Finite-time global convergence  of exact PMD (i.e., when action value $Q^{\pi_k}$ can be computed exactly) have been studied in for example~\citet{Xiao_2022,Lan_2021,Johnson_Pike-Burke_Rebeschini_2023}. Specifically, it is shown in~\citet{Lan_2021,Xiao_2022} that exact PMD enjoys a $O({1}/{K})$ dimension-free global sublinear convergence with any constant step size. Moreover, the global linear convergence can be obtained with  geometrically increasing step sizes~\citep{Xiao_2022}. In~\citet{Johnson_Pike-Burke_Rebeschini_2023}, the $\gamma$-rate global linear convergence is established under adaptive step sizes. Another line of research considers the regularized setting, i.e., when the object value function (equation~\eqref{eq:RL-goal}) is equipped with a regularized term. Under such setting, PMD can achieve global linear convergence with a large constant step size~\citep{Lan_2021}. The global linear convergence is then extended to the scenario for any constant step sizes~\citep{Zhan_Cen_Huang_Chen_Lee_Chi_2021}. Furthermore, PMD enjoys the local super-linear convergence and the policy convergence under geometrically increasing the step sizes if the coefficient of regularization term decreases properly~\citep{Li_Zhao_Lan_2022}. There are also works investigating the convergence of PMD with function approximation~\citep{yuan2023general}, variance reduction PMD~\citep{Yang2019PolicyOW,huang2022bregman}, and the acceleration of PMD~\citep{chelu2024functional}.
There are also other variants  of PMD  where $Q^{\pi_k}$ is replaced by different Q-values. For instance, $h$-PMD~\citep{protopapas2024policy} applies an $h$-step lookahead of optimal Bellman operation over $Q^{\pi_k}$ and achieves a $\gamma^h$-rate global linear convergence under adaptive step sizes. 
A TD-PMD algorithm (also known as MD-MPI Type I in~\citet{Geist_Scherrer_Pietquin_2019}) is considered in~\citet{liu2025tdpmd}, which replaces exact action value $Q^{\pi_k}$ with a temporal difference learning estimation. Despite the inaccuracy of the action value, it is shown in~\citet{liu2025tdpmd} that TD-PMD enjoys similar convergence as vanilla PMD, including the sublinear convergence for constant step sizes and linear convergence for adaptive step sizes. 

Most of existing sample complexity analysis of PMD is based on the generative model, where one can generate arbitrary trajectories starting from any state-action pair $(s,a) \in \calS \times \calA$. The sample complexity for unregularized PMD  to find a high-probability last-iterate $\varepsilon$-optimal policy  under the generative model is $\tilde O((1-\gamma)^{-8}\varepsilon^{-2})$~\citep{Xiao_2022,Johnson_Pike-Burke_Rebeschini_2023}, and to find an average-time $\varepsilon$-optimal policy is $\tilde O((1-\gamma)^{-6}\varepsilon^{-2})$ in~\citet{Lan_2021}. TD-PMD \citep{liu2025tdpmd} and $h$-PMD \citep{protopapas2024policy} improve the last-iterate sample complexity of PMD to $\tilde O((1-\gamma)^{-7}\varepsilon^{-2})$. For regularized PMD, the $\tilde O(\varepsilon^{-1})$ sample complexity is obtained in \citet{Lan_2021}. By carefully decreasing the coefficient of the regularization term, the $\tilde O(\varepsilon^{-2})$ sample complexity for obtaining the last-iterate  $\varepsilon$-optimal solution of the unregularized MPD is established in~\citet{Lan_2021, Li_Zhao_Lan_2022}\footnote{More concretely, the sample complexity in~\citet{Li_Zhao_Lan_2022} is $\tilde O(\varepsilon_0^{-2})$ with probability at least $O(1-\varepsilon_0^{1/3})$ for any $\varepsilon < \varepsilon_0$, where $\varepsilon_0$ is a sufficiently small constant.}. Besides the generative model, there are several works investigating the sample complexity of PMD under the Markovian sampling model~\citep{Lan_2021,li2024stochastic,li2025policy}, i.e., sampling a single and complete trajectory $\{ (s_t, a_t) \}_{t\geq 0}$ via the interaction with environment. In~\citet{Lan_2021,li2024stochastic}\footnote{\citet{li2024stochastic} investigates the average-reward MDP case.}, conditional temporal difference learning (CTD) is adopted to  evaluate the action value of current policy. More precisely, CTD  takes samples every $m$ steps based on the current policy, and then  solves the Bellman equation in a stochastic manner to obtain an accurate approximation of $Q^{\pi_k}$. In \citet{li2025policy}, it adopts an alternative approach based on Monte Carlo estimation (MC) to approximate $Q^{\pi_k}$ from the Markov data. The sample complexity for unregularized PMD is $\tilde O(\varepsilon^{-2})$ and  is $\tilde O(\varepsilon^{-1})$ for regularized PMD for  both CTD~\citep{Lan_2021,li2024stochastic} and MC learning~\citep{li2025policy}. 


\paragraph{Natural policy gradient}  When $h$ is specified to the negative entropy, i.e., $h(p) = -\sum_{a\in\calA} p(a) \log p(a)$, PMD reduces to NPG under the softmax policy parameterization (softmax NPG). The $O({1}/{\sqrt{K}})$ global sublinear convergence of softmax NPG is established in~\citet{Shani_Efroni_Mannor_2019}, and the rate is improved to $O({1}/{K})$ in~\citet{Agarwal_Kakade_Lee_Mahajan_2019} with arbitrary constant step size. It is worth remarking that the $O({1}/{K})$ sublinear convergence of exact PMD in~\citet{Xiao_2022} is indeed an extension of that for softmax NPG in~\citet{Agarwal_Kakade_Lee_Mahajan_2019}. Though softmax NPG can be covered by PMD, specific results can be obtained for softmax NPG by leveraging its explicit policy update formula. For instance, by characterizing the policy over non-optimal actions, the local $\exp(-\eta \Delta(1-1/\lambda) )$ linear convergence rate for softmax NPG is established in~\citet{Khodadadian_Jhunjhunwala_Varma_Maguluri_2021}, where $\lambda > 1$ is the parameter correlated to the local phase and $\Delta > 0$ is some MDP-dependent constant. Furthermore, the global linear convergence is achieved by applying  adaptive step sizes~\citep{Khodadadian_Jhunjhunwala_Varma_Maguluri_2021}. Another linear convergence analysis can be found in~\citet{Bhandari_Russo_2021}, where the relation of softmax NPG and policy iteration algorithm (PI, see for instance~\citet{suttonRL}) is discussed and the $\frac{1+\gamma}{2}$-rate linear convergence is established with  adaptive step sizes. The global linear convergence is then extended to the arbitrary constant step size $\eta > 0$ case in~\citet{pg-liu} by computing the state improvement $\sum_{a\in\calA} \pi_{k+1}(a|s) [Q^{\pi_k}(s,a) - V^{\pi_k}(s)]$, though the convergence rate convergence rate is dynamic and cannot be expressed in terms of the problem parameters. Recently, it is shown in~\citet{li2025phi} that softmax NPG indeed converges in the policy domain and enjoys an exact asymptotic convergence rate of $\exp(-\eta\Delta)$. The analysis of regularized softmax NPG can be found in~\citet{Shani_Efroni_Mannor_2019, Cen_Cheng_Chen_Wei_Chi_2022,pg-liu,cayci2024entropyNPG}, and we note that a tight local linear convergence rate of $1/(\eta\tau+1)^2$ is established in~\citet{pg-liu} for entropy regularized softmax NPG, where $\tau$ is the coefficient of the entropy regularization term.

Under softmax policy parameterization with a generative model, the $\tilde O(\varepsilon^{-2})$ sample complexity of NPG can be directly obtained by the aforementioned PMD analyzes~\citep{Xiao_2022,Lan_2021,Johnson_Pike-Burke_Rebeschini_2023}. For log-linear policy, NPG and its variant Q-NPG are studied in~\citet{Agarwal_Kakade_Lee_Mahajan_2019} with the $O(1/\sqrt{K})$ global sublinear convergence rate. Further combined with an unbiased sampler for state/action values, the $\tilde O(\varepsilon^{-2})$ sample complexity is obtained in~\citet{yuan2022linear} where  the regression sub-problem is solved by stochastic gradient descent. The $\tilde O(\varepsilon^{-2})$ sample complexity is also established in \citet{feng2024global} for general smooth and non-degenerate policies under Monte Carlo sampling through variance reduction, which is inspired by \citet{pmlr-v202-fatkhullin23a}. 
Natural actor-critic methods (NAC~\citep{konda2000ac,peters2008natural}) under i.i.d. sampling\footnote{Here the i.i.d. sampling assumes that there exists a sampler which can provide i.i.d. samples from the stationary distribution.}~\citep{wang2019neural,fu2021single} and Markov sampling~\citep{khodadadian2021finite,khodadadian2022finite,gaur2024closing,wang2024non,xu2020improving,xu2020non,cayci2022finite} are also extensively investigated. In the softmax tabular case, the sample complexity for on-policy NAC is $\tilde O(\varepsilon^{-6})$~\citep{khodadadian2022finite} and the one for off-policy NAC is $\tilde O(\varepsilon^{-3})$~\citep{khodadadian2021finite}.
 The best known sample complexity for NAC  is $\tilde O(\varepsilon^{-3})$ for general smooth policies as well as s well as the neural policy parameterization~\citep{xu2020improving,wang2024non,gaur2024closing}. 


\paragraph{Other related policy gradient methods} For exact projected policy gradient (PPG) in the tabular case, its $O(1/\sqrt{K})$ global sublinear convergence with small constant step size is established in~\citet{Agarwal_Kakade_Lee_Mahajan_2019}, which is improved to $O(1/K)$ in~\citet{Xiao_2022}. To get rid of the small step size constraint, \citet{ppgliu} develops a new analysis by computing the policy improvement directly and  establishes the $O(1/K)$ global sublinear convergence for any constant step size  and the $\gamma$-rate global linear convergence with  adaptive step sizes. For exact softmax PG, the asymptotic convergence is first established in~\citet{Agarwal_Kakade_Lee_Mahajan_2019}. The  $O(1/K)$ global sublinear rate is firstly established in~\citet{Mei_Xiao_Szepesvari_Schuurmans_2020} for a small constant step size, and later is extended to any constant step size in ~\citep{pg-liu}. These convergence bound, however, relies on some parameter  which is problem dependent and can be very small, resulting an exponential-time convergence in some hard MDP instances~\citep{li2023exponential}. To handle such issue, \citet{Mei_Xiao_Dai_Li_Szepesvari_Schuurmans_2020} proposes the escort PG, and one of its instances namely Hadamard PG~\citep{hadamard_pg} is shown to enjoy a global linear convergence. For the entropy regularization case,  softmax PG enjoys a global linear convergence~\citep{pg-liu,Mei_Xiao_Szepesvari_Schuurmans_2020}. There are also works that study PG under a general class of policies and investigate the convergence to the second-order stationary point and the global optimality convergence ~\citet{zhang2020global} and~\citet{Bhandari_Russo_2021}.

The sample complexity of stochastic PG is also widely studied. For stochastic tabular PG, we refer the readers to~\citet{yuan2022general} for a summary of the sample complexity results. For the general smooth policies, the sample complexity of stochastic PG is shown to be $\tilde O(\varepsilon^{-2})$ under an unbiased policy gradient sampler in~\citet{yuan2022general}. The sample complexity of stochastic PG  within the actor-critic framework has also received a lot of attention~\citep{olshevsky2023small,wang2019neural,chen2021closing,xu2020improving,xu2020non,wang2024non,chen2023finite,wu2020finite,kumar2023ac,xu2021doubly,qiu2021on,suttle2023beyond,barakat2022analysis,tian2023convergence}, where the actor is some general smooth policy or neural policy and the critic uses a linear approximation, compatible parameterization, or is approximated by a neural network. Overall, the sample complexity is $\tilde O(\varepsilon^{-2})$~\citep{xu2020improving,chen2023finite,wang2024non} to achieve a small gradient.

\subsection{Motivation and main contributions}
Existing sample complexity results for PMD-type algorithms mostly focus on the setting of generative model~\citep{Lan_2021,Xiao_2022,Johnson_Pike-Burke_Rebeschini_2023,protopapas2024policy} or the Markovian sampling but with nested-loop conditional temporal difference learning (CTD) estimation~\citep{Lan_2021,li2024stochastic}, {where action values are explicitly approximated to certain prerequisite accuracy. In contrast, TD-PMD~\citep{Geist_Scherrer_Pietquin_2019,liu2025tdpmd} updates the critic by applying only one-step Bellman operator over the last value estimate (without controlling the estimation accuracy). However, their analysis is limited to the exact setting and the generative model.} 
In this paper, we consider the more practical setting which combines both Markovian sampling {(with a mini-batch mechanism)} and TD learning. In particular, two algorithms including off-policy {Expected TD-PMD} (Algorithm~\ref{alg:Expected-TD-AC-PMD}) and mixed-policy {Approximate TD-PMD} (Algorithm~\ref{alg:Approximated-TD-AC-PMD}) are presented, and their sample complexities have been investigated. We focus on the tabular setting in this paper. Despite this, it is worth emphasizing that even though various policy gradient methods with policy/value function approximations  have been studied, it often requires strong assumptions in the parameter domain (such as the smoothness and non-degeneracy  of the policy) which are not applicable for the tabular setting. 
The main contributions of this paper are summarized as follows.
\begin{itemize}
    \item Under a small enough constant policy update step size, both Expected TD-PMD and Approximate TD-PMD enjoy a $\tilde O(\varepsilon^{-2})$ sample complexity to obtain  an average $\varepsilon$-optimal solution, where some constants and poly-logarithmic factors are hidden in $\tilde{O}(\cdot)$. Note that existing average-time sample complexities for PMD are also $\tilde O(\varepsilon^{-2})$ under the generative model or the Markovian sampling with {an inner-loop CTD learning}~\citep{Lan_2021,li2024stochastic}. In contrast, in Expected TD-PMD and Approximate TD-PMD, the new estimate of action value is obtained by applying only one step weighted Bellman operator (in sample form) to the last estimate, without {an inner-loop critic learning to approximate the exact action values accurately.} {The crux in our analysis is the development of a route to bound the stochastic bias term\footnote{See Lemma~\ref{lem:Expected-TD-AC-PMD:Lan-Xiao-bound} for the detailed description of the stochastic bias term.} in an inductive way.}
    \item Based on adaptive policy update step sizes, we further improve the sample complexity of Expected TD-PMD and Approximate TD-PMD to the $O(\varepsilon^{-2})$  sample complexity for the last-iterate-$\varepsilon$ optimality, where $O(\cdot)$ does not contain any poly-logarithmic factors about $\varepsilon$. To the best of our knowledge, it is the first  $O(\varepsilon^{-2})$ sample complexity for PMD-type methods under the unregularized MDP setting. In contrast, only the $\tilde{O}(\varepsilon^{-2})$
    sample complexity for the last-iterate optimality has  been obtained in \citet{Lan_2021,Xiao_2022, Johnson_Pike-Burke_Rebeschini_2023, protopapas2024policy,liu2025tdpmd} under the generative model. {The key of removing the poly-logarithmic factors is the adoption of  adaptive batch sizes.} As a byproduct, by specifying the mirror map $h$ to the squared $\ell_2$-norm, we have obtained the $O(\varepsilon^{-2})$ sample complexity for batch Q-learning  with absolute constant learning step size which is not known a prior as far as we know. 
\end{itemize}

The comparisons with typical sample complexity results for PMD-type methods (applicable for general mirror map $h$) are listed in Table~\ref{tab:AC-PMD}. Overall, we have studied the more practical setting which combines the Markovian sampling and the TD learning, and still be able to establish the competitive $\tilde{O}(\varepsilon^{-2})$ sample complexity and the improved ${O}(\varepsilon^{-2})$ sample complexity.
\begin{table}[ht!]
    \renewcommand{\arraystretch}{1.5}
    \small
    \centering
    \caption{Comparison with typical sample complexity results for PMD-type algorithms.}
    \vspace{0.2cm}
    \label{tab:AC-PMD}
    \begin{tabular}{ccccc} 
    \toprule
     & \makecell[cc]{\textbf{Sampling} \\ \textbf{scheme}} & \makecell[cc]{\textbf{Critic} \\ \textbf{update}} & \textbf{Metric} & \makecell[cc]{\textbf{Sample} \\ \textbf{complexity}} \\
    \midrule
    \multirow{2}{*}{\citet{Lan_2021}} & Generative model & MC & \multirow{2}{*}{\makecell[cc]{Average-time\\  \& Last-iterate}} & \multirow{2}{*}{$\tilde O (\varepsilon^{-2})$}\\
     & Markovian & {CTD} & &\\
    \hline
    \makecell*[ccc]{\citet{Xiao_2022} \\ \citet{Johnson_Pike-Burke_Rebeschini_2023} \\ \citet{protopapas2024policy}} & {Generative model} & MC & Last-iterate & $\tilde O(\varepsilon^{-2})$ \\ \hline
    \makecell*[cc]{\citet{li2024stochastic} \\ (Average-reward)} & Markovian & CTD  & Average-time & $\tilde O(\varepsilon^{-2})$ \\ \hline
    \citet{li2025policy} & Markovian & MC & Average-time & $\tilde O(\varepsilon^{-2})$ \\ \hline
    \citet{liu2025tdpmd} & Generative model & TD  & Last-iterate & $\tilde O(\varepsilon^{-2})$ \\ \hline
    \makecell*[cc]{\textbf{This paper} \\ (Theorems~\ref{thm:Expected-TD-AC-PMD:constant-step-size} and~\ref{thm:Approximated-TD-AC-PMD:constant-step-size})} & Markovian & TD & \makecell[cc]{Average-time \\ (Constant step size)} & $\tilde O (\varepsilon^{-2})$ \\ \hline 
    \makecell*[cc]{\textbf{This paper} \\ (Theorems~\ref{thm:Expected-TD-AC-PMD:adaptive-step-size} and~\ref{thm:Approximated-TD-AC-PMD:adaptive-step-size})} & Markovian & TD & \makecell[cc]{Last-iterate \\ (Adaptive step sizes)} & $O (\varepsilon^{-2})$ \\
    \bottomrule
    \end{tabular}
\end{table}

The rest of the paper is organized as below. In Section~\ref{sec:preliminary} we provide more preliminary knowledge about MDP and PMD. Section~\ref{sec:Expected-TD-AC-PMD} and Section~\ref{sec:Approximated-TD-AC-PMD} introduce Expected TD-PMD and Approximated TD-PMD, and offers the sample complexity analyzes respectively. {The proofs of critical lemmas are presented in Section~\ref{sec:lemma-proof}.} We conclude in Section~\ref{sec:conclusion} with a brief summary.

\section{Preliminary}
\label{sec:preliminary}
\subsection{More on MDP}
Throughout the paper, we use vectors $V \in \mathbb{R}^{|\calS|}$ and $Q \in \mathbb{R}^{|\calS||\calA|}$ to represent the value functions associated with the state and action, respectively. In addition, for any  $\mu\in\Delta(\calS)$, and $\rho\in\Delta(\calS\times\calA)$, let $V(\mu) = \E_{s\sim\mu}[V(s)]$ and $Q(\rho) = \E_{(s,a)\sim\rho}[Q(s,a)]$.
Note that  we will arrange any $\pi \in \Pi$ and vector $Q \in \mathbb{R}^{|\calS||\calA|}$ first by states and then  by actions, that is,
\begin{align*}
    Q &= [Q(s_1, a_1), \dots, Q(s_1, a_{|\calA|}), Q(s_2, a_1), \dots, Q(s_2, a_{|\calA|}), \dots, Q(s_{|\calS|}, a_{|\calA|})]^\top, \\
    \pi &= [\pi(a_1 | s_1), \dots, \pi(a_{|\calA|} | s_1), \pi(a_1 | s_2), \dots, \pi(a_{|\calA|} | s_2), \dots, \pi(a_{|\calA|} | s_{|\calS|})]^\top. 
\end{align*}

Define the state/action Bellman operator 
\begin{align*}
    [\calT^\pi V](s) &:= \E_{a\sim\pi(\cdot|s),\, s^\prime\sim P(\cdot|s,a)}[r(s,a) + \gamma \, V(s^\prime)], \\
    [\calF^\pi Q](s,a) &:= \E_{s^\prime\sim P(\cdot|s,a), \, a^\prime\sim\pi(\cdot|s^\prime)} [r(s,a) + \gamma \, Q(s^\prime, a^\prime)].
\end{align*}
It can be easily verified that $V^\pi$ and $Q^\pi$ satisfy $\calT^\pi V^\pi = V^\pi$ and $\calF^\pi Q^\pi = Q^\pi$, and both $\calT^\pi$ and $\calF^\pi$ are $\gamma$-contraction operators over $\mathbb{R}^{|\calS|}$ and $\mathbb{R}^{|\calS||\calA|}$ with respect to the $l_\infty$-norm. Furthermore, define the state/action transition matrix as
\begin{align*}
    P_{\scriptscriptstyle\calS}^\pi \in \mathbb{R}^{|\calS| \times |\calS|} &: \quad P^\pi_{\scriptscriptstyle\calS}(s, s^\prime) = \sum_{a\in\calA} \pi(a|s) P(s^\prime | s,a), \\
    P^{\pi}_{\scriptscriptstyle \calS\times\calA} \in \mathbb{R}^{|\calS||\calA| \times |\calS||\calA|}&: \quad P^{\pi}_{\scriptscriptstyle \calS\times\calA}((s,a), (s^\prime, a^\prime)) = P(s^\prime | s,a) \, \pi(a^\prime | s^\prime).
\end{align*}
That is, $P^{\pi}_{\scriptscriptstyle\calS}$ is the transition matrix in the state space and $P^{\pi}_{\scriptscriptstyle\calS\times\calA}$ is the transition matrix in the state-action space. Then the Bellman operators can be written in the following vector form
\begin{align*}
    \calT^\pi V = r^\pi + \gamma P^\pi_{\scriptscriptstyle\calS} V, \quad \calF^\pi Q = r + \gamma P^\pi_{\scriptscriptstyle\calS\times\calA} Q,
\end{align*}
where $r^\pi \in \mathbb{R}^{|\calS|}$ with $r^\pi(s) = \sum_a \pi(a|s) r(s,a)$ and $r \in \mathbb{R}^{|\calS||\calA|}$ is the reward function. Moreover, define the optimal Bellman operators:
\begin{align*}
    [\calT V](s) &:= \max_{\pi\in\Pi} \, [\calT^\pi V](s) = \max_{a\in\calA} \, \E_{s^\prime}[r(s,a) + \gamma V(s^\prime)], \\
    [\calF Q](s,a) &:= \max_{\pi\in\Pi} \, [\calF^\pi Q](s,a) = r(s,a) + \gamma\E_{s^\prime} \ls[\max_{a^\prime\in\calA} \,Q(s^\prime, a^\prime) \rs].
\end{align*}
Let $V^*$ and $Q^*$ be the fixed point of $\calT$, $\calF$, respectively. It is well-known~\citep{suttonRL} that $V^*$ and $Q^*$ is the optimal solution to the problem~\eqref{eq:RL-goal}, i.e., 
\begin{align*}
    \forall\, s, a: \quad V^*(s) = \max_{\pi\in\Pi} \, V^\pi(s), \quad Q^*(s,a) = \max_{\pi\in\Pi} \, Q^\pi(s,a),
\end{align*}
and there exists at least one optimal (deterministic) policy $\pi^*$ such that $V^{\pi^*}=V^*$ and $Q^{\pi^*}=Q^*$. Let $\Pi^* := \ls\{ \pi^* \in \Pi: \;\; V^{\pi^*}=V^* \rs\}$ be the optimal policy set. 

{For conciseness, it is assumed that  $r(s,a) \in [0,1]$.} Then,  for any policy $\pi \in \Pi$, there holds
\begin{align}
    \forall\, s\in\calS, \; a\in\calA: \quad 0 \leq V^\pi(s) \leq V^*(s) \leq \frac{1}{1-\gamma}, \quad 0\leq Q^{\pi}(s,a) \leq Q^*(s,a) \leq \frac{1}{1-\gamma}.
    \label{eq:bounded-policy-values}
\end{align}
 Define the state/action visitation measure as
\begin{align*}
    d^\pi_\mu(s) &:= (1-\gamma) \,\E\ls[ \sum_{t=0}^{\infty} \gamma^t \cdot \mathbf{1}(s_t = s) \; \bigg | \; s_0 \sim\mu \rs], \\
    v^\pi_{\rho}(s,a) &:= (1-\gamma)\, \E\ls[ \sum_{t=0}^\infty \gamma^t \cdot \mathbf{1}(s_t=s, a_t=a) \; \bigg | \; (s_0, a_0) \sim \rho \rs].
\end{align*}
The visitation measures corresponding to an optimal policy will be denoted as $d^*_\mu$ and $v^*_\rho$. The  performance difference lemma is a fundamental result in policy optimization.
\begin{lemma}[Performance difference lemma~\citep{kakade2002approximately,liu2025tdpmd}]
    For any $\pi\in\Pi$, $V\in\mathbb{R}^{|\calS|}$, $Q\in\mathbb{R}^{|\calS||\calA|}$, $\mu\in\Delta(\calS)$, and $\rho\in\Delta(\calS\times\calA)$, there holds
    \begin{align*}
        [V^\pi - V](\mu) &= \frac{1}{1-\gamma} \ls[ \calT^\pi V - V \rs](d^\pi_\mu), \\
        [Q^\pi - Q](\rho) &= \frac{1}{1-\gamma} \ls[ \calF^\pi Q - Q \rs](v^\pi_\rho).
    \end{align*}
    \label{lem:pdl}
\end{lemma}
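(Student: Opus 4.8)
The plan is to prove both identities by reducing them to the Neumann-series (resolvent) representation of the value functions and visitation measures, working entirely in the vector form introduced above. Throughout I would treat $\mu$, $\rho$ and the visitation measures as row vectors and $V$, $Q$ as column vectors, so that the pairing $[f](d) = \sum_s d(s)\,f(s)$ becomes an ordinary inner product; in particular $[V^\pi - V](\mu) = \mu^\top(V^\pi - V)$ and $[\calT^\pi V - V](d^\pi_\mu) = d^\pi_\mu\,(\calT^\pi V - V)$, and analogously for the action-value case.

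First I would record the matrix form of the state visitation measure. Since $\P(s_t = s \mid s_0\sim\mu,\pi) = (\mu^\top (P^\pi_{\scriptscriptstyle\calS})^t)(s)$, the definition of $d^\pi_\mu$ gives
\begin{align*}
    d^\pi_\mu = (1-\gamma)\,\mu^\top \sum_{t=0}^\infty \gamma^t (P^\pi_{\scriptscriptstyle\calS})^t = (1-\gamma)\,\mu^\top (I - \gamma P^\pi_{\scriptscriptstyle\calS})^{-1},
\end{align*}
where the Neumann series converges and $(I-\gamma P^\pi_{\scriptscriptstyle\calS})$ is invertible because $P^\pi_{\scriptscriptstyle\calS}$ is row-stochastic (spectral radius $1$) and $\gamma\in[0,1)$.

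Next comes the key algebraic identity. Using the fixed-point equation $\calT^\pi V^\pi = V^\pi$ in vector form, we have $r^\pi = (I-\gamma P^\pi_{\scriptscriptstyle\calS})V^\pi$, hence
\begin{align*}
    \calT^\pi V - V = r^\pi + \gamma P^\pi_{\scriptscriptstyle\calS} V - V = (I - \gamma P^\pi_{\scriptscriptstyle\calS})V^\pi - (I-\gamma P^\pi_{\scriptscriptstyle\calS})V = (I - \gamma P^\pi_{\scriptscriptstyle\calS})(V^\pi - V).
\end{align*}
Substituting this together with the resolvent form of $d^\pi_\mu$ into the claimed right-hand side, the factor $(I-\gamma P^\pi_{\scriptscriptstyle\calS})^{-1}$ cancels $(I-\gamma P^\pi_{\scriptscriptstyle\calS})$ and the $(1-\gamma)$ factors cancel, leaving $\mu^\top(V^\pi - V) = [V^\pi - V](\mu)$, which is the first identity.

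Finally, the action-value identity follows by the same three steps verbatim, replacing $P^\pi_{\scriptscriptstyle\calS}$ by $P^\pi_{\scriptscriptstyle\calS\times\calA}$, $r^\pi$ by $r$, $\calT^\pi$ by $\calF^\pi$, $\mu$ by $\rho$, and $d^\pi_\mu$ by $v^\pi_\rho$, now invoking $\calF^\pi Q^\pi = Q^\pi$ to write $r = (I-\gamma P^\pi_{\scriptscriptstyle\calS\times\calA})Q^\pi$. I do not expect a genuine obstacle here; the only points requiring care are the convergence and invertibility justification for the Neumann series, and the bookkeeping of the row/column-vector conventions so that the visitation measure pairs correctly with $\calT^\pi V - V$. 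An alternative, purely probabilistic derivation by telescoping $\sum_t \gamma^t (\calT^\pi V - V)(s_t)$ along a trajectory is also available, but the resolvent computation is shorter and sidesteps interchange-of-summation subtleties.
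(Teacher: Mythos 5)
Your proof is correct. Note that the paper itself does not prove this lemma at all --- it is quoted from the cited references (Kakade--Langford and the TD-PMD paper), so there is no in-paper argument to compare against. Your resolvent derivation --- writing $d^\pi_\mu = (1-\gamma)\,\mu^\top (I-\gamma P^\pi_{\scriptscriptstyle\calS})^{-1}$, factoring $\calT^\pi V - V = (I-\gamma P^\pi_{\scriptscriptstyle\calS})(V^\pi - V)$ via the Bellman fixed-point equation, and cancelling, with the identical argument for the $(Q, \calF^\pi, v^\pi_\rho)$ case --- is the standard and complete way to establish this generalized performance difference identity, and all the supporting details (Neumann-series convergence from $\gamma < 1$ and row-stochasticity, nonnegativity justifying the sum--expectation interchange) are handled correctly.
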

\noindent As a corollary, we can obtain the following useful result.
\begin{lemma}
    For arbitrary $\pi, \pi^\prime \in \Pi$,
    \begin{align*}
        \| Q^{\pi} - Q^{\pi^\prime} \|_\infty &\leq \frac{\gamma |\calA|}{(1-\gamma)^2} \cdot \| \pi - \pi^\prime \|_\infty.
    \end{align*}
    \label{lem:Lip:Q-value}
\end{lemma}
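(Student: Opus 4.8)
The plan is to derive the bound from the performance difference lemma (Lemma~\ref{lem:pdl}) applied to the action value, rather than manipulating the resolvent $(I-\gamma P^\pi_{\scriptscriptstyle\calS\times\calA})^{-1}$ directly. First I would fix an arbitrary state-action pair $(s,a)$ and instantiate the second identity of Lemma~\ref{lem:pdl} with the comparison vector $Q = Q^{\pi'}$, the policy $\pi$, and the Dirac mass $\rho = \delta_{(s,a)}$. Since $Q^\pi(\delta_{(s,a)}) = Q^\pi(s,a)$, this yields
$$Q^\pi(s,a) - Q^{\pi'}(s,a) = \frac{1}{1-\gamma}\,\left[\calF^\pi Q^{\pi'} - Q^{\pi'}\right]\!\left(v^\pi_{\delta_{(s,a)}}\right),$$
where $v^\pi_{\delta_{(s,a)}}$ is a probability measure on $\calS\times\calA$ (the visitation weights sum to one after the $(1-\gamma)$ normalization). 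Because it is a probability distribution, the right-hand side is bounded in absolute value by $\frac{1}{1-\gamma}\,\|\calF^\pi Q^{\pi'} - Q^{\pi'}\|_\infty$, so it suffices to control the $\ell_\infty$-norm of the one-step Bellman residual $\calF^\pi Q^{\pi'} - Q^{\pi'}$.

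The second step exploits the fixed-point equation $\calF^{\pi'} Q^{\pi'} = Q^{\pi'}$, which lets me rewrite the residual as $\calF^\pi Q^{\pi'} - Q^{\pi'} = \calF^\pi Q^{\pi'} - \calF^{\pi'} Q^{\pi'}$. Since the two Bellman operators differ only in the policy used to average the next-step action value, the reward terms cancel and I obtain, for each $(s,a)$,
$$\left[\calF^\pi Q^{\pi'} - \calF^{\pi'} Q^{\pi'}\right](s,a) = \gamma\,\E_{s'\sim P(\cdot|s,a)}\!\left[\sum_{a'\in\calA}\big(\pi(a'|s') - \pi'(a'|s')\big)\,Q^{\pi'}(s',a')\right].$$
Bounding the inner sum by a crude $\ell_1$-$\ell_\infty$ estimate, using $\|Q^{\pi'}\|_\infty \le \frac{1}{1-\gamma}$ from~\eqref{eq:bounded-policy-values} together with $\sum_{a'\in\calA}|\pi(a'|s')-\pi'(a'|s')| \le |\calA|\,\|\pi-\pi'\|_\infty$, gives $\|\calF^\pi Q^{\pi'} - \calF^{\pi'}Q^{\pi'}\|_\infty \le \frac{\gamma|\calA|}{1-\gamma}\,\|\pi-\pi'\|_\infty$.

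Finally I would combine the two estimates: substituting the residual bound into the performance-difference identity yields $|Q^\pi(s,a) - Q^{\pi'}(s,a)| \le \frac{\gamma|\calA|}{(1-\gamma)^2}\,\|\pi-\pi'\|_\infty$, and taking the supremum over $(s,a)$ produces the claim. I do not expect a serious obstacle here; the only points requiring care are (i) justifying the Dirac choice of $\rho$ so that the left-hand $\ell_\infty$-norm is correctly recovered, and (ii) tracking the factor $|\calA|$, which arises precisely from passing from the $\ell_1$-norm of the per-state policy difference to the $\ell_\infty$-norm $\|\pi-\pi'\|_\infty$. An alternative route via the resolvent identity $A^{-1}-B^{-1} = A^{-1}(B-A)B^{-1}$ with $A = I-\gamma P^\pi_{\scriptscriptstyle\calS\times\calA}$ and $B = I-\gamma P^{\pi'}_{\scriptscriptstyle\calS\times\calA}$ would also work, but the performance-difference approach reuses machinery already established in the excerpt and keeps the constants transparent.
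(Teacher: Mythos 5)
Your proposal is correct and follows essentially the same route as the paper's own proof: both apply the action-value form of the performance difference lemma at a Dirac mass $\rho=\delta_{(s,a)}$, use the fixed-point identity $\calF^{\pi'}Q^{\pi'}=Q^{\pi'}$ to turn the residual into $\calF^{\pi}Q^{\pi'}-\calF^{\pi'}Q^{\pi'}$, and then bound the resulting inner product by the $\ell_1$--$\ell_\infty$ H\"older estimate together with $\|Q^{\pi'}\|_\infty \le \tfrac{1}{1-\gamma}$, yielding the identical constant $\tfrac{\gamma|\calA|}{(1-\gamma)^2}$.
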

\begin{proof}
    By the performance difference lemma, one has
    \begin{align*}
    \forall\, (s,a) \in \calS \times \calA: \quad | Q^{\pi}(s,a) - Q^{\pi^\prime}(s,a) | &= \frac{1}{1-\gamma} | [\calF^{\pi}Q^{\pi^\prime} - Q^{\pi^\prime}](d^{\pi}_{s,a}) | \\
    &= \frac{1}{1-\gamma} | [\calF^{\pi}Q^{\pi^\prime} - \calF^{\pi^\prime}Q^{\pi^\prime}](d^{\pi}_{s,a}) | \\
    &\leq \frac{1}{1-\gamma} \| \calF^{\pi} Q^{\pi} - \calF^{\pi^\prime} Q^{\pi^\prime} \|_\infty.
    \end{align*}
    Since
    \begin{align*}
        \forall\, (s,a) : \quad | \calF^{\pi} Q^{\pi^\prime}(s,a) - \calF^{\pi^\prime} Q^{\pi^\prime}(s,a) | &= \gamma \cdot | \E_{s^\prime \sim P(\cdot|s,a)} [ \langle Q^{\pi^\prime} (s^\prime, \cdot), \, \pi(\cdot|s^\prime) - \pi^\prime(\cdot|s^\prime) \rangle ] | \\
        &\leq \gamma \cdot | \E_{s^\prime \sim P(\cdot|s,a)} [ \frac{|\calA|}{1-\gamma} \cdot \| \pi(\cdot|s^\prime) - \pi^\prime(\cdot|s^\prime) \|_\infty ] | \\
        &\leq \frac{\gamma |\calA|}{1-\gamma} \cdot \| \pi - \pi^\prime \|_\infty,
    \end{align*}
   it follows that
    \begin{align*}
        \forall\, \pi, \pi^\prime: \quad \| Q^\pi - Q^{\pi^\prime} \|_\infty \leq \frac{\gamma |\calA|}{(1-\gamma)^2} \cdot \ls\| \pi - \pi^\prime \rs\|_\infty,
    \end{align*}
    which completes the proof.
\end{proof}

\subsection{More on TD-PMD}
As mentioned previously, TD-PMD updates the critic by tracking a sequence of TD learning of the action values which does not only involve current policy but also relies on the last value estimator. More precisely, TD-PMD admits the following form in the exact setting: 
\begin{equation}\label{eq:AC-PMD-policy-update}
\begin{aligned}
    \forall\, s\in\calS: \quad \pi_{k+1}(\cdot|s) &= \underset{p\in\Delta(\calA)}{\arg\max} \, \ls\{ \eta_k \ls\langle p, \, Q^k(s,\cdot) \rs\rangle - D^p_\pi(s) \rs\},\\
    Q^{k+1} &=\mathcal{F}^{\pi_{k+1}}Q^k,
\end{aligned}
\end{equation}
where $\eta_k > 0$ is the step size, $h$ is a convex function over $\Delta(\calA)$, and $D_h(p \, ||\, q)$ is the Bregman divergence induced by $h$. Note that here  and in the rest of the paper, we will omit the subscript $h$ in $D_h$ and use the short notation $D^\pi_{\pi^\prime}(s)$ for $D_h(\pi(\cdot|s) \, ||\, \pi^\prime(\cdot|s))$ and $D^p_{\pi}(s)$ for $D_h(p \, || \, \pi(\cdot|s))$. In contrast to the classical PMD which utilizes the true value function $Q^{\pi_k}$ as the critic, TD-PMD obtains the new critic by only performing one-step of Bellman iteration based on the current policy and the last critic estimation. When $h$ is specified to  the negative entropy $h(p) = -\sum_{a\in\calA} p(a)\log p(a)$ and quadratic function $h(p) = \frac{1}{2}\| p \|_2^2$, TD-PMD reduces to TD-NPG and TD-PQA, respectively, with the following policy update rules: 
\begin{align*}
    \mbox{(TD-NPG)} \quad&\pi_{k+1}(a|s) \propto \pi_k(a|s) \cdot \exp \ls( \eta \, Q^k(s,a) \rs), \\
    \mbox{(TD-PQA)} \quad& \pi_{k+1}(\cdot | s) = \Proj_{\Delta(\calA)} \ls[ \pi_k(\cdot|s) + \eta \, Q^k(s,\cdot) \rs],
\end{align*}
where in TD-PQA, $\Proj_{\Delta(\calA)}$ denotes the projection on the probabilistic simplex $\Delta(\calA)$.

 The following  three-point-descent lemma (see for example \citet{Xiao_2022,chen1993convergence}) is essential in the analysis of the PMD type methods.
\begin{lemma}
    For any $\pi \in \Pi$, $\eta > 0$, and $Q \in \mathbb{R}^{|\calS||\calA|}$, consider the following update:
    \begin{align*}
        \forall\, s\in\calS: \quad \pi^+(\cdot|s) = \underset{p\in\Delta(\calA)}{\arg\max} \ls\{ \eta  \ls\langle p, \, Q(s,\cdot) \rs\rangle - D^p_\pi(s) \rs\}.
    \end{align*}
    There holds $\pi^+(\cdot | s) \in \mbox{\textup{rint dom }}h \cap \Delta(\calA)$ and
    \begin{align*}
        \forall\, s\in\calS, \; p \in \Delta(\calA): \quad \eta \ls\langle \pi^+(\cdot|s) - p, \; Q(s,\cdot) \rs\rangle \geq D^{\pi^+}_{\pi}(s) + D^p_{\pi^+}(s) - D^p_{\pi}(s).
    \end{align*}
    \label{lem:three-point-descent-lemma}
\end{lemma}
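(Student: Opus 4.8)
The plan is to fix an arbitrary state $s\in\calS$ and abbreviate $q := \pi(\cdot|s)$, $g := Q(s,\cdot)$, and $p^+ := \pi^+(\cdot|s)$. Then $p^+$ is by definition the maximizer over $\Delta(\calA)$ of $\phi(p) := \eta\ls\langle p,\, g\rs\rangle - D_h(p\,\|\,q)$, equivalently the minimizer of the proper closed convex function $\psi := -\phi$. Strict convexity of $h$ makes $\psi$ strictly convex, so the maximizer is unique; existence is guaranteed because $\mbox{\textup{rint dom}}(h)\cap\Delta(\calA)\neq\emptyset$ ensures $\psi\not\equiv+\infty$ on the compact set $\Delta(\calA)$. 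Since the statement is proved state by state, it suffices to work with this fixed $s$.

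First I would establish $p^+\in\mbox{\textup{rint dom }} h\cap\Delta(\calA)$, which is the one place essential smoothness of $h$ is needed. Suppose toward a contradiction that $p^+$ lies on the relative boundary of $\mbox{\textup{dom }} h$. Pick any $\bar p\in\mbox{\textup{rint dom }} h\cap\Delta(\calA)$ and consider the inward direction $d := \bar p - p^+$, which keeps the iterate in $\Delta(\calA)$. Essential smoothness forces $\|\nabla h(p)\|\to\infty$ as $p\to\mbox{\textup{bd dom }} h$, which makes the one-sided directional derivative of $p\mapsto D_h(p\,\|\,q)$ along $d$ diverge to $-\infty$; hence $\psi$ could be strictly decreased by a small inward step, contradicting the optimality of $p^+$. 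Therefore $p^+$ lies in the relative interior and $\nabla h(p^+)$ is well-defined.

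With $\nabla h(p^+)$ available, I would invoke the first-order optimality condition for minimizing the differentiable convex function $\psi$ over the convex set $\Delta(\calA)$. Because $p^+\in\mbox{\textup{rint dom }} h$, for every $p\in\Delta(\calA)$ the segment $p^+ + t(p-p^+)$ stays in $\mbox{\textup{dom }} h$ for all small $t>0$, so $\psi$ is finite and differentiable along it near $t=0$ and minimality yields
\[
    \forall\, p\in\Delta(\calA): \quad \ls\langle -\eta g + \nabla h(p^+) - \nabla h(q),\; p - p^+ \rs\rangle \geq 0,
\]
which is equivalent to $\eta\ls\langle g,\; p^+ - p\rs\rangle \geq \ls\langle \nabla h(p^+) - \nabla h(q),\; p^+ - p \rs\rangle$. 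Note this remains valid even for $p\notin\mbox{\textup{dom }} h$, since only the tail of the segment near $p^+$ must lie in $\mbox{\textup{dom }} h$. The final step is the three-point identity: expanding the Bregman divergences directly (the $h(p)$ contributions cancel, so the left-hand side is finite for every $p\in\Delta(\calA)$) gives
\[
    D^{\pi^+}_\pi(s) + D^p_{\pi^+}(s) - D^p_\pi(s) = \ls\langle \nabla h(p^+) - \nabla h(q),\; p^+ - p \rs\rangle.
\]
Combining the last two displays and restoring $g = Q(s,\cdot)$ and $p^+ = \pi^+(\cdot|s)$ produces precisely the asserted inequality. I expect the genuinely delicate point to be the relative-interior argument of the second paragraph, since it is what licenses writing $\nabla h(p^+)$ at all; the optimality condition and the three-point identity are then routine algebra.
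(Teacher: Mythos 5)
Your proof is correct, but there is nothing in the paper to compare it against line by line: the paper states this lemma without proof, citing \citet{Xiao_2022,chen1993convergence}. Your argument is precisely the standard proof from that literature — existence/uniqueness of the maximizer from strict convexity and closedness over the compact simplex, interiority of $\pi^+(\cdot|s)$ via essential smoothness, the first-order variational inequality, and the Bregman three-point identity — and the two final steps combine exactly as you state (your expansion of $D^{\pi^+}_{\pi}(s)+D^p_{\pi^+}(s)-D^p_{\pi}(s)$ into $\ls\langle \nabla h(\pi^+(\cdot|s))-\nabla h(\pi(\cdot|s)),\,\pi^+(\cdot|s)-p\rs\rangle$ is right). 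Two technical points deserve tightening, though neither is a substantive gap. First, in the interiority step, blow-up of $\|\nabla h\|$ near $\mbox{bd dom }h$ does not by itself force the directional derivative along your particular direction $d=\bar p-p^+$ to $-\infty$: the gradient could a priori blow up orthogonally to $d$. The clean route is Rockafellar's characterization that an essentially smooth $h$ has $\partial h(x)=\emptyset$ for every $x\notin\mbox{int dom }h$, which is incompatible with $p^+$ being a constrained minimizer once the subdifferential sum rule applies (licensed by $\mbox{rint dom}(h)\cap\Delta(\calA)\neq\emptyset$); alternatively, a convexity estimate using the boundedness of $h$ on a small ball around $\bar p$ closes your version of the argument. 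Second, for $p\in\Delta(\calA)\setminus\mbox{dom }h$ both $D^p_{\pi^+}(s)$ and $D^p_{\pi}(s)$ equal $+\infty$, so the asserted inequality is meaningful only under the cancellation convention you adopt; for $p\in\Delta(\calA)\cap\mbox{dom }h$ your segment argument is exactly the line-segment principle and is sound. In short, the paper's citation buys brevity, while your reconstruction supplies the self-contained argument the cited references contain.
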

\noindent For the mirror map $h$ being the negative entropy, there holds $\mbox{rint dom }h = \mbox{int }\Delta(\calA)$. Hence the policies generated by softmax NPG update always stay in $\mbox{rint }\Delta(\calA)$ if the initial policy lies in $\mbox{rint }\Delta(\calA)$.

\section{Expected TD-PMD with Off-Policy Markov Data}
\label{sec:Expected-TD-AC-PMD}

As shown in \citet{liu2025tdpmd}, TD-PMD admits the same sublinear convergence as PMD in the exact setting. Moreover, under the generative model where $[\mathcal{F}^{\pi_{k+1}}Q^k](s,a)$ in \eqref{eq:AC-PMD-policy-update} is estimated via random samples for every $(s,a)$, the $\tilde{O}(\varepsilon^{-2})$ sample complexity of TD-PMD is also established therein. In contrast, we will consider the sample complexity of TD-PMD under  the more practical and challenging  online Markov sampling model.

This section first considers the case where the  Markov data are obtained through an behavior policy $\pi_b$ which is sufficiently exploratory. 
Let $\tau_k$ be the trajectory data of length $B_k$, i.e.,
\begin{align*}
    \tau_k = \{ (s_t^k, a_t^k, r^k_t, s_{t+1}^k) \}_{t=0}^{B_k-1}, \quad \mbox{where} \; a^k_t \sim \pi_b(\cdot|s_t^k), \;\; r^k_t = r(s^k_t, a^k_t), \;\;  s^k_{t+1} \sim P(\cdot| s_t^k, a_t^k).
\end{align*}
For each tuple $(s_t^k, a_t^k, r_t^k, s_{t+1}^k)$, the expected TD error of critic $Q^k$ with respect to the target policy $\pi_{k+1}$ is constructed by
\begin{align*}
    \forall\, (s,a)\in\calS\times\calA: \quad \delta_t^k(s,a) := \mathds{1}_{(s^k_t,a^k_t)=(s,a) } \cdot\ls[r_t^k + \gamma\E_{a\sim\pi_{k+1}(\cdot|s_{t+1}^k)} \ls[Q^k(s_{t+1}^k, a)\rs] - Q^k(s^k_t, a^k_t)\rs].
\end{align*}
{Notice that when conditioned on $(s_t, a_t)=(s,a)$, the term $r_t^k + \gamma \E_{a\sim\pi_{k+1}(\cdot|s_{t+1}^k)}[Q^k(s_{t+1}^k, a)]$ is an unbiased estimator of $[\calF^{\pi_{k+1}} Q^k](s,a)$.} The expected TD error constructed from the whole trajectory $\tau_k$ is defined as the weighted average of $\delta_t^k$,
\begin{align*}
    \forall\, (s,a)\in\calS\times\calA: \quad \bar\delta_k(s,a) := {\sum_{t=0}^{B_k-1} c_t^k \cdot \delta_t^k(s,a)},
\end{align*}
where $c_t^k \geq 0$ are weights satisfying $\sum_{t=0}^{B_k - 1} c_t^k = 1$.
We then update the critic via the TD learning method, i.e.,
\begin{align*}
    \forall\, (s,a) \in \calS\times\calA: \quad Q^{k+1}(s,a) = Q^k(s,a) + \alpha_k \cdot \bar\delta_k(s,a),
\end{align*}
where $\alpha_k > 0$ is the critic update step size. 

\begin{figure}[ht!]
    \centering
    \includegraphics[width=1\linewidth]{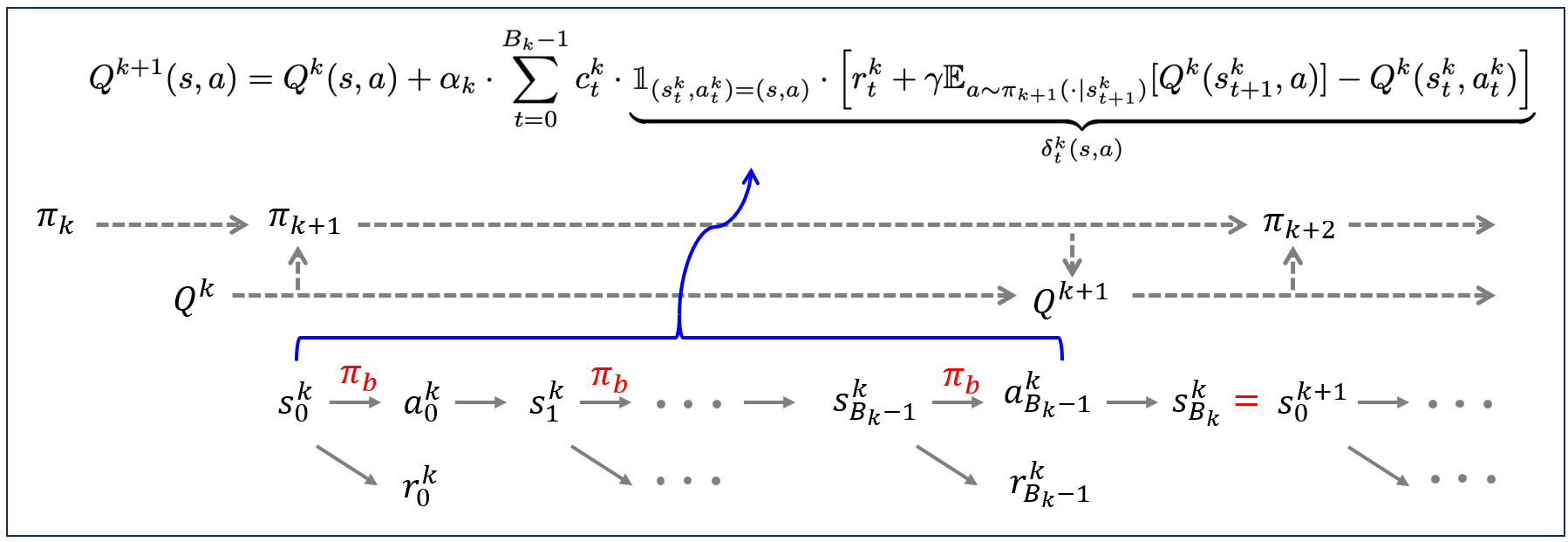}
    \caption{An illustration of Expected TD-PMD with off-policy Markov data (Algorithm~\ref{alg:Expected-TD-AC-PMD}),}
    \label{fig:Expected-TD-AC-PMD}
\end{figure}

The formal description of the Expected TD-PMD  with the Off-Policy Markov data is given in Algorithm~\ref{alg:Expected-TD-AC-PMD}, with an illustration given in Figure~\ref{fig:Expected-TD-AC-PMD}.
First note that  ``expected'' refers to the expectation term $\E_{a\sim\pi_{k+1}(\cdot|s_{t+1}^k)} \ls[Q^k(s_{t+1}^k, a)\rs]$, which is computable in the tabular case since $\pi_{k+1}$ is known. Moreover, Algorithm~\ref{alg:Expected-TD-AC-PMD} is an off-policy algorithm, as the behavior policy $\pi_b$ for sampling the data is not the target policy $\pi_{k+1}$. It is worth remarking that when the policy update step size $\eta_k$ goes to infinity, the updated policy $\pi_{k+1}$ is actually the greedy policy induced by the action value $Q^k$. In such case, Expected TD-PMD reduces to the  Q-learning algorithm~\citep{suttonRL} with batch Markov data. Thus the Expected TD-PMD can be viewed as a ``smoothed'' Q-learning algorithm. Furthermore, for the on-policy version with infinity critic update step size, i.e., $\eta_k = \infty$ and $\pi_b = \pi_{k+1}$, Expected TD-PMD with $B_k=1$ reduces to the expected SARSA algorithm~\citep{suttonRL}. 

\begin{algorithm}[t!]
    \small
    \caption{Expected TD-PMD with Off-Policy Markov Data}
    \label{alg:Expected-TD-AC-PMD}
\begin{algorithmic}
    \STATE {\bfseries Input:} Iterations $K$. Initial action value $Q^0=0$, initial policy $\pi_0$, critic step size $\alpha_k$, policy update step size $\eta_k$, initial state $s_0$, batch size $B_k$, average weights $c^k_t$.
    \STATE Set $s_0^0 = s_0$.
    \FOR{$k=0,1, \dots, K$}
    \STATE \textbf{(Policy update)} Update the target policy by
    \begin{align*}
        \forall\, s\in\calS: \quad \pi_{k+1}(\cdot|s) = \underset{p\in\Delta(\calA)}{\arg\max} \; \ls\{ \eta_k \ls\langle p ,\, Q^k(s,\cdot) \rs\rangle - D^p_{\pi_k}(s) \rs\}.
    \end{align*}
    \STATE \textbf{(Sampling)} Obtain $\tau_k$ by obeying behavior policy $\pi_b$ starting from $s_0^k$,
    \begin{align*}
        \tau_k = \{ (s_t^k, a_t^k, r^k_t, s_{t+1}^k) \}_{t=0}^{B_k-1}, \quad \mbox{where} \; r^k_t = r(s^k_t, a^k_t), \;\; a^k_t \sim \pi_b(\cdot|s_t^k), \;\; s^k_{t+1} \sim P(\cdot| s_t^k, a_t^k),
    \end{align*}
    and set $s_0^{k+1} = s_{B_k}^k$.
    \STATE \textbf{(Critic update)} Construct the expected TD error,
    \begin{align*}
        &\delta_t^k(s,a) := \mathds{1}_{(s^k_t,a^k_t)=(s,a) } \cdot\ls[r_t^k + \gamma\E_{a\sim\pi_{k+1}(\cdot|s_{t+1}^k)} \ls[Q^k(s_{t+1}^k, a)\rs] - Q^k(s^k_t, a^k_t)\rs], \\
        &\bar\delta_k(s,a) := {\sum_{t=0}^{B_k-1} c_t^k \cdot \delta_t^k(s,a)}.
    \end{align*}
    \STATE Update the critic by
    \begin{align*}
        Q^{k+1}(s,a) = Q^k(s,a) + \alpha_k \cdot \bar\delta_k(s,a).
    \end{align*}
    \ENDFOR
    \STATE Sample $\hat K$ from $\{ 0, 1, \dots, K \}$ with uniform probability.
    \STATE Output $\pi_K$ and $\pi_{\hat K}$.
\end{algorithmic}
\end{algorithm}

Before proceeding to the sample complexity analysis of Expected TD-PMD, we give some intuition about the critic update formula. Conditioned on the first state $s_0^k$, the critic $Q^k$ and the target policy $\pi_{k+1}$, it is straightforward to compute the expectation of the TD error,
\begin{align*}
    \E\ls[ \delta_t^k(s,a) \rs] = \P\ls( s_t^k=s \rs) \pi_b(a|s) \cdot \ls[ \calF^{\pi_{k+1}} Q^k - Q^k \rs](s,a).
\end{align*}
Therefore the conditional expectation of $Q^{k+1}$ is given by
\begin{align}
    \E\ls[ Q^{k+1}(s,a) \rs] = \E\Bigg[ Q^k(s,a) + \alpha_k\cdot\sum_{t=0}^{B_k-1} c_t^k\cdot \underbrace{\P(s_t^k=s) \pi_b(a|s)}_{\mbox{exploration}} \cdot \underbrace{\ls[ \calF^{\pi_{k+1}} Q^k - Q^k \rs]}_{\mbox{exploitation}}(s,a) \Bigg].
    \label{eq:Expected-TD-AC-PMD:intuition}
\end{align}
From this equation, it can be seen that there are two parts in the TD error: the exploration part and the exploitation part. The exploitation part is a Bellman operator, offering a shift for the critic to approach $Q^{\pi_{k+1}}$. The exploration part is indeed the probability for the behavior policy $\pi_b$ to obtain action-state $(s,a)$. Noticing that the critic update will fail if the exploration part goes to $0$, we require the following two assumptions to ensure the sufficient  exploration over all $(s,a) \in \calS\times\calA$.

\begin{assumption}[Sufficient exploration of behavior policy]
    We have $\tilde\pi_b := \min_{(s,a) \in \calS\times\calA} \pi_b(a|s) > 0$.
    \label{ass:behavior-exploration}
\end{assumption}

\begin{assumption}[Ergodicity]
    The state transition matrix induced by $\pi_b$, i.e., $P^{\pi_b}_{\scriptscriptstyle \calS}$, is ergodic over $\calS$. 
    \label{ass:ergodicity}
\end{assumption}
Assumption~\ref{ass:behavior-exploration} guarantees the exploration over the action space. Note that if there exists a pair $(s,a)\in\calS\times\calA$ such that $\pi_b(a|s)=0$, then $(s_t^k, a_t^k)\neq(s,a)$ almost surely and $Q^k(s,a)$ will not be updated. 
Assumption~\ref{ass:ergodicity} is very standard for the analysis of the actor-critic algorithms with the Markov data, see for example~\citet{li2024stochastic,khodadadian2021finite,khodadadian2022finite}. This assumption can guarantee the sufficient exploration over the state space.
\begin{proposition}
   Under Assumption~\ref{ass:ergodicity}, there exists a unique stationary distribution $\nu^{\pi_b} \in \Delta(\calS)$ for the transition kernel $P^{\pi_b}_{\scriptscriptstyle\calS}$ which satisfies
    \begin{align*}
        (\nu^{\pi_b})^\top P^{\pi_b}_{\scriptscriptstyle\calS} = (\nu^{\pi_b})^\top\quad\mbox{and}\quad \tilde\nu_b := \min_{s\in\calS} \; \nu^{\pi_b}(s) > 0.
    \end{align*}
    Furthermore, letting $\{ s_t \}_{t\geq0}$ be the Markov chain induced by $P^{\pi_b}_{\scriptscriptstyle\calS}$, then it is geometrically mixing. That is, there exists two constants $m_b>0$ and  $\kappa_b\in(0,1)$ such that
    \begin{align*}
        \forall\, s\in\calS, \; n \geq 0, \; t\geq0: \quad  d_{TV}(\nu^{\pi_b}(\cdot), \, \P(s_{n+t} = \cdot | s_n = s)) \leq m_b\kappa_b^t,
    \end{align*}
    where $d_{TV}$ denotes the total variation distance.
    \label{pro:MDP-ergodicity}
\end{proposition}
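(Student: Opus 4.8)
The statement is the classical ergodic theorem for finite-state Markov chains, so the plan is to give a self-contained argument resting on the Perron--Frobenius theorem together with a Doeblin-type minorization. Throughout I would read Assumption~\ref{ass:ergodicity} in its standard sense, namely that $P^{\pi_b}_{\scriptscriptstyle\calS}$ is irreducible \emph{and} aperiodic (equivalently, primitive). Aperiodicity is what the geometric mixing conclusion genuinely needs, since a periodic irreducible chain never converges to $\nu^{\pi_b}$ in total variation; making this convention explicit is one of the two structural points the proof hinges on.

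For the existence, uniqueness, and positivity claims, I would first observe that $P^{\pi_b}_{\scriptscriptstyle\calS}$ is a nonnegative row-stochastic matrix, so its spectral radius equals $1$ with the all-ones vector as a right eigenvector. Because the matrix is irreducible, the Perron--Frobenius theorem guarantees that the eigenvalue $1$ is simple and that the associated left eigenvector can be chosen entrywise positive; normalizing it to a probability vector yields a stationary distribution $\nu^{\pi_b}$ with $(\nu^{\pi_b})^\top P^{\pi_b}_{\scriptscriptstyle\calS} = (\nu^{\pi_b})^\top$. Simplicity of the eigenvalue delivers uniqueness, and strict positivity of the Perron eigenvector gives $\tilde\nu_b = \min_{s\in\calS} \nu^{\pi_b}(s) > 0$.

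For the geometric mixing bound, primitivity supplies an integer $N \geq 1$ with $(P^{\pi_b}_{\scriptscriptstyle\calS})^N$ entrywise positive; I would set $\delta := \min_{s,s'\in\calS} (P^{\pi_b}_{\scriptscriptstyle\calS})^N(s,s') > 0$ and $\beta := |\calS|\,\delta \in (0,1]$. Writing $u$ for the uniform distribution on $\calS$, this is exactly the Doeblin minorization $(P^{\pi_b}_{\scriptscriptstyle\calS})^N(s,\cdot) \geq \beta\, u(\cdot)$ for every $s$. I would then invoke two standard facts about total variation under the Markov operator: since $\nu^{\pi_b}$ is stationary, $d_{TV}(\mu^\top P^{\pi_b}_{\scriptscriptstyle\calS}, \nu^{\pi_b}) \leq d_{TV}(\mu, \nu^{\pi_b})$ for every distribution $\mu$ (non-strict contraction), and the minorization forces the strict block contraction $d_{TV}(\mu^\top (P^{\pi_b}_{\scriptscriptstyle\calS})^N, \nu^{\pi_b}) \leq (1-\beta)\, d_{TV}(\mu, \nu^{\pi_b})$. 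Iterating the second over blocks of length $N$, applying the first to the leftover steps, and using time-homogeneity (so that $\P(s_{n+t}=\cdot\mid s_n=s)$ is the $s$-th row of $(P^{\pi_b}_{\scriptscriptstyle\calS})^t$, independent of $n$), I obtain for $t = Nq + r$ with $0\le r < N$ the estimate
\begin{align*}
    d_{TV}\big(\nu^{\pi_b},\, \P(s_{n+t}=\cdot\mid s_n=s)\big) \;\le\; (1-\beta)^{q} \;\le\; (1-\beta)^{-1}\big((1-\beta)^{1/N}\big)^{t},
\end{align*}
whence the claim follows with $\kappa_b := (1-\beta)^{1/N} \in (0,1)$ and $m_b := (1-\beta)^{-1}$.

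The main obstacle is not any single computation but the two bookkeeping points flagged above: justifying that Assumption~\ref{ass:ergodicity} carries aperiodicity (without which the total-variation conclusion is simply false for periodic chains), and converting the block-wise decay into the per-step rate $m_b\kappa_b^t$ by absorbing the remainder $r<N$ into the constant $m_b$ via $q \ge t/N - 1$. An alternative to the Doeblin route would read the rate directly off the spectrum of $P^{\pi_b}_{\scriptscriptstyle\calS}$ — picking any $\kappa_b$ strictly between the second-largest eigenvalue modulus and $1$ and absorbing the polynomial Jordan-block factors into $m_b$ — but the coupling/minorization argument is more elementary and sidesteps diagonalizability concerns.
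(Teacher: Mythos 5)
Your proposal is correct and is essentially the argument the paper relies on: the paper gives no proof of its own for this proposition, deferring to \citet{khodadadian2021finite} and \citet[Theorem~4.9]{levin2017markov}, and the proof of that cited convergence theorem is precisely your route --- Perron--Frobenius for existence, uniqueness, and positivity of $\nu^{\pi_b}$, plus a Doeblin minorization of $(P^{\pi_b}_{\scriptscriptstyle\calS})^N$ for the geometric mixing, under the same reading of ergodicity as irreducibility together with aperiodicity. The only cosmetic caveat is that your constants $m_b=(1-\beta)^{-1}$ and $\kappa_b=(1-\beta)^{1/N}$ degenerate in the edge case $\beta=1$ (exact mixing in $N$ steps); this is fixed by replacing $\beta$ with $\min\{\beta,\,1/2\}$ in the minorization before iterating.
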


The proof of Proposition~\ref{pro:MDP-ergodicity} can be found for example in~\citet{khodadadian2021finite}, {which is the application of~\citet[Theorem~4.9]{levin2017markov}}. Noticing that at any $k$-th iteration of Expected TD-PMD, the generated state trajectory $\{ s_t^k \}_{t\geq 0}$ is the Markov chain with transition kernel of $P^{\pi_b}_{\scriptscriptstyle\calS}$. Thus by Proposition~\ref{pro:MDP-ergodicity},
\begin{align}
    \forall\, 0 \leq k\leq K, \; s\in\calS, \; n \geq 0, \; t\geq0: \quad  d_{TV}(\nu^{\pi_b}(\cdot), \, \P(s_{n+t}^k = \cdot | s_n^k = s)) \leq m_b\kappa_b^t,
    \label{eq:Expected-TD-AC-PMD:MDP-ergodicity}
\end{align}
implying that the exploration part $\P(s_t^k=s)\pi_b(a|s)$ converges to $\nu^{\pi_b}(s)\pi_b(a|s) \geq \tilde\nu_b\tilde\pi_b > 0$ as $B_k$ increases. Therefore,  Assumptions~\ref{ass:behavior-exploration} and~\ref{ass:ergodicity} together can guarantee sufficient exploration over $\calS\times\calA$.

We need to make an additional assumption on the mirror map $h$ for the subsequent analysis when $\eta_k$ is a small constant step size. 
\begin{assumption}
    The mirror map $h$ is further $\lambda$-strongly convex w.r.t. $\ell_p$-norm over $\Delta(\calA)$ where $p \in [1,+\infty]$ is some constant.
    \label{ass:h}
\end{assumption}
\noindent Assumption~\ref{ass:h} is also common in the convergence analysis of mirror descent methods ({see for example~\citet[Chapter~9]{beck2017first}}), and typical mirror maps including quadratic function and negative entropy satisfy this assumption. Indeed, one can verify that  $h(p) = \frac{1}{2}\| p \|_2^2$ is $1$-strongly convex with respect to the $l_2$-norm, while  $h(p) = -\sum_{a} p(a) \log p(a)$ is $1$-strongly convex with respect to the $l_1$-norm. By Assumption~\ref{ass:h}, it is clear that for {PMD-type update (including the policy update in Expected TD-PMD)} there holds
\begin{align}
    \forall\, 0\leq k \leq K, \; s\in\calS: \quad D^{\pi_{k+1}}_{\pi_k}(s) \geq \frac{\lambda}{2} \ls\| \pi_{k+1}(\cdot|s) - \pi_k(\cdot|s) \rs\|_p^2 \geq \frac{\lambda}{2} \ls\| \pi_{k+1}(\cdot|s) - \pi_k(\cdot|s) \rs\|_\infty^2.
    \label{eq:Bregman-lower-bound}
\end{align}
Moreover, the policy  shift $\| \pi_{k+1} - \pi_k \|_\infty$ in {after the PMD-type  update} can be bounded by the policy update step size based on the strong convexity of mirror map $h$ and the three-point-descent lemma.
{
\begin{lemma}
    Consider the PMD-type policy update in \eqref{eq:AC-PMD-policy-update}.
    Under Assumption~\ref{ass:h}, there holds
    \begin{align*}
        \forall\, 0\leq k \leq K: \quad \ls\| \pi_{k+1} - \pi_k \rs\|_\infty \leq \frac{\| Q^k(s,\cdot) \|_1}{\lambda} \eta_k. 
    \end{align*}
    \label{lem:policy-shift}
\end{lemma}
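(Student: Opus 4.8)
The plan is to reduce everything to a single fixed state $s \in \calS$, since the vectorized arrangement of $\pi$ gives $\|\pi_{k+1} - \pi_k\|_\infty = \max_{s\in\calS} \|\pi_{k+1}(\cdot|s) - \pi_k(\cdot|s)\|_\infty$; it therefore suffices to bound the per-state shift $\|\pi_{k+1}(\cdot|s) - \pi_k(\cdot|s)\|_\infty$ and take the maximum at the end. The two ingredients are the three-point-descent lemma (Lemma~\ref{lem:three-point-descent-lemma}) applied to the PMD-type update defining $\pi_{k+1}(\cdot|s)$, together with the strong-convexity bound \eqref{eq:Bregman-lower-bound} coming from Assumption~\ref{ass:h}.

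First I would invoke Lemma~\ref{lem:three-point-descent-lemma} with $\pi^+ = \pi_{k+1}$, $\pi = \pi_k$, $Q = Q^k(s,\cdot)$, $\eta = \eta_k$, and the test point $p = \pi_k(\cdot|s) \in \Delta(\calA)$. This produces
\[
\eta_k \la \pi_{k+1}(\cdot|s) - \pi_k(\cdot|s), \; Q^k(s,\cdot) \ra \geq D^{\pi_{k+1}}_{\pi_k}(s) + D^{\pi_k}_{\pi_{k+1}}(s) - D^{\pi_k}_{\pi_k}(s),
\]
and since $D^{\pi_k}_{\pi_k}(s) = 0$ the right-hand side is exactly the sum $D^{\pi_{k+1}}_{\pi_k}(s) + D^{\pi_k}_{\pi_{k+1}}(s)$ of two Bregman divergences. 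The point that must be handled carefully --- and the reason one obtains the constant $1/\lambda$ rather than $2/\lambda$ --- is to keep \emph{both} Bregman terms: applying \eqref{eq:Bregman-lower-bound} to each of them (the estimate holds in either argument order because $\|\cdot\|_p$ is symmetric) yields
\[
D^{\pi_{k+1}}_{\pi_k}(s) + D^{\pi_k}_{\pi_{k+1}}(s) \geq \lambda \, \ls\| \pi_{k+1}(\cdot|s) - \pi_k(\cdot|s) \rs\|_p^2 \geq \lambda \, \ls\| \pi_{k+1}(\cdot|s) - \pi_k(\cdot|s) \rs\|_\infty^2.
\]

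To finish, I would bound the inner product on the left by H\"older's inequality in its $\ell_\infty$--$\ell_1$ form, $\la \pi_{k+1}(\cdot|s) - \pi_k(\cdot|s), \, Q^k(s,\cdot) \ra \leq \ls\| \pi_{k+1}(\cdot|s) - \pi_k(\cdot|s) \rs\|_\infty \ls\| Q^k(s,\cdot) \rs\|_1$. Chaining this with the previous display gives $\lambda \|\pi_{k+1}(\cdot|s) - \pi_k(\cdot|s)\|_\infty^2 \leq \eta_k \|\pi_{k+1}(\cdot|s) - \pi_k(\cdot|s)\|_\infty \, \|Q^k(s,\cdot)\|_1$; if the per-state shift vanishes the claimed bound is trivial, and otherwise I cancel one factor of $\|\pi_{k+1}(\cdot|s) - \pi_k(\cdot|s)\|_\infty$ and then take the maximum over $s$. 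There is no genuine obstacle beyond this bookkeeping --- the only trap is discarding one of the two Bregman divergences, which would inflate the constant by a factor of two.
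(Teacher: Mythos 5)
Your proposal is correct and follows essentially the same route as the paper: three-point descent with test point $\pi_k(\cdot|s)$, keeping both Bregman terms to get the factor $\lambda$ (not $\lambda/2$), H\"older, and cancellation of one factor of the policy shift. The only cosmetic difference is that you apply H\"older in the $\ell_\infty$--$\ell_1$ pairing after converting the strong-convexity bound to the $\ell_\infty$ norm, whereas the paper pairs $\ell_p$ with $\ell_{p/(p-1)}$, bounds $\|Q^k(s,\cdot)\|_{p/(p-1)} \leq \|Q^k(s,\cdot)\|_1$, and converts $\|\cdot\|_p \geq \|\cdot\|_\infty$ only at the end --- the two orderings yield the identical final bound.
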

}
\begin{proof}
    By Assumption~\ref{ass:h}, one has
    \begin{align*}
        \forall\, x, y\in\Delta(\calA): \quad D_h(x \, || \, y) = h(x)-h(y) - \ls\langle \nabla h(y), \, x-y \rs\rangle \geq \frac{\lambda}{2} {\| x - y \|_p^2.}
    \end{align*}
    By H\"older's inequality and the three-point descent lemma (Lemma~\ref{lem:three-point-descent-lemma}), 
    \begin{align*}
        \eta_k  \| \pi_{k+1}(\cdot|s) - \pi_k(\cdot|s) \|_p \cdot \| Q^k(s,\cdot) \|_{\frac{p}{p-1}} &\geq \eta_k \ls\langle \pi_{k+1}(\cdot|s) - \pi_k(\cdot|s), \, Q^k(s,\cdot) \rs\rangle \\
        &\geq D_{\pi_k}^{\pi_{k+1}}(s) + D^{\pi_k}_{\pi_{k+1}}(s) \\
        &\geq \lambda \cdot \| \pi_{k+1}(\cdot|s) - \pi_k(\cdot|s) \|_p^2.
    \end{align*}
    Since $\| Q^k(s,\cdot) \|_{\frac{p}{p-1}} \leq \ls\| Q^k(s,\cdot) \rs\|_1$, we obtain
    \begin{align*}
        \| \pi_{k+1}(\cdot|s) - \pi_k(\cdot|s) \|_p \leq \frac{\| Q^k(s,\cdot) \|_1}{\lambda} \eta_k,
    \end{align*}
    which implies that $\| \pi_{k+1} - \pi_k \|_\infty \leq \frac{\| Q^k(s,\cdot) \|_1}{\lambda} \eta_k$.
\end{proof}

The following property about the boundedness of $Q^k$ can be verified easily.
\begin{lemma}
    Consider Expected TD-PMD \textup{(}Algorithm~\ref{alg:Expected-TD-AC-PMD}\textup{)} with critic step size $\alpha_k$ satisfying $\alpha_k \in (0,1]$. There holds
    \begin{align*}
        \forall\, 0 \leq k \leq K, \; (s,a)\in\calS\times\calA: \quad 0 \leq Q^k(s,a) \leq \frac{1}{1-\gamma}.
    \end{align*}
    \label{lem:Expected-TD-AC-PMD:bounded-value}
\end{lemma}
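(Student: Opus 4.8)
The plan is to argue by induction on $k$, exploiting the fact that the critic update is, coordinate-wise, a convex combination of the previous estimate and a one-step Bellman backup. First I would fix an arbitrary pair $(s,a)\in\calS\times\calA$ and rewrite the update by isolating the dependence on $Q^k(s,a)$. Since $\delta_t^k(s,a)$ carries the indicator $\mathds{1}_{(s_t^k,a_t^k)=(s,a)}$, only the time indices $t$ with $(s_t^k,a_t^k)=(s,a)$ contribute to $\bar\delta_k(s,a)$. Collecting these terms and setting $w_k(s,a):=\sum_{t:\,(s_t^k,a_t^k)=(s,a)} c_t^k$, the update becomes
\[
    Q^{k+1}(s,a) = \bigl(1-\alpha_k w_k(s,a)\bigr) Q^k(s,a) + \alpha_k \sum_{t:\,(s_t^k,a_t^k)=(s,a)} c_t^k \bigl[ r_t^k + \gamma\, \E_{a^\prime\sim\pi_{k+1}(\cdot|s_{t+1}^k)}[Q^k(s_{t+1}^k,a^\prime)] \bigr].
\]
Because the $c_t^k$ are nonnegative and sum to one over all $t$, the aggregated weights satisfy $w_k(s,a)\in[0,1]$. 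Combined with $\alpha_k\in(0,1]$, this gives $1-\alpha_k w_k(s,a)\in[0,1]$, so the right-hand side is a genuine convex combination of $Q^k(s,a)$ and the backup targets. This is the crux: once the convex-combination form is exposed, the bounds propagate automatically.

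For the inductive step I would assume $0\leq Q^k(s,a)\leq\frac{1}{1-\gamma}$ for all $(s,a)$; the base case holds since $Q^0\equiv 0$. For the upper bound, each backup target obeys $r_t^k+\gamma\,\E_{a^\prime}[Q^k(s_{t+1}^k,a^\prime)]\leq 1+\gamma\cdot\frac{1}{1-\gamma}=\frac{1}{1-\gamma}$, using $r_t^k\leq 1$ and the inductive hypothesis (the expectation of an upper-bounded quantity stays upper-bounded). Substituting into the convex combination and using $\sum_{t:\,(s_t^k,a_t^k)=(s,a)} c_t^k = w_k(s,a)$ yields $Q^{k+1}(s,a)\leq\frac{1}{1-\gamma}$. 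For the lower bound, every backup target is nonnegative (as $r_t^k\geq 0$ and $Q^k\geq 0$) and the coefficient $1-\alpha_k w_k(s,a)$ is nonnegative, so $Q^{k+1}(s,a)\geq 0$. This closes the induction.

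The argument is elementary; the only step requiring care is the bookkeeping that recasts the weighted TD update as a convex combination, in particular verifying $w_k(s,a)\leq 1$ from $\sum_t c_t^k=1$ and confirming that the off-policy expectation term $\E_{a^\prime\sim\pi_{k+1}}[Q^k(s_{t+1}^k,a^\prime)]$ inherits the $[0,\frac{1}{1-\gamma}]$ bound from the inductive hypothesis. Notably, no contraction or mixing properties are needed here: the bound is purely algebraic and deterministic, holding for every realized trajectory rather than only in expectation.
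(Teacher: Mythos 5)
Your proof is correct: the rewriting of the update as $Q^{k+1}(s,a) = (1-\alpha_k w_k(s,a))Q^k(s,a) + \alpha_k\sum_{t:(s_t^k,a_t^k)=(s,a)} c_t^k\bigl[r_t^k + \gamma\,\E_{a'\sim\pi_{k+1}(\cdot|s_{t+1}^k)}[Q^k(s_{t+1}^k,a')]\bigr]$ with $w_k(s,a)\in[0,1]$ is exactly the convex-combination structure that makes the bounds propagate by induction from $Q^0=0$. The paper omits this proof entirely (stating it "can be verified easily"), and your argument is precisely the standard deterministic, per-trajectory verification the authors intend.
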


\subsection{Expected TD-PMD with constant step size}
\label{sec:Expected-TD-AC-PMD-with-constant-step-size}
We are now ready to provide the sample complexity analysis for Expected TD-PMD. This subsection focuses on the constant policy update step size setting, i.e., $\eta_k = \eta > 0$. We begin with the following upper bound for Expected TD-PMD.

\begin{lemma}
    Consider Expected TD-PMD \textup{(}Algorithm~\ref{alg:Expected-TD-AC-PMD}\textup{)}. Suppose that Assumption~\ref{ass:h} holds. Let $\eta_k = \eta > 0$ and $\alpha_k = \alpha \in (0,1]$. Fixing any optimal policy $\pi^* \in \Pi^*$ and $\mu \in \Delta(\calS)$, there holds
    \begin{align*}
        \E\ls[ V^*(\mu) - V^{\pi_{\hat K}}(\mu) \rs] &\leq \underbrace{\frac{1}{K+1} \ls[ \frac{\| D^{\pi^*}_{\pi_0} \|_\infty}{\eta(1-\gamma)} + \frac{1}{(1-\gamma)^2}  \rs]}_{\mbox{\small Exact PMD bound}} \\
        &\;\;\;\;+ \underbrace{\frac{1}{K+1} \cdot \frac{\eta |\calA|^2}{2(1-\gamma)^2\lambda} \cdot \E\ls[ \sum_{k=0}^K \| Q^{k} - Q^{\pi_k} \|_\infty^2 \rs]}_{\mbox{\small Stochastic Variance term}} \\
        &\;\;\;\;+ \underbrace{\frac{1}{K+1} \cdot \frac{1}{1-\gamma} \cdot \E \ls[ \sum_{k=0}^K \E_{s\sim d^*_\mu} \ls[ \ls\langle \pi^*(\cdot|s) - \pi_k(\cdot|s), \, Q^{\pi_k}(s,\cdot) - Q^k(s,\cdot) \rs\rangle \rs] \rs]}_{\mbox{\small Stochastic Bias term}},
    \end{align*}
    where $\| D^{\pi^*}_{\pi_0} \|_\infty := \max_{s\in\calS} \, D^{\pi^*}_{\pi_0}(s)$ and $\hat K$ is a random time index that is uniformly sampled from $0, \dots, K$. 
    \label{lem:Expected-TD-AC-PMD:Lan-Xiao-bound}
\end{lemma}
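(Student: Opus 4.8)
The plan is to combine the three-point-descent lemma (Lemma~\ref{lem:three-point-descent-lemma}) applied at each iteration with the performance difference lemma (Lemma~\ref{lem:pdl}), and then to rewrite the resulting bound in terms of the \emph{exact} action values $Q^{\pi_k}$ rather than the noisy critic $Q^k$. The central difficulty, and the reason three distinct terms appear in the statement, is that the policy update in \eqref{eq:AC-PMD-policy-update} uses the inexact critic $Q^k$, so every inner product $\langle \cdot, Q^k(s,\cdot)\rangle$ must eventually be traded for $\langle \cdot, Q^{\pi_k}(s,\cdot)\rangle$ at the cost of error terms measuring $Q^k - Q^{\pi_k}$.

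First I would apply Lemma~\ref{lem:three-point-descent-lemma} with the update producing $\pi_{k+1}$ from $\pi_k$, taking the comparator $p = \pi^*(\cdot|s)$ and the critic $Q = Q^k$. This yields, for each state $s$,
\begin{align*}
    \eta \ls\langle \pi_{k+1}(\cdot|s) - \pi^*(\cdot|s), \, Q^k(s,\cdot) \rs\rangle \geq D^{\pi_{k+1}}_{\pi_k}(s) + D^{\pi^*}_{\pi_{k+1}}(s) - D^{\pi^*}_{\pi_k}(s).
\end{align*}
Rearranging and splitting the left side as a telescoping divergence difference plus a one-step progress term is the standard mirror-descent bookkeeping. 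The key maneuver is to insert $Q^{\pi_k}$: write $\langle \cdot, Q^k\rangle = \langle \cdot, Q^{\pi_k}\rangle + \langle \cdot, Q^k - Q^{\pi_k}\rangle$. The $Q^{\pi_k}$ part, after invoking the performance difference lemma with the distribution $d^*_\mu$ and summing/telescoping over $k = 0,\dots,K$, produces the \emph{Exact PMD bound}: the telescoped divergence gives the $\|D^{\pi^*}_{\pi_0}\|_\infty/(\eta(1-\gamma))$ term, and the uniform bound \eqref{eq:bounded-policy-values} on value functions contributes the $1/(1-\gamma)^2$ term, exactly as in the exact PMD analysis of \citet{Xiao_2022}.

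Next I would isolate the two error contributions generated by the substitution. The summand $D^{\pi_{k+1}}_{\pi_k}(s)$ is a nonnegative progress term; I expect it to be used to absorb part of the discrepancy between evaluating the inner product at $\pi_{k+1}$ versus $\pi_k$. Concretely, writing $\langle \pi_{k+1} - \pi^*, Q^k - Q^{\pi_k}\rangle = \langle \pi_{k+1} - \pi_k, Q^k - Q^{\pi_k}\rangle + \langle \pi_k - \pi^*, Q^k - Q^{\pi_k}\rangle$, the first piece is bounded via Cauchy--Schwarz (or H\"older) together with Lemma~\ref{lem:policy-shift} controlling $\|\pi_{k+1}-\pi_k\|_\infty$ by $\eta\|Q^k(s,\cdot)\|_1/\lambda$; this, combined with $\|Q^k - Q^{\pi_k}\|_\infty$ and the simplex bound $\|Q^k(s,\cdot)\|_1 \le |\calA|/(1-\gamma)$, yields the \emph{Stochastic Variance term} of order $\eta|\calA|^2/((1-\gamma)^2\lambda)\cdot\|Q^k - Q^{\pi_k}\|_\infty^2$. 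The second piece $\langle \pi_k - \pi^*, Q^k - Q^{\pi_k}\rangle = -\langle \pi^* - \pi_k, Q^{\pi_k} - Q^k\rangle$ is exactly the \emph{Stochastic Bias term} after taking expectation over $s \sim d^*_\mu$ and the $1/(1-\gamma)$ factor from the performance difference lemma.

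The main obstacle I anticipate is the careful separation of the variance term (quadratic in the critic error, multiplied by $\eta$, and hence summable/controllable) from the bias term (linear in the critic error, not sign-definite, and requiring the separate inductive argument advertised in the paper's introduction). Getting the $\pi_{k+1}$-versus-$\pi_k$ splitting right so that the quadratic error lands entirely in the variance term while the residual linear-in-error term matches the claimed bias expression with the correct comparator ordering is the delicate step; the rest is telescoping the Bregman divergences and uniformly bounding value functions via \eqref{eq:bounded-policy-values} and Lemma~\ref{lem:Expected-TD-AC-PMD:bounded-value}. Finally, dividing by $K+1$ and identifying $\hat K$ as the uniform random index converts the averaged sum into the expectation $\E[V^*(\mu) - V^{\pi_{\hat K}}(\mu)]$ on the left.
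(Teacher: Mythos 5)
Your skeleton (three-point descent with comparator $\pi^*$, inserting $Q^{\pi_k}$, the performance difference lemma, telescoping the Bregman divergences, and reading off the bias term $\langle \pi^*(\cdot|s)-\pi_k(\cdot|s),\, Q^{\pi_k}(s,\cdot)-Q^k(s,\cdot)\rangle$) matches the paper's proof, but two of your concrete steps fail, and both failures trace to how you treat the progress term $I_k(s):=\eta\langle \pi_{k+1}(\cdot|s)-\pi_k(\cdot|s),\,Q^k(s,\cdot)\rangle - D^{\pi_{k+1}}_{\pi_k}(s)$. The first gap is in the variance term: H\"older plus Lemma~\ref{lem:policy-shift} bounds $\eta\langle \pi_{k+1}(\cdot|s)-\pi_k(\cdot|s),\,Q^k(s,\cdot)-Q^{\pi_k}(s,\cdot)\rangle$ by $\eta\,\|\pi_{k+1}(\cdot|s)-\pi_k(\cdot|s)\|_1\,\|Q^k-Q^{\pi_k}\|_\infty \le \frac{\eta^2|\calA|^2}{(1-\gamma)\lambda}\|Q^k-Q^{\pi_k}\|_\infty$, which is \emph{linear} in the critic error: Lemma~\ref{lem:policy-shift} converts the policy shift into $\eta\|Q^k(s,\cdot)\|_1/\lambda$, not into a second factor of the error, so your claim that this chain "yields" a term proportional to $\|Q^k-Q^{\pi_k}\|_\infty^2$ is incorrect. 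A linear-in-error term is not dominated by the claimed quadratic term $\frac{\eta^2|\calA|^2}{2\lambda}\|Q^k-Q^{\pi_k}\|_\infty^2$ (for small error the linear term is the larger one), so the lemma as stated does not follow from your derivation. The paper's step is different: apply Young's inequality to split the cross term into $\frac{\lambda}{2|\calA|^2}\|\pi_{k+1}(\cdot|s)-\pi_k(\cdot|s)\|_1^2+\frac{\eta^2|\calA|^2}{2\lambda}\|Q^k-Q^{\pi_k}\|_\infty^2$ and absorb the first summand into $-D^{\pi_{k+1}}_{\pi_k}(s)$ via the strong-convexity bound \eqref{eq:Bregman-lower-bound}; Lemma~\ref{lem:policy-shift} is never used in this proof.

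The second gap is the change of measure. You apply the three-point lemma only with $p=\pi^*$ and then invoke the performance difference lemma ``with the distribution $d^*_\mu$'' for the $Q^{\pi_k}$ part. But the piece $\langle \pi_{k+1}(\cdot|s)-\pi_k(\cdot|s),\,Q^{\pi_k}(s,\cdot)\rangle$ is a performance difference only under the visitation measure $d^{\pi_{k+1}}_{d^*_\mu}$, and passing from $\E_{s\sim d^*_\mu}$ to $\frac{1}{1-\gamma}\E_{s\sim d^{\pi_{k+1}}_{d^*_\mu}}$ requires the integrand to be nonnegative. That nonnegativity is exactly what the paper's additional application of Lemma~\ref{lem:three-point-descent-lemma} with $p=\pi_k(\cdot|s)$ supplies, namely $I_k(s)\ge D^{\pi_k}_{\pi_{k+1}}(s)\ge 0$; note that $\langle \pi_{k+1}(\cdot|s)-\pi_k(\cdot|s),\,Q^{\pi_k}(s,\cdot)\rangle$ by itself need not be nonnegative, since $\pi_{k+1}$ is the mirror step for $Q^k$, not for $Q^{\pi_k}$. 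Consequently the measure change must be performed on $I_k(s)$ as a whole, and the insertion of $Q^{\pi_k}$ together with the Young step must happen after it. Without this ordering you can produce neither the telescoping $\eta[V^{\pi_{k+1}}(d^*_\mu)-V^{\pi_k}(d^*_\mu)]$ that yields the $\frac{1}{(1-\gamma)^2}$ part of the exact PMD bound, nor keep the negative Bregman term available for the absorption needed in the first point.
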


    The proof of Lemma~\ref{lem:Expected-TD-AC-PMD:Lan-Xiao-bound}  follows the standard route of the  stochastic PMD/NAC analysis, see for example~\citet{Lan_2021,khodadadian2022finite,xu2020improving,xu2020non,fu2021single,Xiao_2022,li2024stochastic,liu2025tdpmd,li2025policy}, and we present the proof in Section~\ref{sec:proof:Expected-TD-AC-PMD:Lan-Xiao-bound} for completeness. In the exact vanilla PMD setting (i.e., $Q^k = Q^{\pi_k}$), the stochastic variance and bias terms vanish and the upper bound in Lemma~\ref{lem:Expected-TD-AC-PMD:Lan-Xiao-bound} reduces to the same upper bound for the  exact PMD in~\citet[Theorem~8]{Xiao_2022}.

Note that the exact PMD bound vanishes in Lemma~\ref{lem:Expected-TD-AC-PMD:Lan-Xiao-bound} as $K$ grows, and it is also not hard to see that the stochastic variance term can be well controlled if the policy update step size $\eta$ is small enough. Thus the challenge is  to bound the stochastic bias term, which closely relies on the critic update formula of Expected TD-PMD and occupies most of the proof. To this end, we reformulate the critic update formula of Expected TD-PMD into a more concise form. By equation~\eqref{eq:Expected-TD-AC-PMD:intuition} we know that
\begin{align*}
    \E\ls[ Q^{k+1}(s,a) \rs] &= \E\Bigg[ Q^k(s,a) + \alpha_k\cdot \sum_{t=0}^{B_k-1} c_t^k \cdot \underbrace{\P(s_t^k=s)}_{\to \nu^{\pi_b}(s) \mbox{\small \;by equation~\eqref{eq:Expected-TD-AC-PMD:MDP-ergodicity}}} \pi_b(a|s) \cdot {\ls[ \calF^{\pi_{k+1}} Q^k - Q^k \rs]}(s,a) \Bigg] \numberthis \label{eq:Expected-TD-AC-PMD:intuition-2} \\
    &\approx \E\ls[ Q^k(s,a) + \alpha_k\cdot {\sigma^{\pi_b}(s,a)} \cdot {\ls[ \calF^{\pi_{k+1}} Q^k - Q^k \rs]}(s,a) \rs],
\end{align*}
where $\sigma^{\pi_b}(a|s) := \nu^{\pi_b}(s) \pi_b(a|s)$. One can verify that $\sigma^{\pi_b}$ is the stationary distribution of $\{ (s^k_t, a^k_t) \}_{t\geq0}$, that is, $(\sigma^{\pi_b})^\top P^{\pi_b}_{\scriptscriptstyle \calS\times\calA} = (\sigma^{\pi_b})^\top$. For simplicity, assume $\alpha_k=\alpha\in(0,1]$. It is natural to define the following $\sigma^{\pi_b}$-weighted Bellman operator,
\begin{align*}
    \forall\, \pi \in \Pi, \; Q\in\mathbb{R}^{|\calS||\calA|}: \quad \calF^{\pi}_{\pi_b,\alpha} Q(s,a) := Q(s,a) + \alpha\cdot\sigma^{\pi_b}(s,a) \ls[ \calF^{\pi} Q - Q \rs](s,a),
\end{align*}
or equivalently,
\begin{align*}
    \calF^{\pi}_{\pi_b,\alpha} Q := Q + \alpha\cdot\Sigma^{\pi_b} \ls[ \calF^\pi Q - Q \rs],
\end{align*}
where $\Sigma^{\pi_b} = \mbox{diag}(\sigma^{\pi_b}) \in \mathbb{R}^{|\calS||\calA| \times |\calS||\calA|}$. When $\alpha=1$, we simply denote $\calF^{\pi}_{\pi_b,\alpha}$ by $\calF^{\pi}_{\pi_b}$, i.e., 
\begin{align*}
    \calF^{\pi}_{\pi_b} Q := Q + \Sigma^{\pi_b} \ls[ \calF^\pi Q - Q \rs].
\end{align*}

By Assumption~\ref{ass:behavior-exploration}, Assumption~\ref{ass:ergodicity} and Proposition~\ref{pro:MDP-ergodicity}, it is easily shown that 
\[\tilde\sigma_{b} := \min_{s,a} \; \sigma^{\pi_b}(s,a) \geq \tilde\nu_b \tilde\pi_b > 0.\]
In this case, one can show that $\calF^{\pi}_{\pi_b,\alpha}$ shares similar properties with the vanilla Bellman operator $\calF^\pi$.

\begin{proposition}
    Suppose Assumption~\ref{ass:behavior-exploration} and Assumption~\ref{ass:ergodicity} hold. Then for any $\pi\in\Pi$,
    \begin{enumerate}
        \item[\textup{(a)}] $\calF^{\pi}_{\pi_b,\alpha} Q^\pi = Q^\pi$\textup{;}
        \item[\textup{(b)}] $\forall\, Q, \, Q^\prime \in \mathbb{R}^{|\calS||\calA|}: \;\; \ls\| \calF^{\pi}_{\pi_b,\alpha} Q - \calF^{\pi}_{\pi_b,\alpha} Q^\prime \rs\|_\infty \leq (1-\alpha(1-\gamma) \tilde\sigma_b) \ls\| Q - Q^\prime \rs\|_\infty$.
    \end{enumerate}
    \label{pro:Expected-TD-AC-PMD:sigma-bellman-property}
\end{proposition}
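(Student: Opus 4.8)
The plan is to treat the two claims separately, with (a) being essentially immediate and (b) carrying the main computation.

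For (a), I would simply invoke the fixed-point identity $\calF^\pi Q^\pi = Q^\pi$ recorded earlier in the excerpt. Substituting $Q = Q^\pi$ into the definition $\calF^\pi_{\pi_b,\alpha} Q = Q + \alpha \Sigma^{\pi_b}[\calF^\pi Q - Q]$ gives $\calF^\pi_{\pi_b,\alpha} Q^\pi = Q^\pi + \alpha\Sigma^{\pi_b}[\calF^\pi Q^\pi - Q^\pi] = Q^\pi$, since the bracketed term vanishes. This is a one-line argument that requires nothing beyond the fixed-point property of the ordinary Bellman operator.

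For (b), the key observation is that the ordinary Bellman operator is affine: because $\calF^\pi Q = r + \gamma P^\pi_{\scriptscriptstyle\calS\times\calA} Q$, the reward cancels in the difference, leaving $\calF^\pi Q - \calF^\pi Q' = \gamma P^\pi_{\scriptscriptstyle\calS\times\calA}(Q - Q')$. Writing $\Delta := Q - Q'$ and substituting into the definition of $\calF^\pi_{\pi_b,\alpha}$, I would obtain
\begin{align*}
\calF^\pi_{\pi_b,\alpha} Q - \calF^\pi_{\pi_b,\alpha} Q' = (I - \alpha \Sigma^{\pi_b})\Delta + \alpha\gamma \Sigma^{\pi_b} P^\pi_{\scriptscriptstyle\calS\times\calA} \Delta,
\end{align*}
and then analyze this coordinate by coordinate. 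At each $(s,a)$ the expression equals $(1 - \alpha\sigma^{\pi_b}(s,a))\Delta(s,a) + \alpha\gamma\sigma^{\pi_b}(s,a)[P^\pi_{\scriptscriptstyle\calS\times\calA}\Delta](s,a)$. The step requiring care is verifying that the first coefficient is nonnegative: since $\alpha \in (0,1]$ and $\sigma^{\pi_b}$ is a probability distribution over $\calS\times\calA$ (so each entry is at most $1$), we have $1 - \alpha\sigma^{\pi_b}(s,a) \geq 0$. Because $P^\pi_{\scriptscriptstyle\calS\times\calA}$ is row-stochastic, each entry of $P^\pi_{\scriptscriptstyle\calS\times\calA}\Delta$ is bounded in absolute value by $\|\Delta\|_\infty$. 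Applying the triangle inequality then yields the coordinatewise bound $(1 - \alpha(1-\gamma)\sigma^{\pi_b}(s,a))\|\Delta\|_\infty$. Finally I would use $\sigma^{\pi_b}(s,a) \geq \tilde\sigma_b$, established from Assumptions~\ref{ass:behavior-exploration} and~\ref{ass:ergodicity}, to replace this by $(1 - \alpha(1-\gamma)\tilde\sigma_b)\|\Delta\|_\infty$, and take the maximum over $(s,a)$.

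There is no genuine obstacle here; this is a routine weighted-contraction argument paralleling the $\gamma$-contraction of $\calF^\pi$. The only subtle point is the non-negativity of the coordinate weight $1 - \alpha\sigma^{\pi_b}(s,a)$, which is exactly where the constraint $\alpha \leq 1$ and the fact that $\sigma^{\pi_b}$ has entries bounded by $1$ enter, allowing the triangle inequality to be applied termwise. The strict positivity $\tilde\sigma_b > 0$, guaranteeing that the contraction factor lies in $[0,1)$ and hence that $\calF^\pi_{\pi_b,\alpha}$ is a genuine contraction, is precisely what the two exploration assumptions provide.
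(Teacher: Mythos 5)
Your proposal is correct and follows essentially the same route as the paper's proof: part (a) via the fixed-point identity $\calF^\pi Q^\pi = Q^\pi$, and part (b) via the coordinatewise decomposition into $(1-\alpha\sigma^{\pi_b}(s,a))\Delta(s,a)$ plus $\alpha\sigma^{\pi_b}(s,a)$ times the Bellman difference, using non-negativity of the first coefficient, the $\gamma$-contraction (which you re-derive inline from $\calF^\pi Q = r + \gamma P^\pi_{\scriptscriptstyle\calS\times\calA}Q$ and row-stochasticity, whereas the paper cites it directly), and finally $\sigma^{\pi_b}(s,a)\geq\tilde\sigma_b$.
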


\begin{proof}
    Claim $(a)$ follows directly from the fact that $\calF^\pi Q^\pi = Q^\pi$. Notice that for any $(s,a) \in \calS\times\calA$,
    \begin{align*}
        \ls| \calF^{\pi}_{\pi_b, \alpha} Q(s,a) - \calF^{\pi}_{\pi_b, \alpha} Q^\prime(s,a) \rs| &= \ls| (1-\alpha\sigma^{\pi_b}(s,a)) \ls(Q(s,a) - Q^\prime(s,a)\rs) + \alpha\sigma^{\pi_b}(s,a) \ls[ \calF^\pi Q(s,a) - \calF^\pi Q^\prime(s,a)  \rs] \rs| \\
        &\leq (1-\alpha\sigma^{\pi_b}(s,a)) \ls\| Q - Q^\prime \rs\|_\infty + \alpha\sigma^{\pi_b}(s,a) \underbrace{\ls\| \calF^\pi Q - \calF^\pi Q^\prime \rs\|_\infty}_{\leq \gamma\ls\| Q - Q^\prime \rs\|_\infty} \\
        &\leq (1-\alpha(1-\gamma) \sigma^{\pi_b}(s,a)) \cdot \ls\| Q - Q^\prime \rs\|_\infty \\
        &\leq (1-\alpha(1-\gamma) \tilde\sigma_b) \cdot \ls\| Q - Q^\prime \rs\|_\infty,
    \end{align*}
    which yields (b) directly.
\end{proof}

Let
$
A^\pi = I-\alpha\Sigma^{\pi_b}(I-\gamma P^{\pi}_{\scriptscriptstyle \mathcal S\times \mathcal A}).
$
One has 
\begin{align*}
    \mathcal{F}^\pi_{\pi_b,\alpha}Q-\mathcal{F}^\pi_{\pi_b,\alpha}Q'&=A^\pi (Q-Q').
\end{align*}
It follows immediately from Proposition~\ref{pro:Expected-TD-AC-PMD:sigma-bellman-property} that 
\begin{align}
    \forall\, \pi \in \Pi: \quad \|A^\pi\|_\infty\leq (1-\alpha(1-\gamma) \tilde\sigma_b).\label{eq:Api-bound}
\end{align}

Define the stochastic noise $\bar\omega_k \in \mathbb{R}^{|\calS||\calA|}$ as follows:
\begin{equation}
\begin{aligned}
    \omega_t^k(s,a) &= \delta_t^k(s,a) - \ls[ \calF^{\pi_{k+1}}_{\pi_b} Q^k - Q^k \rs](s,a), \\
    \bar\omega_k(s,a) &=  {\sum_{t=0}^{B_k-1} c_t^k \cdot \omega_t^k(s,a)} = \bar\delta_k(s,a) - \ls[ \calF^{\pi_{k+1}}_{\pi_b} Q^k - Q^k \rs](s,a).\\
\end{aligned}
    \label{eq:Expected-TD-AC-PMD:stochastic-noise}
\end{equation}
We can rewrite the critic update of Expected TD-PMD into the following two equivalent ways: 
\begin{equation}
    Q^{k+1} = \calF^{\pi_{k+1}}_{\pi_b, \alpha} Q^k + \alpha\cdot \bar\omega_k\,\,\mbox{ or }\,\,
    Q^{k+1} = Q^k + \alpha\cdot \Sigma^{\pi_b} \ls[ \calF^{\pi_{k+1}} Q^k - Q^k \rs] + \alpha\cdot \bar\omega_k.
    \label{eq:Expected-TD-AC-PMD:critic-update}
\end{equation}
This expression means that the critic update in Expected TD-PMD is essentially the application of the $\sigma^{\pi_b}$-weighted Bellman operator $\calF^{\pi_{k+1}}_{\pi_b, \alpha}$ with an additive stochastic Markov noise $\bar\omega_k$.  Moreover, we will consider the following average weight:
\begin{align}
    c_t^k = \dfrac{\vartheta^{B_k - t - 1}}{\sum_{t=0}^{B_k-1} \vartheta^{t}},
    \label{eq:Expected-TD-AC-PMD:average-weight}
\end{align}
where $\vartheta \in [0,1]$ is some preset parameter. Note that when $\vartheta =0$, we define $0^0=1$ here, therefore only the last sample in the batch will be used. 

To bound the stochastic bias part,  it is convenient to define the following index matrix:
\begin{equation}\label{eq:EsJs}
\begin{aligned}
    E_s &= \left[ {\mathbf 0} \;\; {\cdots}\;\; {\mathbf 0} \;\;\begin{array}{cccc}
        1 & & & \\
         & \ddots && \\
         && 1& 
    \end{array}\;\; \mathbf{0}\;\;  \cdots\;\; \mathbf{0}  \right] \in \mathbb{R}^{|\calA| \times |\calS||\calA|}, \\[1em]
    J_s &= E_s^\top E_s \in \mathbb{R}^{|\calS||\calA| \times |\calS||\calA|},
    \end{aligned}
\end{equation}
where $\mathbf 0\in \mathbb{R}^{|\mathcal{A}|\times|\mathcal{A}|}$, and each $1$ corresponds to an action associated with state $s$.
Then it is evident that $Q(s,\cdot) = E_s Q$ and $\pi(\cdot |s) = E_s \pi$, and thus we have
\begin{align*}
    \inner{\pi(\cdot|s)}{Q(s,\cdot)} = \inner{E_s\pi}{E_sQ} = \inner{E_s^\top E_s \pi}{\;Q} = \inner{J_s \pi}{\;Q}.
\end{align*}

We first have the following decomposition for the stochastic bias, whose proof is presented in Section~\ref{sec:proof-bias-deomposition}.
\begin{lemma}\label{lem:decomposition of bias}
Consider Expected TD-PMD \textup{(}Algorithm~\ref{alg:Expected-TD-AC-PMD}\textup{)}. Let $\eta_k = \eta > 0$ and $\alpha_k = \alpha \in (0,1]$. Fixing any optimal policy $\pi^* \in \Pi^*$, for any $0 \leq k \leq K$ and $s\in\calS$, there holds
 \begin{align*}
    \ls|\E\ls[ \inner{\pi^*(\cdot|s) - \pi_k(\cdot|s)}{Q^{\pi_k}(s,\cdot) - Q^k(s,\cdot)} \rs]\rs| 
        &\leq {\ls| B_0^{(k)} \rs|} + \sum_{j=1}^k \E{\ls[\ls|C_j^{(k)}\rs| + \ls|D_j^{(k)}\rs| + \ls|E_j^{(k)}\rs| \rs]} + \sum_{j=1}^k {\ls| \E\ls[F_j^{(k)}\rs] \rs|},
 \end{align*}
 where 
 \begin{align*}
     B_0^{(k)} &= \inner{\ls[ (A_0)^k \rs]^\top J_s [\pi^* - \pi_0]}{Q^{\pi_0} - Q^0},\\
     C_j^{(k)} &= \inner{\ls[ (A_j)^{k-j+1} \rs]^\top J_s[\pi^* - \pi_j]}{[Q^{\pi_j} - Q^{\pi_{j-1}}]},\\
     D_j^{(k)} & = \inner{\ls[ (A_j)^{k-j+1} \rs]^\top J_s[\pi_{j-1} - \pi_j]}{[Q^{\pi_{j-1}} - Q^{{j-1}}]},\\
     E_j^{(k)} & =\inner{\ls[ (A_j)^{k-j+1} - (A_{j-1})^{k-j+1} \rs]^\top J_s[\pi^* - \pi_{j-1}]}{[Q^{\pi_{j-1}} - Q^{{j-1}}]},\\
     F_j^{(k)} & =\inner{\ls[(A_j)^{k-j}\rs]^\top J_s [\pi^* - \pi_j]}{\alpha \bar\omega_{j-1}},
 \end{align*}
 with $A_j = A^{\pi_j} = I-\alpha\Sigma^{\pi_b}(I-\gamma P^{\pi_j}_{\mathcal S\times \mathcal A})$.
\end{lemma}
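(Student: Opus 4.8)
The plan is to turn the critic update into an exact linear recursion for the error and then telescope a carefully chosen auxiliary quantity whose single-matrix powers $(A_j)^{k-j+1}$ coincide with those appearing in the statement. First I would rewrite the target inner product in the compact form $\inner{\pi^*(\cdot|s) - \pi_k(\cdot|s)}{Q^{\pi_k}(s,\cdot) - Q^k(s,\cdot)} = \inner{J_s[\pi^* - \pi_k]}{Q^{\pi_k} - Q^k}$, using $Q(s,\cdot) = E_s Q$ and $J_s = E_s^\top E_s$ from \eqref{eq:EsJs}. Setting $e_m := Q^{\pi_m} - Q^m$, the critic update \eqref{eq:Expected-TD-AC-PMD:critic-update} together with the fixed-point identity $\calF^{\pi}_{\pi_b,\alpha} Q^\pi = Q^\pi$ (Proposition~\ref{pro:Expected-TD-AC-PMD:sigma-bellman-property}(a)) and $\calF^{\pi}_{\pi_b,\alpha} Q - \calF^{\pi}_{\pi_b,\alpha} Q' = A^\pi(Q-Q')$ gives, after splitting $Q^{\pi_m} - Q^{m-1} = e_{m-1} + (Q^{\pi_m} - Q^{\pi_{m-1}})$,
\begin{align*}
e_m = A_m e_{m-1} + A_m (Q^{\pi_m} - Q^{\pi_{m-1}}) - \alpha \bar\omega_{m-1}.
\end{align*}
This isolates the propagated previous error, the drift in the value function induced by the policy change, and the stochastic noise.

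The crux is the second step, where I introduce the auxiliary sequence
\begin{align*}
G_m := \inner{[(A_m)^{k-m}]^\top J_s[\pi^* - \pi_m]}{e_m}, \qquad 0 \le m \le k,
\end{align*}
chosen so that $G_k = \inner{J_s[\pi^* - \pi_k]}{e_k}$ is exactly the quantity of interest (power $0$) and $G_0 = B_0^{(k)}$ (power $k$, with $e_0 = Q^{\pi_0} - Q^0$). Substituting the recursion for $e_m$ into $G_m$ and absorbing one factor $A_m^\top$ into the power (so $k-m$ becomes $k-m+1$) immediately produces $C_m^{(k)}$ from the value-drift term and $-F_m^{(k)}$ from the noise term, leaving $\inner{[(A_m)^{k-m+1}]^\top J_s[\pi^* - \pi_m]}{e_{m-1}}$. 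To reduce this to $G_{m-1}$ I perform a two-fold splitting: writing $\pi^* - \pi_m = (\pi^* - \pi_{m-1}) + (\pi_{m-1} - \pi_m)$ peels off $D_m^{(k)}$, and writing $(A_m)^{k-m+1} = (A_{m-1})^{k-m+1} + [(A_m)^{k-m+1} - (A_{m-1})^{k-m+1}]$ peels off $E_m^{(k)}$, leaving precisely $G_{m-1}$ (since $k-(m-1) = k-m+1$). This yields the one-step identity $G_m = G_{m-1} + C_m^{(k)} + D_m^{(k)} + E_m^{(k)} - F_m^{(k)}$.

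Finally I would telescope this identity from $m=1$ to $m=k$ to obtain the exact pathwise decomposition $\inner{J_s[\pi^* - \pi_k]}{e_k} = B_0^{(k)} + \sum_{j=1}^k (C_j^{(k)} + D_j^{(k)} + E_j^{(k)} - F_j^{(k)})$, then take expectations and apply the triangle inequality. I would keep $\E[\cdot]$ outside $|\cdot|$ for the terms $C_j^{(k)}, D_j^{(k)}, E_j^{(k)}$ (using $|\E[\cdot]| \le \E|\cdot|$), but retain the expectation inside the absolute value for the noise as $|\E[F_j^{(k)}]|$, since the martingale structure of $\bar\omega_{j-1}$ (zero conditional mean) is what makes this term small and must be preserved when it is bounded later. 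I would also note that $B_0^{(k)}$ is deterministic, because $Q^0 = 0$ and $\pi_0, \pi^*$ are fixed so $A_0$ and $Q^{\pi_0}$ carry no randomness, hence $|\E[B_0^{(k)}]| = |B_0^{(k)}|$.

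The main obstacle is discovering the auxiliary sequence $G_m$ with the single-matrix power $(A_m)^{k-m}$ rather than unfolding the recursion directly. A naive unfolding produces the product $A_k A_{k-1} \cdots A_{j+1}$ of distinct, policy-dependent matrices as the propagator from step $j$ to step $k$, which cannot be controlled by the contraction bound \eqref{eq:Api-bound} on a single $A^\pi$. The value of the $G_m$-telescoping is exactly that it replaces these products by powers of one matrix at the cost of the correction terms $D_m^{(k)}$ (policy-index mismatch) and $E_m^{(k)}$ (matrix-power mismatch), both of which are then amenable to the contraction estimate and the Lipschitz bound of Lemma~\ref{lem:Lip:Q-value}.
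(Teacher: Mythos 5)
Your proposal is correct and is essentially the paper's own argument: your auxiliary sequence $G_m$ coincides exactly with the paper's intermediate quantities $B_j^{(k)} = \inner{[(A_j)^{k-j}]^\top J_s(\pi^*-\pi_j)}{Q^{\pi_j}-Q^j}$, and your one-step identity $G_m = G_{m-1} + C_m^{(k)} + D_m^{(k)} + E_m^{(k)} - F_m^{(k)}$ followed by telescoping, taking expectations, and applying the triangle inequality (with the expectation kept inside the absolute value only for the noise terms $F_j^{(k)}$) is precisely how the paper proceeds. The only cosmetic difference is that you first isolate the error recursion $e_m = A_m e_{m-1} + A_m(Q^{\pi_m}-Q^{\pi_{m-1}}) - \alpha\bar\omega_{m-1}$ and then substitute, whereas the paper substitutes the critic update directly into $B_j^{(k)}$; the algebra is identical.
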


{
\begin{remark}
    In~\textup{\citet{Lan_2021,li2024stochastic}}, the stochastic bias term is bounded by explicitly controlling $\ls| \E[Q^{\pi_k}(s,a) - Q^k(s,a) \; | \; \pi_k] \rs| \leq \varsigma$ with sufficiently small $\varsigma > 0$ for all iteration $k$ via incorporating the inner-loop CTD learning, which constructs an empirical Bellman operator $\hat\calF^{\pi_k}$ by interval samples and applies it multiple times. In contrast, we track the critic update process without introducing any inner-loop critic learning algorithm. In order to bound the stochastic bias term, we  decompose it into the accumulation of policy shifts and stochastic noises terms and we bound them separately. 
\end{remark}
}

{The first term $B_0^{(k)}$ is a remainder term that vanishes exponentially by the contraction of $(A_0)^k$.} Suppose that Assumptions~\ref{ass:behavior-exploration} and~\ref{ass:ergodicity} hold. Noting $\|A_j\|_\infty\leq (1-\alpha(1-\gamma) \tilde\sigma_b)$ (see equation~\eqref{eq:Api-bound}), the term with $B_0^{(k)}$ can be bounded easily as follows:
 \begin{align*}
        \ls| B_0^{(k)} \rs| &= \ls| \inner{\ls[ (A_0)^k \rs]^\top J_s [\pi^* - \pi_0]}{Q^{\pi_0} - Q^0} \rs| \\
        &\leq \ls\| \ls[ (A_0)^k \rs]^\top J_s [\pi^* - \pi_0] \rs\|_1 \cdot \ls\| Q^{\pi_0} - Q^0 \rs\|_\infty \\
        &\leq \underbrace{\ls\| (A_0)^\top \rs\|_1^k}_{=\ls\| A_0 \rs\|_\infty ^k} \times \underbrace{\ls\| J_s[\pi^* - \pi_0] \rs\|_1}_{\leq 2} \times \underbrace{\ls\| Q^{\pi_0} - Q^0 \rs\|_\infty}_{\leq (1-\gamma)^{-1} \mbox{\small  by equation~\eqref{eq:bounded-policy-values} and Lemma~\ref{lem:Expected-TD-AC-PMD:bounded-value}}} \\
        &\leq \frac{2}{1-\gamma} \cdot \ls[ 1-(1-\gamma) \alpha\tilde \sigma_b \rs]^k.\numberthis\label{eq:expected-B0k-bound}
    \end{align*}
{The terms $C_j^{(k)}$, $D_j^{(k)}$, and $E_j^{(k)}$ can be controlled by the policy shift $\| \pi_j - \pi_{j-1} \|_\infty$ (which is further controlled by $\eta$ in Lemma~\ref{lem:policy-shift}) using the Lipschitz-continuity (w.r.t. $\pi$) of $Q^\pi$ and $P^{\pi}_{\scriptscriptstyle\calS\times\calA}$. The term $F_j^{(k)}$ is the stochastic noise term in terms of $\bar\omega_{j-1}$, which can be bounded by the batch size.} The bounds for the terms with $C_j^{(k)}, D_j^{(k)}, E_j^{(k)}$ and $F_j^{(k)}$ are presented in the following three lemmas whose proofs are deferred to Sections~\ref{sec:expected-CjkDjk-bounds}, \ref{sec:expected-Ejk-bound}, and \ref{sec:expected-Fjk-bound}, respectively.

\begin{lemma}[Bounds for terms with $C_j^{(k)}$ and $D_j^{(k)}$]\label{lem:expected-CjkDjk-bounds} Suppose that Assumptions~\ref{ass:behavior-exploration},~\ref{ass:ergodicity}, and~\ref{ass:h} hold. One has 
\begin{align*}
     \sum_{j=1}^k \ls| C_j^{(k)} \rs| \leq \frac{2\gamma |\calA|^2 \eta}{\alpha\lambda\tilde \sigma_b(1-\gamma)^4}\quad\mbox{and}\quad
     \sum_{j=1}^k \ls| D_j^{(k)} \rs| \leq \frac{|\calA|^2\eta}{\alpha\lambda\tilde \sigma_b(1-\gamma)^3}.
        \end{align*}
\end{lemma}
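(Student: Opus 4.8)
The plan is to bound $|C_j^{(k)}|$ and $|D_j^{(k)}|$ termwise by Hölder's inequality, extract a geometric factor $(1-\alpha(1-\gamma)\tilde\sigma_b)^{k-j+1}$ from the matrix power $(A_j)^{k-j+1}$ via the contraction estimate \eqref{eq:Api-bound}, and then sum the resulting geometric series over $j$. The only further inputs needed are the Lipschitz bound of Lemma~\ref{lem:Lip:Q-value}, the policy-shift bound of Lemma~\ref{lem:policy-shift}, and the value boundedness of Lemma~\ref{lem:Expected-TD-AC-PMD:bounded-value}.

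First I would write each term as $|\inner{x}{y}| \leq \|x\|_1 \cdot \|y\|_\infty$, where $x = [(A_j)^{k-j+1}]^\top J_s[\,\cdot\,]$ and $y$ is the relevant value-function difference. Using $\|[(A_j)^{k-j+1}]^\top\|_1 = \|(A_j)^{k-j+1}\|_\infty \leq \|A_j\|_\infty^{k-j+1} \leq (1-\alpha(1-\gamma)\tilde\sigma_b)^{k-j+1}$ together with submultiplicativity, and noting that $J_s$ merely extracts the coordinate block indexed by state $s$, I reduce the first factor to the $(1-\gamma)$-contraction weight times an $\ell_1$-norm of a policy difference over $\calA$. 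This is the point at which the two estimates diverge: in $C_j^{(k)}$ the optimal-policy difference is controlled trivially by $\|\pi^*(\cdot|s)-\pi_j(\cdot|s)\|_1 \leq 2$ (both are probability vectors), whereas in $D_j^{(k)}$ the consecutive-policy difference is bounded by $\|\pi_{j-1}(\cdot|s)-\pi_j(\cdot|s)\|_1 \leq |\calA|\,\|\pi_{j-1}-\pi_j\|_\infty \leq |\calA|^2\eta/((1-\gamma)\lambda)$ after an $\ell_\infty$-to-$\ell_1$ conversion and an application of Lemma~\ref{lem:policy-shift} (with $\|Q^{j-1}(s,\cdot)\|_1 \leq |\calA|/(1-\gamma)$ from Lemma~\ref{lem:Expected-TD-AC-PMD:bounded-value}).

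For the remaining $\|y\|_\infty$ factor, in $C_j^{(k)}$ I would chain Lemma~\ref{lem:Lip:Q-value} with Lemma~\ref{lem:policy-shift} to obtain $\|Q^{\pi_j}-Q^{\pi_{j-1}}\|_\infty \leq \gamma|\calA|^2\eta/((1-\gamma)^3\lambda)$, while in $D_j^{(k)}$ I would simply use $\|Q^{\pi_{j-1}}-Q^{j-1}\|_\infty \leq 1/(1-\gamma)$, which holds since both quantities lie in $[0,1/(1-\gamma)]$ by \eqref{eq:bounded-policy-values} and Lemma~\ref{lem:Expected-TD-AC-PMD:bounded-value}. At this stage each $|C_j^{(k)}|$ and $|D_j^{(k)}|$ equals the geometric weight $(1-\alpha(1-\gamma)\tilde\sigma_b)^{k-j+1}$ times a $j$-independent constant.

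Finally I would sum over $j$: setting $\beta = 1-\alpha(1-\gamma)\tilde\sigma_b \in [0,1)$, the reindexing $\sum_{j=1}^k \beta^{k-j+1} = \sum_{i=1}^k \beta^i \leq \beta/(1-\beta) \leq 1/(\alpha(1-\gamma)\tilde\sigma_b)$ supplies the extra $(1-\gamma)^{-1}$ and the $\alpha\tilde\sigma_b$ in the denominators. Multiplying the per-term constants by this sum yields exactly $\frac{2\gamma|\calA|^2\eta}{\alpha\lambda\tilde\sigma_b(1-\gamma)^4}$ for $\sum_j|C_j^{(k)}|$ and $\frac{|\calA|^2\eta}{\alpha\lambda\tilde\sigma_b(1-\gamma)^3}$ for $\sum_j|D_j^{(k)}|$. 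There is no genuine obstacle here once the contraction of $A_j$ is available; the one point requiring care is the asymmetric treatment of the two $\ell_1$ policy-difference factors, which is precisely what produces the extra $\gamma/(1-\gamma)$ in the $C$-bound against the $|\calA|$ absorbed into the $D$-bound.
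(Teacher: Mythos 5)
Your proposal is correct and follows essentially the same route as the paper's proof: term-by-term Hölder estimates, the contraction bound $\|A_j\|_\infty^{k-j+1}\leq(1-\alpha(1-\gamma)\tilde\sigma_b)^{k-j+1}$ from \eqref{eq:Api-bound}, the factors $\|J_s[\pi^*-\pi_j]\|_1\leq 2$ versus $\|J_s[\pi_{j-1}-\pi_j]\|_1\leq|\calA|\,\|\pi_{j-1}-\pi_j\|_\infty$, Lemmas~\ref{lem:Lip:Q-value}, \ref{lem:policy-shift}, and \ref{lem:Expected-TD-AC-PMD:bounded-value} for the value and policy-shift factors, and the geometric sum $\sum_{j=1}^k\beta^{k-j+1}\leq 1/(\alpha(1-\gamma)\tilde\sigma_b)$. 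All intermediate constants match those in the paper, so nothing further is needed.
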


\begin{lemma}[Bound for term with $E_j^{(k)}$]\label{lem:expected-Ejk-bounds} Suppose that Assumptions~\ref{ass:behavior-exploration},~\ref{ass:ergodicity}, and~\ref{ass:h} hold. One has 
     \begin{align*}
        \sum_{j=1}^k \ls| E_j^{(k)} \rs| \leq \frac{2\gamma |\calA|^2 \eta}{\alpha\lambda\tilde\sigma_b^2 (1-\gamma)^4}.
    \end{align*}
\end{lemma}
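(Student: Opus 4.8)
The plan is to bound each $\ls| E_j^{(k)} \rs|$ pathwise and then sum over $j$; since every estimate below holds surely, it also controls the expectation appearing in Lemma~\ref{lem:decomposition of bias}. First I would apply Hölder's inequality together with the identity $\|M^\top x\|_1 \le \|M\|_\infty \|x\|_1$ to split
\begin{align*}
    \ls| E_j^{(k)} \rs| \le \ls\| (A_j)^{k-j+1} - (A_{j-1})^{k-j+1} \rs\|_\infty \cdot \ls\| J_s[\pi^* - \pi_{j-1}] \rs\|_1 \cdot \ls\| Q^{\pi_{j-1}} - Q^{j-1} \rs\|_\infty.
\end{align*}
The last factor is at most $(1-\gamma)^{-1}$ by equation~\eqref{eq:bounded-policy-values} and Lemma~\ref{lem:Expected-TD-AC-PMD:bounded-value}, and the middle factor equals $\|\pi^*(\cdot|s)-\pi_{j-1}(\cdot|s)\|_1 \le 2$. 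The crux, which distinguishes this term from $C_j^{(k)}$ and $D_j^{(k)}$, is the difference of matrix powers $(A_j)^{k-j+1} - (A_{j-1})^{k-j+1}$.

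To handle it I would use the telescoping identity
\begin{align*}
    (A_j)^{n} - (A_{j-1})^{n} = \sum_{i=0}^{n-1} (A_j)^i \, (A_j - A_{j-1}) \, (A_{j-1})^{n-1-i}, \qquad n := k-j+1,
\end{align*}
and bound each summand in the induced $\infty$-norm. Writing $\beta := 1 - \alpha(1-\gamma)\tilde\sigma_b$, equation~\eqref{eq:Api-bound} gives $\|(A_j)^i\|_\infty \le \beta^i$ and $\|(A_{j-1})^{n-1-i}\|_\infty \le \beta^{n-1-i}$, so their product is at most $\beta^{n-1}$ uniformly in $i$; since there are $n$ summands,
\begin{align*}
    \ls\| (A_j)^{n} - (A_{j-1})^{n} \rs\|_\infty \le n\,\beta^{n-1}\, \ls\| A_j - A_{j-1} \rs\|_\infty.
\end{align*}
For the increment, a direct computation gives $A_j - A_{j-1} = \alpha\gamma\,\Sigma^{\pi_b}(P^{\pi_j}_{\scriptscriptstyle\calS\times\calA} - P^{\pi_{j-1}}_{\scriptscriptstyle\calS\times\calA})$. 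Bounding the row sums of the transition-matrix difference by $|\calA|\,\|\pi_j - \pi_{j-1}\|_\infty$ and using $\|\Sigma^{\pi_b}\|_\infty \le 1$, I would obtain $\|A_j - A_{j-1}\|_\infty \le \alpha\gamma|\calA|\,\|\pi_j-\pi_{j-1}\|_\infty$, and then Lemma~\ref{lem:policy-shift} with $\|Q^{j-1}(s,\cdot)\|_1 \le |\calA|/(1-\gamma)$ yields $\|A_j - A_{j-1}\|_\infty \le \alpha\gamma|\calA|^2\eta / (\lambda(1-\gamma))$.

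Combining these estimates gives $\ls| E_j^{(k)} \rs| \le \frac{2\alpha\gamma|\calA|^2\eta}{\lambda(1-\gamma)^2}\,(k-j+1)\beta^{k-j}$. Finally I would sum over $j$ and reindex by $m = k-j$, using $\sum_{m\ge 0}(m+1)\beta^m = (1-\beta)^{-2} = \big(\alpha(1-\gamma)\tilde\sigma_b\big)^{-2}$, which collapses to the claimed bound $\frac{2\gamma|\calA|^2\eta}{\alpha\lambda\tilde\sigma_b^2(1-\gamma)^4}$. The main obstacle is precisely the extra factor $n=k-j+1$ produced by the telescoping of the power difference: it converts the geometric sum seen for $C_j^{(k)}$ and $D_j^{(k)}$ into an arithmetic-geometric sum, which is responsible for the extra $\tilde\sigma_b^{-1}$ (hence the $\tilde\sigma_b^{-2}$) relative to those terms; one must check that this series still converges, which it does because $\beta < 1$ strictly under Assumptions~\ref{ass:behavior-exploration} and~\ref{ass:ergodicity}.
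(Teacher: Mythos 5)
Your proposal is correct and follows essentially the same route as the paper's proof: the same three-factor H\"older split, the same increment identity $A_j - A_{j-1} = \alpha\gamma\,\Sigma^{\pi_b}(P^{\pi_j}_{\scriptscriptstyle\calS\times\calA} - P^{\pi_{j-1}}_{\scriptscriptstyle\calS\times\calA})$ bounded via Lemma~\ref{lem:policy-shift}, and the same arithmetic-geometric summation yielding the $(\alpha(1-\gamma)\tilde\sigma_b)^{-2}$ factor. The only cosmetic difference is that you bound the power difference by the closed-form telescoping sum $\sum_{i=0}^{n-1}(A_j)^i(A_j-A_{j-1})(A_{j-1})^{n-1-i}$, whereas the paper unrolls the identical estimate as a one-step recursion followed by induction; both give exactly $(k-j+1)\beta^{k-j}\|A_j-A_{j-1}\|_\infty$.
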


\begin{lemma}[Bound for term with $F_j^{(k)}$]\label{lem:expected-Fjk-bounds} Suppose that Assumptions~\ref{ass:behavior-exploration} and~\ref{ass:ergodicity} hold. Recall the average weight $c_t^k$ in equation~\eqref{eq:Expected-TD-AC-PMD:average-weight}. Assume $B_k=B$, $k=0,\dots,K$. One has 
\begin{align*}
\sum_{j=1}^k\ls|\mathbb{E}\ls[F_j^{(k)}\rs]\rs|\leq \frac{2}{(1-\gamma)\tilde{\sigma}_b}\Psi({B},\vartheta,m_b,\kappa_b), 
\end{align*}
where 
\begin{align*}
 \Psi(B, \vartheta, m_b, \kappa_b) := \begin{cases}
            \dfrac{m_b}{(1-\gamma)(\kappa_b-\vartheta)} \cdot {\kappa_b^{B}}, & \mbox{if } 0 \leq \vartheta < \kappa_b, \\[1em]
            \dfrac{m_b}{(1-\gamma)} B \cdot \vartheta^{B - 1}, & \mbox{if } \vartheta = \kappa_b, \\[1em]
            \dfrac{m_b}{(1-\gamma)(\vartheta-\kappa_b)} \cdot {\vartheta^{B}}, & \mbox{if } 1 > \vartheta > \kappa_b, \\[1em]
            \dfrac{m_b}{B(1-\gamma)(1-\kappa_b)}, &\mbox{if } \vartheta = 1.
        \end{cases}
    \end{align*}
\label{lem:Expected-TD-AC-PMD:error-control}
\end{lemma}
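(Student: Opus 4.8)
The plan is to bound each $\E[F_j^{(k)}]$ by conditioning on the information available before the sampling at iteration $j-1$, reducing the conditional mean of the noise to a pure mixing bias, and then summing a geometric series in $j$. Let $\calG_{j-1}$ be the $\sigma$-algebra generated by all randomness up to and including the policy update of iteration $j-1$; in particular $Q^{j-1}$, $\pi_j$, the matrix $A_j = A^{\pi_j}$, and the starting state $s_0^{j-1}$ are $\calG_{j-1}$-measurable, while the trajectory $\tau_{j-1}$ that forms $\bar\omega_{j-1}$ is drawn afterwards. Since $\ls[(A_j)^{k-j}\rs]^\top J_s[\pi^* - \pi_j]$ is $\calG_{j-1}$-measurable, I can freeze it and write
$$\E\ls[ F_j^{(k)} \mid \calG_{j-1} \rs] = \alpha \inner{\ls[(A_j)^{k-j}\rs]^\top J_s[\pi^* - \pi_j]}{\E\ls[ \bar\omega_{j-1} \mid \calG_{j-1} \rs]}.$$

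The second step is to evaluate $\E[\bar\omega_{j-1}\mid\calG_{j-1}]$. Using the conditional mean of the per-step TD error (as in equation~\eqref{eq:Expected-TD-AC-PMD:intuition}) together with the definition of the noise in~\eqref{eq:Expected-TD-AC-PMD:stochastic-noise}, the subtracted $\sigma^{\pi_b}$-weighted Bellman term combines with the stationary weight $\nu^{\pi_b}(s)$, so that only the deviation of the occupation probability from stationarity survives:
$$\E\ls[ \bar\omega_{j-1}(s,a) \mid \calG_{j-1} \rs] = \sum_{t=0}^{B-1} c_t^{j-1} \ls( \P(s_t^{j-1}=s \mid s_0^{j-1}) - \nu^{\pi_b}(s) \rs) \pi_b(a|s) \ls[ \calF^{\pi_j} Q^{j-1} - Q^{j-1} \rs](s,a).$$
Bounding $\ls| \calF^{\pi_j} Q^{j-1} - Q^{j-1} \rs| \leq (1-\gamma)^{-1}$ via equation~\eqref{eq:bounded-policy-values} and Lemma~\ref{lem:Expected-TD-AC-PMD:bounded-value}, using $\pi_b(a|s) \leq 1$, and invoking the geometric-mixing estimate $|\P(s_t^{j-1}=s\mid s_0^{j-1}) - \nu^{\pi_b}(s)| \leq m_b \kappa_b^t$ from Proposition~\ref{pro:MDP-ergodicity}, I obtain
$$\ls\| \E\ls[ \bar\omega_{j-1} \mid \calG_{j-1} \rs] \rs\|_\infty \leq \frac{m_b}{1-\gamma} \sum_{t=0}^{B-1} c_t^{j-1} \kappa_b^t.$$

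Third, I would evaluate the weighted sum $\sum_{t=0}^{B-1} c_t^{j-1}\kappa_b^t$ explicitly. Substituting the weights~\eqref{eq:Expected-TD-AC-PMD:average-weight}, the numerator $\sum_{t=0}^{B-1}\vartheta^{B-t-1}\kappa_b^t$ is a finite geometric sum equal to $\frac{\kappa_b^B - \vartheta^B}{\kappa_b-\vartheta}$ when $\vartheta\neq\kappa_b$ and to $B\vartheta^{B-1}$ when $\vartheta=\kappa_b$, while the normalizer $\sum_{t=0}^{B-1}\vartheta^t$ satisfies $\frac{1-\vartheta}{1-\vartheta^B}\leq 1$ for $\vartheta\in[0,1)$ and equals $B$ when $\vartheta=1$. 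Handling the four regimes $\vartheta<\kappa_b$, $\vartheta=\kappa_b$, $\vartheta>\kappa_b$, and $\vartheta=1$ separately and discarding the negative lower-order terms yields exactly $\frac{m_b}{1-\gamma}\sum_{t=0}^{B-1}c_t^{j-1}\kappa_b^t \leq \Psi(B,\vartheta,m_b,\kappa_b)$, i.e.\ $\|\E[\bar\omega_{j-1}\mid\calG_{j-1}]\|_\infty\leq\Psi$.

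Finally, I would apply Hölder's inequality $|\langle x,y\rangle|\leq\|x\|_1\|y\|_\infty$ to the frozen identity of the first step. Using $\ls\| \ls[(A_j)^{k-j}\rs]^\top z \rs\|_1 \leq \|A_j\|_\infty^{k-j}\|z\|_1$, the bound $\|J_s[\pi^*-\pi_j]\|_1 = \|\pi^*(\cdot|s)-\pi_j(\cdot|s)\|_1\leq 2$, and the contraction $\|A_j\|_\infty\leq 1-\alpha(1-\gamma)\tilde\sigma_b$ from equation~\eqref{eq:Api-bound}, I get $|\E[F_j^{(k)}\mid\calG_{j-1}]|\leq 2\alpha(1-\alpha(1-\gamma)\tilde\sigma_b)^{k-j}\Psi$. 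Taking total expectation, using $|\E[F_j^{(k)}]|\leq\E[|\E[F_j^{(k)}\mid\calG_{j-1}]|]$, and summing $\sum_{j=1}^k(1-\alpha(1-\gamma)\tilde\sigma_b)^{k-j}\leq\frac{1}{\alpha(1-\gamma)\tilde\sigma_b}$ cancels the factor $\alpha$ and gives the claimed $\frac{2}{(1-\gamma)\tilde\sigma_b}\Psi$. I expect the first two steps to be the crux: one must set up the filtration so that the heavy factor $(A_j)^{k-j}J_s[\pi^*-\pi_j]$ is measurable and frozen while the conditional mean of $\bar\omega_{j-1}$ collapses cleanly to the mixing bias $\P(s_t^{j-1}=s\mid s_0^{j-1})-\nu^{\pi_b}(s)$; once this reduction is correct, the geometric bookkeeping and the four-way case split for $\Psi$ are routine.
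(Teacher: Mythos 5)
Your proposal is correct and follows essentially the same route as the paper's proof: condition on $\calG_{j-1}=\{Q^{j-1},\pi_j,s_0^{j-1}\}$ via the tower property to freeze $[(A_j)^{k-j}]^\top J_s[\pi^*-\pi_j]$, reduce $\E[\bar\omega_{j-1}\mid\calG_{j-1}]$ to the mixing deviation $\P(s_t^{j-1}=s\mid s_0^{j-1})-\nu^{\pi_b}(s)$ times the bounded Bellman residual, evaluate the weighted geometric sum case by case to get $\Psi$, and finish with H\"older, the contraction $\|A_j\|_\infty\leq 1-\alpha(1-\gamma)\tilde\sigma_b$, and the geometric series in $j$. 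No gaps.
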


Based on the above results, we are now ready to present the first sample complexity for Expected TD-PMD.
\begin{theorem}\label{thm:Expected-TD-AC-PMD:constant-step-size}
    Consider Expected TD-PMD \textup{(}Algorithm~\ref{alg:Expected-TD-AC-PMD}\textup{)}. Suppose that Assumptions~\ref{ass:behavior-exploration},~\ref{ass:ergodicity}, and~\ref{ass:h} hold. Let 
    \[
        \eta_k = \eta = \sqrt{\frac{\alpha\lambda\tilde\sigma_b^2 (1-\gamma)^4 \| D^{\pi^*}_{\pi_0} \|_\infty}{6|\calA|^2 (K+1)}}, \quad \alpha_k = \alpha \in (0,1], \quad B_k = B > 0,
    \]
    and assume $c_t^k$ takes the form in equation~\eqref{eq:Expected-TD-AC-PMD:average-weight}. Fixing any optimal policy $\pi^* \in \Pi^*$ and $\mu \in \Delta(\calS)$, there holds
    \begin{align*}
        \E\ls[ V^*(\mu) - V^{\pi_{\hat K}}(\mu) \rs] &\leq \frac{1}{K+1} \cdot \frac{3}{\alpha\tilde\sigma_b (1-\gamma)^3} + \frac{2}{\sqrt{K+1}} \cdot \ls( \frac{6|\calA|^2 \| D^{\pi^*}_{\pi_0} \|_\infty}{\alpha\lambda \tilde\sigma_b^2(1-\gamma)^6} \rs)^{\nicefrac{1}{2}} + \frac{2}{\tilde\sigma_b(1-\gamma)^2} \Psi(B, \vartheta, m_b, \kappa_b).
    \end{align*}
 
\end{theorem}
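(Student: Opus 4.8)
The plan is to begin from the master bound in Lemma~\ref{lem:Expected-TD-AC-PMD:Lan-Xiao-bound}, which already decomposes $\E[V^*(\mu) - V^{\pi_{\hat K}}(\mu)]$ into the exact PMD bound, the stochastic variance term, and the stochastic bias term; the task then reduces to bounding the latter two and optimizing over the step size $\eta$. For the variance term I would invoke the uniform value bound: by Lemma~\ref{lem:Expected-TD-AC-PMD:bounded-value} together with equation~\eqref{eq:bounded-policy-values}, both $Q^k$ and $Q^{\pi_k}$ lie in $[0, (1-\gamma)^{-1}]$, so $\|Q^k - Q^{\pi_k}\|_\infty^2 \leq (1-\gamma)^{-2}$ for every $k$. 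Summing over $k = 0, \dots, K$ cancels the $1/(K+1)$ prefactor and bounds the variance term by $\frac{\eta |\calA|^2}{2(1-\gamma)^4 \lambda}$, a quantity linear in $\eta$.

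For the stochastic bias term I would apply the decomposition in Lemma~\ref{lem:decomposition of bias}, which writes each summand $|\E[\langle \pi^*(\cdot|s) - \pi_k(\cdot|s), Q^{\pi_k}(s,\cdot) - Q^k(s,\cdot)\rangle]|$ in terms of $B_0^{(k)}$ and the summed $C_j^{(k)}, D_j^{(k)}, E_j^{(k)}, F_j^{(k)}$ contributions, and then feed in the four bounds already established. The remainder $B_0^{(k)}$ is controlled by equation~\eqref{eq:expected-B0k-bound}, giving $|B_0^{(k)}| \leq \frac{2}{1-\gamma}[1 - (1-\gamma)\alpha\tilde\sigma_b]^k$; summing this geometric series over $k$ yields a contribution of order $\frac{1}{(1-\gamma)^2 \alpha\tilde\sigma_b}$, which after the $1/(K+1)$ prefactor decays at rate $1/(K+1)$. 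The $C, D, E$ contributions come from Lemmas~\ref{lem:expected-CjkDjk-bounds} and~\ref{lem:expected-Ejk-bounds}; crucially each bound is uniform in $k$ and proportional to $\eta$, so summing over $k$ again cancels the $1/(K+1)$ prefactor and leaves $\eta$-linear terms. Finally the Markov-noise term $F_j^{(k)}$ is handled by Lemma~\ref{lem:Expected-TD-AC-PMD:error-control}, which is likewise uniform in $k$ and, after the extra $1/(1-\gamma)$ prefactor carried by the bias term, contributes exactly the mixing term $\frac{2}{(1-\gamma)^2 \tilde\sigma_b} \Psi(B, \vartheta, m_b, \kappa_b)$.

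At this stage the whole bound regroups into three pieces: a pure $1/(K+1)$ piece (the $(1-\gamma)^{-2}$ term of the exact PMD bound plus the summed $B_0$ contribution), an $\eta$-dependent piece of the form $\frac{a}{\eta} + C\eta$ with $a = \frac{\|D^{\pi^*}_{\pi_0}\|_\infty}{(1-\gamma)(K+1)}$ obtained by balancing the $\frac{\|D^{\pi^*}_{\pi_0}\|_\infty}{\eta(1-\gamma)(K+1)}$ term of the exact PMD bound against all the $\eta$-linear terms (variance plus $C, D, E$), and the mixing piece $\frac{2}{(1-\gamma)^2 \tilde\sigma_b}\Psi$. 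For the first piece I would use $\alpha\tilde\sigma_b(1-\gamma) \leq 1$ to absorb $(1-\gamma)^{-2}$ into $\frac{1}{\alpha\tilde\sigma_b(1-\gamma)^3}$, producing the stated $\frac{3}{\alpha\tilde\sigma_b(1-\gamma)^3(K+1)}$. For the $\eta$-dependent piece, choosing $\eta = \sqrt{a/C}$ and applying AM--GM gives $2\sqrt{aC}$. I expect the main bookkeeping obstacle to be verifying that the combined $\eta$-linear coefficient $C$ is dominated by the clean constant $\frac{6|\calA|^2}{\alpha\lambda\tilde\sigma_b^2(1-\gamma)^5}$: this amounts to collecting four terms carrying different powers of $(1-\gamma)$, $\alpha$, and $\tilde\sigma_b$ and repeatedly using $\alpha \leq 1$, $\tilde\sigma_b \leq 1$, and $\gamma < 1$ to unify the denominators. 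With that bound in hand, the prescribed step size is precisely $\eta = \sqrt{a/C}$, and $2\sqrt{aC}$ collapses to the middle term $\frac{2}{\sqrt{K+1}}\big(\frac{6|\calA|^2 \|D^{\pi^*}_{\pi_0}\|_\infty}{\alpha\lambda\tilde\sigma_b^2(1-\gamma)^6}\big)^{1/2}$, completing the proof.
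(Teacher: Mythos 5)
Your proposal is correct and follows essentially the same route as the paper's own proof: starting from Lemma~\ref{lem:Expected-TD-AC-PMD:Lan-Xiao-bound}, bounding the variance term via the uniform value bound, controlling the bias term through equation~\eqref{eq:expected-B0k-bound} and Lemmas~\ref{lem:decomposition of bias}--\ref{lem:expected-Fjk-bounds}, absorbing the $\eta$-linear coefficients into $\frac{6|\calA|^2}{\alpha\lambda\tilde\sigma_b^2(1-\gamma)^5}$ using $\alpha\le 1$, $\tilde\sigma_b\le 1$, $\gamma<1$, and then optimizing $\eta$ by AM--GM. The bookkeeping you flag (collecting the $C$, $D$, $E$ and variance terms into a single constant, and merging the $(1-\gamma)^{-2}$ and summed $B_0$ contributions into $\frac{3}{\alpha\tilde\sigma_b(1-\gamma)^3}$) is exactly what the paper does.
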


\begin{proof}
First recall the error decomposition in Lemma~\ref{lem:Expected-TD-AC-PMD:Lan-Xiao-bound}. 
For the stochastic variance term, it is easy to see that 
\begin{align*}
    \frac{1}{K+1} \cdot \frac{\eta |\calA|^2}{2(1-\gamma)^2\lambda} \cdot \E\ls[ \sum_{k=0}^K \| Q^{k} - Q^{\pi_k} \|_\infty^2 \rs] \leq \frac{\eta|\calA|^2}{2(1-\gamma)^4 \lambda}
\end{align*}
since $\| Q^k - Q^{\pi_k} \|_\infty \leq (1-\gamma)^{-1}$. For the stochastic bias term, based on equation \eqref{eq:expected-B0k-bound} and Lemmas~\ref{lem:decomposition of bias}--\ref{lem:expected-Fjk-bounds}, one has 
\begin{align*}
    &\mathbb{E}\left[\sum_{k=0}^K\mathbb{E}_{s\sim d_\mu^*}[\langle \pi^*(\cdot|s)-\pi_k(\cdot|s),Q^{\pi_k}(s,\cdot)-Q^{k}(s,\cdot)\rangle]\right]\\
    &=\mathbb{E}_{s\sim d_\mu^*}\left[\sum_{k=0}^K\mathbb{E}[\langle \pi^*(\cdot|s)-\pi_k(\cdot|s),Q^{\pi_k}(s,\cdot)-Q^{k}(s,\cdot)\rangle]\right]\\
    &\leq \mathbb{E}_{s\sim d_\mu^*}\left[\sum_{k=0}^K\left|\mathbb{E}[\langle \pi^*(\cdot|s)-\pi_k(\cdot|s),Q^{\pi_k}(s,\cdot)-Q^{k}(s,\cdot)\rangle]\right|\right]\\
    &\leq \mathbb{E}_{s\sim d_\mu^*}\left[\sum_{k=0}^K\left(\ls|B_0^{(k)}\rs|+\sum_{j=1}^k\mathbb{E}[\ls|C_j^{(k)}\rs|+\ls|D_j^{(k)}\rs|+\ls|E_j^{(k)}\rs|]+\sum_{j=1}^k\ls|\mathbb{E}\ls[F_j^{(k)}\rs]\rs|\right)\right]\\
    &\leq \mathbb{E}_{s\sim d_\mu^*}\left[\sum_{k=0}^K \frac{2}{1-\gamma}(1-\alpha(1-\gamma)\tilde{\sigma}_b)^k\right]+\mathbb{E}_{s\sim d_\mu^*}\left[\sum_{k=0}^K\frac{2\gamma |\mathcal{A}|^2\eta}{\alpha\tilde{\sigma}_b\lambda(1-\gamma)^4}\right]\\
    &\;\;\;\;+\mathbb{E}_{s\sim d_\mu^*}\left[\sum_{k=0}^K\frac{|\mathcal{A}|^2\eta}{\alpha\tilde{\sigma}_b\lambda(1-\gamma)^3}\right]+\mathbb{E}_{s\sim d_\mu^*}\left[\frac{2\gamma|\mathcal{A}|^2\eta}{\alpha\lambda\tilde{\sigma}_b^2(1-\gamma)^4}\right]\\
    &\;\;\;\;+\mathbb{E}_{s\sim d_\mu^*}\left[\frac{2}{(1-\gamma)\tilde{\sigma}_b}\Psi({B},\vartheta,m_b,\kappa_b)\right].
\end{align*}
It follows that 
\begin{align*}
    &\frac{1}{K+1}\cdot\frac{1}{1-\gamma}\mathbb{E}\left[\sum_{k=0}^K\mathbb{E}_{s\sim d_\mu^*}[\langle \pi^*(\cdot|s)-\pi_k(\cdot|s),Q^{\pi_k}(s,\cdot)-Q^{k}(s,\cdot)\rangle]\right]\\
    &\leq \frac{1}{K+1}\cdot\frac{2}{\alpha\tilde{\sigma}_b(1-\gamma)^3}+\frac{5|\mathcal{A}|^2\eta}{\alpha\tilde{\sigma}_b^2\lambda(1-\gamma)^5}+\frac{2}{(1-\gamma)^2\tilde{\sigma}_b}\Psi({B},\vartheta,m_b,\kappa_b).
\end{align*}
Putting them all together {in Lemma~\ref{lem:Expected-TD-AC-PMD:Lan-Xiao-bound}} yields 
\begin{align*}
    \mathbb{E}\left[\frac{1}{K+1}\sum_{k=0}^K\left(V^*(\mu)-V^{\pi_k}(\mu)\right)\right] &\leq \frac{1}{K+1}\left[ \frac{D_{\pi_{0}}^{\pi^*}(d_\mu^*)}{{\eta}(1-\gamma)}+\frac{3}{\alpha\tilde{\sigma}_b(1-\gamma)^3}\right]+\frac{6|\mathcal{A}|^2{\eta}}{\alpha\tilde{\sigma}_b^2\lambda(1-\gamma)^5}\\
    &\;\;\;\;+\frac{2}{(1-\gamma)^2\tilde{\sigma}_b}\Psi({B},\vartheta,m_b,\kappa_b).
\end{align*}
Choose the best $\eta$ such that the minimal is achieved completes the proof.
\end{proof}

\begin{remark}
     Assume $0<\vartheta<\kappa_b$. By Theorem~\ref{thm:Expected-TD-AC-PMD:constant-step-size}, Expected TD-PMD can find an $\varepsilon$-optimal solution provided the iteration complexity and batch-size satisfying
    \begin{align*}
        K \asymp \max \ls\{ \frac{9}{\alpha \tilde\sigma_b (1-\gamma)^3 \varepsilon}, \; \frac{216 |\calA|^2 \| D^{\pi^*}_{\pi_0} \|_\infty}{\alpha\lambda\tilde\sigma_b^2 (1-\gamma)^6 \varepsilon^{2}} \rs\}, \quad B \asymp \ls( \log \kappa_b^{-1}\rs)^{-1} \cdot \log \ls(\frac{6}{\varepsilon} \cdot \frac{m_b}{\tilde\sigma_b(1-\gamma)^3(\kappa_b-\vartheta)}\rs).
    \end{align*}
    Thus, the total sample complexity is $\tilde{O}(\varepsilon^{-2})$, where some constants and logarithm terms are hidden in $\tilde{O}(\cdot)$. In particular, if we restrict the mirror map $h$ to be the negative entropy and let $\pi_b$ be the uniform policy, then $\lambda = 1$ and $\| D^{\pi^*}_{\pi_0} \|_\infty \leq \log |\calA|$. By further setting $\alpha = 1$, the overall sample complexity becomes
    \begin{align*}
        S = KB = O\ls( \varepsilon^{-2} \tilde\sigma_b^{-2} (1-\gamma)^{-6} |\calA|^2 \log |\calA| \times \ls( \log \kappa_b^{-1}\rs)^{-1} \cdot \log \ls(\varepsilon^{-1} m_b \tilde\sigma_b^{-1}(1-\gamma)^{-3}(\kappa_b-\vartheta)^{-1}\rs) \rs).
    \end{align*}
    Compared to the $O(\varepsilon^{-2}(1-\gamma)^{-5}|\calS||\calA| \log |\calA| (\log \gamma^{-1})^{-1} (\log \varepsilon^{-1})) \asymp O(\varepsilon^{-2}(1-\gamma)^{-6}|\calS||\calA|\log |\calA| (\log \varepsilon^{-1}))$ sample complexity of PMD with generative model in~\textup{\citet[Proposition~2]{Lan_2021}}, the sample complexity of Expected TD-PMD has same dependence on the accuracy $\varepsilon$ and similar dependence on the decision horizon $(1-\gamma)^{-1}$ and the state/action space size. Note that the dependence on $\tilde\sigma_b$, $m_b$, and $\kappa_b$ is for the Markov sampling to sufficiently explore the whole state-action space $\calS\times\calA$, while the exploration is not an issue for generative model.
    \label{rem:Expected-TD-AC-PMD:constant-step-size}
\end{remark}

\begin{remark}
    If we set $\vartheta = 1$, then the average weight in equation~\eqref{eq:Expected-TD-AC-PMD:average-weight} reduces to $ \nicefrac{1}{B}$, and $\bar\delta_k$ reduces to the arithmetic mean of $\delta_t^k$. In such setting, the sample complexity of Expected TD-PMD is $\tilde{O}(\varepsilon^{-3})$. In the next section, we will establish the $O(\varepsilon^{-2})$ \textup{(}without hidden logarithm term\textup{)} last-iterate sample complexity for the case $\vartheta=1$ by using an adaptive policy update step size. 
\end{remark}

\begin{remark}
\label{rem:entire-mixing}
    The careful reader may find out that in our analysis, we do not use the stationary property of the entire Markov chain induced by $\pi_b$, but only use the stationary property of every segment. 
    It may be possible to improve the sample complexity on the dependency of other parameters, but the $\varepsilon^{-2}$ sample complexity on $\varepsilon$ overall cannot be improved, as far as we know. In addition, we will consider a mixed policy setting in Section~\ref{sec:Approximated-TD-AC-PMD}, which involves Markov chains induced by varying policies in contrast to the static $\pi_b$. Thus, it is convenient to keep the analysis within a similar framework.
\end{remark}

\subsection{Expected TD-PMD with adaptive step sizes}
\label{sec:Expected-TD-AC-PMD-with-adaptive-step-size}
As stated above, this section focuses on the later iteration complexity of Expected TD-PMD with adaptive step sizes when $\vartheta=1$. In this case, it can be shown the $O(\varepsilon^{-2})$ sample complexity without a logarithm term can be established. Moreover, the mirror map $h$ is not required to be strongly convex in this section. 

 The main idea is to approximate the classic value iteration algorithm (VI)~\citep{suttonRL} by using a sufficiently large adaptive policy update step size. More precisely, recall the policy  update formula of Expected TD-PMD
\begin{align*}
    \forall\, k: \quad \pi_{k+1}(\cdot|s) = \underset{p\in\Delta(\calA)}{\arg\max} \; \ls\{ \eta_k \ls\langle p, \, Q^k(s,\cdot) \rs\rangle - D^{p}_{\pi_k}(s) \rs\}.
\end{align*}
Notice that when $\eta_k \to \infty$, the updated policy $\pi_{k+1}$ tends to be greedy, i.e., choosing the action with largest critic value
\begin{align*}
    \pi_{k+1} \stackrel{\eta_k\to\infty}{\approx} \pi^k_{\mbox{\tiny greedy}}: \quad \pi_{\mbox{\tiny greedy}}^k(a^\prime|s) = 0 , \quad \forall\, s\in\calS, \;\forall\, a^\prime \not\in\underset{a\in\calA}{\arg\max} \; Q^k(s,a).
\end{align*}
Therefore, one has $\calF^{\pi^k_{k+1}} Q^k \approx\calF Q^k$, and the critic update in equation~\eqref{eq:Expected-TD-AC-PMD:critic-update} becomes
\begin{align*}
     Q^{k+1} &\approx Q^k + \alpha_k \Sigma^{\pi_b} \ls[ \calF Q^k - Q^k \rs] + \alpha_k \bar\omega_k.
\end{align*}
Noting that $\Sigma^{\pi_b} \succeq \tilde\sigma_b \cdot I \succ \mathbf{0}$, the critic update above can be viewed as a weighted VI with step size $\alpha_k$.  Thus we can  bound $\| Q^* - Q^k \|_\infty$ directly by using the $\gamma$-contraction property of $\calF$. 

\begin{lemma}
    Suppose Assumption~\ref{ass:behavior-exploration} and Assumption~\ref{ass:ergodicity} hold. Consider Expected TD-PMD (Algorithm~\ref{alg:Expected-TD-AC-PMD}) with constant critic step size $\alpha_k = \alpha \in (0,1]$. There holds
    \begin{align*}
        \forall\, 0\leq k \leq K: \quad \ls\| Q^* - Q^{k+1} \rs\|_\infty \leq [1-(1-\gamma)\alpha\tilde\sigma_b] \cdot \ls\| Q^* - Q^k \rs\|_\infty + \frac{\alpha\gamma}{\eta_k} \| D^{\tilde\pi_k}_{\pi_k} \|_\infty + \alpha \| \bar\omega_k \|_\infty,
    \end{align*}
    where 
    \begin{align*}
        \ls\|D_{\pi_k}^{\tilde{\pi}_k} \rs\|_\infty :=\max_s D_{\pi_k}^{\tilde{\pi}_k}(s),
    \end{align*}
    and $\tilde{\pi}_k$ is any policy that satisfies $\langle\tilde{\pi}_k(\cdot|s),Q^k(s,\cdot)\rangle = \max_aQ^{k}(s,a),\;\forall s$.
    \label{lem:Expected-TD-AC-PMD:linear-critic-convergence}
\end{lemma}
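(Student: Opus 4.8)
The plan is to rewrite the critic update \eqref{eq:Expected-TD-AC-PMD:critic-update} as $Q^{k+1} = \calF^{\pi_{k+1}}_{\pi_b,\alpha} Q^k + \alpha\bar\omega_k$ and to compare it against a \emph{weighted optimal} Bellman operator, defined in analogy to $\calF^\pi_{\pi_b,\alpha}$ but with the optimal Bellman operator $\calF$ in place of $\calF^{\pi_{k+1}}$. Concretely, set $\calF_{\pi_b,\alpha} Q := Q + \alpha\Sigma^{\pi_b}[\calF Q - Q]$. Since $\calF Q^* = Q^*$, the target $Q^*$ is a fixed point of $\calF_{\pi_b,\alpha}$, i.e. $\calF_{\pi_b,\alpha} Q^* = Q^*$. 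This yields the error recursion
\begin{align*}
    Q^* - Q^{k+1} = \ls[ \calF_{\pi_b,\alpha} Q^* - \calF_{\pi_b,\alpha} Q^k \rs] + \ls[ \calF_{\pi_b,\alpha} Q^k - \calF^{\pi_{k+1}}_{\pi_b,\alpha} Q^k \rs] - \alpha\bar\omega_k,
\end{align*}
which I would bound term by term via the triangle inequality.

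For the first bracket, I would verify that $\calF_{\pi_b,\alpha}$ is a $(1-(1-\gamma)\alpha\tilde\sigma_b)$-contraction in $\|\cdot\|_\infty$, entirely parallel to Proposition~\ref{pro:Expected-TD-AC-PMD:sigma-bellman-property}: writing each coordinate as the convex combination $(1-\alpha\sigma^{\pi_b}(s,a))(Q-Q') + \alpha\sigma^{\pi_b}(s,a)(\calF Q - \calF Q')$ (nonnegative weights since $\alpha\le 1$ and $\sigma^{\pi_b}(s,a)\le 1$) and using that $\calF$ is a $\gamma$-contraction together with $\sigma^{\pi_b}(s,a)\geq\tilde\sigma_b$. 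Hence the first bracket contributes at most $[1-(1-\gamma)\alpha\tilde\sigma_b]\|Q^* - Q^k\|_\infty$, and the noise term is trivially bounded by $\alpha\|\bar\omega_k\|_\infty$.

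The crux is the middle bracket. The $Q^k$ terms cancel, leaving $\alpha\Sigma^{\pi_b}[\calF Q^k - \calF^{\pi_{k+1}} Q^k]$; since $\Sigma^{\pi_b}$ is diagonal with entries in $[0,1]$, its action contributes at most a factor one in $\|\cdot\|_\infty$, so it suffices to bound $\|\calF Q^k - \calF^{\pi_{k+1}} Q^k\|_\infty$. Unfolding both operators and using $\max_{a^\prime} Q^k(s^\prime, a^\prime) = \inner{\tilde\pi_k(\cdot|s^\prime)}{Q^k(s^\prime,\cdot)}$, each coordinate equals $\gamma\E_{s^\prime}[\inner{\tilde\pi_k(\cdot|s^\prime) - \pi_{k+1}(\cdot|s^\prime)}{Q^k(s^\prime,\cdot)}]$. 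I would then apply the three-point-descent lemma (Lemma~\ref{lem:three-point-descent-lemma}) to the policy update with $p = \tilde\pi_k(\cdot|s^\prime)$, obtaining
\begin{align*}
    \inner{\tilde\pi_k(\cdot|s^\prime) - \pi_{k+1}(\cdot|s^\prime)}{Q^k(s^\prime,\cdot)} \leq \frac{1}{\eta_k}\ls[ D^{\tilde\pi_k}_{\pi_k}(s^\prime) - D^{\pi_{k+1}}_{\pi_k}(s^\prime) - D^{\tilde\pi_k}_{\pi_{k+1}}(s^\prime) \rs] \leq \frac{1}{\eta_k}\| D^{\tilde\pi_k}_{\pi_k} \|_\infty,
\end{align*}
where the last step drops the two nonnegative Bregman terms. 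This inner product is also nonnegative because $\tilde\pi_k$ maximizes $\inner{\cdot}{Q^k(s^\prime,\cdot)}$ over $\Delta(\calA)$, so the coordinate lies in $[0,\gamma\eta_k^{-1}\| D^{\tilde\pi_k}_{\pi_k} \|_\infty]$ and is controlled in absolute value by $\gamma\eta_k^{-1}\| D^{\tilde\pi_k}_{\pi_k} \|_\infty$, giving the middle-bracket bound $\alpha\gamma\eta_k^{-1}\| D^{\tilde\pi_k}_{\pi_k} \|_\infty$. Combining the three bounds yields the claim. The main obstacle is precisely this three-point-descent step: it is where a large step size $\eta_k$ converts the gap between the PMD policy $\pi_{k+1}$ and the greedy policy $\tilde\pi_k$ into an $O(1/\eta_k)$ error, formalizing the informal $\calF^{\pi_{k+1}} Q^k \approx \calF Q^k$ heuristic preceding the lemma.
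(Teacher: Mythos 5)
Your proof is correct and takes essentially the same route as the paper's: both hinge on applying the three-point-descent lemma with $p=\tilde\pi_k(\cdot|s)$ to obtain $\calF^{\pi_{k+1}}Q^k \geq \calF Q^k - \gamma\eta_k^{-1}\ls\|D^{\tilde\pi_k}_{\pi_k}\rs\|_\infty$ (together with the trivial reverse inequality $\calF^{\pi_{k+1}}Q^k \leq \calF Q^k$, which you use as nonnegativity of the mismatch term), and both extract the contraction factor $1-(1-\gamma)\alpha\tilde\sigma_b$ from the same convex-combination structure of the weighted update. The only difference is organizational: you run a triangle inequality around the auxiliary operator $\calF_{\pi_b,\alpha}Q := Q+\alpha\Sigma^{\pi_b}\ls[\calF Q - Q\rs]$, which has $Q^*$ as a fixed point, whereas the paper performs the equivalent coordinate-wise sandwich bound directly on the update formula.
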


Lemma~\ref{lem:Expected-TD-AC-PMD:linear-critic-convergence} establishes  the linear convergence rate of the critic error $\| Q^* - Q^k \|_\infty$ and the proof can be found in Section~\ref{sec:proof-critic-linear}. The following lemma shows that the error of the policy value can be bounded by the critic error.

\begin{lemma}
    Suppose Assumption~\ref{ass:behavior-exploration} and Assumption~\ref{ass:ergodicity} hold. Consider Expected TD-PMD \textup{(}Algorithm~\ref{alg:Expected-TD-AC-PMD}\textup{)} with constant critic step size $\alpha_k = \alpha \in (0,1]$. Then there holds
    \begin{align*}
        \forall\, 0 \leq k \leq K: \quad \ls\| Q^* - Q^{\pi_{k+1}} \rs\|_\infty \leq \frac{1}{\alpha(1-\gamma)\tilde\sigma_b} \ls( \ls\| Q^* - Q^{k+1} \rs\|_\infty + \ls\| Q^* - Q^k \rs\|_\infty + \alpha \| \bar\omega_k \|_\infty \rs).
    \end{align*}
    \label{lem:Expected-TD-AC-PMD:linear-actor-convergence}
\end{lemma}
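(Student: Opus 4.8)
The plan is to turn the statement into an exact algebraic identity for $Q^*-Q^{\pi_{k+1}}$ and then read off the three coefficients. The only link between the target policy $\pi_{k+1}$ and the tracked quantities $Q^k,Q^{k+1},\bar\omega_k$ is the critic update \eqref{eq:Expected-TD-AC-PMD:critic-update}, so the proof must route through it. First I would invoke the resolvent form of the performance difference lemma (Lemma~\ref{lem:pdl}), which follows from $Q^{\pi}=(I-\gamma P^{\pi}_{\calS\times\calA})^{-1}r$ and $\calF^{\pi}Q-Q=r-(I-\gamma P^{\pi}_{\calS\times\calA})Q$: taking the reference $Q=Q^k$ and the policy $\pi_{k+1}$,
\[
Q^{\pi_{k+1}}-Q^k=(I-\gamma P^{\pi_{k+1}}_{\calS\times\calA})^{-1}\bigl[\calF^{\pi_{k+1}}Q^k-Q^k\bigr].
\]

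Next I would eliminate the one-step residual using \eqref{eq:Expected-TD-AC-PMD:critic-update}. Since $\Sigma^{\pi_b}$ is invertible (as $\tilde\sigma_b>0$), the update rearranges to $\calF^{\pi_{k+1}}Q^k-Q^k=\tfrac1\alpha(\Sigma^{\pi_b})^{-1}(Q^{k+1}-Q^k)-(\Sigma^{\pi_b})^{-1}\bar\omega_k$. Writing $M:=\tfrac1\alpha(I-\gamma P^{\pi_{k+1}}_{\calS\times\calA})^{-1}(\Sigma^{\pi_b})^{-1}$ and combining with the previous display gives $Q^{\pi_{k+1}}-Q^k=M(Q^{k+1}-Q^k)-\alpha M\bar\omega_k$. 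Substituting the identity $Q^{k+1}-Q^k=(Q^*-Q^k)-(Q^*-Q^{k+1})$ to introduce $Q^*$, I obtain the centerpiece decomposition
\[
Q^*-Q^{\pi_{k+1}}=(I-M)(Q^*-Q^k)+M(Q^*-Q^{k+1})+\alpha M\bar\omega_k.
\]
Taking $\ell_\infty$ norms, the resolvent is a Neumann series of the row-stochastic matrix $P^{\pi_{k+1}}_{\calS\times\calA}$, so $\|(I-\gamma P^{\pi_{k+1}}_{\calS\times\calA})^{-1}\|_\infty\le(1-\gamma)^{-1}$, while the diagonal $(\Sigma^{\pi_b})^{-1}$ has $\|(\Sigma^{\pi_b})^{-1}\|_\infty=(\min_{s,a}\sigma^{\pi_b}(s,a))^{-1}\le\tilde\sigma_b^{-1}$; submultiplicativity then yields $\|M\|_\infty\le L:=\bigl(\alpha(1-\gamma)\tilde\sigma_b\bigr)^{-1}$, which already controls the $M(Q^*-Q^{k+1})$ and $\alpha M\bar\omega_k$ terms by $L$ and $\alpha L$ respectively.

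The main obstacle is the coefficient $\|I-M\|_\infty$ of $\|Q^*-Q^k\|_\infty$: a crude triangle inequality only gives $\|I-M\|_\infty\le 1+\|M\|_\infty\le 1+L$, which exceeds the claimed constant $L$ by a stray $+1$ (note $L\ge1$, since $\alpha,1-\gamma,\tilde\sigma_b\le1$), producing a spurious factor of two. The resolution is to exploit the sign structure of $M$. As a product of the entrywise-nonnegative resolvent $(I-\gamma P^{\pi_{k+1}}_{\calS\times\calA})^{-1}=\sum_{t\ge0}\gamma^t(P^{\pi_{k+1}}_{\calS\times\calA})^t$, the positive diagonal $(\Sigma^{\pi_b})^{-1}$, and the scalar $\alpha^{-1}$, the matrix $M$ is entrywise nonnegative, and each diagonal entry obeys $M_{ii}=\alpha^{-1}\,[(I-\gamma P^{\pi_{k+1}}_{\calS\times\calA})^{-1}]_{ii}\,\sigma^{\pi_b}(s_i,a_i)^{-1}\ge1$, since all three factors are $\ge1$. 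Hence for each row $i$, $\sum_j|\delta_{ij}-M_{ij}|=(M_{ii}-1)+\sum_{j\ne i}M_{ij}=\mathrm{rowsum}_i(M)-1$, so that $\|I-M\|_\infty=\|M\|_\infty-1\le L-1\le L$. Feeding the three coefficient bounds $\|I-M\|_\infty\le L$, $\|M\|_\infty\le L$, $\alpha\|M\|_\infty\le\alpha L$ into the centerpiece identity and collecting terms gives exactly $\|Q^*-Q^{\pi_{k+1}}\|_\infty\le L\bigl(\|Q^*-Q^{k+1}\|_\infty+\|Q^*-Q^k\|_\infty+\alpha\|\bar\omega_k\|_\infty\bigr)$, which is the asserted bound.
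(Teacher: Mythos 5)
Your proof is correct, but it takes a genuinely different route from the paper's. The paper's argument is a two-line contraction argument: it writes the critic update as $Q^{k+1} = \calF^{\pi_{k+1}}_{\pi_b,\alpha}Q^k + \alpha\bar\omega_k$, uses the fixed-point property $\calF^{\pi_{k+1}}_{\pi_b,\alpha}Q^{\pi_{k+1}} = Q^{\pi_{k+1}}$ together with the contraction modulus $1-(1-\gamma)\alpha\tilde\sigma_b$ from Proposition~\ref{pro:Expected-TD-AC-PMD:sigma-bellman-property} to bound $\|Q^{\pi_{k+1}}-Q^{k+1}\|_\infty$, and then closes a self-bounding inequality in $\|Q^*-Q^{\pi_{k+1}}\|_\infty$ via the triangle inequality, so the constant $\frac{1}{\alpha(1-\gamma)\tilde\sigma_b}$ appears as $\frac{1}{1-\text{(contraction factor)}}$ after rearrangement. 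You instead solve for $Q^*-Q^{\pi_{k+1}}$ exactly: inverting the critic update through $(\Sigma^{\pi_b})^{-1}$, composing with the resolvent identity $Q^{\pi_{k+1}}-Q^k = (I-\gamma P^{\pi_{k+1}}_{\scriptscriptstyle\calS\times\calA})^{-1}[\calF^{\pi_{k+1}}Q^k-Q^k]$, and reading the constant off as $\|M\|_\infty \le \frac{1}{\alpha(1-\gamma)\tilde\sigma_b}$. Every step checks out, including the two points where care is needed: the identity $(I-\gamma P^{\pi}_{\scriptscriptstyle\calS\times\calA})^{-1}[\calF^{\pi}Q - Q] = Q^{\pi}-Q$ is valid, and your structural argument for $\|I-M\|_\infty = \|M\|_\infty - 1$ (nonnegativity of $M$ plus $M_{ii}\ge 1$, the latter because $\alpha^{-1}\ge 1$, $[(I-\gamma P)^{-1}]_{ii}\ge 1$ from the $t=0$ Neumann term, and $\sigma^{\pi_b}(s,a)\le 1$) is exactly what is needed to avoid the spurious $+1$ that a naive triangle inequality would produce. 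The trade-off: the paper's route is shorter and reuses machinery it has already built (and extends verbatim to the Approximate TD-PMD counterpart, Lemma~\ref{lem:Approximated-TD-AC-PMD:linear-actor-convergence}, by swapping in $\calF^{\pi}_{\pi_b\circ\pi,\alpha}$); your route is self-contained, produces an exact error decomposition rather than an inequality chain, and in fact yields the marginally sharper coefficient $\|I-M\|_\infty\le\frac{1}{\alpha(1-\gamma)\tilde\sigma_b}-1$ on the $\|Q^*-Q^k\|_\infty$ term, though this does not change the stated bound.
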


\begin{proof}
    By the critic update formula (equation~\eqref{eq:Expected-TD-AC-PMD:critic-update}),
    \begin{align*}
        \| Q^{\pi_{k+1}} - Q^{k+1} \|_\infty &= \| {Q^{\pi_{k+1}}} - \calF^{\pi_{k+1}}_{\pi_b, \alpha}Q^k - \alpha \bar\omega_k \|_\infty \\
        &= \| \calF^{\pi_{k+1}}_{\pi_b, \alpha}{Q^{\pi_{k+1}}} - \calF^{\pi_{k+1}}_{\pi_b, \alpha}Q^k - \alpha \bar\omega_k \|_\infty \\
        &\leq \underbrace{\ls\| \calF^{\pi_{k+1}}_{\pi_b, \alpha} Q^{\pi_{k+1}} - \calF^{\pi_{k+1}}_{\pi_b, \alpha} Q^k \rs\|_\infty}_{\mbox{\small Applying Proposition~\ref{pro:Expected-TD-AC-PMD:sigma-bellman-property}}} + \alpha \ls\| \bar\omega_k \rs\|_\infty \\
        &\leq [1-(1-\gamma)\alpha\tilde\sigma_b] \| Q^{\pi_{k+1}} - Q^k \|_\infty + \alpha \| \bar\omega_k \|_\infty \\
        &\leq [1-(1-\gamma)\alpha\tilde\sigma_b] \ls( \| Q^* - Q^{\pi_{k+1}} \|_\infty + \| Q^* - Q^k \|_\infty \rs) + \alpha \| \bar\omega_k \|_\infty \\
        &\leq [1-(1-\gamma)\alpha\tilde\sigma_b] \| Q^* - Q^{\pi_{k+1}} \|_\infty + \| Q^* - Q^k \|_\infty  + \alpha \| \bar\omega_k \|_\infty.
    \end{align*}
    The proof is completed by noting that $\| Q^* - Q^{\pi_{k+1}} \|_\infty \leq \| Q^* - Q^{k+1} \|_\infty + \| Q^{\pi_{k+1}} - Q^{k+1} \|_\infty$.
\end{proof}

It remains to bound the stochastic error term $\| \bar\omega_k \|_\infty$, which is given in Lemma~\ref{lem:Expected-TD-AC-PMD:error-control-2} below. The proof of this lemma is deferred to Section~\ref{sec:proof-stochastic-error-2}. It is worth noting that  Lemma~\ref{lem:Expected-TD-AC-PMD:error-control} establishes the bound for $\ls\| \E \ls[ \bar\omega_k\, | \, \calG_k \rs]\rs\|_\infty$ but here we need to compute a bound for $\E\ls[ \ls\| \bar\omega_k \rs\|_\infty\rs]$.

\begin{lemma}
    Consider Expected TD-PMD \textup{(}Algorithm~\ref{alg:Expected-TD-AC-PMD}\textup{)}. Suppose Assumption~\ref{ass:ergodicity} hold. Let $c_t^k=1/B_k$. Then there holds
    \begin{align*}
        \forall\, 0 \leq k \leq K: \quad  \E \ls[ \ls\| \bar\omega_k \rs\|_\infty \rs] \leq \ls( \frac{4|\calS||\calA|}{(1-\gamma)^2} \ls( 1+\frac{m_b}{1-\kappa_b} \rs) \rs)^{\nicefrac{1}{2}} \cdot (B_k)^{-\nicefrac{1}{2}}.
    \end{align*}
    \label{lem:Expected-TD-AC-PMD:error-control-2}
\end{lemma}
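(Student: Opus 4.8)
The plan is to control the second moment of $\bar\omega_k$ and then pass to the $\ell_\infty$ bound via $\E[\|\bar\omega_k\|_\infty]\leq(\E[\|\bar\omega_k\|_\infty^2])^{1/2}$ together with $\|\bar\omega_k\|_\infty^2\leq\sum_{(s,a)}\bar\omega_k(s,a)^2$. This reduces the task to showing $\E[\bar\omega_k(s,a)^2]\leq\tfrac{4}{B_k(1-\gamma)^2}(1+\tfrac{m_b}{1-\kappa_b})$ for each fixed $(s,a)$; summing over the $|\calS||\calA|$ pairs and taking a square root then yields the stated bound. Throughout I would condition on the $\sigma$-algebra $\calG_k$ generated by everything up to the start of the $k$-th batch (so that $Q^k$, $\pi_{k+1}$ and the initial state $s_0^k$ are fixed), and introduce the within-batch filtration $\calF_t^k=\sigma(\calG_k, s_0^k, a_0^k,\dots, s_{t-1}^k, a_{t-1}^k, s_t^k)$, under which $\omega_t^k$ is $\calF_{t+1}^k$-measurable.

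Since $c_t^k=1/B_k$, expanding the square gives $\E[\bar\omega_k(s,a)^2\mid\calG_k]=B_k^{-2}\sum_{t,t'}\E[\omega_t^k(s,a)\omega_{t'}^k(s,a)\mid\calG_k]$, which I split into diagonal and off-diagonal contributions. For the diagonal terms I would use that the bracketed TD target $r_t^k+\gamma\E_{a\sim\pi_{k+1}}[Q^k(s_{t+1}^k,a)]-Q^k$ lies in $[-\tfrac{1}{1-\gamma},\tfrac{1}{1-\gamma}]$ by Lemma~\ref{lem:Expected-TD-AC-PMD:bounded-value} and \eqref{eq:bounded-policy-values}, and that $\sigma^{\pi_b}(s,a)\leq1$, so that $|\omega_t^k(s,a)|\leq\tfrac{2}{1-\gamma}$ and hence $\E[(\omega_t^k)^2\mid\calG_k]\leq\tfrac{4}{(1-\gamma)^2}$.

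The key step is the off-diagonal covariance decay. For $t<t'$ I would first condition on $\calF_{t'}^k$: since $\omega_t^k$ is $\calF_{t'}^k$-measurable, the tower property gives $\E[\omega_t^k\omega_{t'}^k\mid\calG_k]=\E[\omega_t^k\,\E[\omega_{t'}^k\mid\calF_{t'}^k]\mid\calG_k]$, and a direct computation of the conditional mean of the TD error (as in \eqref{eq:Expected-TD-AC-PMD:intuition-2}) shows $\E[\omega_{t'}^k(s,a)\mid\calF_{t'}^k]=(\mathds{1}_{s_{t'}^k=s}-\nu^{\pi_b}(s))\,\pi_b(a|s)\,[\calF^{\pi_{k+1}}Q^k-Q^k](s,a)$. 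This is precisely the mixing-bias remainder, i.e. the price for replacing $\P(s_{t'}^k=s)$ by its stationary value $\nu^{\pi_b}(s)$. Conditioning once more on $\calF_{t+1}^k$ and invoking the Markov property together with the geometric mixing bound of Proposition~\ref{pro:MDP-ergodicity}, namely $|\P(s_{t'}^k=s\mid s_{t+1}^k)-\nu^{\pi_b}(s)|\leq m_b\kappa_b^{t'-t-1}$, and using $|\omega_t^k|\leq\tfrac{2}{1-\gamma}$ and $\pi_b(a|s)\,|[\calF^{\pi_{k+1}}Q^k-Q^k](s,a)|\leq\tfrac{1}{1-\gamma}$, I obtain $|\E[\omega_t^k\omega_{t'}^k\mid\calG_k]|\leq\tfrac{2m_b}{(1-\gamma)^2}\kappa_b^{t'-t-1}$.

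It then remains to assemble the pieces: the $B_k$ diagonal terms contribute $B_k\cdot\tfrac{4}{(1-\gamma)^2}$, while summing the off-diagonal geometric tails gives $2\sum_{0\le t<t'\le B_k-1}\kappa_b^{t'-t-1}\leq\tfrac{2B_k}{1-\kappa_b}$, so that after dividing by $B_k^2$ one finds $\E[\bar\omega_k(s,a)^2\mid\calG_k]\leq\tfrac{4}{B_k(1-\gamma)^2}(1+\tfrac{m_b}{1-\kappa_b})$. Taking total expectations, summing over $(s,a)$, and applying the square root finishes the proof. I expect the only delicate point to be the off-diagonal step: one must correctly recognize that the conditional mean $\E[\omega_{t'}^k\mid\calF_{t'}^k]$ does \emph{not} vanish (the per-step occupation $\P(s_{t'}^k=s)$ only \emph{approaches} $\nu^{\pi_b}(s)$), and then route the resulting non-martingale correlation through the geometric mixing estimate; by contrast the diagonal bound and the final geometric summation are routine.
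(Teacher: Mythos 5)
Your proposal is correct and follows essentially the same route as the paper's proof: bound the conditional second moment of $\bar\omega_k(s,a)$ by splitting into diagonal terms (controlled by the uniform bound $|\omega_t^k|\leq 2(1-\gamma)^{-1}$) and off-diagonal covariances (controlled via the Markov property and the geometric mixing bound $m_b\kappa_b^{t'-t-1}$), then pass to $\E[\|\bar\omega_k\|_\infty]$ through the $\ell_2$-norm domination and Jensen's inequality. The only cosmetic difference is that you split the conditioning into two steps (first on $\calF_{t'}^k$, keeping the indicator $\mathds{1}_{s_{t'}^k=s}$, then on $\calF_{t+1}^k$), whereas the paper conditions once on $(\calG_k, o_i^k)$ with $o_i^k=(s_i^k,a_i^k,s_{i+1}^k)$; the two are equivalent and yield identical constants.
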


\begin{theorem}
    Suppose Assumption~\ref{ass:behavior-exploration} and Assumption~\ref{ass:ergodicity} hold. Consider Expected TD-PMD \textup{(}Algorithm~\ref{alg:Expected-TD-AC-PMD}\textup{)} with constant critic step size $\alpha_k = \alpha \in (0,1]$ and the adaptive policy update step size
    \begin{align*}
        \eta_k \geq \eta \cdot \| D^{\tilde\pi_k}_{\pi_k} \|_\infty \quad \mbox{with} \quad \eta > 0.
    \end{align*}
    Let $c_t^k=1/B_k$. Then there holds
    \begin{align*}
        \E\ls[ \| Q^* - Q^{\pi_K} \|_\infty \rs] &\leq \frac{2}{\alpha\tilde\sigma_b(1-\gamma)^2} [1-(1-\gamma)\alpha\tilde\sigma_b]^{K-1} + \frac{2\gamma}{\alpha\eta(1-\gamma)^2 \tilde\sigma_b^2} \\
        &+ \frac{1}{\tilde\sigma_b(1-\gamma)} \ls( \frac{4|\calS||\calA|}{(1-\gamma)^2} \ls( 1+\frac{m_b}{1-\kappa_b} \rs) \rs)^{\nicefrac{1}{2}} \ls( \Xi(K) + \Xi(K-1) + (B_{K-1})^{-\nicefrac{1}{2}} \rs),
    \end{align*}
    where $\Xi(t) := \sum_{k=0}^{t-1} [1-(1-\gamma)\alpha\tilde\sigma_b]^{t-1-k} (B_k)^{-\nicefrac{1}{2}}$.
    \label{thm:Expected-TD-AC-PMD:adaptive-step-size}
\end{theorem}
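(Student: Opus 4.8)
The plan is to chain together the three auxiliary results already in hand: the linear critic recursion (Lemma~\ref{lem:Expected-TD-AC-PMD:linear-critic-convergence}), the actor-to-critic transfer bound (Lemma~\ref{lem:Expected-TD-AC-PMD:linear-actor-convergence}), and the Markov-noise bound (Lemma~\ref{lem:Expected-TD-AC-PMD:error-control-2}). For brevity write $\beta := 1-(1-\gamma)\alpha\tilde\sigma_b \in (0,1)$ (positivity follows since $\alpha\in(0,1]$ and $\tilde\sigma_b,\,1-\gamma\in(0,1)$) and $C := \ls( \frac{4|\calS||\calA|}{(1-\gamma)^2}\ls(1+\frac{m_b}{1-\kappa_b}\rs) \rs)^{\nicefrac{1}{2}}$. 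The first observation is that the adaptive step size $\eta_k \geq \eta\,\| D^{\tilde\pi_k}_{\pi_k} \|_\infty$ is engineered precisely so that the value-iteration bias term in Lemma~\ref{lem:Expected-TD-AC-PMD:linear-critic-convergence} collapses to a constant, namely $\frac{\alpha\gamma}{\eta_k}\| D^{\tilde\pi_k}_{\pi_k} \|_\infty \leq \frac{\alpha\gamma}{\eta}$. Substituting this, Lemma~\ref{lem:Expected-TD-AC-PMD:linear-critic-convergence} reduces to the clean scalar recursion
\[
\ls\| Q^* - Q^{k+1} \rs\|_\infty \leq \beta\,\ls\| Q^* - Q^k \rs\|_\infty + \frac{\alpha\gamma}{\eta} + \alpha\,\ls\| \bar\omega_k \rs\|_\infty.
\]

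Next I would unroll this recursion from the initialization $Q^0=0$, for which $\| Q^* - Q^0 \|_\infty \leq (1-\gamma)^{-1}$ by~\eqref{eq:bounded-policy-values}. Unrolling pathwise yields
\[
\ls\| Q^* - Q^k \rs\|_\infty \leq \frac{\beta^k}{1-\gamma} + \frac{\alpha\gamma}{\eta}\sum_{i=0}^{k-1}\beta^i + \alpha\sum_{i=0}^{k-1}\beta^{k-1-i}\,\ls\| \bar\omega_i \rs\|_\infty.
\]
Bounding the geometric sum $\sum_{i=0}^{k-1}\beta^i \leq (1-\beta)^{-1} = [(1-\gamma)\alpha\tilde\sigma_b]^{-1}$ turns the middle term into $\frac{\gamma}{\eta(1-\gamma)\tilde\sigma_b}$, and taking expectations while invoking $\E[\| \bar\omega_i \|_\infty] \leq C\,(B_i)^{-\nicefrac{1}{2}}$ (Lemma~\ref{lem:Expected-TD-AC-PMD:error-control-2}) converts the stochastic term into $\alpha C\,\Xi(k)$ by the very definition of $\Xi$. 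This gives, for every $k$, the intermediate estimate $\E[\| Q^* - Q^k \|_\infty] \leq \frac{\beta^k}{1-\gamma} + \frac{\gamma}{\eta(1-\gamma)\tilde\sigma_b} + \alpha C\,\Xi(k)$.

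Finally I would feed this into the actor-to-critic bound of Lemma~\ref{lem:Expected-TD-AC-PMD:linear-actor-convergence} at index $k=K-1$, take expectations, and substitute the intermediate estimate at indices $K$ and $K-1$ together with $\alpha\E[\| \bar\omega_{K-1} \|_\infty] \leq \alpha C\,(B_{K-1})^{-\nicefrac{1}{2}}$. Using $\beta^K + \beta^{K-1} \leq 2\beta^{K-1}$ consolidates the exponentially decaying terms, the two VI-bias constants add to $\frac{2\gamma}{\eta(1-\gamma)\tilde\sigma_b}$, and the three stochastic contributions assemble into $\alpha C\ls(\Xi(K)+\Xi(K-1)+(B_{K-1})^{-\nicefrac{1}{2}}\rs)$. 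Multiplying through by the prefactor $\frac{1}{\alpha(1-\gamma)\tilde\sigma_b}$ from Lemma~\ref{lem:Expected-TD-AC-PMD:linear-actor-convergence} reproduces exactly the three summands of the claimed bound once $C$ is expanded.

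Given the three lemmas, the assembly is essentially bookkeeping, and the only points demanding care are conceptual rather than computational: first, recognizing that the adaptive scaling of $\eta_k$ by $\| D^{\tilde\pi_k}_{\pi_k} \|_\infty$ is exactly what keeps the value-iteration bias bounded (without it this term would be uncontrolled, since $\| D^{\tilde\pi_k}_{\pi_k} \|_\infty$ may be large); and second, that taking expectation of the pathwise-unrolled sum of correlated Markov noises is legitimate termwise by linearity, because Lemma~\ref{lem:Expected-TD-AC-PMD:error-control-2} already absorbs the mixing structure into the marginal bound on $\E[\| \bar\omega_i \|_\infty]$. The genuinely hard content lives upstream — in the greedy/value-iteration comparison behind Lemma~\ref{lem:Expected-TD-AC-PMD:linear-critic-convergence} and the mixing estimate behind Lemma~\ref{lem:Expected-TD-AC-PMD:error-control-2} — so that the theorem itself is their linear-recursion corollary.
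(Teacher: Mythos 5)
Your proposal is correct and follows essentially the same route as the paper's own proof: both use the adaptive step-size condition to collapse the bias term in Lemma~\ref{lem:Expected-TD-AC-PMD:linear-critic-convergence} to $\alpha\gamma/\eta$, unroll the resulting linear recursion (with $\E[\|\bar\omega_k\|_\infty]$ controlled by Lemma~\ref{lem:Expected-TD-AC-PMD:error-control-2}) to get the intermediate bounds on $\E[\|Q^*-Q^K\|_\infty]$ and $\E[\|Q^*-Q^{K-1}\|_\infty]$, and then feed these into Lemma~\ref{lem:Expected-TD-AC-PMD:linear-actor-convergence} at index $K-1$. The only cosmetic difference is that you unroll the recursion pathwise before taking expectations whereas the paper takes expectations first and then iterates; by linearity and nonnegativity of the coefficients these are interchangeable, and your final assembly reproduces the stated constants exactly.
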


\begin{proof}
    Taking an expectation on both sides of the inequality in Lemma~\ref{lem:Expected-TD-AC-PMD:linear-critic-convergence} yields 
    \begin{align*}
        \forall\, 0 \leq k \leq K: \quad &\phantom{=\,\,\,}\E \| Q^* -Q^{k+1} \|_\infty \\
        &\leq [1-(1-\gamma)\alpha\tilde\sigma_b] \cdot \E\| Q^* - Q^k \|_\infty + \underbrace{\E\ls[\frac{\alpha\gamma}{\eta_k} \| D^{\tilde\pi_k}_{\pi_k} \|_\infty\rs]}_{\mbox{\small Applying } \eta_k \geq \eta \| D^{\tilde\pi_k}_{\pi_k} \|_\infty} + \alpha \underbrace{\E\ls\| \bar\omega_k \rs\|_\infty}_{\mbox{\small Applying Lemma~\ref{lem:Expected-TD-AC-PMD:error-control-2}}} \\
        &\leq [1-(1-\gamma) \alpha\tilde\sigma_b] \cdot \E\ls\| Q^* - Q^k \rs\|_\infty + \alpha\gamma\eta^{-1} + \alpha C_1 (B_k)^{-\nicefrac{1}{2}},
    \end{align*}
    where $C_1 = \ls( \frac{4|\calS||\calA|}{(1-\gamma)^2} \ls( 1+\frac{m_b}{1-\kappa_b} \rs) \rs)^{\nicefrac{1}{2}}$. Iterating from $K-1$ to $0$ we obtain
    \begin{align*}
        \E\| Q^* - Q^{K} \|_\infty &\leq [1-(1-\gamma)\alpha\tilde\sigma_b]^K \cdot \| Q^* - Q^0 \|_\infty  \\
        & \;\;\;\; + \alpha \sum_{k=0}^{K-1} [1-(1-\gamma)\alpha\tilde\sigma_b]^{K-1-k} [\gamma\eta^{-1} + C_1 (B_k)^{-\nicefrac{1}{2}}] \\
        &\leq [1-(1-\gamma)\alpha\tilde\sigma_b]^K \cdot \frac{1}{1-\gamma} + \frac{\gamma}{\eta(1-\gamma)\tilde\sigma_b} + \alpha C_1 \Xi(K). \numberthis \label{eq:Expected-TD-AC-PMD:adaptive-stepsize-critic-convergence}
    \end{align*}
    Similarly, there holds
    \begin{align*}
        \E\| Q^* - Q^{K-1} \|_\infty \leq [1-(1-\gamma)\alpha\tilde\sigma_b]^{K-1} \cdot \frac{1}{1-\gamma} + \frac{\gamma}{\eta(1-\gamma)\tilde\sigma_b} + \alpha C_1 \Xi(K-1).
    \end{align*}
    On the other hand, taking an expectation on both sides of the inequality in Lemma~\ref{lem:Expected-TD-AC-PMD:linear-actor-convergence} yields
    \begin{align*}
        \E\ls\| Q^* - Q^{\pi_{K}} \rs\|_\infty &\leq \frac{1}{\alpha(1-\gamma)\tilde\sigma_b} \Big( \E\ls\| Q^* - Q^{K} \rs\|_\infty + \E \ls\| Q^* - Q^{K-1} \rs\|_\infty + \alpha \underbrace{\E\ls\| \bar\omega_{K-1} \rs\|_\infty}_{\mbox{\small Applying Lemma~\ref{lem:Expected-TD-AC-PMD:error-control-2}}} \Big) \\
        & \leq \frac{1}{\alpha(1-\gamma)\tilde\sigma_b} \Big( \E\ls\| Q^* - Q^{K} \rs\|_\infty + \E \ls\| Q^* - Q^{K-1} \rs\|_\infty + \alpha C_1(B_{K-1})^{-\nicefrac{1}{2}} \Big).
    \end{align*}
    Inserting the  above bounds for $\E\| Q^* - Q^K \|_\infty$ and $\E\| Q^* - Q^{K-1} \|_\infty$ completes the proof.
\end{proof}

\begin{remark}
    Theorem~\ref{thm:Expected-TD-AC-PMD:adaptive-step-size} implies that $\E\| Q^* - Q^{\pi_K} \|_\infty \leq \varepsilon$ provided
    \begin{gather*}
        K = \log \ls( \frac{6}{\alpha\tilde\sigma_b(1-\gamma)^2 \varepsilon} \rs) \ls[ \log \ls( \frac{1}{1- (1-\gamma)\alpha\tilde\sigma_b^2} \rs) \rs]^{-1} + 1, \quad
        \eta = \frac{6\gamma}{\alpha(1-\gamma)^2\tilde\sigma_b^2 \varepsilon}, \\
        B_k = \frac{81}{\tilde\sigma_b^2(1-\gamma)^2} \ls( \frac{4|\calS||\calA|}{(1-\gamma)^2}\ls( 1+\frac{m_b}{1-\kappa_b} \rs) \rs) \frac{1}{\ls( 1-[1-(1-\gamma)\alpha\tilde\sigma_b]^{\nicefrac{1}{2}} \rs)^2} \times [1-(1-\gamma)\alpha\tilde\sigma_b]^{K-k-2} \varepsilon^{-2}.
    \end{gather*}
    Thus the total sample complexity is 
    \begin{align*}
        S = \sum_{k=0}^K B_k &\leq \frac{81}{\tilde\sigma_b^2(1-\gamma)^2} \ls( \frac{4|\calS||\calA|}{(1-\gamma)^2}\ls( 1+\frac{m_b}{1-\kappa_b} \rs) \rs) \frac{1}{\ls( 1-[1-(1-\gamma)\alpha\tilde\sigma_b]^{\nicefrac{1}{2}} \rs)^2} \times \frac{[1-(1-\gamma)\alpha\tilde\sigma_b]^{-2}}{(1-\gamma)\alpha\tilde\sigma_b} \varepsilon^{-2} \\
        &= O\ls( (1-\gamma)^{-7} |\calS||\calA| \alpha^{-3} \tilde\sigma_b^{-5} \varepsilon^{-2} \cdot m_b (1-\kappa_b)^{-1} \rs)
    \end{align*}
    which does not contain an logarithmic term. In contrast, existing sample complexities for PMD type methods under both the generative model ~\textup{\citep{Lan_2021,Xiao_2022,Johnson_Pike-Burke_Rebeschini_2023,protopapas2024policy,liu2025tdpmd}} and the Markov  sampling ~\textup{\citep{li2025policy,li2024stochastic}} are all $O(\varepsilon^{-2}\log(1/\varepsilon))$.
    \label{rem:Expected-TD-AC-PMD:adaptive-step-size}
\end{remark}

\begin{remark}
Note that  the adaptive step sizes $\eta_k$ in Theorem~\ref{thm:Expected-TD-AC-PMD:adaptive-step-size} is computable. In particular, for some specific mapping function $h$, the upper bound of $\| D^{\tilde\pi_k}_{\pi_k} \|_\infty$ can be easily derived. For instance, for $h(p)=\frac{1}{2}\|p\|_2^2$ there holds $\| D^{\tilde\pi_k}_{\pi_k} \|_\infty \leq 1$ and for $h(p)=-\sum_a p(a) \log p(a)$ there holds $\| D^{\tilde\pi_k}_{\pi_k} \|_\infty \leq \log [(\min_{s,a} \, \pi_k(a|s))^{-1}]$.
\end{remark}

\subsubsection{Batch Q-learning}
As mentioned previously, Expected TD-PMD approaches  batch Q-learning  (see Algorithm~\ref{alg:batch-Q-learning}) with $\eta_k \to +\infty$. In particular, by picking $h(p)=\frac{1}{2}\| p \|_2^2$,  Proposition~\ref{pro:Expected-TD-AC-PMD:equivalence} below tells us that under an adaptive but finite policy update step size, Expected TD-PMD is exactly equivalent to batch Q-learning.

\begin{algorithm}[ht!]
    \small
    \caption{Batch Q-learning}
    \label{alg:batch-Q-learning}
\begin{algorithmic}
    \STATE {\bfseries Input:} Iterations $K$. Initial action value $Q^0=0$, critic step size $\alpha_k$, initial state $s_0$, batch size $B_k$, average weight $c_t^k$.
    \STATE Set $s_0^0 = s_0$.
    \FOR{$k=0,1, \dots, K-1$}
    \STATE \textbf{(Sampling)} Obtain $\tau_k$ by obeying behavior policy $\pi_b$ starting from $s_0^k$,
    \begin{align*}
        \tau_k = \{ (s_t^k, a_t^k, r^k_t, s_{t+1}^k) \}_{t=0}^{B_k-1}, \quad \mbox{where} \; r^k_t = r(s^k_t, a^k_t), \;\; a^k_t \sim \pi_b(\cdot|s_t^k), \;\; s^k_{t+1} \sim P(\cdot| s_t^k, a_t^k),
    \end{align*}
    and set $s_0^{k+1} = s_{B_k}^k$.
    \STATE \textbf{(Critic update)} Construct
    \begin{align*}
        &\delta_t^k(s,a) := \mathds{1}_{(s^k_t,a^k_t)=(s,a) } \cdot\ls[r_t^k + \gamma \max_{a\in\calA} \ls[Q^k(s_{t+1}^k, a)\rs] - Q^k(s^k_t, a^k_t)\rs], \\
        &\bar\delta_k(s,a) := \sum_{t=0}^{B_k-1} c_t^k\cdot \delta_t^k(s,a).
    \end{align*}
    \STATE Update the critic by
    \begin{align*}
        Q^{k+1}(s,a) = Q^k(s,a) + \alpha_k \cdot \bar\delta_k(s,a).
    \end{align*}
    \ENDFOR
    \STATE Output $Q^K$.
\end{algorithmic}
\end{algorithm}

\begin{lemma}[\protect{\citet[Lemma~7]{ppgliu}}]
    Let $\mathcal{B}$ and $\mathcal{C}$ be two disjoint non-empty sets such that $\mathcal{A} =\mathcal{B}\cup \mathcal{C}$. Given an arbitrary vector $
    p=\left( p_a \right) _{a\in \mathcal{A}}\in \mathbb{R} ^{\left| \mathcal{A} \right|}$, let $y=\mathrm{Proj}_{\Delta \left( \mathcal{A} \right)}\left( p \right)$. Then
    \[
    \forall a^\prime\in \mathcal{C},~ y_{a^\prime}=0 \Leftrightarrow \,\,\sum_{a\in \mathcal{B}}{\left( p_a-\underset{a^{\prime}\in \mathcal{C}}{\max}\,\,p_{a^{\prime}} \right) _+}\ge 1.
    \]
    \label{lem:proj-property}    
\end{lemma}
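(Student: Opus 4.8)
The plan is to invoke the standard threshold (``water-filling'') characterization of the Euclidean projection onto the probability simplex, and then convert the event $\{y_{a'}=0 \text{ for all } a'\in\mathcal{C}\}$ into a condition on the threshold that can be read off directly from the right-hand side. First I would write $y=\mathrm{Proj}_{\Delta(\mathcal{A})}(p)$ as the minimizer of $\tfrac12\|y-p\|_2^2$ subject to $\sum_{a}y_a=1$ and $y_a\ge 0$, and record its KKT conditions. Introducing a multiplier $\tau$ for the equality constraint and multipliers $\lambda_a\ge 0$ for nonnegativity, stationarity gives $y_a=p_a-\tau+\lambda_a$ and complementary slackness $\lambda_a y_a=0$. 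Combining these yields the familiar form $y_a=(p_a-\tau)_+$, where $\tau$ is the unique scalar determined by the normalization $\sum_{a\in\mathcal{A}}(p_a-\tau)_+=1$.

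Next I would reduce the event ``$y_{a'}=0$ for all $a'\in\mathcal{C}$'' to a single inequality on $\tau$. Since $y_{a'}=(p_{a'}-\tau)_+$, vanishing of every such coordinate is exactly $p_{a'}\le\tau$ for all $a'\in\mathcal{C}$, i.e.\ $M\le\tau$ where $M:=\max_{a'\in\mathcal{C}}p_{a'}$. Thus the whole claim collapses to the equivalence $M\le\tau \Leftrightarrow \sum_{a\in\mathcal{B}}(p_a-M)_+\ge 1$.

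The bridge between these two sides is the monotonicity of $g(\theta):=\sum_{a\in\mathcal{A}}(p_a-\theta)_+$, which is continuous, non-increasing, and strictly decreasing on $(-\infty,\max_a p_a]$, with $\tau$ being its unique root of $g(\theta)=1$. Hence $M\le\tau$ holds if and only if $g(M)\ge g(\tau)=1$: the forward direction uses only that $g$ is non-increasing, while the reverse uses strict monotonicity on the relevant interval. Finally I would evaluate $g(M)$: because $p_{a'}\le M$ for every $a'\in\mathcal{C}$, all $\mathcal{C}$-indexed terms vanish, so $g(M)=\sum_{a\in\mathcal{B}}(p_a-M)_+$, which is precisely the left-hand side of the stated inequality. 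Chaining these equivalences completes the argument.

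The one delicate point is the reverse implication in the monotonicity step, where I must rule out $g$ being flat between $M$ and $\tau$. Here I would note that $g(\tau)=1>0$ forces $\tau<\max_a p_a$, so $g$ is strictly decreasing on the closed interval joining $M$ and $\tau$; a strict decrease then prevents $g(M)\ge 1=g(\tau)$ from coexisting with $M>\tau$. Everything else, including the explicit KKT derivation of $y_a=(p_a-\tau)_+$ and the piecewise-linear structure of $g$, is routine.
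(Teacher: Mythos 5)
Your proposal is correct. Note, however, that this paper does not actually prove Lemma~\ref{lem:proj-property} at all: it is imported verbatim as \citet[Lemma~7]{ppgliu} and used as a black box (in the proof of Proposition~\ref{pro:Expected-TD-AC-PMD:equivalence}), so there is no in-paper proof to compare against. Judged on its own merits, your argument is a complete and standard one: the KKT derivation of the water-filling form $y_a=(p_a-\tau)_+$ with $\sum_a(p_a-\tau)_+=1$, the reduction of ``$y_{a'}=0$ for all $a'\in\mathcal{C}$'' to $M\le\tau$ with $M=\max_{a'\in\mathcal{C}}p_{a'}$, and the monotonicity of $g(\theta)=\sum_a(p_a-\theta)_+$ chain together exactly to the stated equivalence, since the $\mathcal{C}$-terms in $g(M)$ vanish. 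Your treatment of the delicate reverse direction is essentially right; the only cosmetic gloss is the case $M>\max_a p_a$, where the interval joining $M$ and $\tau$ leaves the strictly decreasing region --- but there $g(M)=0<1$ gives the contradiction immediately, so nothing breaks. One could state this case split explicitly, but the proof stands as written.
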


\begin{proposition}
    Consider Expected TD-PMD \textup{(}Algorithm~\ref{alg:Expected-TD-AC-PMD}\textup{)} with $h(p)=\frac{1}{2} \| p \|_2^2$. At each iteration $k$,  let $\calA^k_s := \arg\max_{a\in\calA} \, Q^k(s,a)$ for each  $s\in\calS$, and also define the following value gap
    \begin{align*}
        \varDelta_{k,s} := \begin{cases}
            \max_{a\in\calA} Q^k(s,a) - \max_{a^\prime\not\in\calA^k_s} Q^k(s,a^\prime) & \mbox{if } \calA^k_s \neq \calA, \\
            +\infty & \mbox{if } \calA^k_s = \calA.
        \end{cases}
    \end{align*}
    If the policy update step size of Expected TD-PMD satisfies
    \begin{align*}
        \forall\, k\geq 0 : \quad \eta_k \geq 2\cdot\max_{s\in\calS} \; (\varDelta_{k,s})^{-1},\numberthis\label{eq:Qlearning-stepsize}
    \end{align*}
    then one has
    $
        \pi_{k+1} = \pi^k_{\mbox{\tiny greedy}}.
    $
    That is, Expected TD-PMD is equivalent to batch Q-learning under the step size condition in \eqref{eq:Qlearning-stepsize}.
    \label{pro:Expected-TD-AC-PMD:equivalence}
\end{proposition}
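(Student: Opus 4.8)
The plan is to first reduce the policy update to a Euclidean projection and then invoke the projection characterization in Lemma~\ref{lem:proj-property}. Since $h(p)=\frac{1}{2}\|p\|_2^2$, the induced Bregman divergence is $D^p_{\pi_k}(s)=\frac12\|p-\pi_k(\cdot|s)\|_2^2$, so completing the square in $p$ turns the maximization in the policy update into
\[
\pi_{k+1}(\cdot|s)=\arg\min_{p\in\Delta(\calA)}\left\| p-\left(\pi_k(\cdot|s)+\eta_k Q^k(s,\cdot)\right)\right\|_2^2=\Proj_{\Delta(\calA)}\left[\pi_k(\cdot|s)+\eta_k Q^k(s,\cdot)\right],
\]
which is exactly the TD-PQA update. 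Thus it suffices to show that this projection assigns zero mass to every non-optimal action once the step size is large enough, and the problem becomes purely about projections onto the simplex.

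Next I would fix a state $s$ with $\calA^k_s\neq\calA$ and apply Lemma~\ref{lem:proj-property} to the target vector $p=\pi_k(\cdot|s)+\eta_k Q^k(s,\cdot)$ with $\mathcal{B}=\calA^k_s$ (non-empty) and $\mathcal{C}=\calA\setminus\calA^k_s$ (non-empty precisely because $\calA^k_s\neq\calA$). The task is then to verify the threshold condition $\sum_{a\in\mathcal{B}}(p_a-\max_{a'\in\mathcal{C}}p_{a'})_+\geq 1$. Writing $Q^\star_s=\max_a Q^k(s,a)$ and $Q^-_s=\max_{a'\notin\calA^k_s}Q^k(s,a')$ so that $\varDelta_{k,s}=Q^\star_s-Q^-_s$, I would bound $\max_{a'\in\mathcal{C}}p_{a'}\leq 1+\eta_k Q^-_s$, using $\pi_k(a'|s)\leq 1$ and $Q^k(s,a')\leq Q^-_s$ for $a'\in\mathcal{C}$, while for each $a\in\mathcal{B}$ one has exactly $p_a=\pi_k(a|s)+\eta_k Q^\star_s$. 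Subtracting yields $p_a-\max_{a'\in\mathcal{C}}p_{a'}\geq\pi_k(a|s)-1+\eta_k\varDelta_{k,s}$.

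I would then invoke the step-size hypothesis: since $\eta_k\geq 2\max_s(\varDelta_{k,s})^{-1}$ forces $\eta_k\varDelta_{k,s}\geq 2$ for every $s$ with $\calA^k_s\neq\calA$, each bracket obeys $p_a-\max_{a'\in\mathcal{C}}p_{a'}\geq\pi_k(a|s)+1\geq 1$. Hence every summand of $\sum_{a\in\mathcal{B}}(p_a-\max_{a'\in\mathcal{C}}p_{a'})_+$ is at least $1$, and as $\mathcal{B}\neq\varnothing$ the sum is at least $1$. Lemma~\ref{lem:proj-property} then forces $\pi_{k+1}(a'|s)=0$ for all $a'\in\calA\setminus\calA^k_s$, so $\pi_{k+1}(\cdot|s)$ is supported on the greedy set $\calA^k_s$. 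For the degenerate states with $\calA^k_s=\calA$ there is no non-optimal action to eliminate and $\pi_{k+1}(\cdot|s)$ is automatically greedy, so altogether $\pi_{k+1}=\pi^k_{\mbox{\tiny greedy}}$. Finally, a greedy $\pi_{k+1}$ satisfies $\E_{a\sim\pi_{k+1}(\cdot|s^k_{t+1})}[Q^k(s^k_{t+1},a)]=\max_a Q^k(s^k_{t+1},a)$, so the expected TD error $\delta^k_t$ of Expected TD-PMD coincides termwise with the Q-learning TD error; the two critic updates, and hence the two algorithms, are identical.

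I expect the main obstacle to be the upper bound on $\max_{a'\in\mathcal{C}}p_{a'}$ that cleanly isolates the at-most-one contribution of the $\pi_k$ term from the $\eta_k Q^k$ term; this split is exactly where the factor $2$ in the step-size condition is consumed, one unit absorbing the worst-case loss $\pi_k(a|s)-1\geq-1$ and one unit clearing the threshold. Everything else is a direct specialization of the Bregman projection and a bookkeeping application of Lemma~\ref{lem:proj-property}.
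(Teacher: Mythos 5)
Your proposal is correct and follows essentially the same route as the paper's proof: reduce the update with $h(p)=\frac{1}{2}\|p\|_2^2$ to a Euclidean projection onto $\Delta(\calA)$, then verify the threshold condition of Lemma~\ref{lem:proj-property} with $\mathcal{B}=\calA^k_s$ and $\mathcal{C}=\calA\setminus\calA^k_s$, using the step-size condition $\eta_k\varDelta_{k,s}\geq 2$ to make each summand at least $1$. The only cosmetic difference is that you bound $\max_{a'\in\mathcal{C}}p_{a'}$ explicitly before subtracting, whereas the paper bounds the pairwise differences $\pi_k(a|s)-\pi_k(a'|s)\geq -1$ directly; the arithmetic (one unit of the factor $2$ absorbing the worst-case policy gap, the other clearing the threshold) is identical.
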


\begin{proof}
    Letting $h(p)=\frac{1}{2}\| p \|_2^2$, the policy update of Expected TD-PMD becomes
    \begin{align*}
        \forall\, s\in\calS: \quad \pi_{k+1}(\cdot|s) = \Proj_{\Delta(\calA)} \ls[ \pi_k(\cdot|s) + \eta_k \cdot Q^k(s,\cdot) \rs].
    \end{align*}
    For the state such that $\calA^k_s = \calA$,  we have $\pi_{k+1}(\cdot|s)=\pi^k_{\mbox{\tiny greedy}}(\cdot|s)$ for any $\eta_ k > 0$. Consider the state that $\calA^k_s \neq \calA$. Let $\mathcal{B} = \calA^k_s$, $\mathcal{C} = \calA \setminus \calA^k_s$. We have
    \begin{align*}
        \forall\, a^\prime \in \mathcal{C}: \quad  &\sum_{a\in\mathcal{B}} \ls[ \pi_k(a|s) + \eta_k Q^k(s,a) - \pi_k(a^\prime | s) - \eta_k Q^k(s,a^\prime) \rs]_+ \\
        \geq &\sum_{a\in\mathcal{B}} \ls[ (\pi_k(a|s) - \pi_k(a^\prime | s)) + \eta_k \ls( Q^k(s,a) - Q^k(s,a^\prime) \rs) \rs] \\
        \geq &\sum_{a\in\mathcal{B}}\ls[ -1 + \eta_k \cdot \varDelta_{k,s} \rs] \geq \sum_{a\in\mathcal{B}} [-1+2] \geq 1.
    \end{align*}
    Therefore, by Lemma~\ref{lem:proj-property},
    \begin{align*}
        \forall\, a^\prime\in\mathcal{C}: \quad \pi_{k+1}(a^\prime | s) = \Proj_{\Delta(\calA)} \ls[ \pi_k(\cdot | s) + \eta_k\cdot Q^k(s,\cdot) \rs](a^\prime) = 0,
    \end{align*}
    which implies that $\pi_{k+1}(\cdot | s) = \pi^k_{\mbox{\tiny greedy}}(\cdot | s)$ and Expected TD-PMD reduces to batch Q-learning.
\end{proof}

By Proposition~\ref{pro:Expected-TD-AC-PMD:equivalence}, batch Q-learning can be viewed as a special instance of Expected TD-PMD with $h(p)=\frac{1}{2}\| p \|_2^2$ provided the policy update step size $\eta_k$ is large. Thus the analysis in Section~\ref{sec:Approximated-TD-AC-PMD-with-adaptive-step-size} also applies for batch Q-learning.

\begin{corollary}
    Suppose Assumption~\ref{ass:behavior-exploration} and Assumption~\ref{ass:ergodicity} hold. Consider batch Q-learning  with $\alpha_k=\alpha\in(0,1]$ and let $c_t^k = \nicefrac{1}{B_k}$ \textup{(}i.e., the form in equation~\eqref{eq:Expected-TD-AC-PMD:average-weight} with $\vartheta=1$\textup{)}. There holds
    \begin{align*}
        \E\ls[ \ls\| Q^* - Q^K \rs\|_\infty \rs] \leq \frac{1}{1-\gamma} \ls[ 1 - (1-\gamma) \alpha\tilde\sigma_b \rs]^{K} + \alpha \ls( \frac{4|\calS||\calA|}{(1-\gamma)^2} \ls( 1 + \frac{m_b}{1-\kappa_b} \rs) \rs)^{\nicefrac{1}{2}} \Xi(K),
    \end{align*}
    where $\Xi$ is defined in Theorem~\ref{thm:Expected-TD-AC-PMD:adaptive-step-size}.
    \label{cor:batch-Q-learning}
\end{corollary}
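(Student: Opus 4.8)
The plan is to mirror the critic-convergence argument behind Theorem~\ref{thm:Expected-TD-AC-PMD:adaptive-step-size}, exploiting the fact that batch Q-learning admits a \emph{cleaner} recursion than Expected TD-PMD because its critic update invokes the optimal Bellman operator $\calF$ directly rather than the Bellman operator of a merely near-greedy policy. First I would observe that, conditioned on $Q^k$ and on the event $(s_t^k, a_t^k)=(s,a)$, the bracketed term in $\delta_t^k$ has conditional expectation $[\calF Q^k - Q^k](s,a)$, so that the batch update can be rewritten as $Q^{k+1} = \calF_{\pi_b,\alpha} Q^k + \alpha \bar\omega_k$, where $\calF_{\pi_b,\alpha} Q := Q + \alpha \Sigma^{\pi_b}[\calF Q - Q]$ is the $\sigma^{\pi_b}$-weighted \emph{optimal} Bellman operator and $\bar\omega_k$ is the associated stochastic Markov noise. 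Exactly as in Proposition~\ref{pro:Expected-TD-AC-PMD:sigma-bellman-property}, and using that $\calF$ is itself a $\gamma$-contraction in $\ell_\infty$ with fixed point $Q^*$, the operator $\calF_{\pi_b,\alpha}$ satisfies $\calF_{\pi_b,\alpha} Q^* = Q^*$ and is an $\ell_\infty$-contraction with modulus $1-(1-\gamma)\alpha\tilde\sigma_b$.

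Next I would establish the one-step recursion by writing $\|Q^* - Q^{k+1}\|_\infty = \|\calF_{\pi_b,\alpha} Q^* - \calF_{\pi_b,\alpha} Q^k - \alpha\bar\omega_k\|_\infty$ and combining the triangle inequality with the contraction bound to get
\[
\|Q^* - Q^{k+1}\|_\infty \leq [1-(1-\gamma)\alpha\tilde\sigma_b]\,\|Q^* - Q^k\|_\infty + \alpha\|\bar\omega_k\|_\infty .
\]
This is the Q-learning analogue of Lemma~\ref{lem:Expected-TD-AC-PMD:linear-critic-convergence}, and it is strictly simpler: there is no $\tfrac{\alpha\gamma}{\eta_k}\|D^{\tilde\pi_k}_{\pi_k}\|_\infty$ residual, since Q-learning uses the exact maximum instead of an approximately greedy policy. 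Equivalently, by Proposition~\ref{pro:Expected-TD-AC-PMD:equivalence} batch Q-learning is the squared-$\ell_2$ instance of Expected TD-PMD once $\eta_k$ is large, and this recursion is the $\eta\to\infty$ specialization of equation~\eqref{eq:Expected-TD-AC-PMD:adaptive-stepsize-critic-convergence}, in which the $\tfrac{\gamma}{\eta(1-\gamma)\tilde\sigma_b}$ term vanishes.

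I would then take expectations and invoke Lemma~\ref{lem:Expected-TD-AC-PMD:error-control-2} to bound $\E\|\bar\omega_k\|_\infty \leq C_1 (B_k)^{-1/2}$ with $C_1 = \big(\tfrac{4|\calS||\calA|}{(1-\gamma)^2}(1+\tfrac{m_b}{1-\kappa_b})\big)^{1/2}$, and unroll the recursion from $K$ down to $0$. Using $Q^0=0$ together with $0 \leq Q^* \leq \tfrac{1}{1-\gamma}$ from equation~\eqref{eq:bounded-policy-values}, so that $\|Q^* - Q^0\|_\infty \leq \tfrac{1}{1-\gamma}$, this yields
\[
\E\ls[\|Q^* - Q^K\|_\infty\rs] \leq \frac{1}{1-\gamma}[1-(1-\gamma)\alpha\tilde\sigma_b]^K + \alpha C_1 \sum_{k=0}^{K-1}[1-(1-\gamma)\alpha\tilde\sigma_b]^{K-1-k}(B_k)^{-1/2},
\]
and recognizing the geometric sum as $\Xi(K)$ gives exactly the claimed bound.

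The only point requiring genuine care is confirming that Lemma~\ref{lem:Expected-TD-AC-PMD:error-control-2} transfers to the Q-learning noise, since that lemma is stated for the Expected TD-PMD noise built from $\calF^{\pi_{k+1}}$ whereas here $\bar\omega_k$ is built from $\calF$. I expect no real obstacle: the proof of that lemma relies only on the uniform boundedness $0 \leq Q^k \leq \tfrac{1}{1-\gamma}$ (the Q-learning counterpart of Lemma~\ref{lem:Expected-TD-AC-PMD:bounded-value}, which holds verbatim because the greedy target $\max_a Q^k(s',a)$ stays in $[0,\tfrac{1}{1-\gamma}]$ under $\alpha\in(0,1]$) and on the geometric mixing of the behavior chain from Proposition~\ref{pro:MDP-ergodicity}, and neither ingredient depends on whether the bootstrap target is formed with $\calF^{\pi_{k+1}}$ or $\calF$. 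Hence the corollary follows with essentially the same constants.
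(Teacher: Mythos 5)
Your proof is correct, but it takes a genuinely different route from the paper's. The paper never re-derives a contraction recursion for the optimal Bellman operator: it notes that $\| D^{\tilde\pi_k}_{\pi_k} \|_\infty \leq 1$ when $h(p)=\frac{1}{2}\|p\|_2^2$, chooses the adaptive step size $\eta_k = \max\{2\max_{s\in\calS}(\varDelta_{k,s})^{-1}, \eta\}$ so that, by Proposition~\ref{pro:Expected-TD-AC-PMD:equivalence}, Expected TD-PMD coincides with batch Q-learning, then invokes the already-established bound in equation~\eqref{eq:Expected-TD-AC-PMD:adaptive-stepsize-critic-convergence} --- which still carries the residual $\frac{\gamma}{\eta(1-\gamma)\tilde\sigma_b}$ --- and finally kills that residual by letting $\eta\to\infty$, which is legitimate precisely because the batch Q-learning iterates do not depend on $\eta$. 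You instead work directly with the weighted optimal Bellman operator $\calF_{\pi_b,\alpha}Q = Q + \alpha\Sigma^{\pi_b}[\calF Q - Q]$, whose fixed point is $Q^*$ and whose $\ell_\infty$-contraction modulus is $1-(1-\gamma)\alpha\tilde\sigma_b$, so your one-step recursion has no $\eta$-residual at all and no limiting argument is needed. Your careful point about transferring Lemma~\ref{lem:Expected-TD-AC-PMD:error-control-2} is right, and can in fact be shortened: for the policy that is greedy with respect to $Q^k$ one has $\calF^{\pi_{k+1}}Q^k = \calF Q^k$, so the batch Q-learning noise coincides exactly with the Expected TD-PMD noise and the lemma applies verbatim rather than by re-inspection of its proof. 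In terms of trade-offs, your argument is self-contained and cleaner (no appeal to the equivalence proposition, no limit), whereas the paper's is shorter given its existing machinery and emphasizes the structural fact that batch Q-learning is literally an instance of Expected TD-PMD with the squared $\ell_2$ mirror map.
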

\begin{proof}
Notice that $\| D^{\tilde\pi_k}_{\pi_k} \|_\infty\leq 1$ when $h(p)=\frac{1}{2}\|p\|_2^2$. Thus, if we set $\eta_k = \max\{ 2\cdot\max_{s\in\calS} (\varDelta_{k,s})^{-1}, \; \eta \}\geq \eta$ for Expected TD-PMD, if follows from equation~\eqref{eq:Expected-TD-AC-PMD:adaptive-stepsize-critic-convergence} that
    \begin{align*}
        \E[\| Q^* - Q^{K} \|_\infty] \leq [1-(1-\gamma)\alpha\tilde\sigma_b]^K \cdot \frac{1}{1-\gamma} + \frac{\gamma}{\eta(1-\gamma)\tilde\sigma_b} + \alpha \ls( \frac{4|\calS||\calA|}{(1-\gamma)^2} \ls( 1 + \frac{m_b}{1-\kappa_b} \rs) \rs)^{\nicefrac{1}{2}} \Xi(K).
    \end{align*}
    Moreover, Proposition~\ref{pro:Expected-TD-AC-PMD:equivalence} implies that Expected TD-PMD is equivalent to batch Q-learning, thus the upper bound above is valid for batch Q-learning for any $\eta > 0$. Letting $\eta \to \infty$ concludes the proof.
\end{proof}

\begin{remark}
    It is worth noting that, since for (batch) Q-learning the policy is greedy, we do not need to obtain the error bound of the policy value through Theorem~\ref{thm:Expected-TD-AC-PMD:adaptive-step-size}. In fact we can transform the error bound from $\| Q^* - Q^{K} \|_\infty$ to $\| Q^* - Q^{\pi_K} \|_\infty$ through the error amplification lemma, see for example \textup{\citet[Lemma 1.11]{Agarwal2019ReinforcementLT}}.
\end{remark}

\begin{remark}
    Corollary~\ref{cor:batch-Q-learning} implies that $\E[\| Q^* - Q^K \|_\infty] \leq \varepsilon$ provided
    \begin{gather*}
        K = \log \frac{2}{\varepsilon(1-\gamma)} \cdot \ls[ \log \frac{1}{1-(1-\gamma)\alpha\tilde\sigma_b} \rs]^{-1}, \\ 
        B_k = \frac{16\alpha^2|\calS||\calA|}{\varepsilon^{2}(1-\gamma)^2} \ls( 1 + \frac{m_b}{1-\kappa_b} \rs) [1-(1-\gamma)\alpha\tilde\sigma_b]^{K-k-1} \frac{1}{\ls(1-[1-(1-\gamma)\alpha\tilde\sigma_b]^{\nicefrac{1}{2}}\rs)^2}.
    \end{gather*}
    Thus the total sample complexity is
    \begin{align*}
        S = \sum_{k=0}^{K-1} B_k &\leq \frac{16\alpha^2|\calS||\calA|}{\varepsilon^{2}(1-\gamma)^2} \ls( 1 + \frac{m_b}{1-\kappa_b} \rs) \frac{1}{(1-\gamma)\alpha\tilde\sigma_b} \frac{1}{\ls(1-[1-(1-\gamma)\alpha\tilde\sigma_b]^{\nicefrac{1}{2}}\rs)^2} \\
        &= O \ls( (1-\gamma)^{-5} |\calS||\calA| \alpha^{-1} \tilde\sigma_b^{-3} \varepsilon^{-2} \cdot m_b (1-\kappa_b)^{-1} \rs).
    \end{align*}
    Considering the dependency on $(1-\gamma)$ and $\varepsilon$, the sample complexity of batch Q-learning is $O((1-\gamma)^{-5} \varepsilon^{-2})$. In~\textup{\citet{Li2021IsQM}}, a tight sample complexity of $O((1-\gamma)^{-4}\varepsilon^{-2}|\mathcal{S}||\mathcal{A}| \log^2 ((1-\gamma)^{-1}\varepsilon^{-1}))$ is established for Q-learning \textup{(}i.e., batch Q-learning with $B_k=1$\textup{)} with a small critic step size $\alpha=\tilde{O}(\varepsilon^{-2})$. In contrast, we have established the sample complexity of Q-learning with absolute constant critic step size by utilizing batch.  Though their result has a better dependency on $(1-\gamma)$, our sample complexity improves it by a logarithm  factor. Moreover, it is interesting to see whether the sample complexity of batch Q-learning can be further improved to match that in  \textup{\citet{Li2021IsQM}} or not. As discussed in Remark~\ref{rem:entire-mixing}, maybe this can be achieved by considering the stationary property of the entire Markov chain.
\end{remark}


\section{Approximate TD-PMD with Mixed-Policy Markov Data}
\label{sec:Approximated-TD-AC-PMD}
Notice that in Expected TD-PMD, it requires to compute the expectation $\E_{a \sim \pi_{k+1}(\cdot|s_t^k)} [Q^k(s^k_t, a)]$ when reconstructing  the TD error.  Thus, it is natural to ask what about we replacing the expectation with a sample mean? In this section we will investigate this problem.

In this setting, the trajectory $\tau_k$ admits the following form 
\[
\tau_k = (s_t^k, a_t^k, r_t^k, s_t^{\prime \, k}, a_t^{\prime \, k})_{t=0}^{B_k-1},
\]
where 
\begin{align*}
    a_t^k \sim \pi_b(\cdot | s_t^k), \;\; r_t^k = r(s_t^k, a_t^k), \;\; s^{\prime \, k}_t \sim P(\cdot | s_t^k, a_t^k), \;\; a_t^{\prime \, k} \sim \pi_{k+1}(\cdot|s^{\prime \, k}_t), \;\; s_{t+1}^k \sim P(\cdot|s_t^{\prime \, k}, a_t^{\prime \, k}).
\end{align*}
That is, $a^k_t$ is the action sampled from a behavior policy $\pi_b$, while $a^{\prime \, k}_t$ is the action sampled from the target policy $\pi_{k+1}$.  
Moreover, the TD error is constructed by
\begin{align*}
    \delta_t^k(s,a) = \mathds{1}_{(s^k_t,a^k_t)=(s,a) } \cdot\ls[r_t^k + \gamma\ls[Q^k(s_t^{\prime \, k}, a_t^{\prime \, k})\rs] - Q^k(s^k_t, a^k_t)\rs].
\end{align*}
Based on the new TD error, we obtain the {Approximate TD-PMD} algorithm, see Algorithm~\ref{alg:Approximated-TD-AC-PMD} for a detailed description with the illustration presented in Figure~\ref{fig:Approximated-TD-AC-PMD}. It is evident that Approximate TD-PMD is an online mixed-policy learning algorithm (off-policy for sampling an action to update the value while on-policy for estimating the expectation). As will be shown next, similar sample complexities can be established for Approximate TD-PMD. 

\begin{algorithm}[ht!]
    \small
    \caption{Approximate TD-PMD with Mixed-Policy Markov Data}
    \label{alg:Approximated-TD-AC-PMD}
\begin{algorithmic}
    \STATE {\bfseries Input:} Iterations $K$. Initial action value $Q^0=0$, initial policy $\pi_0$, critic step size $\alpha_k$, policy update step size $\eta_k$, initial state $s_0$, batch size $B_k$, average weight $c_t^k$.
    \STATE Set $s_0^0 = s_0$.
    \FOR{$k=0,1, \dots, K$}
    \STATE \textbf{(Policy update)} Update the target policy by
    \begin{align*}
        \forall\, s\in\calS: \quad \pi_{k+1}(\cdot|s) = \underset{p\in\Delta(\calA)}{\arg\max} \; \ls\{ \eta_k \ls\langle p ,\, Q^k(s,\cdot) \rs\rangle - D^p_{\pi_k}(s) \rs\}.
    \end{align*}
    \STATE \textbf{(Sampling)} Starting from $s_0^k$. obtain $\tau_k$ by
    \begin{align*}
        &\tau_k = \{ (s_t^k, a_t^k, r^k_t, s_{t}^{k\,\prime}, a_t^{k\,\prime}) \}_{t=0}^{B_k-1}, \\
        &\mbox{where} \; a_t^k \sim \pi_b(\cdot | s_t^k), \;\; r_t^k = r(s_t^k, a_t^k), \;\; s^{\prime \, k}_t \sim P(\cdot | s_t^k, a_t^k), \;\; a_t^{\prime \, k} \sim \pi_{k+1}(\cdot|s^{\prime \, k}_t), \;\; s_{t+1}^k \sim P(\cdot|s_t^{\prime \, k}, a_t^{\prime \, k}),
    \end{align*}
    and set $s_0^{k+1} = s_{B_k}^k$.
    \STATE \textbf{(Critic update)} Construct the approximate TD error,
    \begin{align*}
        &\delta_t^k(s,a) = \mathds{1}_{(s^k_t,a^k_t)=(s,a) } \cdot\ls[r_t^k + \gamma\ls[Q^k(s_t^{\prime \, k}, a_t^{\prime \, k})\rs] - Q^k(s^k_t, a^k_t)\rs], \\
        &\bar{\delta}_k(s,a) := \sum_{t=0}^{B_k-1} c_t^k \cdot \delta_t^k(s,a).
    \end{align*}
    \STATE Update the critic by
    \begin{align*}
        Q^{k+1}(s,a) = Q^k(s,a) + \alpha_k \cdot \bar{\delta}_k(s,a).
    \end{align*}
    \ENDFOR
    \STATE Sample $\hat K$ from $\{ 0, 1, \dots, K \}$ with uniform probability.
    \STATE Output $\pi_K$ and $\pi_{\hat K}$.
\end{algorithmic}
\end{algorithm}

\begin{figure}
    \centering
    \includegraphics[width=1\linewidth]{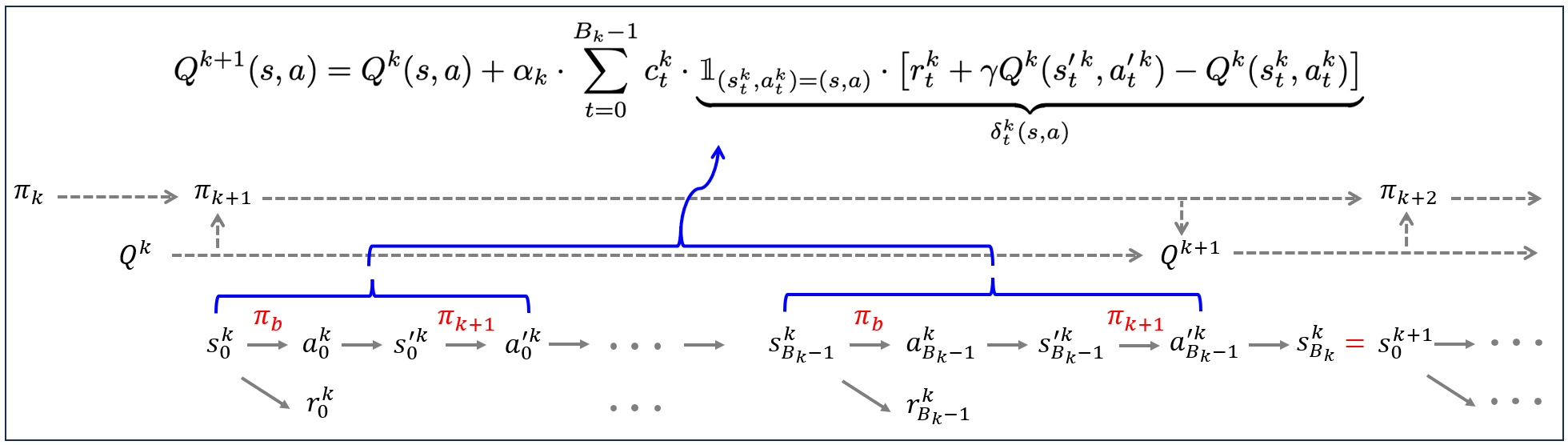}
    \caption{An illustration of Approximate TD-PMD with mixed-policy Markov data (Algorithm~\ref{alg:Approximated-TD-AC-PMD}).}
    \label{fig:Approximated-TD-AC-PMD}
\end{figure}

For Approximate TD-PMD,  the expectation of the critic update conditioned on $s_0^k$, $Q^k$, and $\pi_{k+1}$ is given by 
\begin{align}
    \E\ls[Q^{k+1}(s,a)\rs] = \E \ls[Q^k(s,a) + \alpha_k\cdot \sum_{t=0}^{B_k-1} c_t^k\cdot {\P(s_t^k=s) \pi_b(a|s)}\cdot {\ls[ \calF^{\pi_{k+1}} Q^k - Q^k \rs]}  \rs].
    \label{eq:Approximated-TD-AC-PMD:intuition}
\end{align}
In contrast to the expression in Expected TD-PMD, the term $\P(s_t^k=s)$ does not converge to the stationary distribution $\sigma^{\pi_b}(s)$ due to alternative  sampling. To obtain the stationary distribution of chain $\{ s_t^k \}_{t\geq 0}$ (note that the complete state chain is $\{ s_t^k, s_t^{\prime\,k} \}_{t\geq0}$), for any $\pi, \pi^\prime \in \Pi$, define the following composition transition matrix:
\begin{align*}
    P^{\pi \circ \pi^\prime}_{\scriptscriptstyle \calS} \in \mathbb{R}^{|\calS|\times|\calS|}: \quad P^{\pi\circ\pi^\prime}_{\scriptscriptstyle\calS}(s,\tilde s) = \sum_{a\in\calA,\, s^\prime \in \calS,\, a^\prime \in \calA} \pi(a|s) P(s^\prime | s,a) \pi^\prime(a^\prime | s^\prime) P(\tilde s| s^\prime, a^\prime) = [P^{\pi}_{\scriptscriptstyle\calS} P^{\pi^\prime}_{\scriptscriptstyle\calS}](s, \tilde s).
\end{align*}
One can verify that $P^{\pi_b \circ \pi_{k+1}}_{\scriptscriptstyle \calS}$ is the transition matrix of the state chain $\{ s_t^k \}_{t\geq0}$ when $\pi_{k+1}$ is given.  Thus, we make the following assumption for the ergodicity of $\{ s_t^k \}_{t\geq 0}$.
\begin{assumption}
    For any $\pi\in\Pi$, the state transition matrix $P^{\pi_b\circ\pi}_{\scriptscriptstyle\calS}$ is ergodic over $\calS$.
    \label{ass:ergodicity-2}
\end{assumption}
\noindent As the target policy $\pi_k$ varies over from iteration to iteration, here we assume  $P^{\pi_b\circ\pi}_{\scriptscriptstyle\calS}$ is ergodic for any policy $\pi \in \Pi$. This is an assumption that is common in the analysis of AC algorithms. In fact, in   existing works such as~\citet{khodadadian2022finite,li2024stochastic,wang2024non}, they assume the ergodicity of $P^{\pi}_{\scriptscriptstyle\calS}$ for any $\pi\in\Pi$. Under Assumption~\ref{ass:ergodicity-2}, a result  similar to Proposition~\ref{pro:MDP-ergodicity} can be obtained.

\begin{proposition}
    Suppose that Assumption~\ref{ass:ergodicity-2} holds. For any policy $\pi\in\Pi$, there exists a unique stationary distribution for transition kernel $P^{\pi_b\circ\pi}_{\scriptscriptstyle\calS}$,
    \begin{align*}
        \nu^{\pi_b\circ\pi} \in \Delta(\calS): \quad (\nu^{\pi_b\circ\pi})^\top P^{\pi_b\circ\pi}_{\scriptscriptstyle\calS} = (\nu^{\pi_b\circ\pi})^\top.
    \end{align*}
    Additionally, there exists a constant $\tilde\nu > 0$ such that 
    \begin{align*}
        \forall\, \pi \in \Pi, \; s\in\calS: \quad \nu^{\pi_b\circ\pi}(s) \geq \tilde\nu > 0.
    \end{align*}
    Furthermore, for any $\pi$, let $\{ s_t \}_{t\geq 0}$ be the Markov chain with transition kernel of $P^{\pi_b\circ\pi}_{\scriptscriptstyle\calS}$. Then the chain $\{ s_t \}_{t\geq0}$ is \textup{(}uniform\textup{)} geometrically mixing, i.e., there exists $m > 0$ and $\kappa \in (0,1)$ \textup{(}independent of $\pi$\textup{)} such that
    \begin{align*}
        \forall\, s\in\calS, \; n \geq 0, \; t \geq 0: \quad d_{TV}(\nu^{\pi_b\circ\pi}(\cdot), \, \P(s_{n+t} = \cdot | s_n = s)) \leq m \kappa^t.
    \end{align*}
    \label{pro:MDP-ergodicity-2}
\end{proposition}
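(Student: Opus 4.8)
The plan is to establish all three claims by reducing everything to a single \emph{uniform minorization} condition of the form $(P^{\pi_b\circ\pi}_{\scriptscriptstyle\calS})^{R}(s,s')\geq \beta$ for an integer power $R$ and a constant $\beta>0$ that do not depend on $\pi$. For each fixed $\pi\in\Pi$, the existence, uniqueness, and strict positivity of $\nu^{\pi_b\circ\pi}$, together with geometric mixing of the corresponding chain, already follow from Assumption~\ref{ass:ergodicity-2} by the same standard argument behind Proposition~\ref{pro:MDP-ergodicity} (namely \citet[Theorem~4.9]{levin2017markov} applied to the finite, irreducible, aperiodic kernel $P^{\pi_b\circ\pi}_{\scriptscriptstyle\calS}$). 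The genuinely new content is therefore the \emph{uniformity} of the constants $\tilde\nu$, $m$, and $\kappa$ over the entire policy class $\Pi$, and this is exactly where a compactness argument enters.

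First I would record the topological facts. Since $\calS$ and $\calA$ are finite, the policy set $\Pi=\Delta(\calA)^{|\calS|}$ is compact, and the map $\pi\mapsto P^{\pi_b\circ\pi}_{\scriptscriptstyle\calS}=P^{\pi_b}_{\scriptscriptstyle\calS}P^{\pi}_{\scriptscriptstyle\calS}$ is continuous (indeed linear in the entries of $\pi$). Hence the image $\mathcal M:=\{P^{\pi_b\circ\pi}_{\scriptscriptstyle\calS}:\pi\in\Pi\}$ is a compact set of row-stochastic matrices, each of which is primitive (irreducible and aperiodic) by Assumption~\ref{ass:ergodicity-2}. For each $P\in\mathcal M$ primitivity gives an integer $r_P$ with $P^{r_P}>0$ entrywise; since strict positivity is an open condition and matrix powering is continuous, there is an open neighborhood $U_P$ of $P$ and a constant $\beta_P>0$ with $(P')^{r_P}(s,s')\geq\beta_P$ for all $P'\in U_P\cap\mathcal M$ and all $s,s'$. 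Covering the compact set $\mathcal M$ by finitely many such $U_{P_1},\dots,U_{P_L}$ and setting $R:=\max_i r_{P_i}$ and $\beta:=\min_i\beta_i>0$, I would then use the stochasticity of the leftover factor, $(P')^{R}(s,s')=\sum_z (P')^{R-r_{P_i}}(s,z)\,(P')^{r_{P_i}}(z,s')\geq \beta_i$ (rows of $(P')^{R-r_{P_i}}$ sum to $1$), to conclude the uniform minorization $(P^{\pi_b\circ\pi}_{\scriptscriptstyle\calS})^{R}(s,s')\geq\beta$ for all $\pi$, $s$, $s'$.

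With the uniform minorization in hand, the three conclusions follow quickly. For the lower bound on the stationary distribution, stationarity gives $\nu^{\pi_b\circ\pi}(s')=\sum_s \nu^{\pi_b\circ\pi}(s)(P^{\pi_b\circ\pi}_{\scriptscriptstyle\calS})^{R}(s,s')\geq\beta$, so $\tilde\nu:=\beta$ works uniformly over $\pi$. For geometric mixing, I would invoke the Dobrushin contraction coefficient $\delta_{\mathrm D}(\cdot)$: the minorization yields $\delta_{\mathrm D}((P^{\pi_b\circ\pi}_{\scriptscriptstyle\calS})^{R})\leq 1-|\calS|\beta=:c<1$ uniformly, and submultiplicativity of $\delta_{\mathrm D}$ under products together with the standard contraction bound $d_{TV}(\nu,P^{t}(s,\cdot))\leq \delta_{\mathrm D}(P^{t})$ gives $d_{TV}(\nu^{\pi_b\circ\pi}(\cdot),\P(s_{n+t}=\cdot\mid s_n=s))\leq c^{\lfloor t/R\rfloor}\leq c^{-1}(c^{1/R})^{t}$, i.e. the claim with $m:=c^{-1}$ and $\kappa:=c^{1/R}\in(0,1)$, both independent of $\pi$.

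The main obstacle is precisely the uniform minorization step: for an individual policy the power $r_\pi$ that makes the kernel strictly positive can a priori depend on $\pi$ and could be unbounded over a non-compact family, so the argument crucially relies on compactness of $\mathcal M$ to extract a single $R$ and a single $\beta$. The only point requiring care is that the finite subcover must be taken with respect to neighborhoods on which both strict positivity and a uniform positive lower bound on the $r_{P_i}$-th power persist; once that is secured, padding the power up to the common $R$ is harmless because the extra factor is stochastic. Everything else is a routine transcription of the single-chain estimates already used for Proposition~\ref{pro:MDP-ergodicity}.
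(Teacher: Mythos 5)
Your proof is correct, and it is essentially the argument the paper intends: the paper omits the proof entirely, deferring to \citet[Lemma~7]{khodadadian2022finite}, and that lemma is itself established by exactly the compactness-plus-uniform-minorization route you spell out (per-policy primitivity from Assumption~\ref{ass:ergodicity-2}, continuity of $\pi\mapsto P^{\pi_b\circ\pi}_{\scriptscriptstyle\calS}$ on the compact set $\Pi$, a finite subcover giving a single power $R$ and lower bound $\beta$, then Doeblin/Dobrushin contraction). The only point to tidy is the degenerate case $|\calS|\beta=1$, which would give $c=0$ and hence $\kappa\notin(0,1)$; replacing $\beta$ by $\min\{\beta,\tfrac{1}{2|\calS|}\}$ before defining $c$ fixes this trivially.
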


Proposition~\ref{pro:MDP-ergodicity-2} can be proved similarly as ~\citet[Lemma~7]{khodadadian2022finite} and the details are omitted. At the $k$-th iteration of Approximate TD-PMD, the trajectory $\{ s_t^k \}_{t\geq0}$ is the Markov chain with transition kernel of $P^{\pi_b\circ\pi_{k+1}}_{\scriptscriptstyle\calS}$ when $\pi_{k+1}$ is fixed. Thus by Proposition~\ref{pro:MDP-ergodicity-2},
\begin{align}
    \forall\, 0\leq k \leq K, \; s\in\calS, \; n \geq 0, \; t \geq 0: \quad d_{TV}(\nu^{\pi_b\circ\pi_{k+1}}(\cdot), \, \P(s_{n+t}^k = \cdot | s_n^k = s) \; \big| \; \pi_{k+1}) \leq m \kappa^t.
    \label{eq:Approximated-TD-AC-PMD:MDP-ergodicity}
\end{align}
Let $\sigma^{\pi_b\circ\pi}(s,a) := \nu^{\pi_b\circ\pi}(s) \cdot \pi_b(a|s)$. One can directly verify that $\sigma^{\pi_b\circ\pi_{k+1}}$ is the stationary distribution of chain $\{ (s_t^k, a_t^k) \}_{t\geq0}$. By Proposition~\ref{pro:MDP-ergodicity-2}, we have $\P(s_t^k = s) \to \nu^{\pi_b\circ\pi_{k+1}}(s)$ and $\P(s^k_t=s, \, a^k_t=a) \to \sigma^{\pi_b\circ\pi_{k+1}}(s,a)$ as $B_k$ increases.

As in Expected TD-PMD (see Lemma~\ref{lem:Expected-TD-AC-PMD:bounded-value}),  the Q-values in Approximate TD-PMD are also bounded.
\begin{lemma}
    Consider Approximate TD-PMD \textup{(}Algorithm~\ref{alg:Approximated-TD-AC-PMD}\textup{)} with critic step size satisfying $\alpha_k \in (0,1]$. We have
    \begin{align*}
        \forall\, 0\leq k\leq K, \; (s,a) \in \calS\times\calA: \quad 0 \leq Q^k(s,a) \leq \frac{1}{1-\gamma}.
    \end{align*}
    \label{lem:Approximated-TD-AC-PMD:bounded-value}
\end{lemma}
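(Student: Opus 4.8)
The plan is to prove the bound by induction on the iteration index $k$, exactly paralleling the argument behind Lemma~\ref{lem:Expected-TD-AC-PMD:bounded-value} for Expected TD-PMD. The base case is immediate: since $Q^0 = 0$, we have $0 \leq Q^0(s,a) \leq \frac{1}{1-\gamma}$ for every $(s,a)\in\calS\times\calA$. For the inductive step, I would assume $0 \leq Q^k(s,a) \leq \frac{1}{1-\gamma}$ holds for all $(s,a)$ and establish the same bound for $Q^{k+1}$.

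First I would fix a pair $(s,a)$ and isolate which summands of $\bar\delta_k(s,a)$ actually contribute. Because of the indicator $\mathds{1}_{(s_t^k,a_t^k)=(s,a)}$, only the time indices in $T_{s,a} := \ls\{ t : (s_t^k,a_t^k)=(s,a) \rs\}$ are nonzero, and for such $t$ the approximate TD error reduces to $r_t^k + \gamma Q^k(s_t^{\prime\,k}, a_t^{\prime\,k}) - Q^k(s,a)$. Substituting into the critic update and collecting the $Q^k(s,a)$ terms, the key step is to rewrite the update as a convex combination,
\begin{align*}
    Q^{k+1}(s,a) = (1 - w)\, Q^k(s,a) + \alpha_k \sum_{t \in T_{s,a}} c_t^k \ls[ r_t^k + \gamma Q^k(s_t^{\prime\,k}, a_t^{\prime\,k}) \rs], \quad w := \alpha_k \sum_{t\in T_{s,a}} c_t^k.
\end{align*}
Since $\alpha_k \in (0,1]$, the weights $c_t^k$ are nonnegative, and $\sum_{t=0}^{B_k-1} c_t^k = 1$, we have $0 \leq w \leq \alpha_k \leq 1$, so this is a genuine convex combination of $Q^k(s,a)$ and the one-step targets $r_t^k + \gamma Q^k(s_t^{\prime\,k}, a_t^{\prime\,k})$.

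It then remains to verify that each quantity entering the convex combination lies in $\ls[0, \frac{1}{1-\gamma}\rs]$. For $Q^k(s,a)$ this is precisely the induction hypothesis. For each target, using $r_t^k \in [0,1]$ together with the induction hypothesis $0 \leq Q^k(s_t^{\prime\,k}, a_t^{\prime\,k}) \leq \frac{1}{1-\gamma}$, I would bound it below by $0$ and above by $1 + \gamma \cdot \frac{1}{1-\gamma} = \frac{1}{1-\gamma}$. Since a convex combination of numbers in $\ls[0, \frac{1}{1-\gamma}\rs]$ remains in $\ls[0, \frac{1}{1-\gamma}\rs]$, the bound for $Q^{k+1}(s,a)$ follows and the induction closes. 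There is no genuine obstacle here; the only point requiring care is confirming that the convex-combination weights sum to one and that the \emph{single-sample} target $r_t^k + \gamma Q^k(s_t^{\prime\,k}, a_t^{\prime\,k})$ enjoys the same $\frac{1}{1-\gamma}$ bound as the expected target in the Expected TD-PMD proof, which it does because $Q^k(s_t^{\prime\,k}, a_t^{\prime\,k})$ is itself bounded by $\frac{1}{1-\gamma}$ under the inductive hypothesis.
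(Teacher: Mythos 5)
Your proof is correct: the induction on $k$, rewriting the update at each $(s,a)$ as a convex combination of $Q^k(s,a)$ (weight $1-w$) and the one-step targets $r_t^k + \gamma Q^k(s_t^{\prime\,k}, a_t^{\prime\,k})$ (total weight $w = \alpha_k\sum_{t\in T_{s,a}} c_t^k \le 1$), each of which lies in $[0,\tfrac{1}{1-\gamma}]$ by the inductive hypothesis and $r\in[0,1]$, is exactly the routine verification the paper alludes to but omits (it states the lemma ``can be verified easily'' and gives no proof body). Nothing is missing; your handling of the single-sample target in place of the expected target is the only difference from the Expected TD-PMD case, and you address it correctly.
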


In the sequel, we will extend the analysis for Expected TD-PMD to establish the sample complexity of Approximate TD-PMD under constant and adaptive policy update step sizes. The major difference is to handle the time-varying stationary distribution $\sigma^{\pi_b\circ\pi_{k+1}}$, which requires following lemma.

\begin{lemma}[\protect{\citet[Corollary~3.1]{Mitrophanov_2005}}]
    Let $\{ X_t \}_{t\geq0}$ and $\{ \tilde X_t \}_{t\geq0}$ be two Markov chains over $|\mathcal{X}|< \infty$ with transition kernels $P$ and $\tilde P$. Denote by $\nu$, $\tilde\nu$ the stationary distribution of $\{ X_t \}_{t\geq0}$, $\{ \tilde X_t \}_{t\geq0}$ respectively, and assume $\{ X_t \}_{t\geq0}$ is geometrically mixing, i.e.,
    \begin{align*}
        \exists\, m>0, \; \exists\, \kappa \in (0,1), \;\; \forall\, x\in \mathcal{X}, \;\; \forall\, n \geq 0 , \;\; \forall\, t\geq 0: \quad d_{TV} \ls( \nu(\cdot), \; \P(X_{n+t}=\cdot  | X_n = x) \rs) \leq m \kappa^{t}.
    \end{align*}
    Then there holds
    \begin{align*}
        d_{TV}(\nu(\cdot), \, \tilde\nu(\cdot)) \leq \ls( \ls\lceil \log_\kappa m^{-1} \rs\rceil + \frac{1}{1-\kappa} \rs) \| P - \tilde P \|_\infty.
    \end{align*}
    \label{lem:perturbated-chain-stationary-diff}
\end{lemma}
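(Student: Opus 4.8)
The plan is to reprove this perturbation bound (Corollary~3.1 of \citet{Mitrophanov_2005}) by writing the gap between the two stationary distributions as a convergent Neumann-type series in the kernel difference $\tilde P - P$ and then summing it using the geometric mixing of $P$. Treat $\nu,\tilde\nu$ as row vectors, so $\nu P = \nu$ and $\tilde\nu\tilde P = \tilde\nu$. The starting point is the telescoping identity $\tilde P^n - P^n = \sum_{j=0}^{n-1}\tilde P^{j}(\tilde P - P)P^{n-1-j}$; left-multiplying by $\tilde\nu$ and using $\tilde\nu\tilde P^{j}=\tilde\nu$ gives $\tilde\nu - \tilde\nu P^{n} = \sum_{i=0}^{n-1}\tilde\nu(\tilde P - P)P^{i}$. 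Since $\tilde\nu$ is a probability measure, the mixing hypothesis forces $\tilde\nu P^{n}\to\nu$ in total variation, so letting $n\to\infty$ produces the representation
\[
\tilde\nu - \nu = \sum_{i=0}^{\infty}\phi P^{i}, \qquad \phi := \tilde\nu(\tilde P - P).
\]

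Next I would record two elementary facts about $\phi$. First, because both $P$ and $\tilde P$ are stochastic, $\phi\mathbf{1} = \tilde\nu(\tilde P - P)\mathbf{1} = 0$, so $\phi$ is a signed measure of total mass zero (its positive and negative parts have equal mass). Second, a row-by-row estimate gives $\|\phi\|_1 \le \sum_x \tilde\nu(x)\sum_y |\tilde P(x,y)-P(x,y)| \le \|P-\tilde P\|_\infty$, using that the matrix $\infty$-norm is the maximal absolute row sum. The crux is then to control $\|\phi P^{i}\|_1$ by two complementary bounds. On one hand every stochastic matrix is non-expansive in $\ell_1$, so $\|\phi P^{i}\|_1 \le \|\phi\|_1$. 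On the other hand, splitting $\phi$ into its (equal-mass) positive and negative parts, normalizing each to a probability measure, and applying the geometric-mixing hypothesis to each through the triangle inequality via $\nu$, one obtains $\|\phi P^{i}\|_1 \le C\,m\kappa^{i}\,\|\phi\|_1$ for an absolute constant $C$. Combining, $\|\phi P^{i}\|_1 \le \min\{1,\,C m\kappa^{i}\}\,\|\phi\|_1$.

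Finally I would sum the geometric-type series $\sum_{i\ge 0}\min\{1,\,Cm\kappa^{i}\}$. Writing $i_0 := \lceil \log_\kappa m^{-1}\rceil$ for the crossover index past which the mixing bound becomes the binding one, the first block of terms is bounded term-by-term by $1$ and contributes at most $i_0$, while the remaining tail is a genuine geometric series summing to at most $\tfrac{1}{1-\kappa}$. Multiplying by $\|\phi\|_1 \le \|P-\tilde P\|_\infty$ and converting the resulting $\ell_1$ bound into total variation reproduces the coefficient $\lceil \log_\kappa m^{-1}\rceil + \tfrac{1}{1-\kappa}$ in the statement. The main obstacle lies precisely in these last two steps: one must justify convergence of the infinite series using the mean-zero property together with mixing (a stochastic matrix by itself is only non-expansive, not contractive, so the series would not converge for a generic signed measure), and then track the total-variation normalization carefully so that the crossover lands at $\log_\kappa m^{-1}$ and the final coefficient emerges exactly as stated rather than with a spurious factor of two.
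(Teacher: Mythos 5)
The paper never proves this lemma at all: it is imported verbatim as \citet[Corollary~3.1]{Mitrophanov_2005}, so there is no internal proof to compare against, and the only question is whether your self-contained derivation is sound. It is, and it is essentially the classical perturbation-series argument (in the same spirit as the source, specialized to finite chains with geometric mixing). Every step checks out: the telescoping identity together with $\tilde\nu\tilde P^{j}=\tilde\nu$ gives $\tilde\nu-\tilde\nu P^{n}=\sum_{i=0}^{n-1}\phi P^{i}$ with $\phi=\tilde\nu(\tilde P-P)$; writing $\tilde\nu P^{n}=\sum_x\tilde\nu(x)\,\delta_xP^{n}$ and averaging the mixing hypothesis gives $d_{TV}(\tilde\nu P^{n},\nu)\le m\kappa^{n}\to0$, hence $\tilde\nu-\nu=\sum_{i\ge0}\phi P^{i}$; and the facts $\phi\mathbf{1}=0$, $\|\phi\|_1\le\|P-\tilde P\|_\infty$, $\|\phi P^{i}\|_1\le\|\phi\|_1$ are immediate. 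For the complementary bound, the Hahn decomposition $\phi=\phi^{+}-\phi^{-}$ with $\|\phi^{+}\|_1=\|\phi^{-}\|_1=\frac12\|\phi\|_1$ (this is exactly where the mean-zero property enters) and the triangle inequality through $\nu$ give $\|\phi P^{i}\|_1\le 2m\kappa^{i}\|\phi\|_1$, i.e.\ your unspecified constant is $C=2$.

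The factor of two you flag as the main obstacle cancels exactly, and since you left it open, here is the bookkeeping. With $i_0=\lceil\log_\kappa m^{-1}\rceil\ge0$ one has $m\kappa^{i_0}\le1$, so
\begin{align*}
\|\tilde\nu-\nu\|_1\;\le\;\Big(\sum_{i<i_0}1+\sum_{i\ge i_0}2m\kappa^{i}\Big)\|\phi\|_1\;\le\;\Big(i_0+\frac{2}{1-\kappa}\Big)\|\phi\|_1,
\end{align*}
and passing to total variation halves the left-hand side: $d_{TV}(\nu,\tilde\nu)=\frac12\|\tilde\nu-\nu\|_1\le\big(\frac{i_0}{2}+\frac{1}{1-\kappa}\big)\|\phi\|_1\le\big(\lceil\log_\kappa m^{-1}\rceil+\frac{1}{1-\kappa}\big)\|P-\tilde P\|_\infty$. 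In other words, the factor $2$ produced by splitting $\phi$ into two equal-mass pieces is exactly absorbed by the $\frac12$ in $d_{TV}=\frac12\|\cdot\|_1$, so the crossover may indeed be placed at $\log_\kappa m^{-1}$ rather than $\log_\kappa (2m)^{-1}$, and the stated coefficient emerges with room to spare. One marginal caveat: if $m<\kappa$ then $i_0\le-1$ and this splitting of the sum degenerates; the same estimates then yield $\frac{m}{1-\kappa}\|P-\tilde P\|_\infty$, which can slightly exceed the quoted coefficient. That corner is an artifact of translating Mitrophanov's normalization (where the ergodicity prefactor is automatically at least one) into the TV convention, and it never occurs in the regime in which the paper invokes the lemma; a fully faithful reconstruction should simply assume $m\ge1$ (or $m\ge\kappa$), as the source effectively does.
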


\subsection{Approximate TD-PMD with constant step size}
\label{sec:Approximated-TD-AC-PMD-with-constant-step-size}
Notice that the proof of Lemma~\ref{lem:Expected-TD-AC-PMD:Lan-Xiao-bound} does not rely on the sampling scheme and the critic update formula. Thus the upper bound in Lemma~\ref{lem:Expected-TD-AC-PMD:Lan-Xiao-bound} still applies for Approximate TD-PMD, which is presented below for readers' convenience. 

\begin{lemma}
    Consider Approximate TD-PMD \textup{(}Algorithm~\ref{alg:Approximated-TD-AC-PMD}\textup{)}. Suppose that Assumptions~\ref{ass:h} holds. Let $\eta_k = \eta > 0$ and $\alpha_k = \alpha \in (0,1]$. Fixing any optimal policy $\pi^*\in \Pi$ and $\mu \in \Delta(\calS)$, one has
    \begin{align*}
        \E\ls[ V^*(\mu) - V^{\pi_{\hat K}}(\mu) \rs] &\leq \underbrace{\frac{1}{K+1} \ls[ \frac{\| D^{\pi^*}_{\pi_0} \|_\infty}{\eta(1-\gamma)} + \frac{1}{(1-\gamma)^2}  \rs]}_{\mbox{ \small Exact PMD bound}} \\
        &\;\;\;\;+ \underbrace{\frac{1}{K+1} \cdot \frac{\eta |\calA|^2}{2(1-\gamma)^2\lambda} \cdot \E\ls[ \sum_{k=0}^K \| Q^{k} - Q^{\pi_k} \|_\infty^2 \rs]}_{\mbox{\small Stochastic Variance term}} \\
        &\;\;\;\;+ \underbrace{\frac{1}{K+1} \cdot \frac{1}{1-\gamma} \cdot \E \ls[ \sum_{k=0}^K \E_{s\sim d^*_\mu} \ls[ \ls\langle \pi^*(\cdot|s) - \pi_k(\cdot|s), \, Q^{\pi_k}(s,\cdot) - Q^k(s,\cdot) \rs\rangle \rs] \rs]}_{\mbox{\small Stochastic Bias term}}.
    \end{align*}
    \label{lem:Approximated-TD-AC-PMD:Lan-Xiao-bound}
\end{lemma}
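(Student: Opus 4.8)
The plan is to observe that the stated bound is exactly the one already proved for Expected TD-PMD in Lemma~\ref{lem:Expected-TD-AC-PMD:Lan-Xiao-bound}, and that its entire derivation is agnostic to how the critic $Q^k$ is produced. Since $\hat K$ is uniform on $\{0,\dots,K\}$, the left-hand side equals $\frac{1}{K+1}\sum_{k=0}^K \E[V^*(\mu) - V^{\pi_k}(\mu)]$, so it suffices to control the averaged value gap. The policy update in Algorithm~\ref{alg:Approximated-TD-AC-PMD} is verbatim the PMD-type update of Algorithm~\ref{alg:Expected-TD-AC-PMD}, and Lemma~\ref{lem:Approximated-TD-AC-PMD:bounded-value} supplies the identical boundedness $0 \le Q^k(s,a) \le 1/(1-\gamma)$ as Lemma~\ref{lem:Expected-TD-AC-PMD:bounded-value}. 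Every ingredient of the derivation --- the performance difference lemma (Lemma~\ref{lem:pdl}), the three-point-descent lemma (Lemma~\ref{lem:three-point-descent-lemma}), the strong convexity from Assumption~\ref{ass:h} together with \eqref{eq:Bregman-lower-bound}, and the value bounds \eqref{eq:bounded-policy-values} --- treats $Q^k$ as an arbitrary bounded vector in $\mathbb{R}^{|\calS||\calA|}$ and never invokes the sampling scheme or the critic recursion. I would therefore rerun the same argument with $Q^k$ generic.

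For completeness I would reproduce the decomposition. Applying Lemma~\ref{lem:pdl} at each $k$ gives
\[
V^*(\mu) - V^{\pi_k}(\mu) = \frac{1}{1-\gamma}\,\E_{s\sim d^*_\mu}\big[\langle \pi^*(\cdot|s) - \pi_k(\cdot|s),\, Q^{\pi_k}(s,\cdot)\rangle\big].
\]
Writing $Q^{\pi_k} = Q^k + (Q^{\pi_k} - Q^k)$ peels off the stochastic bias term verbatim, and inserting $\pi_{k+1}$ into the remaining inner product splits it as $\langle \pi^* - \pi_{k+1}, Q^k\rangle + \langle \pi_{k+1} - \pi_k, Q^k\rangle$. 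The first piece is handled by the three-point-descent lemma with $p = \pi^*(\cdot|s)$, which yields $\langle \pi^* - \pi_{k+1}, Q^k\rangle \le \frac{1}{\eta}[D^{\pi^*}_{\pi_k}(s) - D^{\pi^*}_{\pi_{k+1}}(s) - D^{\pi_{k+1}}_{\pi_k}(s)]$; summed over $k$ the Bregman divergences telescope to at most $\|D^{\pi^*}_{\pi_0}\|_\infty$, producing the first part of the exact PMD bound, while the leftover $-\frac{1}{\eta}D^{\pi_{k+1}}_{\pi_k}(s)$ is saved.

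For the second piece I would split once more as $\langle \pi_{k+1} - \pi_k, Q^{\pi_k}\rangle + \langle \pi_{k+1} - \pi_k, Q^k - Q^{\pi_k}\rangle$. The exact part equals $[\calT^{\pi_{k+1}}V^{\pi_k} - V^{\pi_k}](s)$ and telescopes (after the usual change of measure and using that $V$ lies in $[0,(1-\gamma)^{-1}]$) to the $\frac{1}{(1-\gamma)^2}$ term, whereas the error part is combined with the saved $-\frac{1}{\eta}D^{\pi_{k+1}}_{\pi_k}(s) \le -\frac{\lambda}{2\eta}\|\pi_{k+1} - \pi_k\|_p^2$ via Hölder's inequality, the estimate $\|\cdot\|_{p/(p-1)}\le \|\cdot\|_1 \le |\calA|\,\|\cdot\|_\infty$, and the elementary bound $bx - ax^2 \le b^2/(4a)$, producing the stochastic variance term $\frac{\eta|\calA|^2}{2\lambda}\|Q^k - Q^{\pi_k}\|_\infty^2$. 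Collecting the pieces, averaging over $k$, and taking expectations gives the claim.

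The one genuinely delicate step is the telescoping of the exact improvement part $\langle \pi_{k+1} - \pi_k, Q^{\pi_k}\rangle$, where the inner product is weighted by $d^*_\mu$ while monotone policy improvement is naturally measured under $d^{\pi_{k+1}}_\mu$; this change-of-measure bookkeeping is exactly the argument of \citet[Theorem~8]{Xiao_2022} and is carried out in Section~\ref{sec:proof:Expected-TD-AC-PMD:Lan-Xiao-bound}. Crucially, that step and all the others are independent of the critic update, so no modification is needed for the mixed-policy sampling of Approximate TD-PMD; the algorithm-specific structure only re-enters later, when the bias and variance terms are bounded. Hence the bound transfers unchanged.
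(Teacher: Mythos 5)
Your lead argument is precisely the paper's own proof: the paper disposes of this lemma with the single observation that the derivation of Lemma~\ref{lem:Expected-TD-AC-PMD:Lan-Xiao-bound} never invokes the sampling scheme or the critic recursion, only that $Q^k$ is a bounded vector (with Lemma~\ref{lem:Approximated-TD-AC-PMD:bounded-value} playing the role of Lemma~\ref{lem:Expected-TD-AC-PMD:bounded-value}), so the bound transfers verbatim to Approximate TD-PMD. On that level your proposal is correct and complete.

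However, the ``for completeness'' sketch reorders the argument of Section~\ref{sec:proof:Expected-TD-AC-PMD:Lan-Xiao-bound} in a way that would not survive being written out. You decompose first and then apply the change of measure $d^{\pi_{k+1}}_{d^*_\mu}(s) \geq (1-\gamma)\, d^*_\mu(s)$ to the exact improvement part $\langle \pi_{k+1}(\cdot|s) - \pi_k(\cdot|s),\, Q^{\pi_k}(s,\cdot)\rangle = [\calT^{\pi_{k+1}} V^{\pi_k} - V^{\pi_k}](s)$ on its own. That change of measure only upper-bounds expectations of \emph{nonnegative} functions, and in the inexact setting this quantity has no sign: $\pi_{k+1}$ is the mirror step with respect to $Q^k$, not $Q^{\pi_k}$, so the three-point-descent lemma guarantees nonnegativity of $I_k(s) = \eta\langle \pi_{k+1} - \pi_k,\, Q^k\rangle - D^{\pi_{k+1}}_{\pi_k}(s)$ but not of the improvement part you isolate. (In \citet[Theorem~8]{Xiao_2022}, which you cite for this step, the improvement part is nonnegative precisely because the update there uses $Q^{\pi_k}$; that is exactly what fails here.) The paper's proof therefore changes measure on the whole of $I_k(s) \geq 0$ first, and only afterwards splits $I_k$ under $d^{\pi_{k+1}}_{d^*_\mu}$ into the improvement, H\"older/Young, and Bregman pieces --- which is also why its variance term carries $(1-\gamma)^{-2}$, whereas your bookkeeping (error and Bregman terms kept under $d^*_\mu$) would yield $(1-\gamma)^{-1}$ via a step that does not go through. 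Since you explicitly defer the delicate step to Section~\ref{sec:proof:Expected-TD-AC-PMD:Lan-Xiao-bound}, your conclusion stands, but the fix living there is this reordering, not the exact-PMD change-of-measure bookkeeping you describe.
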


Next, the key is still to bound the stochastic bias, which requires us to rewrite the update of the critic.  By Proposition~\ref{pro:MDP-ergodicity-2}, under Assumption~\ref{ass:ergodicity-2}, we have $\P(s_t^k = s) \to \nu^{\pi_b\circ\pi_{k+1}}(s)$. Thus it follows from equation~\eqref{eq:Approximated-TD-AC-PMD:intuition}that
\begin{align*}
    \E\ls[ Q^{k+1}(s,a) \rs] \approx \E\ls[ Q^k(s,a) + \alpha_k \cdot \sigma^{\pi_b\circ\pi_{k+1}}(s,a) \cdot \ls[ \calF^{\pi_{k+1}} Q^k - Q^k \rs](s,a) \rs].
\end{align*}
This inspires us to  define the $\sigma^{\pi_b\circ\pi}$-weighted Bellman operator with $\alpha \in (0,1]$,
\begin{align*}
    \forall\, \pi\in\Pi, \; Q\in\mathbb{R}^{|\calS||\calA|}: \quad  \calF^{\pi}_{\pi_b\circ\pi, \alpha} Q(s,a) := Q(s,a) + \alpha \cdot \sigma^{\pi_b\circ\pi}(s,a) \cdot \ls[ \calF^{\pi} Q(s,a) - Q(s,a) \rs],
\end{align*}
or equivalently,
\begin{align*}
    \calF^{\pi}_{\pi_b\circ\pi, \alpha} Q = Q + \alpha\Sigma^{\pi_b\circ\pi} \ls[ \calF^{\pi} Q - Q \rs],
\end{align*}
where $\Sigma^{\pi_b\circ\pi} := \mbox{diag}(\sigma^{\pi_b\circ\pi}) \in \mathbb{R}^{|\calS||\calA| \times |\calS||\calA|}$. When $\alpha = 1$ we simply denote $\calF^{\pi}_{\pi_b\circ\pi, \alpha}$ by $\calF^{\pi}_{\pi_b\circ\pi}$,
\begin{align*}
    \calF^\pi_{\pi_b\circ\pi} Q := Q + \Sigma^{\pi_b\circ\pi} [\calF^\pi Q - Q].
\end{align*}
By Proposition~\ref{pro:MDP-ergodicity-2}, we know that under Assumption~\ref{ass:ergodicity-2} and Assumption~\ref{ass:behavior-exploration} there holds
\begin{align*}
    \tilde\sigma := \min_{\pi\in\Pi} \, \min_{(s,a)\in\calS\times\calA} \; \sigma^{\pi_b\circ\pi}(s,a) > 0,
\end{align*}
implying that $\Sigma^{\pi_b\circ\pi} \succeq \tilde\sigma \cdot I$. Such a positive lower bound offers further implies the contraction property of $\calF^{\pi}_{\pi_b\circ\pi, \alpha}$.

\begin{proposition}
    Suppose Assumption~\ref{ass:behavior-exploration} and Assumption~\ref{ass:ergodicity-2} hold. Then for any $\pi \in \Pi$,
    \begin{enumerate}
        \item[\textup{(a)}] $\calF^{\pi}_{\pi_b\circ\pi, \alpha} Q^\pi = Q^\pi$\textup{;}
        \item[\textup{(b)}] $\forall\, Q, \, Q^\prime \in \mathbb{R}^{|\calS||\calA|}: \;\; \ls\| \calF^{\pi}_{\pi_b\circ\pi, \alpha} Q - \calF^{\pi}_{\pi_b\circ\pi, \alpha} Q^\prime \rs\|_\infty \leq (1-\alpha(1-\gamma)\tilde\sigma) \| Q - Q^\prime \|_\infty$.
    \end{enumerate}
    \label{pro:Approximated-TD-AC-PMD:sigma-bellman-property}
\end{proposition}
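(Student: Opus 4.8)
The plan is to adapt, almost verbatim, the proof of Proposition~\ref{pro:Expected-TD-AC-PMD:sigma-bellman-property}, replacing the static stationary weight $\sigma^{\pi_b}$ by the policy-dependent weight $\sigma^{\pi_b\circ\pi}$ and the constant $\tilde\sigma_b$ by $\tilde\sigma$. The only structural facts I need are: (i) $\calF^\pi$ fixes $Q^\pi$; (ii) $\calF^\pi$ is a $\gamma$-contraction in $\|\cdot\|_\infty$, both recorded in Section~\ref{sec:preliminary}; and (iii) the uniform positive lower bound $\sigma^{\pi_b\circ\pi}(s,a)\geq\tilde\sigma>0$, which follows from Proposition~\ref{pro:MDP-ergodicity-2} together with Assumption~\ref{ass:behavior-exploration}.

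For claim (a), I would observe that $\calF^\pi Q^\pi = Q^\pi$ makes the bracket $[\calF^\pi Q^\pi - Q^\pi]$ vanish, hence $\calF^{\pi}_{\pi_b\circ\pi,\alpha}Q^\pi = Q^\pi + \alpha\Sigma^{\pi_b\circ\pi}\mathbf{0} = Q^\pi$, regardless of the weight. For claim (b), the operator is affine in $Q$, so I would fix $(s,a)$ and write the coordinate difference as
\begin{align*}
\calF^{\pi}_{\pi_b\circ\pi,\alpha}Q(s,a) - \calF^{\pi}_{\pi_b\circ\pi,\alpha}Q^\prime(s,a) &= (1-\alpha\sigma^{\pi_b\circ\pi}(s,a))(Q(s,a)-Q^\prime(s,a)) \\
&\quad + \alpha\sigma^{\pi_b\circ\pi}(s,a)[\calF^\pi Q(s,a) - \calF^\pi Q^\prime(s,a)].
\end{align*}
Since $\alpha\in(0,1]$ and $\sigma^{\pi_b\circ\pi}(s,a)=\nu^{\pi_b\circ\pi}(s)\pi_b(a|s)\leq 1$, the coefficient $1-\alpha\sigma^{\pi_b\circ\pi}(s,a)$ is nonnegative, so a triangle inequality applies; bounding $|Q(s,a)-Q^\prime(s,a)|\leq\|Q-Q^\prime\|_\infty$ and $|\calF^\pi Q(s,a)-\calF^\pi Q^\prime(s,a)|\leq\gamma\|Q-Q^\prime\|_\infty$ via (ii) and collecting terms produces the factor $1-\alpha(1-\gamma)\sigma^{\pi_b\circ\pi}(s,a)$. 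Replacing $\sigma^{\pi_b\circ\pi}(s,a)$ by its lower bound $\tilde\sigma$ and taking the maximum over $(s,a)$ then yields the stated contraction factor $1-\alpha(1-\gamma)\tilde\sigma$.

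There is no genuine obstacle here; the computation is identical to that of Proposition~\ref{pro:Expected-TD-AC-PMD:sigma-bellman-property}. The only point requiring care is that, because the weight $\sigma^{\pi_b\circ\pi}$ now depends on $\pi$, the uniform positivity $\tilde\sigma>0$ must be invoked from Proposition~\ref{pro:MDP-ergodicity-2} rather than from the single-chain Proposition~\ref{pro:MDP-ergodicity}. Since the present statement fixes $\pi$, this is immediate; the uniformity over all $\pi\in\Pi$ will only become essential later, when $\pi=\pi_{k+1}$ varies with the iteration index $k$.
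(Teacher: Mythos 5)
Your proof is correct and follows exactly the route the paper takes: the paper simply states that the proof is the same as that of Proposition~\ref{pro:Expected-TD-AC-PMD:sigma-bellman-property}, namely the coordinate-wise affine decomposition, the triangle inequality (valid since $\alpha\sigma^{\pi_b\circ\pi}(s,a)\leq 1$), the $\gamma$-contraction of $\calF^\pi$, and the uniform lower bound $\tilde\sigma>0$ from Proposition~\ref{pro:MDP-ergodicity-2}. Your closing remark correctly identifies the only substantive change—invoking the uniform-in-$\pi$ bound $\tilde\sigma$ rather than the single-chain bound $\tilde\sigma_b$—so nothing is missing.
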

The proof is same with the one for Proposition~\ref{pro:Expected-TD-AC-PMD:sigma-bellman-property}. Similarly, letting $A^{\pi_b\circ\pi} = I - \alpha \Sigma^{\pi_b\circ\pi} (I - \gamma P^{\pi}_{\scriptscriptstyle\calS\times\calA})$, one has
\begin{align*}
    \calF^{\pi}_{\pi_b\circ\pi, \alpha} Q - \calF^{\pi}_{\pi_b\circ\pi,\alpha} Q^\prime = A^{\pi_b\circ\pi} (Q - Q^\prime),
\end{align*}
and thus
\begin{align*}
    \forall\, \pi \in \Pi: \quad \ls\| A^{\pi_b\circ\pi} \rs\|_\infty \leq (1-\alpha(1-\gamma)\tilde\sigma).
\end{align*}

Based on the operator $\calF^{\pi}_{\pi_b\circ\pi, \alpha}$, we can reformulate the critic update of Approximate TD-PMD as
\begin{equation}
\begin{aligned}
    Q^{k+1} = \calF^{\pi_{k+1}}_{\pi_b\circ\pi_{k+1}, \alpha} Q^k + \alpha \cdot \bar\omega_k \quad \mbox{or} \quad Q^{k+1} = Q^k + \alpha \cdot \Sigma^{\pi_b\circ\pi_{k+1}} \ls[ \calF^{\pi_{k+1}} Q^k - Q^k \rs] + \alpha \cdot \bar\omega_k,
\end{aligned}
    \label{eq:Approximated-TD-AC-PMD:critic-update}
\end{equation}
where $\bar{\omega}_k \in \mathbb{R}^{|\calS||\calA|}$ is the stochastic term defined as follows:
\begin{equation}
\begin{aligned}
    \omega_t^k(s,a) &= \delta_t^k(s,a) - \ls[ \calF^{\pi_{k+1}}_{\pi_b \circ \pi_{k+1}} Q^k - Q^k \rs](s,a), \\
    \bar\omega_k(s,a) &= \sum_{t=0}^{B_k-1} c_t^k \cdot \omega_t^k(s,a) = \bar\delta_k(s,a) - \ls[ \calF^{\pi_{k+1}}_{\pi_b \circ \pi_{k+1}} Q^k - Q^k \rs](s,a).
\end{aligned}
\label{eq:Approximated-TD-AC-PMD:stochastic-noise}
\end{equation}

In Approximate TD-PMD, we adopt the same  weights $c_t^k$ as in Expected TD-PMD, see equation~\eqref{eq:Expected-TD-AC-PMD:average-weight}. The following lemma is a counterpart of Lemma~\ref{lem:decomposition of bias}, and the proof details are omitted.

\begin{lemma}
    Consider Approximate TD-PMD \textup{(}Algorithm~\ref{alg:Approximated-TD-AC-PMD}\textup{)}. Let $\eta_k = \eta > 0$ and $\alpha_k = \alpha \in (0,1]$. Fixing any optimal policy $\pi^* \in \Pi^*$, for any $0 \leq k \leq K$ and $s\in\calS$, there holds
    \begin{align*}
        \ls| \E \ls[ \inner{\pi^*(\cdot|s) - \pi_k(\cdot|s)}{Q^{\pi_k}(s,\cdot) - Q^k(s,\cdot)} \rs] \rs| &\leq \ls| B_0^{(k)} \rs| + \sum_{j=1}^k \E \ls[ \ls|C_j^{(k)}\rs| + \ls|D_j^{(k)}\rs| + \ls|E_j^{(k)}\rs| \rs] + \sum_{j=1}^k \ls| \E\ls[F_j^{(k)}\rs] \rs|,
    \end{align*}
    where $B_0^{(k)}$, $C_j^{(k)}$, $D_j^{(k)}$, $E_j^{(k)}$, and $F_j^{(k)}$ admit the same expressions as in Lemma~\ref{lem:decomposition of bias}, and  $A_j = A^{\pi_b\circ\pi_j} = I - \alpha \Sigma^{\pi_b\circ\pi_j}(I - \gamma P^{\pi_j}_{\scriptscriptstyle\calS\times\calA})$.
    \label{lem:Approxiamted decomposition of bias}
\end{lemma}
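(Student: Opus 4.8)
The plan is to reproduce, essentially verbatim, the telescoping argument behind Lemma~\ref{lem:decomposition of bias}, since the only structural change for Approximate TD-PMD is that the fixed weighting $\Sigma^{\pi_b}$ is replaced by the policy-dependent $\Sigma^{\pi_b\circ\pi_j}$, and this is absorbed entirely into the redefinition $A_j = A^{\pi_b\circ\pi_j} = I - \alpha\Sigma^{\pi_b\circ\pi_j}(I - \gamma P^{\pi_j}_{\scriptscriptstyle\calS\times\calA})$. The single substantive input I need is the one-step critic recursion. Starting from the reformulated critic update $Q^{k+1} = \calF^{\pi_{k+1}}_{\pi_b\circ\pi_{k+1},\alpha}Q^k + \alpha\bar\omega_k$ in \eqref{eq:Approximated-TD-AC-PMD:critic-update} and the fixed-point identity $\calF^{\pi}_{\pi_b\circ\pi,\alpha}Q^\pi = Q^\pi$ from Proposition~\ref{pro:Approximated-TD-AC-PMD:sigma-bellman-property}(a), subtracting $Q^{\pi_{k+1}}$ and using $\calF^{\pi}_{\pi_b\circ\pi,\alpha}Q - \calF^{\pi}_{\pi_b\circ\pi,\alpha}Q^\prime = A^{\pi_b\circ\pi}(Q - Q^\prime)$ gives, after reindexing, the exact recursion $Q^j - Q^{\pi_j} = A_j(Q^{j-1} - Q^{\pi_j}) + \alpha\bar\omega_{j-1}$ with $\bar\omega_{j-1}$ as in \eqref{eq:Approximated-TD-AC-PMD:stochastic-noise}. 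This is the same recursion exploited in the Expected case, only with $A_j = A^{\pi_b\circ\pi_j}$.

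Next I would introduce the telescoping sequence $\Phi_j := \inner{[(A_j)^{k-j}]^\top J_s[\pi^* - \pi_j]}{Q^{\pi_j} - Q^j}$ for $0 \le j \le k$, where $J_s$ is the block selector from \eqref{eq:EsJs} so that $\inner{\pi(\cdot|s)}{Q(s,\cdot)} = \inner{J_s\pi}{Q}$. The endpoints are designed to match the statement: $\Phi_k = \inner{\pi^*(\cdot|s) - \pi_k(\cdot|s)}{Q^{\pi_k}(s,\cdot) - Q^k(s,\cdot)}$ is exactly the target quantity, while $\Phi_0 = B_0^{(k)}$. I would then compute the increment $\Phi_j - \Phi_{j-1}$ by substituting $Q^{\pi_j} - Q^j = A_j(Q^{\pi_j} - Q^{\pi_{j-1}}) + A_j(Q^{\pi_{j-1}} - Q^{j-1}) - \alpha\bar\omega_{j-1}$ (the recursion rewritten after splitting $Q^{\pi_j} - Q^{j-1}$ across the consecutive fixed points $Q^{\pi_j}$ and $Q^{\pi_{j-1}}$) into $\Phi_j$. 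Using the adjoint identity $\inner{[(A_j)^{k-j}]^\top J_s u}{A_j w} = \inner{[(A_j)^{k-j+1}]^\top J_s u}{w}$ (valid since $J_s$ is symmetric) peels off $C_j^{(k)}$ from the $Q^{\pi_j} - Q^{\pi_{j-1}}$ piece and $-F_j^{(k)}$ from the noise piece, leaving the residual $T_j := \inner{[(A_j)^{k-j+1}]^\top J_s[\pi^* - \pi_j]}{Q^{\pi_{j-1}} - Q^{j-1}}$.

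The crux of the bookkeeping is then $T_j - \Phi_{j-1}$, which I would split exactly into $D_j^{(k)} + E_j^{(k)}$ by writing $\pi^* - \pi_j = (\pi^* - \pi_{j-1}) + (\pi_{j-1} - \pi_j)$ in the first argument: the $\pi_{j-1} - \pi_j$ part produces $D_j^{(k)}$, while the remaining $\pi^* - \pi_{j-1}$ part, after collecting the two powers acting on the common second argument $Q^{\pi_{j-1}} - Q^{j-1}$, produces $[(A_j)^{k-j+1} - (A_{j-1})^{k-j+1}]^\top J_s[\pi^* - \pi_{j-1}]$, i.e. $E_j^{(k)}$. Hence $\Phi_j - \Phi_{j-1} = C_j^{(k)} + D_j^{(k)} + E_j^{(k)} - F_j^{(k)}$, and summing telescopically gives $\Phi_k = B_0^{(k)} + \sum_{j=1}^k(C_j^{(k)} + D_j^{(k)} + E_j^{(k)}) - \sum_{j=1}^k F_j^{(k)}$. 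Taking expectations (noting $B_0^{(k)}$ is deterministic since $Q^0=0$ and $\pi_0$ are fixed), then applying the triangle inequality while bounding the $C,D,E$ terms pathwise as $\E[|\cdot|]$ and keeping the Markov-noise terms as $|\E[F_j^{(k)}]|$, yields the claimed inequality.

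I would emphasize that for this lemma there is essentially no new obstacle beyond verifying that the recursion survives the substitution $\Sigma^{\pi_b} \mapsto \Sigma^{\pi_b\circ\pi_j}$; every subsequent step is purely algebraic telescoping, identical to the Expected case, which is why the details can be omitted. The genuine difficulty created by the time-varying weighting is deferred to the downstream bounds on these terms — in particular to the analog of the $E_j^{(k)}$ estimate, where the difference $(A_j)^{k-j+1} - (A_{j-1})^{k-j+1}$ must now absorb the perturbation of the stationary weighting $\Sigma^{\pi_b\circ\pi_j}$ (to be controlled through Lemma~\ref{lem:perturbated-chain-stationary-diff}) in addition to that of $P^{\pi_j}_{\scriptscriptstyle\calS\times\calA}$, whereas in the Expected setting (Lemma~\ref{lem:expected-Ejk-bounds}) only the transition-matrix part of $A_j$ varied with $j$.
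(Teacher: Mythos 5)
Your proposal is correct and follows essentially the same route as the paper: the paper omits the proof precisely because it is the telescoping argument of Lemma~\ref{lem:decomposition of bias} repeated verbatim with the critic update \eqref{eq:Approximated-TD-AC-PMD:critic-update} and $A_j = A^{\pi_b\circ\pi_j}$, which is exactly what you carry out (your $\Phi_j$ is the paper's $B_j^{(k)}$, and your increment computation $\Phi_j-\Phi_{j-1}=C_j^{(k)}+D_j^{(k)}+E_j^{(k)}-F_j^{(k)}$ matches the paper's decomposition term by term). Your closing observation — that the genuine new difficulty from the time-varying weighting $\Sigma^{\pi_b\circ\pi_j}$ is deferred to the bound on $E_j^{(k)}$ via Lemma~\ref{lem:perturbated-chain-stationary-diff} — is also exactly how the paper organizes the analysis.
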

 The upper bounds for the terms with $B_0^{(k)}$, $C_j^{(k)}$, and $D_j^{(k)}$ can be similarly established as in the analysis for Expected TD-PMD {by leveraging $\| A_j \|_\infty \leq (1-\alpha(1-\gamma)\tilde\sigma)$ and Lemma~\ref{lem:policy-shift}}. Thus we only present them in the following lemma without proofs.
\begin{lemma}[Bounds for terms with $B_0^{(k)}$, $C_j^{(k)}$, and $D_j^{(k)}$]
    Suppose that Assumptions~\ref{ass:behavior-exploration},~\ref{ass:ergodicity-2}, and~\ref{ass:h} hold. One has
    \begin{align*}
        \ls| B_0^{(k)} \rs| \leq \frac{2}{1-\gamma} \cdot \ls[ 1-(1-\gamma)\alpha\tilde\sigma \rs]^k, \quad \sum_{j=1}^k\ls| C_j^{(k)} \rs| \leq \frac{2\gamma |\calA|^2 \eta}{\alpha\lambda\tilde\sigma (1-\gamma)^4}, \;\; \mbox{and} \;\; \sum_{j=1}^k \ls| D_j^{(k)} \rs| \leq \frac{|\calA|^2\eta}{\alpha\lambda\tilde\sigma(1-\gamma)^3}.
    \end{align*}
    \label{lem:approximated-B0kCjkDjk-bounds}
\end{lemma}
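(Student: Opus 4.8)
The plan is to reproduce the Expected TD-PMD argument (equation~\eqref{eq:expected-B0k-bound} together with Lemma~\ref{lem:expected-CjkDjk-bounds}) almost verbatim, since the only structural change in the Approximate setting is that each $A_j = A^{\pi_b\circ\pi_j}$ now carries the time-varying matrix $\Sigma^{\pi_b\circ\pi_j}$ in place of the static $\Sigma^{\pi_b}$. The crucial point is that Proposition~\ref{pro:Approximated-TD-AC-PMD:sigma-bellman-property} still supplies the \emph{uniform} contraction estimate $\|A_j\|_\infty \leq 1-(1-\gamma)\alpha\tilde\sigma$, so the geometric decay of the matrix powers $(A_j)^{k-j+1}$ is preserved. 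Throughout I would use $\|(A_j)^\top\|_1 = \|A_j\|_\infty$ together with the fact that $J_s$ extracts the $s$-block of a vector, so that $\|J_s[\pi-\pi']\|_1 = \|\pi(\cdot|s)-\pi'(\cdot|s)\|_1$.

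For $B_0^{(k)}$, a single application of H\"older's inequality gives $|B_0^{(k)}| \leq \|(A_0)^\top\|_1^{\,k}\cdot\|J_s[\pi^*-\pi_0]\|_1\cdot\|Q^{\pi_0}-Q^0\|_\infty$. The first factor is at most $[1-(1-\gamma)\alpha\tilde\sigma]^k$, the second is at most $2$ (difference of two probability vectors on the $s$-block), and the third is at most $(1-\gamma)^{-1}$ by equation~\eqref{eq:bounded-policy-values} and Lemma~\ref{lem:Approximated-TD-AC-PMD:bounded-value}; this yields the stated geometric bound.

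For $C_j^{(k)}$ and $D_j^{(k)}$ the same H\"older step produces a factor $[1-(1-\gamma)\alpha\tilde\sigma]^{k-j+1}$ from the matrix power. For $C_j^{(k)}$ the remaining factors are $\|J_s[\pi^*-\pi_j]\|_1 \leq 2$ and $\|Q^{\pi_j}-Q^{\pi_{j-1}}\|_\infty$, which I would control by chaining Lemma~\ref{lem:Lip:Q-value} and Lemma~\ref{lem:policy-shift} (using $\|Q^{j-1}(s,\cdot)\|_1 \leq |\calA|/(1-\gamma)$ from Lemma~\ref{lem:Approximated-TD-AC-PMD:bounded-value}) to obtain $\|Q^{\pi_j}-Q^{\pi_{j-1}}\|_\infty \leq \gamma|\calA|^2\eta/[(1-\gamma)^3\lambda]$. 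For $D_j^{(k)}$ the factors are $\|J_s[\pi_{j-1}-\pi_j]\|_1 \leq |\calA|\,\|\pi_{j-1}-\pi_j\|_\infty \leq |\calA|^2\eta/[(1-\gamma)\lambda]$ (again by Lemma~\ref{lem:policy-shift}) and $\|Q^{\pi_{j-1}}-Q^{j-1}\|_\infty \leq (1-\gamma)^{-1}$. Summing the geometric series $\sum_{j=1}^k [1-(1-\gamma)\alpha\tilde\sigma]^{k-j+1} \leq [(1-\gamma)\alpha\tilde\sigma]^{-1}$ in each case then gives precisely the three claimed bounds.

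I do not expect a genuine obstacle for this lemma: all three terms need only a uniform contraction estimate on each $A_j$ plus Lipschitz control of the policy shift, and the time-varying stationary distribution is harmless here because its effect is fully absorbed into $\|A_j\|_\infty \leq 1-(1-\gamma)\alpha\tilde\sigma$. The real difficulty created by the varying $\sigma^{\pi_b\circ\pi_j}$ instead appears in the $E_j^{(k)}$ term—where one must compare the \emph{successive} powers $(A_j)^{k-j+1}$ and $(A_{j-1})^{k-j+1}$ and therefore bound $\|\Sigma^{\pi_b\circ\pi_j}-\Sigma^{\pi_b\circ\pi_{j-1}}\|$ via the perturbation estimate of Lemma~\ref{lem:perturbated-chain-stationary-diff}—and in the Markov-noise term $F_j^{(k)}$; both of these are handled separately and lie outside the present lemma.
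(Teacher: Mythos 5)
Your proposal is correct and follows exactly the route the paper intends: the paper itself omits the proof of this lemma, noting only that the Expected TD-PMD arguments (equation~\eqref{eq:expected-B0k-bound} and Lemma~\ref{lem:expected-CjkDjk-bounds}) carry over verbatim once $\|A_j\|_\infty \leq 1-(1-\gamma)\alpha\tilde\sigma$ (from Proposition~\ref{pro:Approximated-TD-AC-PMD:sigma-bellman-property}) and Lemma~\ref{lem:policy-shift} are invoked, which is precisely what you do, with all constants chased correctly. Your closing observation that the time-varying stationary distribution only bites in the $E_j^{(k)}$ and $F_j^{(k)}$ terms also matches the paper's own discussion.
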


The bound for  the term with $E_j^{(k)}$ is presented in the following lemma, whose proof is provided in Section~\ref{sec:pf:lem:approximated-Ejk-bound}. Differing from the analysis for Expected TD-PMD, it requires us to bound the drift between $\Sigma^{\pi_b\circ\pi_j}$ and $\Sigma^{\pi_b\circ\pi_{j-1}}$ by leveraging Lemma~\ref{lem:perturbated-chain-stationary-diff}. 

\begin{lemma}[Bound for terms with $E_j^{(k)}$]
    Suppose that Assumptions~\ref{ass:behavior-exploration},~\ref{ass:ergodicity-2}, and~\ref{ass:h} hold. One has
    \begin{align*}
        \sum_{j=1}^k \ls| E_j^{(k)} \rs| \leq \frac{2|\calA|^2 \cdot [2L+1]}{\alpha \lambda \tilde\sigma^2 (1-\gamma)^4}\eta,
    \end{align*}
    where $L := |\calA| \ls( \ls\lceil \log_\kappa m^{-1} \rs\rceil + (1-\kappa)^{-1} \rs)$.
    \label{lem:approximated-Ejk-bound}
\end{lemma}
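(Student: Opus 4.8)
The plan is to reproduce the structure of the Expected TD-PMD estimate (Lemma~\ref{lem:expected-Ejk-bounds}), the only genuinely new ingredient being the control of the drift between the now policy-dependent weighting matrices $\Sigma^{\pi_b\circ\pi_j}$ and $\Sigma^{\pi_b\circ\pi_{j-1}}$. First I would apply H\"older's inequality to each summand to peel off the bounded factors:
\[
\ls| E_j^{(k)} \rs| \leq \ls\| \ls[ (A_j)^{k-j+1} - (A_{j-1})^{k-j+1} \rs] \rs\|_\infty \cdot \ls\| J_s[\pi^* - \pi_{j-1}] \rs\|_1 \cdot \ls\| Q^{\pi_{j-1}} - Q^{j-1} \rs\|_\infty ,
\]
estimating $\ls\| J_s[\pi^* - \pi_{j-1}] \rs\|_1 \leq 2$ and $\ls\| Q^{\pi_{j-1}} - Q^{j-1} \rs\|_\infty \leq (1-\gamma)^{-1}$ (the latter from equation~\eqref{eq:bounded-policy-values} and Lemma~\ref{lem:Approximated-TD-AC-PMD:bounded-value}). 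This leaves only the matrix-power factor to handle.

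For that factor I would use the telescoping identity $A^n - B^n = \sum_{i=0}^{n-1} A^i (A-B) B^{n-1-i}$ together with the uniform contraction $\ls\| A_j \rs\|_\infty \leq 1 - \alpha(1-\gamma)\tilde\sigma =: \rho$ coming from Proposition~\ref{pro:Approximated-TD-AC-PMD:sigma-bellman-property}, which gives $\ls\| (A_j)^{k-j+1} - (A_{j-1})^{k-j+1} \rs\|_\infty \leq (k-j+1)\rho^{k-j} \ls\| A_j - A_{j-1} \rs\|_\infty$. Everything then reduces to bounding $\ls\| A_j - A_{j-1} \rs\|_\infty = \alpha \ls\| \Sigma^{\pi_b\circ\pi_j}(I - \gamma P^{\pi_j}_{\scriptscriptstyle\calS\times\calA}) - \Sigma^{\pi_b\circ\pi_{j-1}}(I - \gamma P^{\pi_{j-1}}_{\scriptscriptstyle\calS\times\calA}) \rs\|_\infty$. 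I would split this by adding and subtracting $\Sigma^{\pi_b\circ\pi_j}(I - \gamma P^{\pi_{j-1}}_{\scriptscriptstyle\calS\times\calA})$ into a transition-drift part $\gamma\,\Sigma^{\pi_b\circ\pi_j}(P^{\pi_j}_{\scriptscriptstyle\calS\times\calA} - P^{\pi_{j-1}}_{\scriptscriptstyle\calS\times\calA})$ and a weighting-drift part $[\Sigma^{\pi_b\circ\pi_j} - \Sigma^{\pi_b\circ\pi_{j-1}}](I - \gamma P^{\pi_{j-1}}_{\scriptscriptstyle\calS\times\calA})$. The transition-drift part is handled exactly as in the Expected case, via $\ls\| \Sigma^{\pi_b\circ\pi_j} \rs\|_\infty \leq 1$ and $\ls\| P^{\pi_j}_{\scriptscriptstyle\calS\times\calA} - P^{\pi_{j-1}}_{\scriptscriptstyle\calS\times\calA} \rs\|_\infty \leq |\calA|\,\ls\| \pi_j - \pi_{j-1} \rs\|_\infty$, contributing $\gamma|\calA| \leq |\calA|$.

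The weighting-drift part is the crux and the one step absent from the Expected analysis (where $\Sigma^{\pi_b}$ was static). Since $\Sigma^{\pi_b\circ\pi_j} - \Sigma^{\pi_b\circ\pi_{j-1}}$ is diagonal with entries $\pi_b(a|s)\,[\nu^{\pi_b\circ\pi_j}(s) - \nu^{\pi_b\circ\pi_{j-1}}(s)]$, its $\infty$-norm is at most $\max_s |\nu^{\pi_b\circ\pi_j}(s) - \nu^{\pi_b\circ\pi_{j-1}}(s)|$, and right-multiplying by $I - \gamma P^{\pi_{j-1}}_{\scriptscriptstyle\calS\times\calA}$ (whose $\infty$-norm is at most $1+\gamma \leq 2$) costs only a factor $2$. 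The obstacle is that the stationary distribution $\nu^{\pi_b\circ\pi}$ is merely an implicit function of $\pi$, so its variation cannot be read off directly; this is precisely where Mitrophanov's perturbation bound (Lemma~\ref{lem:perturbated-chain-stationary-diff}) enters. Applying it to the chains with kernels $P^{\pi_b\circ\pi_j}_{\scriptscriptstyle\calS}$ and $P^{\pi_b\circ\pi_{j-1}}_{\scriptscriptstyle\calS}$, using the uniform geometric mixing from Proposition~\ref{pro:MDP-ergodicity-2}, yields $\max_s |\nu^{\pi_b\circ\pi_j}(s) - \nu^{\pi_b\circ\pi_{j-1}}(s)| \leq (\lceil \log_\kappa m^{-1}\rceil + (1-\kappa)^{-1})\,\ls\| P^{\pi_b\circ\pi_j}_{\scriptscriptstyle\calS} - P^{\pi_b\circ\pi_{j-1}}_{\scriptscriptstyle\calS} \rs\|_\infty$. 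Factoring $P^{\pi_b\circ\pi}_{\scriptscriptstyle\calS} = P^{\pi_b}_{\scriptscriptstyle\calS} P^{\pi}_{\scriptscriptstyle\calS}$ and using $\ls\| P^{\pi_b}_{\scriptscriptstyle\calS} \rs\|_\infty = 1$ with $\ls\| P^{\pi_j}_{\scriptscriptstyle\calS} - P^{\pi_{j-1}}_{\scriptscriptstyle\calS} \rs\|_\infty \leq |\calA|\,\ls\| \pi_j - \pi_{j-1} \rs\|_\infty$ absorbs the remaining $|\calA|$ into $L$, giving $\ls\| \Sigma^{\pi_b\circ\pi_j} - \Sigma^{\pi_b\circ\pi_{j-1}} \rs\|_\infty \leq L\,\ls\| \pi_j - \pi_{j-1} \rs\|_\infty$, hence a weighting-drift contribution of $2L$.

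Collecting the two parts gives $\ls\| A_j - A_{j-1} \rs\|_\infty \leq \alpha(|\calA| + 2L)\,\ls\| \pi_j - \pi_{j-1} \rs\|_\infty \leq \alpha|\calA|(2L+1)\,\ls\| \pi_j - \pi_{j-1} \rs\|_\infty$, where the last step uses $|\calA| \geq 1$. Finally I would substitute the uniform policy-shift bound $\ls\| \pi_j - \pi_{j-1} \rs\|_\infty \leq |\calA|\eta/[\lambda(1-\gamma)]$ (Lemma~\ref{lem:policy-shift} with $\ls\| Q^{j-1}(s,\cdot) \rs\|_1 \leq |\calA|/(1-\gamma)$), pull this uniform estimate out of the sum, and evaluate $\sum_{j=1}^k (k-j+1)\rho^{k-j} \leq (1-\rho)^{-2} = [\alpha(1-\gamma)\tilde\sigma]^{-2}$. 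Multiplying the collected factors then produces the claimed bound. The only real difficulty is the weighting-drift term treated above; the remainder is a transcription of the Expected TD-PMD argument with $\tilde\sigma_b$ replaced by $\tilde\sigma$.
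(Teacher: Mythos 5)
Your proposal is correct and follows essentially the same route as the paper's proof: peel off the bounded factors, reduce to $\ls\| (A_j)^{k-j+1} - (A_{j-1})^{k-j+1} \rs\|_\infty \leq (k-j+1)\rho^{k-j}\ls\| A_j - A_{j-1}\rs\|_\infty$, split $A_j - A_{j-1}$ into a transition-drift and a weighting-drift piece, control the latter via Mitrophanov's bound (Lemma~\ref{lem:perturbated-chain-stationary-diff}) with the factorization $P^{\pi_b\circ\pi}_{\scriptscriptstyle\calS} = P^{\pi_b}_{\scriptscriptstyle\calS}P^{\pi}_{\scriptscriptstyle\calS}$, and finish with Lemma~\ref{lem:policy-shift} and $\sum_{j=1}^k (k-j+1)\rho^{k-j}\leq (1-\rho)^{-2}$. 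The only cosmetic differences (explicit telescoping identity instead of induction for the matrix-power difference, and the symmetric choice of the added-and-subtracted cross term) do not change the argument or the constants.
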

The bound for the Markov noise part is presented in the following lemma. {The proof is overall similar to the one for Lemma~\ref{lem:expected-Fjk-bounds} and can be found in Section~\ref{sec:pf:lem:approximated-Fjk-bound}.}
\begin{lemma}[Bound for term with $F_j^{(k)}$]
    Suppose that Assumptions~\ref{ass:behavior-exploration} and~\ref{ass:ergodicity-2} hold. Recall the average weight $c_t^k$ in equation~\eqref{eq:Expected-TD-AC-PMD:average-weight}. Assume $B_k = B$, $k=0, \dots, K$. One has
    \begin{align*}
        \sum_{j=1}^k \ls| \E\ls[F_j^{(k)}\rs] \rs| \leq \frac{2}{(1-\gamma)\tilde\sigma} \Psi(B, \vartheta, m, \kappa),
    \end{align*}
    where $\Psi$ is defined in Lemma~\ref{lem:Expected-TD-AC-PMD:error-control}.
    \label{lem:approximated-Fjk-bound}
\end{lemma}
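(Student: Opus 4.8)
The plan is to mirror the proof of Lemma~\ref{lem:expected-Fjk-bounds}, the only substantive change being that the stationary law of the sampling chain now depends on the current iterate and must be frozen by conditioning before any mixing estimate is invoked. Let $\calG_{j-1}$ denote the $\sigma$-algebra generated by all randomness up to and including the policy update that produces $\pi_j$, i.e.\ everything determined before the trajectory $\tau_{j-1}$ is drawn; then $Q^{j-1}$, $s_0^{j-1}$, $\pi_j$, and hence $A_j = A^{\pi_b\circ\pi_j}$ together with the coefficient vector $[(A_j)^{k-j}]^\top J_s[\pi^*-\pi_j]$ are all $\calG_{j-1}$-measurable, while $\bar\omega_{j-1}$ carries the fresh randomness of $\tau_{j-1}$. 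Recalling $F_j^{(k)} = \inner{[(A_j)^{k-j}]^\top J_s[\pi^* - \pi_j]}{\alpha\bar\omega_{j-1}}$ and taking iterated expectations reduces the problem to the conditional mean of the noise:
\begin{align*}
\ls|\E\ls[F_j^{(k)}\rs]\rs| = \alpha\ls|\E\ls[\inner{\ls[(A_j)^{k-j}\rs]^\top J_s[\pi^* - \pi_j]}{\E\ls[\bar\omega_{j-1}\mid\calG_{j-1}\rs]}\rs]\rs|.
\end{align*}

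The core step is to evaluate $\E[\bar\omega_{j-1}\mid\calG_{j-1}]$. Using the approximate TD-error construction and the tower property over $(s_t^{\prime\,j-1}, a_t^{\prime\,j-1})$, conditioning on the event $(s_t^{j-1}, a_t^{j-1}) = (s,a)$ makes the one-step target unbiased for $[\calF^{\pi_j}Q^{j-1}](s,a)$, exactly as in the expected variant (the sampled $a'$ and the expectation $\E_{a\sim\pi_j}$ share the same conditional mean). Subtracting the centering term built into $\calF^{\pi_j}_{\pi_b\circ\pi_j}$ in equation~\eqref{eq:Approximated-TD-AC-PMD:stochastic-noise} gives
\begin{align*}
\E\ls[\bar\omega_{j-1}(s,a)\mid\calG_{j-1}\rs] = \pi_b(a|s)\,\ls[\calF^{\pi_j}Q^{j-1} - Q^{j-1}\rs](s,a)\ls(\sum_{t=0}^{B-1} c_t^{j-1}\,\P\ls(s_t^{j-1}=s\mid\calG_{j-1}\rs) - \nu^{\pi_b\circ\pi_j}(s)\rs).
\end{align*}
Bounding $|[\calF^{\pi_j}Q^{j-1} - Q^{j-1}](s,a)|\leq(1-\gamma)^{-1}$ via Lemma~\ref{lem:Approximated-TD-AC-PMD:bounded-value} and $\pi_b(a|s)\leq 1$, then applying the uniform geometric mixing of Proposition~\ref{pro:MDP-ergodicity-2} (equation~\eqref{eq:Approximated-TD-AC-PMD:MDP-ergodicity}) to obtain $|\P(s_t^{j-1}=s\mid\calG_{j-1}) - \nu^{\pi_b\circ\pi_j}(s)|\leq m\kappa^t$, I reduce $\|\E[\bar\omega_{j-1}\mid\calG_{j-1}]\|_\infty$ to $(1-\gamma)^{-1}\sum_{t=0}^{B-1} c_t^{j-1} m\kappa^t$. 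A direct evaluation of the weighted geometric sum with $c_t^{j-1}=\vartheta^{B-t-1}/\sum_s\vartheta^s$ and the elementary lower bound $\sum_s\vartheta^s\geq 1$ splits into the four regimes $\vartheta<\kappa$, $\vartheta=\kappa$, $\kappa<\vartheta<1$, and $\vartheta=1$, reproducing precisely the four cases of $\Psi$, so that $\|\E[\bar\omega_{j-1}\mid\calG_{j-1}]\|_\infty\leq\Psi(B,\vartheta,m,\kappa)$.

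To finish, I bound the coefficient vector by operator-norm duality, $\|[(A_j)^{k-j}]^\top J_s[\pi^*-\pi_j]\|_1 \leq \|A_j\|_\infty^{k-j}\,\|J_s[\pi^*-\pi_j]\|_1 \leq 2\,(1-\alpha(1-\gamma)\tilde\sigma)^{k-j}$, using $\|A_j\|_\infty\leq 1-\alpha(1-\gamma)\tilde\sigma$ from Proposition~\ref{pro:Approximated-TD-AC-PMD:sigma-bellman-property} and $\|J_s[\pi^*-\pi_j]\|_1\leq 2$. Combining with the conditional-mean bound and summing the resulting geometric series gives
\begin{align*}
\sum_{j=1}^k\ls|\E\ls[F_j^{(k)}\rs]\rs| \leq 2\alpha\,\Psi(B,\vartheta,m,\kappa)\sum_{j=1}^k\ls(1-\alpha(1-\gamma)\tilde\sigma\rs)^{k-j} \leq \frac{2}{(1-\gamma)\tilde\sigma}\,\Psi(B,\vartheta,m,\kappa),
\end{align*}
which is the claim.

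The main obstacle, and the sole place where the argument departs from Lemma~\ref{lem:expected-Fjk-bounds}, is that the chain $\{s_t^{j-1}\}$ has transition kernel $P^{\pi_b\circ\pi_j}_{\scriptscriptstyle\calS}$ depending on the \emph{random} iterate $\pi_j$, so its stationary law $\nu^{\pi_b\circ\pi_j}$ is itself random and the static mixing estimate cannot be applied directly. The resolution is to freeze $\pi_j$ by conditioning on $\calG_{j-1}$ before invoking mixing; this is legitimate precisely because $\bar\omega_{j-1}$ is centered against $\calF^{\pi_j}_{\pi_b\circ\pi_j}$, the operator built from that same conditional stationary distribution, so the two stationary-distribution terms cancel and only the exponentially small mixing remainder survives. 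Crucially, this step relies on the \emph{uniform} (policy-independent) mixing constants $m,\kappa$ supplied by Proposition~\ref{pro:MDP-ergodicity-2}, in contrast to the fixed $m_b,\kappa_b$ of the expected case.
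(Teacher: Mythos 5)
Your proposal is correct and follows essentially the same route as the paper: condition on $\calG_{j-1}$ so that $\pi_j$ and the coefficient vector $[(A_j)^{k-j}]^\top J_s[\pi^*-\pi_j]$ can be pulled out, compute $\E[\bar\omega_{j-1}\mid\calG_{j-1}]$ via the inner tower property over $(s_t^{\prime},a_t^{\prime})$ so that the policy-dependent stationary term built into $\calF^{\pi_j}_{\pi_b\circ\pi_j}$ cancels, invoke the uniform mixing bound of Proposition~\ref{pro:MDP-ergodicity-2} to get the $m\kappa^t$ remainder, evaluate the $\vartheta$-weighted geometric sums to obtain $\Psi(B,\vartheta,m,\kappa)$, and sum the contraction factors $(1-\alpha(1-\gamma)\tilde\sigma)^{k-j}$. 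This matches the paper's proof, which performs exactly the conditional-expectation computation in equation~\eqref{eq:Approximated-TD-AC-PMD:delta-expectation} and then defers the remaining steps to the proof of Lemma~\ref{lem:expected-Fjk-bounds}.
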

 
 Finally, one can obtain the upper bound for the stochastic bias term  and then establish the convergence for Approximate TD-PMD with the constant step size. Moreover, the $\tilde{O}(\varepsilon^{-2})$ sample complexity of Approximate TD-PMD follows immediately from this convergence result.

\begin{theorem}
    Consider Approximate TD-PMD \textup{(}Algorithm~\ref{alg:Approximated-TD-AC-PMD}\textup{)}. Suppose that Assumptions~\ref{ass:behavior-exploration},~\ref{ass:ergodicity-2}, and~\ref{ass:h} hold. Let
    \begin{align*}
        \eta_k = \eta \sqrt{\frac{\alpha\lambda \tilde\sigma^2 (1-\gamma)^4 \| D^{\pi^*}_{\pi_0} \|_\infty}{(6+4L)|\calA|^2(K+1)}}, \quad \alpha_k = \alpha \in (0,1], \quad B_k = B > 0,
    \end{align*}
    and $c_t^k$ takes the form in equation~\eqref{eq:Expected-TD-AC-PMD:average-weight}. Fixing any optimal policy $\pi^* \in \Pi^*$ and $\mu \in \Delta(\calS)$, there holds
    \begin{align*}
        \E\ls[ V^*(\mu) - V^{\pi_{\hat K}}(\mu) \rs] &\leq \frac{1}{K+1} \cdot \frac{3}{\alpha\tilde\sigma (1-\gamma)^3} + \frac{2}{\sqrt{K+1}} \cdot \ls( \frac{(6+4L)|\calA|^2 \| D^{\pi^*}_{\pi_0} \|_\infty}{\alpha\lambda \tilde\sigma^2(1-\gamma)^6} \rs)^{\nicefrac{1}{2}} + \frac{2}{\tilde\sigma(1-\gamma)^2} \Psi(B, \vartheta, m, \kappa),
    \end{align*}
    where $\Psi$ is defined in Lemma~\ref{lem:Expected-TD-AC-PMD:error-control} and $L$ is given in Lemma~\ref{lem:approximated-Ejk-bound}.
    \label{thm:Approximated-TD-AC-PMD:constant-step-size}
\end{theorem}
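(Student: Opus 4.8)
The plan is to follow the proof of Theorem~\ref{thm:Expected-TD-AC-PMD:constant-step-size} essentially verbatim, since every ingredient needed here has a mixed-policy counterpart already in place. I start from the three-term upper bound of Lemma~\ref{lem:Approximated-TD-AC-PMD:Lan-Xiao-bound}, which splits $\E[V^*(\mu) - V^{\pi_{\hat K}}(\mu)]$ into the exact PMD bound, the stochastic variance term, and the stochastic bias term. The exact PMD bound is already in closed form and contributes the $\frac{1}{K+1}$-scale pieces. For the stochastic variance term, I invoke Lemma~\ref{lem:Approximated-TD-AC-PMD:bounded-value} to get $\|Q^k - Q^{\pi_k}\|_\infty \le (1-\gamma)^{-1}$, so that the term is at most $\frac{\eta|\calA|^2}{2(1-\gamma)^4\lambda}$.

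The core work is the stochastic bias term. I apply the decomposition of Lemma~\ref{lem:Approxiamted decomposition of bias} to write each summand $|\E[\langle \pi^*(\cdot|s)-\pi_k(\cdot|s), Q^{\pi_k}(s,\cdot)-Q^k(s,\cdot)\rangle]|$ in terms of $B_0^{(k)}, C_j^{(k)}, D_j^{(k)}, E_j^{(k)}, F_j^{(k)}$, then substitute the bounds of Lemmas~\ref{lem:approximated-B0kCjkDjk-bounds}, \ref{lem:approximated-Ejk-bound}, and \ref{lem:approximated-Fjk-bound}. Summing over $k = 0,\dots,K$, the $B_0^{(k)}$ terms form a geometric series $\sum_{k}[1-(1-\gamma)\alpha\tilde\sigma]^k \le \frac{1}{(1-\gamma)\alpha\tilde\sigma}$, so after the $\frac{1}{(K+1)(1-\gamma)}$ prefactor they combine with the exact PMD $\frac{1}{(1-\gamma)^2}$ piece into the first displayed term $\frac{1}{K+1}\cdot\frac{3}{\alpha\tilde\sigma(1-\gamma)^3}$ (using $\alpha\tilde\sigma(1-\gamma)\le 1$). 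The $C_j^{(k)}, D_j^{(k)}, E_j^{(k)}$ bounds are uniform in $k$, so their double sums scale like $(K+1)$ and, after the prefactor, survive as $O(\eta)$ contributions. The $F_j^{(k)}$ bound likewise yields $(K+1)\cdot\frac{2}{(1-\gamma)\tilde\sigma}\Psi(B,\vartheta,m,\kappa)$, giving the last displayed term $\frac{2}{\tilde\sigma(1-\gamma)^2}\Psi(B,\vartheta,m,\kappa)$ after the prefactor.

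Next I collect all the $\eta$-linear contributions — the variance term and the $C$, $D$, $E$ parts of the bias — under the common denominator $\frac{|\calA|^2\eta}{\alpha\lambda\tilde\sigma^2(1-\gamma)^5}$. Using $\gamma<1$, $\tilde\sigma\le 1$, and $\alpha\tilde\sigma^2(1-\gamma)\le 1$, the relative coefficients are at most $\frac{1}{2}$ (variance), $2$ ($C$), $1$ ($D$), and $4L+2$ ($E$), totalling at most $6+4L$; this is exactly where the $E$-term factor $[2L+1]$ of Lemma~\ref{lem:approximated-Ejk-bound} (the price of the time-varying stationary distribution) enters the constant. The remaining $\eta$-dependent contribution is the exact PMD term $\frac{\|D^{\pi^*}_{\pi_0}\|_\infty}{(K+1)\eta(1-\gamma)}$. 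Minimizing $\frac{a}{\eta}+b\eta$ by AM--GM with $a=\frac{\|D^{\pi^*}_{\pi_0}\|_\infty}{(K+1)(1-\gamma)}$ and $b=\frac{(6+4L)|\calA|^2}{\alpha\lambda\tilde\sigma^2(1-\gamma)^5}$ gives the stated optimal $\eta=\sqrt{a/b}$ and the middle term $2\sqrt{ab}=\frac{2}{\sqrt{K+1}}\left(\frac{(6+4L)|\calA|^2\|D^{\pi^*}_{\pi_0}\|_\infty}{\alpha\lambda\tilde\sigma^2(1-\gamma)^6}\right)^{1/2}$, completing the proof.

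Since all the analytical difficulty is already absorbed into the component lemmas, the proof of the theorem itself is essentially mechanical assembly. The one place that demands care is the constant bookkeeping in the $(6+4L)$ factor: unlike the Expected case, where the $E_j^{(k)}$ sum contributes only a constant $2$, here it contributes $4L+2$ because Lemma~\ref{lem:approximated-Ejk-bound} must control the drift between the stationary distributions $\Sigma^{\pi_b\circ\pi_j}$ and $\Sigma^{\pi_b\circ\pi_{j-1}}$ via the perturbation bound of Lemma~\ref{lem:perturbated-chain-stationary-diff}. Beyond that, the only systematic change from the Expected TD-PMD proof is to replace the static-chain constants $\tilde\sigma_b, m_b, \kappa_b$ throughout by their mixed-policy analogues $\tilde\sigma, m, \kappa$.
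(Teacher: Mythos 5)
Your proposal is correct and takes essentially the same route as the paper: the paper's own proof of Theorem~\ref{thm:Approximated-TD-AC-PMD:constant-step-size} is precisely "repeat the proof of Theorem~\ref{thm:Expected-TD-AC-PMD:constant-step-size}, gathering Lemma~\ref{lem:Approximated-TD-AC-PMD:Lan-Xiao-bound} and Lemmas~\ref{lem:Approxiamted decomposition of bias}--\ref{lem:approximated-Fjk-bound}," which is exactly what you do, and your constant bookkeeping (the $\tfrac{1}{2}+2+1+(4L+2)\le 6+4L$ tally for the $\eta$-linear terms, the $B_0^{(k)}$ geometric sum absorbed into the $\frac{3}{\alpha\tilde\sigma(1-\gamma)^3}$ term, and the AM--GM choice of $\eta$) matches the paper's computation.
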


\begin{proof}
    The proof is the same with  that for  Theorem~\ref{thm:Expected-TD-AC-PMD:constant-step-size} by gathering Lemma~\ref{lem:Approximated-TD-AC-PMD:Lan-Xiao-bound} and Lemmas~\ref{lem:Approxiamted decomposition of bias}--\ref{lem:approximated-Fjk-bound} together.
\end{proof}


\subsection{Approximate TD-PMD with adaptive step sizes}
\label{sec:Approximated-TD-AC-PMD-with-adaptive-step-size}
The analysis for the adaptive step sizes in Section~\ref{sec:Expected-TD-AC-PMD-with-adaptive-step-size} can be similarly extended to Approximate TD-PMD. Recall that the key ingredient of the adaptive step sizes analysis in Section~\ref{sec:Expected-TD-AC-PMD-with-adaptive-step-size} is the utilization of the contraction property of $\calF^{\pi}_{\pi_b, \alpha}$ to establish the convergence rate of the critic $\| Q^* - Q^k \|_\infty$. Since the contraction property also holds for $\calF^{\pi}_{\pi_b\circ\pi, \alpha}$ (Proposition~\ref{pro:Approximated-TD-AC-PMD:sigma-bellman-property}), the analysis in Section~\ref{sec:Expected-TD-AC-PMD-with-adaptive-step-size} can be easily generalized to Approximate TD-PMD.

\begin{lemma}
    Suppose Assumption~\ref{ass:behavior-exploration} and Assumption~\ref{ass:ergodicity-2} hold. Consider Approximate TD-PMD \textup{(}Algorithm~\ref{alg:Approximated-TD-AC-PMD}\textup{)} with constant critic step size $\alpha_k = \alpha \in (0,1]$. There holds
    \begin{align*}
        \forall\, 0 \leq k \leq K : \quad \ls\| Q^* - Q^{k+1} \rs\|_\infty \leq [1-(1-\gamma)\alpha\tilde\sigma]\cdot \ls\| Q^* - Q^k \rs\|_\infty + \frac{\alpha\gamma}{\eta_k} \| D^{\tilde\pi_k}_{\pi_k} \|_\infty + \alpha \| \bar{ \omega}_k \|_\infty,
    \end{align*}
    where 
    \begin{align*}
        \ls\|D_{\pi_k}^{\tilde{\pi}_k} \rs\|_\infty :=\max_s D_{\pi_k}^{\tilde{\pi}_k}(s),
    \end{align*}
    and $\tilde{\pi}_k$ is any policy that satisfies $\langle\tilde{\pi}_k(\cdot|s),Q^k(s,\cdot)\rangle = \max_aQ^{k}(s,a),\;\forall s$.
    \label{lem:Approximated-TD-AC-PMD:linear-critic-convergence}
\end{lemma}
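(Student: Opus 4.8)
The plan is to recast the critic update \eqref{eq:Approximated-TD-AC-PMD:critic-update} as a perturbation of a weighted \emph{optimal} Bellman iteration and then exploit its contraction toward $Q^*$. Fix $k$ and abbreviate $\Sigma := \Sigma^{\pi_b\circ\pi_{k+1}}$, and introduce the weighted optimal Bellman operator $\calG Q := Q + \alpha\Sigma[\calF Q - Q]$. Since $\calF Q^* = Q^*$, one checks immediately that $\calG Q^* = Q^*$, and by repeating the componentwise estimate in the proof of Proposition~\ref{pro:Approximated-TD-AC-PMD:sigma-bellman-property} with the $\gamma$-contraction of the optimal Bellman operator $\calF$ in place of $\calF^\pi$ (recall that the diagonal entries of $\Sigma$ lie in $[0,1]$ whenever $\alpha\in(0,1]$), the operator $\calG$ is seen to be a $(1-\alpha(1-\gamma)\tilde\sigma)$-contraction in $\|\cdot\|_\infty$.

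Rewriting \eqref{eq:Approximated-TD-AC-PMD:critic-update} as $Q^{k+1} = \calG Q^k + \alpha\Sigma[\calF^{\pi_{k+1}} Q^k - \calF Q^k] + \alpha\bar\omega_k$ and subtracting it from $Q^* = \calG Q^*$ yields the decomposition
\begin{align*}
    Q^* - Q^{k+1} = (\calG Q^* - \calG Q^k) + \alpha\Sigma\ls[\calF Q^k - \calF^{\pi_{k+1}} Q^k\rs] - \alpha\bar\omega_k.
\end{align*}
The first term is controlled by the contraction above and the last by $\alpha\|\bar\omega_k\|_\infty$, so the whole argument reduces to bounding the middle \emph{greedy-gap} term. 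Writing out the definitions of $\calF$ and $\calF^{\pi_{k+1}}$, for every $(s,a)$ one has
\begin{align*}
    \ls[\calF Q^k - \calF^{\pi_{k+1}} Q^k\rs](s,a) = \gamma\,\E_{s'\sim P(\cdot|s,a)}\ls[ \inner{\tilde\pi_k(\cdot|s') - \pi_{k+1}(\cdot|s')}{Q^k(s',\cdot)} \rs],
\end{align*}
which is nonnegative since $\tilde\pi_k$ is greedy for $Q^k$. As the diagonal entries of $\Sigma$ lie in $[0,1]$ and the gap is nonnegative, it then suffices to bound $\inner{\tilde\pi_k(\cdot|s') - \pi_{k+1}(\cdot|s')}{Q^k(s',\cdot)}$ uniformly in $s'$.

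The crux is to control this inner product through the optimality of the PMD step, and this is the only step that is not a verbatim copy of the surrounding estimates. Applying the three-point-descent lemma (Lemma~\ref{lem:three-point-descent-lemma}) to the policy update with $Q = Q^k$, $\pi = \pi_k$, $\pi^+ = \pi_{k+1}$, and comparator $p = \tilde\pi_k$ gives
\begin{align*}
    \eta_k \inner{\pi_{k+1}(\cdot|s') - \tilde\pi_k(\cdot|s')}{Q^k(s',\cdot)} \geq D^{\pi_{k+1}}_{\pi_k}(s') + D^{\tilde\pi_k}_{\pi_{k+1}}(s') - D^{\tilde\pi_k}_{\pi_k}(s').
\end{align*}
Dropping the two nonnegative Bregman terms and rearranging yields $0 \leq \inner{\tilde\pi_k(\cdot|s') - \pi_{k+1}(\cdot|s')}{Q^k(s',\cdot)} \leq \eta_k^{-1} D^{\tilde\pi_k}_{\pi_k}(s') \leq \eta_k^{-1}\|D^{\tilde\pi_k}_{\pi_k}\|_\infty$, where the left inequality uses that $\tilde\pi_k$ maximizes $\inner{p}{Q^k(s',\cdot)}$ over $\Delta(\calA)$. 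Consequently the greedy-gap term is bounded by $\alpha\gamma\eta_k^{-1}\|D^{\tilde\pi_k}_{\pi_k}\|_\infty$, and assembling the three pieces proves the claim. I expect no additional difficulty from the time-varying stationary distribution: since the single-step analysis fixes $\pi_{k+1}$ (hence $\Sigma^{\pi_b\circ\pi_{k+1}}$), the whole argument is the Expected TD-PMD proof of Lemma~\ref{lem:Expected-TD-AC-PMD:linear-critic-convergence} with $\Sigma^{\pi_b}$ and $\tilde\sigma_b$ replaced by $\Sigma^{\pi_b\circ\pi_{k+1}}$ and $\tilde\sigma$.
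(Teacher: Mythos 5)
Your proof is correct and is essentially the paper's own argument: the same application of the three-point-descent lemma with the greedy comparator $\tilde\pi_k$ yields $0 \leq \calF Q^k - \calF^{\pi_{k+1}} Q^k \leq \gamma\eta_k^{-1}\|D^{\tilde\pi_k}_{\pi_k}\|_\infty$, and the contraction of your auxiliary operator $\calG$ is exactly the paper's componentwise estimate $[1-\alpha\sigma^{\pi_b\circ\pi_{k+1}}(s,a)] + \alpha\gamma\sigma^{\pi_b\circ\pi_{k+1}}(s,a) \leq 1-(1-\gamma)\alpha\tilde\sigma$, valid once $\pi_{k+1}$ (hence $\Sigma^{\pi_b\circ\pi_{k+1}}$) is fixed, just as the paper does. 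Packaging the weighted optimal Bellman iteration into $\calG$ and using a single triangle inequality, instead of the paper's two-sided componentwise bounds on $Q^*-Q^{k+1}$, is a purely organizational difference that produces the identical final bound.
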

\begin{proof}
    Using the same argument as in the proof of Lemma~\ref{lem:Expected-TD-AC-PMD:linear-critic-convergence}, we have
    \begin{align*}
        \calF^{\pi_{k+1}} Q^k(s,a) \geq \calF Q^k(s,a) - \frac{\gamma}{\eta_k} \| D^{\tilde\pi_k}_{\pi_k} \|_\infty.
    \end{align*}
    By the critic update formula (equation~\eqref{eq:Approximated-TD-AC-PMD:critic-update}),
    \begin{align*}
        &\phantom{=\,\,\,}[Q^*-Q^{k+1}](s,a) \\
        &=[1-\alpha\sigma^{\pi_b\circ\pi_{k+1}}(s,a)][Q^*(s,a) - Q^k(s,a)] + \alpha \sigma^{\pi_b\circ\pi_{k+1}}(s,a) [ Q^*(s,a) - \calF^{\pi_{k+1}} Q^k(s,a)] - \alpha \bar{\omega}_k(s,a),
    \end{align*}
    we can similarly show that
    \begin{align*}
        &\phantom{=\,\,\,}|Q^*(s,a)-Q^{k+1}(s,a)| \\
        &\leq [1-\alpha\sigma^{\pi_b\circ\pi_{k+1}}(s,a)]\ls|Q^*(s,a) - Q^k(s,a)\rs| + \alpha \sigma^{\pi_b\circ\pi_{k+1}}(s,a) \ls|\calF Q^*(s,a) - \calF Q^k(s,a)\rs| \\
        &\;\;\;\; + \alpha\sigma^{\pi_b\circ\pi_{k+1}}(s,a) \frac{\gamma}{\eta_k} \| D^{\tilde\pi_k}_{\pi_k} \|_\infty + \alpha \ls|\bar{\omega}_k(s,a)\rs| \\
        &\leq [1-(1-\gamma)\alpha\sigma^{\pi_b\circ\pi_{k+1}}(s,a)] \ls\| Q^* - Q^k \rs\|_\infty + \alpha\gamma\eta_k^{-1}\| D^{\tilde\pi_k}_{\pi_k} \|_\infty + \alpha \ls|\bar{\omega}_k(s,a)\rs| \\
        &\leq [1-(1-\gamma)\alpha\tilde\sigma]\ls\| Q^* - Q^k \rs\|_\infty + \alpha \gamma\eta_k^{-1}\| D^{\tilde\pi_k}_{\pi_k} \|_\infty + \alpha\|\bar{\omega}_k\|_\infty,
    \end{align*}
    which completes the proof.
\end{proof}

\begin{lemma}
    Consider Approximate TD-PMD \textup{(}Algorithm~\ref{alg:Approximated-TD-AC-PMD}\textup{)}. Suppose Assumption~\ref{ass:ergodicity-2} hold. Let $c_t^k$ take the form in equation~\eqref{eq:Expected-TD-AC-PMD:average-weight} with $\vartheta=1$. Then there holds
    \begin{align*}
        \forall\, 0 \leq k \leq K : \quad \E\ls[ \ls\| \bar\omega_k \rs\|_\infty \rs] \leq \ls( \frac{4|\calS||\calA|}{(1-\gamma)^2} \ls( 1+\frac{m}{1-\kappa} \rs) \rs)^{\nicefrac{1}{2}} \cdot (B_k)^{-\nicefrac{1}{2}}.
    \end{align*}
    \label{lem:Approximated-TD-AC-PMD:error-control-2}
\end{lemma}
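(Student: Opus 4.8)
The plan is to mirror the argument for the Expected TD-PMD counterpart (Lemma~\ref{lem:Expected-TD-AC-PMD:error-control-2}), the only genuine change being that the relevant chain $\{s_t^k\}$ now has transition kernel $P^{\pi_b\circ\pi_{k+1}}_{\scriptscriptstyle\calS}$ and stationary distribution $\nu^{\pi_b\circ\pi_{k+1}}$, so the mixing constants are the uniform $(m,\kappa)$ from Proposition~\ref{pro:MDP-ergodicity-2}. First I would condition on the sigma-algebra $\calG_k$ generated by everything up to the start of iteration $k$, so that $Q^k$, $\pi_{k+1}$ and $s_0^k$ are frozen; conditionally on $\calG_k$, the sequence $\{s_t^k\}_{t\ge0}$ is a Markov chain with the fixed kernel above. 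Since $\vartheta=1$ gives $c_t^k = 1/B_k$ and $\|x\|_\infty^2\le\|x\|_2^2$, I reduce the target to a per-coordinate second-moment bound:
\begin{align*}
\E\ls[\ls\|\bar\omega_k\rs\|_\infty \mid \calG_k\rs] \le \ls(\E\ls[\ls\|\bar\omega_k\rs\|_\infty^2 \mid \calG_k\rs]\rs)^{1/2} \le \ls(\sum_{(s,a)} \E\ls[\bar\omega_k(s,a)^2\mid\calG_k\rs]\rs)^{1/2},
\end{align*}
so it suffices to show $\E[\bar\omega_k(s,a)^2\mid\calG_k] \le \frac{4}{(1-\gamma)^2}(1+\frac{m}{1-\kappa})B_k^{-1}$ for each $(s,a)$ uniformly in the realization of $\calG_k$, after which the unconditional claim follows by the tower property.

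Two ingredients drive the per-coordinate bound. The first is a uniform envelope: by Lemma~\ref{lem:Approximated-TD-AC-PMD:bounded-value} and $r\in[0,1]$ the per-sample TD error satisfies $|\delta_t^k(s,a)|\le (1-\gamma)^{-1}$, and the subtracted term $\sigma^{\pi_b\circ\pi_{k+1}}(s,a)[\calF^{\pi_{k+1}}Q^k - Q^k](s,a)$ in equation~\eqref{eq:Approximated-TD-AC-PMD:stochastic-noise} is likewise bounded by $(1-\gamma)^{-1}$, giving $|\omega_t^k(s,a)| \le 2(1-\gamma)^{-1} =: C$. The second, and the crux, is a decay estimate for the conditional mean of $\omega_{t'}^k$ given the history $\mathcal{H}_t := \sigma(\calG_k, (s_j^k,a_j^k,s_j^{\prime k},a_j^{\prime k}):j\le t)$, with respect to which $\omega_t^k$ is measurable and which contains $s_{t+1}^k$. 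Integrating out the intermediate variables $s_{t'}^{\prime k},a_{t'}^{\prime k}$ yields $\E[\delta_{t'}^k(s,a)\mid s_{t'}^k] = \mathds{1}_{s_{t'}^k=s}\,\pi_b(a|s)[\calF^{\pi_{k+1}}Q^k-Q^k](s,a)$, whence
\begin{align*}
\E\ls[\omega_{t'}^k(s,a)\mid\mathcal{H}_t\rs] = \ls[\P(s_{t'}^k=s\mid s_{t+1}^k) - \nu^{\pi_b\circ\pi_{k+1}}(s)\rs]\pi_b(a|s)\ls[\calF^{\pi_{k+1}}Q^k-Q^k\rs](s,a).
\end{align*}
By the Markov property and the uniform geometric mixing of Proposition~\ref{pro:MDP-ergodicity-2} (the chain makes $t'-t-1$ transitions from $s_{t+1}^k$), the bracketed total-variation gap is at most $m\kappa^{t'-t-1}$, so $|\E[\omega_{t'}^k(s,a)\mid\mathcal{H}_t]| \le (1-\gamma)^{-1}m\kappa^{t'-t-1} = \tfrac{C}{2}m\kappa^{t'-t-1}$.

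Finally I would expand $\bar\omega_k(s,a)^2 = B_k^{-2}\sum_{t,t'}\omega_t^k(s,a)\omega_{t'}^k(s,a)$. The $B_k$ diagonal terms each contribute at most $C^2$, giving $C^2/B_k$. For the off-diagonal pairs $t<t'$, the tower property combined with the two ingredients gives $|\E[\omega_t^k(s,a)\omega_{t'}^k(s,a)\mid\calG_k]| = |\E[\omega_t^k(s,a)\,\E[\omega_{t'}^k(s,a)\mid\mathcal{H}_t]\mid\calG_k]| \le C\cdot\tfrac{C}{2}m\kappa^{t'-t-1}$; summing the geometric series $\sum_{t<t'}\kappa^{t'-t-1}\le B_k/(1-\kappa)$ and doubling for the symmetric pairs contributes at most $C^2 m/(B_k(1-\kappa))$. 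Adding the two pieces gives exactly $\E[\bar\omega_k(s,a)^2\mid\calG_k]\le \frac{C^2}{B_k}(1+\frac{m}{1-\kappa})$ with $C^2 = 4(1-\gamma)^{-2}$, and summing over the $|\calS||\calA|$ coordinates, taking the square root, and applying the tower property closes the argument. I do not expect a serious obstacle: the decomposition is identical to the Expected case, and the one new point requiring care is verifying that the extra sampling variables $(s_t^{\prime k},a_t^{\prime k})$ of the mixed-policy scheme are cleanly integrated out, so that the conditional mean depends on $\mathcal{H}_t$ only through $s_{t+1}^k$ and Proposition~\ref{pro:MDP-ergodicity-2} applies verbatim.
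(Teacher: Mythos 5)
Your proposal is correct and takes essentially the same route as the paper's own proof: the same envelope bound $|\omega_t^k(s,a)|\le 2(1-\gamma)^{-1}$, the same diagonal/off-diagonal expansion of $\E[\bar\omega_k(s,a)^2]$ with the cross terms controlled via the uniform geometric mixing of Proposition~\ref{pro:MDP-ergodicity-2} (integrating out the primed variables so only the state chain matters), and the same $\|\cdot\|_\infty\le\|\cdot\|_2$ plus Jensen finish over the $|\calS||\calA|$ coordinates. One small slip: with your definition $\mathcal{H}_t=\sigma(\calG_k,(s_j^k,a_j^k,s_j^{\prime k},a_j^{\prime k}):j\le t)$, the state $s_{t+1}^k$ is \emph{not} $\mathcal{H}_t$-measurable (it is drawn from $P(\cdot|s_t^{\prime k},a_t^{\prime k})$ after the listed variables), so your displayed identity for $\E[\omega_{t'}^k(s,a)\mid\mathcal{H}_t]$ is not well-posed as written; the paper sidesteps this by conditioning on $o_t^k=(s_t^k,a_t^k,s_t^{\prime k},a_t^{\prime k},s_{t+1}^k)$, which explicitly includes $s_{t+1}^k$, and enlarging your $\mathcal{H}_t$ the same way (or averaging over $s_{t+1}^k$ and using the triangle inequality) repairs the step without altering the bound $(1-\gamma)^{-1}m\kappa^{t'-t-1}$ or anything downstream.
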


\begin{proof}
    Recall that $\calG_k = \{ s_0^k, \, \pi_{k+1}, \, Q^k \}$ and let $o_t^k := (s_t^k, a_t^k, s_t^{\prime\,k}, a_t^{\prime\,k}, s_{t+1}^k)$. By Lemma~\ref{lem:Approximated-TD-AC-PMD:bounded-value}, there holds $|\omega_t^k(s,a)| \leq 2(1-\gamma)^{-1}$ and $|\bar\omega_k(s,a)| \leq 2 (1-\gamma)^{-1}$. For any $0 \leq i < j \leq B_k-1$, following the same argument as in the proof of Lemma~\ref{lem:Expected-TD-AC-PMD:error-control-2} we have
    \begin{align*}
        \E\ls[ \omega_i^k(s,a) \omega_j^k(s,a) \, \big | \, \calG_k \rs] \leq \frac{2}{1-\gamma} \E \Bigg [ \ls| \E\ls[ \omega_j^k(s,a) \, \big | \, \calG_k, \, o_i^k \rs] \rs| \; \Bigg | \; \calG_k \Bigg].
    \end{align*}
    By the same computation as in equation~\eqref{eq:Approximated-TD-AC-PMD:delta-expectation}, one has 
    \begin{align*}
        \E\ls[ \delta_j^k(s,a) \, \big | \, \calG_k, \, o_i^k \rs] &= \P\ls[ (s_j^k, a_j^k) = (s,a) \, \big | \, \calG_k, \, o_i^k \rs] \cdot \ls[ \calF^{\pi_{k+1}} Q^k(s,a) - Q^k(s,a) \rs] \\
        &= \P \ls[ s_j^k = s \, \big | \, \pi_{k+1}, \, s_{i+1}^k \rs] \cdot \pi_b(a|s) \cdot \ls[ \calF^{\pi_{k+1}} Q^k(s,a) - Q^k(s,a) \rs].
    \end{align*}
    Therefore,
    \begin{align*}
        &\phantom{=\,\,\,}\ls| \E\ls[ \omega_j^k(s,a) \, | \, \calG_k, \, o_i^k \rs] \rs| \\
        &= \ls| \E\ls[ \delta_j^k(s,a) \, | \, \calG_k, \, o_i^k \rs] - \ls[ \calF^{\pi_{k+1}}_{\pi_b\circ\pi_{k+1}} Q^k - Q^k \rs](s,a) \rs| \\
        &= \ls| \P \ls[ s_j^k = s \, \big | \, \pi_{k+1}, \, s_{i+1}^k \rs] \cdot \pi_b(a|s) \cdot \ls[ \calF^{\pi_{k+1}} Q^k(s,a) - Q^k(s,a) \rs] - \sigma^{\pi_b\circ\pi_{k+1}}(s,a) \ls[ \calF^{\pi_{k+1}} Q^k - Q^k \rs](s,a) \rs| \\
        &\leq \ls| \P\ls[ s_j^k = s \, \big | \, \pi_{k+1}, \, s_{i+1}^k \rs] - \nu^{\pi_b\circ\pi_{k+1}}(s) \rs| \cdot \ls| \pi_b(a|s) \rs| \cdot \underbrace{\ls| \ls[ \calF^{\pi_{k+1}} Q^k(s,a) - Q^k(s,a) \rs] \rs|}_{\leq (1-\gamma)^{-1} \mbox{\small \;by Lemma~\ref{lem:Approximated-TD-AC-PMD:bounded-value}}} \\
        &\leq \frac{1}{1-\gamma} \underbrace{d_{TV} \ls( \P\ls[ s_j^k = \cdot \, | \, s_{i+1}^k \rs], \; \nu^{\pi_b\circ\pi_{k+1}}(\cdot) \, | \, \pi_{k+1} \rs)}_{\mbox{\small Applying equation~\eqref{eq:Approximated-TD-AC-PMD:MDP-ergodicity}}} \\
        &\leq \frac{1}{1-\gamma} m\kappa^{j-i-1}.
    \end{align*}
    Then the remaining proof is the same with that for Lemma~\ref{lem:Expected-TD-AC-PMD:error-control-2}.
\end{proof}

\begin{lemma}
    Suppose Assumption~\ref{ass:behavior-exploration} and Assumption~\ref{ass:ergodicity-2} hold. Consider Approximate TD-PMD \textup{(}Algorithm~\ref{alg:Approximated-TD-AC-PMD}\textup{)} with constant critic step size $\alpha_k = \alpha \in (0,1]$. There holds
    \begin{align*}
        \forall\, 0 \leq k \leq K: \quad \ls\| Q^* - Q^{\pi_{k+1}} \rs\|_\infty \leq  \frac{1}{\alpha(1-\gamma)\tilde\sigma} \ls( \ls\| Q^* - Q^{k+1} \rs\|_\infty + \ls\| Q^* - Q^k \rs\|_\infty + \alpha \| \bar{{\omega}}_k \|_\infty \rs).
    \end{align*}
    \label{lem:Approximated-TD-AC-PMD:linear-actor-convergence}
\end{lemma}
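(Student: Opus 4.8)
The plan is to mirror the argument used for the Expected TD-PMD counterpart (Lemma~\ref{lem:Expected-TD-AC-PMD:linear-actor-convergence}), replacing the $\sigma^{\pi_b}$-weighted operator $\calF^{\pi_{k+1}}_{\pi_b,\alpha}$ by its mixed-policy analogue $\calF^{\pi_{k+1}}_{\pi_b\circ\pi_{k+1},\alpha}$ and the constant $\tilde\sigma_b$ by $\tilde\sigma$. First I would start from the critic update in the form $Q^{k+1} = \calF^{\pi_{k+1}}_{\pi_b\circ\pi_{k+1},\alpha} Q^k + \alpha\bar\omega_k$ given in equation~\eqref{eq:Approximated-TD-AC-PMD:critic-update}, and invoke the fixed-point identity $Q^{\pi_{k+1}} = \calF^{\pi_{k+1}}_{\pi_b\circ\pi_{k+1},\alpha} Q^{\pi_{k+1}}$ from Proposition~\ref{pro:Approximated-TD-AC-PMD:sigma-bellman-property}(a). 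Subtracting the two expressions gives $Q^{\pi_{k+1}} - Q^{k+1} = \calF^{\pi_{k+1}}_{\pi_b\circ\pi_{k+1},\alpha} Q^{\pi_{k+1}} - \calF^{\pi_{k+1}}_{\pi_b\circ\pi_{k+1},\alpha} Q^k - \alpha\bar\omega_k$.

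Next I would apply the contraction bound in Proposition~\ref{pro:Approximated-TD-AC-PMD:sigma-bellman-property}(b) together with the triangle inequality to obtain $\|Q^{\pi_{k+1}} - Q^{k+1}\|_\infty \leq [1-(1-\gamma)\alpha\tilde\sigma] \|Q^{\pi_{k+1}} - Q^k\|_\infty + \alpha\|\bar\omega_k\|_\infty$, and then insert the optimal value $Q^*$ via $\|Q^{\pi_{k+1}} - Q^k\|_\infty \leq \|Q^* - Q^{\pi_{k+1}}\|_\infty + \|Q^* - Q^k\|_\infty$. Combining this with the further triangle inequality $\|Q^* - Q^{\pi_{k+1}}\|_\infty \leq \|Q^* - Q^{k+1}\|_\infty + \|Q^{\pi_{k+1}} - Q^{k+1}\|_\infty$ yields an inequality in which $\|Q^* - Q^{\pi_{k+1}}\|_\infty$ appears on both sides, with coefficient $[1-(1-\gamma)\alpha\tilde\sigma] < 1$ on the right. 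The final step is to move that term to the left, so that the net coefficient becomes $(1-\gamma)\alpha\tilde\sigma$, and divide through; this produces exactly the claimed bound with the prefactor $1/[\alpha(1-\gamma)\tilde\sigma]$.

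The argument is essentially a line-by-line transcription of the Expected TD-PMD proof, so I expect no genuine obstacle. The one point worth flagging is why no perturbation (Mitrophanov) lemma is needed here, in contrast to the constant-step-size analysis of the $E_j^{(k)}$ term (Lemma~\ref{lem:approximated-Ejk-bound}): the operator $\calF^{\pi_{k+1}}_{\pi_b\circ\pi_{k+1},\alpha}$ uses the \emph{same} policy $\pi_{k+1}$ both in its Bellman backup and in its weighting distribution $\sigma^{\pi_b\circ\pi_{k+1}}$, so the fixed-point and contraction facts of Proposition~\ref{pro:Approximated-TD-AC-PMD:sigma-bellman-property} apply directly to $Q^{\pi_{k+1}}$ without ever comparing two distinct stationary distributions. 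Moreover, since Proposition~\ref{pro:MDP-ergodicity-2} supplies the uniform lower bound $\tilde\sigma > 0$ valid for every policy, the contraction modulus $1-(1-\gamma)\alpha\tilde\sigma$ is uniform in $k$, and the rearrangement step goes through verbatim.
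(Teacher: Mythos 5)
Your proposal is correct and follows exactly the route the paper takes: the paper proves this lemma by invoking the critic update \eqref{eq:Approximated-TD-AC-PMD:critic-update}, the fixed-point and contraction properties of $\calF^{\pi_{k+1}}_{\pi_b\circ\pi_{k+1},\alpha}$ from Proposition~\ref{pro:Approximated-TD-AC-PMD:sigma-bellman-property}, and then repeating verbatim the triangle-inequality and rearrangement argument of Lemma~\ref{lem:Expected-TD-AC-PMD:linear-actor-convergence}, which is precisely your plan. Your closing remark about why no Mitrophanov-type perturbation bound is needed here (the same policy $\pi_{k+1}$ appears in both the backup and the weighting, and $\tilde\sigma>0$ is uniform over policies) is accurate and matches the paper's implicit reasoning.
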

\begin{proof}
     By substituting equation~\eqref{eq:Approximated-TD-AC-PMD:critic-update} for equation~\eqref{eq:Expected-TD-AC-PMD:critic-update}, $\calF^{\pi_{k+1}}_{\pi_b\circ\pi_{k+1}, \alpha}$ for $\calF^{\pi_{k+1}}_{\pi_b, \alpha}$, and Proposition~\ref{pro:Approximated-TD-AC-PMD:sigma-bellman-property} for Proposition~\ref{pro:Expected-TD-AC-PMD:sigma-bellman-property}, the proof remains same with  that for Lemma~\ref{lem:Expected-TD-AC-PMD:linear-actor-convergence}.
\end{proof}

\begin{theorem}
    Suppose Assumption~\ref{ass:behavior-exploration} and Assumption~\ref{ass:ergodicity-2} hold. Consider Approximate TD-PMD \textup{(}Algorithm~\ref{alg:Approximated-TD-AC-PMD}\textup{)} with constant critic step size $\alpha_k = \alpha \in (0,1]$ and the adaptive policy update step size
    \begin{align*}
        \eta_k \geq \eta \cdot \| D^{\tilde\pi_k}_{\pi_k} \|_\infty \quad \mbox{with} \quad \eta > 0.
    \end{align*}
    There holds
    \begin{align*}
        \E[\| Q^* - Q^{\pi_k} \|_\infty] &\leq \frac{2}{\alpha \tilde\sigma (1-\gamma)^2}[1-(1-\gamma)\alpha\tilde\sigma]^{K-1} + \frac{2\gamma}{\alpha\eta(1-\gamma)^2\tilde\sigma^2 } \\
        &\;\;\;\;+ \frac{1}{\tilde\sigma (1-\gamma)} \ls(\frac{4|\calS||\calA|}{(1-\gamma)^2} \ls( 1 + \frac{m}{1-\kappa}\rs)\rs)^{\nicefrac{1}{2}} \cdot \ls( B_{K-1}^{-\nicefrac{1}{2}} + \tilde\Xi(K) + \tilde\Xi(K-1) \rs),
    \end{align*}
    where $\tilde\Xi(t) := \sum_{k=0}^{t-1} [1-(1-\gamma)\alpha\tilde\sigma]^{t-1-k} B_k^{-\nicefrac{1}{2}}$.
    \label{thm:Approximated-TD-AC-PMD:adaptive-step-size}
\end{theorem}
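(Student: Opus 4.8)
The plan is to replicate the argument of Theorem~\ref{thm:Expected-TD-AC-PMD:adaptive-step-size} essentially verbatim, replacing the static quantities $\tilde\sigma_b, m_b, \kappa_b$ by their policy-uniform counterparts $\tilde\sigma, m, \kappa$ and swapping the Expected TD-PMD lemmas for their Approximate TD-PMD analogues. First I would take expectations on both sides of Lemma~\ref{lem:Approximated-TD-AC-PMD:linear-critic-convergence}. The adaptive choice $\eta_k \geq \eta\,\|D^{\tilde\pi_k}_{\pi_k}\|_\infty$ bounds the middle term $\frac{\alpha\gamma}{\eta_k}\|D^{\tilde\pi_k}_{\pi_k}\|_\infty$ by the iterate-independent constant $\alpha\gamma/\eta$ (thereby removing the unknown Bregman gap), while Lemma~\ref{lem:Approximated-TD-AC-PMD:error-control-2} controls the noise term by $\E\|\bar\omega_k\|_\infty \leq C_1 B_k^{-1/2}$ with $C_1 = \big(\tfrac{4|\calS||\calA|}{(1-\gamma)^2}(1+\tfrac{m}{1-\kappa})\big)^{1/2}$. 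Together these give the one-step recursion $\E\|Q^*-Q^{k+1}\|_\infty \leq [1-(1-\gamma)\alpha\tilde\sigma]\,\E\|Q^*-Q^k\|_\infty + \alpha\gamma\eta^{-1} + \alpha C_1 B_k^{-1/2}$.

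Second, I would unroll this recursion from $k=K-1$ down to $0$, using $\|Q^*-Q^0\|_\infty \leq (1-\gamma)^{-1}$ since $Q^0=0$. The geometric factor $1-(1-\gamma)\alpha\tilde\sigma$ produces the exponentially decaying initial term; the constant $\alpha\gamma\eta^{-1}$ sums to $\gamma/(\eta(1-\gamma)\tilde\sigma)$ through $\sum_{j\geq0}[1-(1-\gamma)\alpha\tilde\sigma]^j = 1/((1-\gamma)\alpha\tilde\sigma)$; and the noise contributions accumulate precisely into $\alpha C_1\,\tilde\Xi(K)$. Running the identical computation with $K-1$ in place of $K$ yields the companion bound for $\E\|Q^*-Q^{K-1}\|_\infty$ in terms of $\tilde\Xi(K-1)$.

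Third, I would pass from the critic error to the policy-value error via Lemma~\ref{lem:Approximated-TD-AC-PMD:linear-actor-convergence}, whose expectation reads $\E\|Q^*-Q^{\pi_K}\|_\infty \leq \frac{1}{\alpha(1-\gamma)\tilde\sigma}\big(\E\|Q^*-Q^K\|_\infty + \E\|Q^*-Q^{K-1}\|_\infty + \alpha C_1 B_{K-1}^{-1/2}\big)$, invoking Lemma~\ref{lem:Approximated-TD-AC-PMD:error-control-2} once more for the last summand. Substituting the two critic bounds, and using $[1-(1-\gamma)\alpha\tilde\sigma]^K \leq [1-(1-\gamma)\alpha\tilde\sigma]^{K-1}$ to merge the two decaying terms into $2[1-(1-\gamma)\alpha\tilde\sigma]^{K-1}$, collects everything into the three stated summands with the claimed constants.

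Finally, the only point where the mixed-policy setting could cause genuine difficulty is that the stationary measure $\sigma^{\pi_b\circ\pi_{k+1}}$ now drifts with the iterate, so the weighted Bellman operator changes at every step. This obstacle, however, is already absorbed upstream: Proposition~\ref{pro:Approximated-TD-AC-PMD:sigma-bellman-property} supplies the contraction factor $1-(1-\gamma)\alpha\tilde\sigma$ \emph{uniformly} over all policies through the uniform lower bound $\tilde\sigma$, and Lemma~\ref{lem:Approximated-TD-AC-PMD:error-control-2} furnishes a noise bound uniform across iterations. Consequently the recursion telescopes with iteration-independent constants exactly as in the Expected case, and no new estimate is needed at the assembly stage; the substantive work resides in the already-established Lemmas~\ref{lem:Approximated-TD-AC-PMD:linear-critic-convergence}--\ref{lem:Approximated-TD-AC-PMD:linear-actor-convergence} rather than in stitching them together here.
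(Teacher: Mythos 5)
Your proposal is correct and follows essentially the same route as the paper: the paper's own proof of this theorem is literally "substitute Lemmas~\ref{lem:Approximated-TD-AC-PMD:linear-critic-convergence}, \ref{lem:Approximated-TD-AC-PMD:error-control-2}, and \ref{lem:Approximated-TD-AC-PMD:linear-actor-convergence} for their Expected TD-PMD counterparts and repeat the proof of Theorem~\ref{thm:Expected-TD-AC-PMD:adaptive-step-size}," which is exactly the recursion-unrolling and assembly you spell out, with matching constants. Your closing observation — that the drift of $\sigma^{\pi_b\circ\pi_{k+1}}$ is already absorbed by the uniform contraction factor $1-(1-\gamma)\alpha\tilde\sigma$ and the iteration-uniform noise bound — correctly identifies why no new estimate is needed at this stage.
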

\begin{proof}
    By substituting Lemma~\ref{lem:Approximated-TD-AC-PMD:error-control-2} for Lemma~\ref{lem:Expected-TD-AC-PMD:error-control-2}, Lemma~\ref{lem:Approximated-TD-AC-PMD:linear-critic-convergence} for Lemma~\ref{lem:Expected-TD-AC-PMD:linear-critic-convergence}, and Lemma~\ref{lem:Approximated-TD-AC-PMD:linear-actor-convergence} for Lemma~\ref{lem:Expected-TD-AC-PMD:linear-actor-convergence}, the proof remains same with that for Theorem~\ref{thm:Expected-TD-AC-PMD:adaptive-step-size}.
\end{proof}
Base on the same parameter settings as in Remark~\ref{rem:Expected-TD-AC-PMD:adaptive-step-size}, one can similarly  establish the $O(\varepsilon^{-2})$ sample complexity of Approximate TD-PMD with adaptive step sizes.

\section{Proofs of critical lemmas}\label{sec:lemma-proof}
\subsection{Proof of Lemma~\ref{lem:Expected-TD-AC-PMD:Lan-Xiao-bound}}\label{sec:proof:Expected-TD-AC-PMD:Lan-Xiao-bound}
For any state $s\in\calS$, applying the three-point-descent lemma (Lemma~\ref{lem:three-point-descent-lemma}) with $p=\pi_{k }(\cdot|s)$ and $p=\pi^*(\cdot|s)$ gives that
    \begin{align*}
        \eta \ls\langle \pi_{k+1}(\cdot|s) - \pi_k(\cdot|s), \, Q^k(s,\cdot) \rs\rangle &\geq D^{\pi_{k+1}}_{\pi_k}(s) + D^{\pi_k}_{\pi_{k+1}}(s), \numberthis \label{eq:Expected-TD-AC-PMD:three-poinj-1} \\
        \eta \ls\langle \pi_{k+1}(\cdot|s) - \pi^*(\cdot|s) , \, Q^k(s,\cdot) \rs\rangle &\geq D^{\pi_{k+1}}_{\pi_k}(s) + D^{\pi^*}_{\pi_{k+1}}(s) - D^{\pi^*}_{\pi_k}(s). \numberthis \label{eq:Expected-TD-AC-PMD:three-point-2}
    \end{align*}
    Subtracting $\ls\langle \pi_k(\cdot|s), \, Q^k(s,\cdot) \rs\rangle$ on both of sides of equation~\eqref{eq:Expected-TD-AC-PMD:three-point-2} and after rearrangement we have 
    \begin{equation*}
    \begin{aligned}
        \underbrace{\eta \ls\langle \pi_{k+1}(\cdot|s) - \pi_k(\cdot|s), \, Q^k(s,\cdot) \rs\rangle - D^{\pi_{k+1}}_{\pi_k}(s)}_{:= I_k(s)} &\geq \eta \ls\langle \pi^*(\cdot|s) - \pi_k(\cdot|s), \, Q^{\pi_k}(s,\cdot) \rs\rangle + D^{\pi^*}_{\pi_{k+1}}(s) - D^{\pi^*}_{\pi_k}(s) \\
        &\;\;\;\;+ \underbrace{\eta \ls\langle \pi^*(\cdot|s) - \pi_k (\cdot|s), \, Q^k(s,\cdot) - Q^{\pi_k}(s,\cdot)\rs\rangle}_{:= G_k(s)}.
    \end{aligned}
    \end{equation*}
    For arbitrary $\mu\in\Delta(\calS)$, taking expectation with respect to $s\sim d^{*}_\mu$ on both sides gives
    \begin{align*}
        \E_{s\sim d^*_\mu} [I_k(s)] &\geq \eta \, \E_{s\sim d^*_\mu} \ls[ \ls\langle \pi^*(\cdot|s) - \pi_k(\cdot|s), \, Q^{\pi_k}(s,\cdot) \rs\rangle \rs] + \E_{s\sim d^*_\mu} \ls[ D^{\pi^*}_{\pi_{k+1}}(s) - D^{\pi^*}_{\pi_k}(s) \rs] + \E_{s\sim d^*_{\mu}} [G_k(s)] \\
        &= \eta(1-\gamma) (V^*(\mu) - V^{\pi_k}(\mu)) + \ls[D^{\pi^*}_{\pi_{k+1}}(d^*_\mu) - D^{\pi^*}_{\pi_k}(d^*_\mu) \rs] + \E_{s\sim d^*_{\mu}} [G_k(s)],
    \end{align*}
    where we have applied the performance difference lemma (Lemma~\ref{lem:pdl}) and set $D^{\pi^*}_{\pi_{k+1}}(d^*_\mu) = \E_{s\sim d^*_\mu} [D^{\pi^*}_{\pi_{k+1}}(s)]$, and  $D^{\pi^*}_{\pi_k}(d^*_\mu)=\E_{s\sim d^*_\mu} [D^{\pi^*}_{\pi_k}(s)]$. Summing it from $k=0$ to $k=K$ yields
    \begin{align*}
        \sum_{k=0}^K \ls( V^*(\mu) - V^{\pi_k}(\mu) \rs) &\leq \frac{1}{\eta(1-\gamma)} \ls[ D^{\pi^*}_{\pi_0}(d^*_\mu) - D^{\pi^*}_{\pi_{K+1}}(d^*_\mu) + \sum_{k=0}^K \E_{s\sim d^*_\mu} [I_k(s)] - \sum_{k=0}^K \E_{s\sim d^*_\mu}[ G_k (s) ] \rs] \\
        &\leq \frac{1}{\eta(1-\gamma)} \ls[ \| D^{\pi^*}_{\pi_0} \|_\infty + \sum_{k=0}^K \E_{s\sim d^*_\mu} [I_k(s)] - \sum_{k=0}^K \E_{s\sim d^*_\mu}[ G_k (s) ] \rs]. \numberthis \label{eq:Xiao-bound-1}
    \end{align*}
    Next we need to bound 
    $\sum_{k=0}^{K} \E_{s\sim d^*_\mu} [I_k(s)]$. Noticing that $I_k(s) \geq 0$ by equation~\eqref{eq:Expected-TD-AC-PMD:three-poinj-1} and the fact that $d^{\pi_{k+1}}_{d^*_\mu}(s) \geq (1-\gamma) d^*_\mu(s)$, we have
    \begin{align*}
        \E_{s\sim d^*_\mu} [I_k(s)] \leq \frac{1}{1-\gamma} \E_{s\sim d^{\pi_{k+1}}_{d^*_\mu}} \ls[ I_k(s) \rs].
    \end{align*}
    Then $I_k(s)$ can be upper bounded as follows,
    \begin{align*}
        I_k(s) &= \eta \ls\langle \pi_{k+1}(\cdot|s) - \pi_k(\cdot|s), \, Q^{\pi_k}(s,\cdot) \rs\rangle \\
        &\;\;\;\;+ \eta \ls\langle \pi_{k+1}(\cdot|s) - \pi_k(\cdot|s), \, Q^k(s,\cdot) - Q^{\pi_k}(s,\cdot) \rs\rangle - D^{\pi_{k+1}}_{\pi_k}(s) \\[.5em]
        &\stackrel{(a)}{\leq} \eta \ls\langle \pi_{k+1}(\cdot|s) - \pi_k(\cdot|s), \, Q^{\pi_k}(s,\cdot) \rs\rangle \\
        &\;\;\;\; + \ls( \sqrt{\frac{\lambda}{|\calA|^2}} \cdot \| \pi_{k+1}(\cdot|s) - \pi_k(\cdot|s) \|_1 \rs) \cdot \ls( \eta \sqrt{\frac{|\calA|^2}{\lambda}} \cdot \ls\| Q^k(s,\cdot) - Q^{\pi_k}(s,\cdot) \rs\|_\infty \rs) - D^{\pi_{k+1}}_{\pi_k}(s) \\[.5em]
        &\stackrel{(b)}{\leq} \eta \ls\langle \pi_{k+1}(\cdot|s) - \pi_k(\cdot|s), \, Q^{\pi_k}(s,\cdot) \rs\rangle \\
        &\;\;\;\; + \underbrace{\frac{\lambda}{2|\calA|^2} \cdot \ls\| \pi_{k+1}(\cdot|s) - \pi_k(\cdot|s) \rs\|_1^2 - D^{\pi_{k+1}}_{\pi_k}(s)}_{\leq 0 \mbox{\small \; by equation~\eqref{eq:Bregman-lower-bound}}} + \frac{\eta^2 |\calA|^2}{2\lambda} \cdot \| Q^k(s,\cdot) - Q^{\pi_k}(s,\cdot) \|_\infty^2 \\[.5em]
        & \leq \eta \ls\langle \pi_{k+1}(\cdot|s) - \pi_k(\cdot|s), \, Q^{\pi_k}(s,\cdot) \rs\rangle + \frac{\eta^2 |\calA|^2}{2\lambda} \cdot \| Q^k - Q^{\pi_k} \|_\infty^2,
    \end{align*}
    where we apply H{\"o}lder's inequality in $(a)$ and Young's inequality in $(b)$. Together with the performance difference lemma (Lemma~\ref{lem:pdl}), we have
    \begin{align*}
        \E_{s\sim d^*_\mu}[I_k(s)] &\leq \frac{\eta}{1-\gamma} \E_{s\sim d^{\pi_{k+1}}_{d^*_\mu}}\ls[ \ls\langle \pi_{k+1}(\cdot|s) - \pi_k(\cdot|s), \, Q^{\pi_k}(s,\cdot) \rs\rangle \rs] + \frac{\eta^2|\calA|^2}{2(1-\gamma)\lambda} \cdot \| Q^{k} - Q^{\pi_k} \|_\infty^2 \\
        &= \eta \ls[ V^{\pi_{k+1}}(d^*_\mu) - V^{\pi_k}(d^*_\mu) \rs] + \frac{\eta^2|\calA|^2}{2(1-\gamma)\lambda} \cdot \| Q^{k} - Q^{\pi_k} \|_\infty^2.
        \end{align*}
        It follows that
        \begin{align*}
        \sum_{k=0}^K \E_{s\sim d^*_\mu}[I_k(s)] & \leq \eta \ls[ V^{\pi_{K+1}} (d^*_\mu) - V^{\pi_0}(d^*_\mu) \rs] + \sum_{k=0}^{K}\frac{\eta^2|\calA|^2}{2(1-\gamma)\lambda} \cdot \| Q^{k} - Q^{\pi_k} \|_\infty^2 \\
        &\leq \frac{\eta}{1-\gamma} + \sum_{k=0}^{K}\frac{\eta^2|\calA|^2}{2(1-\gamma)\lambda} \cdot \| Q^{k} - Q^{\pi_k} \|_\infty^2.
    \end{align*}
    Plugging it back to equation~\eqref{eq:Xiao-bound-1}, taking the total expectation, and using the fact $\E\ls[ V^*(\mu) - V^{\pi_{\hat K}}(\mu) \rs] = (K+1)^{-1} \sum_{k=0}^K \E\ls[ V^* (\mu) - V^{\pi_k}(\mu) \rs]$ complete the proof.

\subsection{Proof of Lemma~\ref{lem:decomposition of bias}}\label{sec:proof-bias-deomposition}
Define
\begin{align*}
B_j^{(k)}=\inner{\ls[(A_j)^{k-j}\rs]^\top J_s(\pi^*-\pi_j)}{Q^{\pi_j}-Q^j}.
\end{align*}
Recalling the definition of $E_s$ and $J_s$ in equation~\eqref{eq:EsJs}, it is evident that 
\begin{align*}
\inner{\pi^*(\cdot|s) - \pi_k(\cdot|s)}{Q^{\pi_k}(s,\cdot) - Q^k(s,\cdot)}& = \langle E_s(\pi^*-\pi_k),E_s(Q^{\pi_k}-Q^{k})\rangle = B_k^{(k)}.
\end{align*}
Therefore, 
\begin{align}
\inner{\pi^*(\cdot|s) - \pi_k(\cdot|s)}{Q^{\pi_k}(s,\cdot) - Q^k(s,\cdot)}& =B_0^{(k)}+\sum_{j=1}^{k}\left[B^{(k)}_{j}-B_{j-1}^{(k)}\right].\label{eq:proof-bias-decomp-01}
\end{align}
To compute $B^{(k)}_{j}-B_{j-1}^{(k)}$, first one has 
\begin{align*}
        B_j^{(k)} &= \inner{\ls[(A_j)^{k-j}\rs]^\top J_s [\pi^* - \pi_j]}{Q^{\pi_j} - Q^j} \\
        &\stackrel{(a)}{=} \inner{\ls[(A_j)^{k-j}\rs]^\top J_s [\pi^* - \pi_j]}{Q^{\pi_j} - Q^{j-1} - \alpha \Sigma^{\pi_b} \ls[ \calF^{\pi_j}Q^{j-1} - Q^{j-1} \rs] - \alpha \bar\omega_{j-1}} \\
        &\stackrel{(b)}{=} \inner{\ls[(A_j)^{k-j}\rs]^\top J_s [\pi^* - \pi_j]}{[I - \alpha \Sigma^{\pi_b}] [Q^{\pi_j} - Q^{j-1}]} \\
        & \;\;\;\; + \inner{\ls[(A_j)^{k-j}\rs]^\top J_s [\pi^* - \pi_j]}{\alpha \Sigma^{\pi_b} [\calF^{\pi_j}Q^{\pi_j} - \calF^{\pi_j}Q^{j-1}]} \\
        & \;\;\;\; - \inner{\ls[(A_j)^{k-j}\rs]^\top J_s [\pi^* - \pi_j]}{\alpha \bar\omega_{j-1}} \\
        &\stackrel{(c)}{=} \inner{\ls[(A_j)^{k-j}\rs]^\top J_s [\pi^* - \pi_j]}{[I - \alpha \Sigma^{\pi_b}] [Q^{\pi_j} - Q^{j-1}]} \\
        & \;\;\;\; + \inner{\ls[(A_j)^{k-j}\rs]^\top J_s [\pi^* - \pi_j]}{(\alpha \Sigma^{\pi_b})(\gamma P^{\pi_j}_{\scriptscriptstyle\calS\times\calA}) [Q^{\pi_j} - Q^{j-1}]} \\
        & \;\;\;\; - \inner{\ls[(A_j)^{k-j}\rs]^\top J_s [\pi^* - \pi_j]}{\alpha \bar\omega_{j-1}} \\
        &= \inner{\ls[(A_j)^{k-j}\rs]^\top J_s [\pi^* - \pi_j]}{[I - \alpha \Sigma^{\pi_b} (I - \gamma P^{\pi_j}_{\scriptscriptstyle\calS\times\calA}) ] [Q^{\pi_j} - Q^{j-1}]} - \inner{\ls[(A_j)^{k-j}\rs]^\top J_s [\pi^* - \pi_j]}{\alpha \bar\omega_{j-1}} \\
        &\stackrel{(d)}{=} \inner{\ls[(A_j)^{k-j+1}\rs]^\top J_s [\pi^* - \pi_j]}{ [Q^{\pi_j} - Q^{j-1}]} - \inner{\ls[(A_j)^{k-j}\rs]^\top J_s [\pi^* - \pi_j]}{\alpha \bar\omega_{j-1}},
    \end{align*}
    where $(a)$ is from the critic update formula (equation~\eqref{eq:Expected-TD-AC-PMD:critic-update}), $(b)$ uses the fact that $\calF^{\pi_j} Q^{\pi_j} = Q^{\pi_j}$, $(c)$ is from the fact that $\calF^{\pi} Q = r + \gamma P^{\pi}_{\scriptscriptstyle\calS\times\calA} Q$ for any $\pi \in \Pi$ and $Q \in \mathbb{R}^{|\calS||\calA|}$, and $(d)$ is from $A_j = [I - \alpha \Sigma^{\pi_b}(I - \gamma P^{\pi_j}_{\scriptscriptstyle\calS\times\calA})]$. By Further decomposing the first term yields
    \begin{align*}
        B_j^{(k)} &= \inner{\ls[(A_j)^{k-j+1}\rs]^\top J_s [\pi^* - \pi_j]}{ [Q^{\pi_j} - Q^{j-1}]} - \inner{\ls[(A_j)^{k-j}\rs]^\top J_s [\pi^* - \pi_j]}{\alpha \bar\omega_{j-1}} \\
        &= \underbrace{\inner{\ls[(A_{j-1})^{k-j+1}\rs]^\top J_s [\pi^* - \pi_{j-1}]}{ [Q^{\pi_{j-1}} - Q^{j-1}]}}_{=B_{j-1}^{(k)}} \\
        &\;\;\;\; + \underbrace{\inner{\ls[ (A_j)^{k-j+1} \rs]^\top J_s[\pi^* - \pi_j]}{[Q^{\pi_j} - Q^{\pi_{j-1}}]}}_{:= C^{(k)}_j} + \underbrace{\inner{\ls[ (A_j)^{k-j+1} \rs]^\top J_s[\pi_{j-1} - \pi_j]}{[Q^{\pi_{j-1}} - Q^{{j-1}}]}}_{:= D^{(k)}_j} \\
        & \;\;\;\; + \underbrace{\inner{\ls[ (A_j)^{k-j+1} - (A_{j-1})^{k-j+1} \rs]^\top J_s[\pi^* - \pi_{j-1}]}{[Q^{\pi_{j-1}} - Q^{{j-1}}]}}_{:=E_j^{(k)}} - \underbrace{\inner{\ls[(A_j)^{k-j}\rs]^\top J_s [\pi^* - \pi_j]}{\alpha \bar\omega_{j-1}}}_{:=F_j^{(k)}}.
    \end{align*}
    Plugging it back to \eqref{eq:proof-bias-decomp-01}, taking the expectation, and taking the absolute value completes the proof.

    \subsection{Proof of Lemma~\ref{lem:expected-CjkDjk-bounds}}\label{sec:expected-CjkDjk-bounds}
     For the term with $C_j^{(k)}$,
 \begin{align*}
        \sum_{j=1}^k \ls| C_j^{(k)} \rs| &= \sum_{j=1}^k \ls| \inner{\ls[ (A_j)^{k-j+1} \rs]^\top J_s [\pi^* - \pi_j]}{[Q^{\pi_j} - Q^{\pi_{j-1}}]} \rs| \\
        &\leq \sum_{j=1}^k\ls\| A_j \rs\|_\infty^{k-j+1} \cdot \ls\| J_s[\pi^* - \pi_j] \rs\|_1 \cdot \underbrace{\ls\| Q^{\pi_j} - Q^{\pi_{j-1}} \rs\|_\infty}_{\mbox{\small applying Lemma~\ref{lem:Lip:Q-value}}} \\
        &\leq \sum_{j=1}^k\frac{\gamma|\calA|}{(1-\gamma)^2} \cdot \ls\| A_j \rs\|_\infty^{k-j+1} \cdot \underbrace{\ls\| J_s[\pi^* - \pi_j] \rs\|_1}_{\leq 2} \cdot \underbrace{\ls\| \pi_j - \pi_{j-1} \rs\|_\infty}_{\mbox{\small applying Lemma~\ref{lem:policy-shift} and Lemma~\ref{lem:Expected-TD-AC-PMD:bounded-value}}} \\
        &\leq \sum_{j=1}^k\frac{2\gamma |\calA|^2\eta}{\lambda(1-\gamma)^3} \cdot \ls[ 1-(1-\gamma)\alpha\tilde \sigma_b \rs]^{k-j+1} \\
        &\leq \frac{2\gamma |\calA|^2 \eta}{\alpha\lambda\tilde \sigma_b(1-\gamma)^4}.
    \end{align*}
    For the term with  $D_j^{(k)}$, 
    \begin{align*}
        \sum_{j=1}^k \ls| D_j^{(k)} \rs| &= \sum_{j=1}^k\ls| \inner{\ls[\ls( A_j \rs)^{k-j+1}\rs]^\top J_s [\pi_{j-1} - \pi_j]}{[Q^{\pi_{j-1}} - Q^{j-1}]} \rs| \\
        &\leq \sum_{j=1}^k\ls\| A_j \rs\|_\infty^{k-j+1} \cdot \underbrace{\ls\| J_s [\pi_{j-1} - \pi_j] \rs\|_1}_{\leq |\calA| \cdot \ls\| \pi_{j-1} - \pi_j \rs\|_\infty} \cdot \underbrace{\ls\| Q^{\pi_{j-1}} - Q^{j-1} \rs\|_\infty}_{\leq (1-\gamma)^{-1}} \\
        &\leq \sum_{j=1}^k\frac{|\calA|^2\eta}{\lambda(1-\gamma)^2} \cdot \ls[ 1 - (1-\gamma) \alpha \tilde \sigma_b \rs]^{k-j+1} \\
        &\leq \frac{|\calA|^2\eta}{\alpha\lambda\tilde \sigma_b(1-\gamma)^3}.
    \end{align*}

    \subsection{Proof of Lemma~\ref{lem:expected-Ejk-bounds}}\label{sec:expected-Ejk-bound}
     First, one has
    \begin{align*}
        \ls| E_j^{(k)} \rs| &= \ls| \inner{\ls[ (A_{j})^{k-j+1} - (A_{j-1})^{k-j+1} \rs]^\top J_s [\pi^* - \pi_{j-1}]}{[Q^{\pi_{j-1}} - Q^{j-1}]} \rs| \\
        &\leq \ls\| (A_j)^{k-j+1} - (A_{j-1})^{k-j+1} \rs\|_\infty \cdot \ls\| J_s[\pi^* - \pi_{j-1}] \rs\|_1 \cdot \ls\| Q^{\pi_{j-1}} - Q^{j-1} \rs\|_\infty \\
        &\leq \frac{2}{1-\gamma} \cdot \ls\| (A_j)^{k-j+1} - (A_{j-1})^{k-j+1} \rs\|_\infty.
    \end{align*}
    For the term $\ls\| (A_{j})^{k-j+1} - (A_{j-1})^{k-j+1} \rs\|_\infty$,  notice that
    \begin{align*}
        \ls\| (A_{j})^{k-j+1} - (A_{j-1})^{k-j+1} \rs\|_\infty &= \ls\| (A_{j})^{k-j+1} - (A_{j})\cdot (A_{j-1})^{k-j} + (A_{j})\cdot (A_{j-1})^{k-j} - (A_{j-1})^{k-j+1} \rs\|_\infty \\
        &\leq \ls\| (A_{j}) \cdot \ls[ (A_{j})^{k-j} - (A_{j-1})^{k-j} \rs] \rs\|_\infty + \ls\| [A_{j} - A_{j-1}] \cdot (A_{j-1})^{k-j} \rs\|_\infty \\
        &\leq [1-(1-\gamma) \alpha\tilde\sigma_b] \cdot \ls\| (A_{j})^{k-j} - (A_{j-1})^{k-j} \rs\|_\infty \\
        & \;\;\;\; + {\ls\| A_{j} - A_{j-1} \rs\|_\infty} \cdot [1-(1-\gamma)\alpha\tilde\sigma_b]^{k-j} \\
        &= [1-(1-\gamma) \alpha\tilde\sigma_b] \cdot \ls\| (A_{j})^{k-j} - (A_{j-1})^{k-j} \rs\|_\infty \\
        & \;\;\;\; + \underbrace{\ls\| (\alpha\gamma \Sigma^{\pi_b}) (P^{\pi_{j}}_{\scriptscriptstyle\calS\times\calA} - P^{\pi_{j-1}}_{\scriptscriptstyle\calS\times\calA}) \rs\|_\infty}_{\leq \ls\| \alpha\gamma\Sigma^{\pi_b} \rs\|_\infty \cdot \ls\| P^{\pi_{j}}_{\scriptscriptstyle\calS\times\calA} - P^{\pi_{j-1}}_{\scriptscriptstyle\calS\times\calA} \rs\|_\infty \leq \alpha\gamma |\calA| \cdot \| \pi_{j} - \pi_{j-1} \|_\infty} \times [1-(1-\gamma)\alpha\tilde\sigma_b]^{k-j} \\
        &\leq [1-(1-\gamma)\alpha\tilde\sigma_b] \cdot \ls\| (A_{j})^{k-j} - (A_{j-1})^{k-j} \rs\|_\infty + [1-(1-\gamma)\alpha\tilde\sigma_b]^{k-j} \cdot \frac{\alpha\gamma |\calA|^2\eta }{\lambda(1-\gamma)},
    \end{align*}
    where we leverage the fact that $\| P^{\pi}_{\scriptscriptstyle\calS\times\calA} - P^{\pi^\prime}_{\scriptscriptstyle\calS\times\calA} \|_\infty \leq |\calA| \cdot \ls\| \pi - \pi^\prime \rs\|_\infty$ which can be directly verified. By induction we have
    \begin{align*}
        \ls\| (A_{j})^{k-j+1} - (A_{j-1})^{k-j+1} \rs\|_\infty \leq \frac{\alpha\gamma |\calA|^2 \eta}{\lambda (1-\gamma)} \cdot (k-j+1) \cdot \ls[ 1-(1-\gamma)\alpha\tilde\sigma_b \rs]^{k-j}.
    \end{align*}
    Plugging it back yields
    \begin{align*}
        \sum_{j=1}^k \ls| E_j^{(k)} \rs| &\leq \sum_{j=1}^k \frac{2\alpha\gamma|\calA|^2 \eta}{\lambda (1-\gamma)^2} \cdot (k-j+1) \cdot \ls[ 1-(1-\gamma) \alpha \tilde\sigma_b \rs]^{k-j} \\
        &\leq \frac{2\alpha\gamma|\calA|^2 \eta}{\lambda (1-\gamma)^2} \cdot \frac{1}{\alpha^2\tilde\sigma_b^2 (1-\gamma)^2} \\
        &\leq \frac{2\gamma |\calA|^2 \eta}{\alpha\lambda\tilde\sigma_b^2 (1-\gamma)^4},
    \end{align*}
    where the inequality follows from the following inequality:
    \begin{align*}
        0 < q < 1: \quad \sum_{j=1}^k (k-j+1) \cdot q^{k-j} = \sum_{j=1}^k j \cdot q^{j-1} &= \frac{1-(k+1) q^k + kq^{k+1}}{(1-q)^2} \leq \frac{1}{(1-q)^2}.
    \end{align*}
\subsection{Proof of Lemma~\ref{lem:expected-Fjk-bounds}}\label{sec:expected-Fjk-bound}
Define 
$
\mathcal{G}_{k}=\{Q^k,\pi_{k+1},s_0^k\}.
$ A direct calculation yields 
\begin{align*}
\ls|\mathbb{E}\ls[F_j^{(k)}\rs]\rs|&=\ls|\mathbb{E}\ls[\ls\langle \ls[(A_j)^{k-j}\rs]^\top J_s(\pi^*-\pi_j), \;\alpha\bar{\omega}_{j-1}\rs\rangle\rs]\rs|=
\ls|\mathbb{E}\ls[\mathbb{E}\ls[\ls\langle \ls[(A_j)^{k-j}\rs]^\top J_s(\pi^*-\pi_j), \;\alpha\bar{\omega}_{j-1}\rs\rangle \Big |\mathcal{G}_{j-1}\rs]\rs]\rs|\\
&\leq \mathbb{E}\ls[\left|\mathbb{E}\ls[\ls\langle \ls[(A_j)^{k-j}\rs]^\top J_s(\pi^*-\pi_j), \;\alpha\bar{\omega}_{j-1}\rs\rangle\Big |\mathcal{G}_{j-1}\rs]\right|\rs]\\
&=\mathbb{E}\ls[\left|\ls\langle \ls[(A_j)^{k-j}\rs]^\top J_s(\pi^*-\pi_j), \;\alpha\mathbb{E}[\bar{\omega}_{j-1}|\mathcal{G}_{j-1}]\rs\rangle\right|\rs]\\
&\leq \mathbb{E}[\|A_j\|_\infty^{k-j}\cdot\|J_s(\pi^*-\pi_j)\|_1\cdot\alpha\|\mathbb{E}[\bar{\omega}_{j-1}|\mathcal{G}_{j-1}]\|_\infty]\\
&\leq 2\alpha(1-\alpha(1-\gamma)\tilde{\sigma}_b)^{k-j}\mathbb{E}[\|\mathbb{E}[\bar{\omega}_{j-1}|\mathcal{G}_{j-1}]\|_\infty].
\end{align*}
Thus, it suffices to consider $|\mathbb{E}[\bar{\omega}_{k}(s,a)|\mathcal{G}_{k}]|$ next.  Recall that
\begin{align*}
\omega_t^k(s,a)=\delta_{t}^k(s,a)-[\mathcal{F}^{\pi_{k+1}}_{\pi_b}Q^k-Q^k](s,a)
\quad\mbox{and}\quad
\bar{\omega}_k(s,a)=\sum_{t=0}^{B_k-1}c_t^k\cdot\omega_t^k(s,a).
\end{align*}
One has 
\begin{align*}
    \mathbb{E}[\delta_t^k(s,a)|\mathcal{G}_k]&=\mathbb{E}[1_{(s_t^{k},a^{k}_t)=(s,a)}\left[r^{k}_t+\gamma\mathbb{E}_{a\sim\pi_{k+1}(\cdot|s_{t+1}^{k})}[Q^{k}(s^{k}_{t+1},a)]-Q^{k}(s^{k}_t,a^{k}_t)\right]|\mathcal{G}_k]\\
    &=\mathbb{E}\left[1_{(s_t^{k},a^{k}_t)=(s,a)}\mathbb{E}\left[r^{k}_t+\gamma\mathbb{E}_{a\sim\pi_{k+1}(\cdot|s_{t+1}^{k})}[Q^{k}(s^{k}_{t+1},a)]-Q^{k}(s^{k}_t,a^{k}_t)|\mathcal{G}_k,s_t^k,a_t^k\right]|\mathcal{G}_k\right]\\
    &=\mathbb{E}\left[1_{(s_t^{k},a^{k}_t)=(s,a)}\cdot[\mathcal{F}^{\pi_{k+1}}Q^k-Q^k](s_t^k,a_t^k)|\mathcal{G}_k\right]\\
    &=\mathbb{E}\left[1_{(s_t^{k},a^{k}_t)=(s,a)}\cdot[\mathcal{F}^{\pi_{k+1}}Q^k-Q^k](s,a)|\mathcal{G}_k\right]\\
    &=\mathbb{P}[s_t^k=s|\mathcal{G}_k]\pi_b(a|s)[\mathcal{F}^{\pi_{k+1}}Q^k-Q^k](s,a)\\
    &=\mathbb{P}[s_t^k=s|s_0^k]\cdot\pi_b(a|s)\cdot [\mathcal{F}^{\pi_{k+1}}Q^k-Q^k](s,a). \numberthis \label{eq:Expected-TD-AC-PMD:delta-expectation}
\end{align*}
It follows that 
\begin{align*}
|\mathbb{E}[\omega_t^k(s,a)|\mathcal{G}_k]|&=|\mathbb{E}[\delta_t^k(s,a)|\mathcal{G}_k]-\sigma^{\pi_b}(s,a)[\mathcal{F}^{\pi_{k+1}}Q^k-Q^k](s,a)|\\
&=|\mathbb{E}[\delta_t^k(s,a)|\mathcal{G}_k]-\nu^{\pi_b}(s)\pi_b(a|s)[\mathcal{F}^{\pi_{k+1}}Q^k-Q^k](s,a)|\\
&=|\mathbb{P}[s_t^k=s|s_0^k]\cdot\pi_b(a|s)\cdot [\mathcal{F}^{\pi_{k+1}}Q^k-Q^k](s,a)-\nu^{\pi_b}(s)\pi_b(a|s)[\mathcal{F}^{\pi_{k+1}}Q^k-Q^k](s,a)|\\
&=|\mathbb{P}[s_t^k=s|s_0^k]-\nu^{\pi_b}(s)|\cdot\pi_b(a|s)\cdot\underbrace{|[\mathcal{F}^{\pi_{k+1}}Q^k-Q^k](s,a)|}_{\leq 1/(1-\gamma)\mbox{\small by Lemma~\ref{lem:Expected-TD-AC-PMD:bounded-value}}}\\
&\leq \frac{1}{1-\gamma}|\mathbb{P}[s_t^k=s|s_0^k]-\nu^{\pi_b}(s)|\\
&\leq \frac{1}{1-\gamma}\underbrace{d_{TV}(\mathbb{P}[s_t^k=s|s_0^k],\nu^{\pi_b}(\cdot))}_{\mbox{\small applying equation~\eqref{eq:Expected-TD-AC-PMD:MDP-ergodicity}}}\\
&\leq \frac{1}{1-\gamma}m_b\kappa_b^t.
\end{align*}
Under the weight setting in \eqref{eq:Expected-TD-AC-PMD:average-weight}, 
\begin{itemize}
    \item When $\vartheta=0$, then $c_t^k=0,t=1,\cdots,B_k-2$ and $c_{B_k-1}^k=1$, and one has
    \begin{align*}
|\mathbb{E}[\bar{\omega}_{k}(s,a)|\mathcal{G}_{k}]|&=|\mathbb{E}[{\omega}^{k}_{B-1}(s,a)|\mathcal{G}_{k}]|\leq \frac{1}{1-\gamma}m_b\kappa_b^{B-1};
\end{align*}
\item When $\vartheta=1$, 
\begin{align*}
|\mathbb{E}[\bar{\omega}_{k}(s,a)|\mathcal{G}_{k}]|=\left|\frac{1}{B}\sum_{t=0}^{B-1}\mathbb{E}[\omega_t^k(s,a)|\mathcal{G}_k]\right|\leq \frac{m_b}{1-\gamma}\left|\frac{1}{B}\sum_{t=0}^{B-1}\kappa_b^t\right|\leq \frac{m_b}{B(1-\gamma)(1-\kappa_b)};
\end{align*}
\item When $\vartheta\in(0,1)$ and $\vartheta\neq \kappa_b$,
\begin{align*}
|\mathbb{E}[\bar{\omega}_{k}(s,a)|\mathcal{G}_{k}]|&=\left|\frac{1}{\sum_{\ell=1}^{B-1}\vartheta^\ell}\sum_{t=0}^{B-1}\vartheta^{B-t-1}\mathbb{E}[\omega_t^k(s,a)|\mathcal{G}_k]\right|\\
&\leq\frac{m_b}{1-\gamma} \frac{1}{\sum_{\ell=0}^{B-1}\vartheta^\ell}\sum_{t=0}^{B-1}\vartheta^{B-t-1}\kappa_b^t\\
&=\frac{m_b}{1-\gamma}\frac{1-\vartheta}{1-\vartheta^{B}}\frac{\vartheta^{B}-\kappa_b^{B}}{\vartheta-\kappa_b}\\
&\leq \begin{cases}\displaystyle
\frac{m_b}{1-\gamma}\frac{\kappa_b^{B}}{\kappa_b-\vartheta},&\mbox{if }\vartheta<\kappa_b\\
\displaystyle\frac{m_b}{1-\gamma}\frac{\vartheta^{B}}{\vartheta-\kappa_b},&\mbox{if }\vartheta>\kappa_b;
\end{cases}
\end{align*}
\item When $\vartheta=\kappa_b$,
\begin{align*}
|\mathbb{E}[\bar{\omega}_{k}(s,a)|\mathcal{G}_{k}]|&=\left|\frac{1}{\sum_{\ell=1}^{B-1}\vartheta^\ell}\sum_{t=0}^{B-1}\vartheta^{B-t-1}\mathbb{E}[\omega_t^k(s,a)|\mathcal{G}_k]\right|\\
&\leq\frac{m_b}{1-\gamma} \frac{1}{\sum_{\ell=0}^{B-1}\vartheta^\ell}\sum_{t=0}^{B_k-1}\vartheta^{B-t-1}\kappa_b^t\\
&=\frac{m_b}{1-\gamma}\frac{1-\vartheta}{1-\vartheta^{B}}\cdot B\cdot\vartheta^{B-1}\\
&\leq \frac{m_b\cdot B\cdot \vartheta^{B-1}}{1-\gamma}.
\end{align*}
\end{itemize}
Finally, one has
\begin{align*}
\sum_{j=1}^k|\mathbb{E}[F_j^k]|&\le 2\alpha\sum_{j=1}^k (1-\alpha(1-\gamma)\tilde{\sigma}_b)^{k-j}\mathbb{E}[\|\mathbb{E}[\bar{\omega}_{j-1}|\mathcal{G}_{j-1}]\|_\infty]\\
&\le 2\alpha\sum_{j=1}^k(1-\alpha(1-\gamma)\tilde{\sigma}_b)^{k-j}\Psi(B_{j-1},\vartheta,m_b,\kappa_b)\\
&\leq \frac{2}{(1-\gamma)\tilde{\sigma}_b}\Psi({B},\vartheta,m_b,\kappa_b).
\end{align*}
\subsection{Proof of Lemma~\ref{lem:Expected-TD-AC-PMD:linear-critic-convergence}}\label{sec:proof-critic-linear}

    We start with applying the three-point-descent lemma (Lemma~\ref{lem:three-point-descent-lemma}) with $p=\tilde\pi_k(\cdot|s^\prime)$ for any $s^\prime\in\calS$,
    \begin{align*}
        &\eta_k \ls\langle \pi_{k+1}(\cdot|s^\prime) - \tilde\pi_k(\cdot|s^\prime), \; Q^k(s^\prime,\cdot) \rs\rangle \geq D^{\pi_{k+1}}_{\pi_k}(s^\prime) + D^{\tilde\pi_k}_{\pi_{k+1}}(s^\prime) - D^{\tilde\pi_k}_{\pi_k}(s^\prime) \geq -D^{\tilde\pi_k}_{\pi_k}(s^\prime), 
        \end{align*}
        yielding that
        \begin{align*}
        \ls\langle \pi_{k+1}(\cdot|s^\prime), \, Q^k(s^\prime,\cdot) \rs\rangle \geq \max_{a\in\calA} \, Q^k(s^\prime,a) - \frac{1}{\eta_k} \| D^{\tilde\pi_k}_{\pi_k} \|_\infty.
    \end{align*}
   Therefore,
    \begin{align*}
        \calF^{\pi_{k+1}} Q^k(s,a) &= r(s,a) + \gamma \, \E_{s^\prime\sim P(\cdot|s,a)}\ls[\ls\langle \pi_{k+1}(\cdot|s^\prime), \, Q^k(s^\prime, \cdot) \rs\rangle\rs] \\
        &\geq r(s,a) + \gamma \, \E_{s^\prime\sim P(\cdot|s,a)} \ls[ \max_{a^\prime\in\calA} \, Q^k(s^\prime, a^\prime) \rs] - \frac{\gamma}{\eta_k} \| D^{\tilde\pi_k}_{\pi_k} \|_\infty \\
        &= \calF Q^k(s,a) - \frac{\gamma}{\eta_k} \| D^{\tilde\pi_k}_{\pi_k} \|_\infty.
    \end{align*}
    According to the critic update formula (equation~\eqref{eq:Expected-TD-AC-PMD:critic-update}), for any $(s,a)\in\Delta(\calS\times\calA)$,
    \begin{align*}
        &\phantom{=\,\,\,}[Q^* - Q^{k+1}](s,a) \\
        &= [1-\alpha\sigma^{\pi_b}(s,a)] [Q^*(s,a) - Q^k(s,a)] + \alpha\sigma^{\pi_b}(s,a) [Q^*(s,a) - \calF^{\pi_{k+1}} Q^k(s,a)] - \alpha \bar\omega_k(s,a) \\
        &\leq [1-\alpha\sigma^{\pi_b}(s,a)] [Q^*(s,a) - Q^k(s,a)] + \alpha\sigma^{\pi_b}(s,a) [\calF Q^*(s,a) - \calF Q^k(s,a) + \gamma \eta_k^{-1} \| D^{\tilde\pi_k}_{\pi_k} \|_\infty] - \alpha \bar\omega_k(s,a),
    \end{align*}
    where we use $\calF Q^* = Q^*$ and apply the lower bound of $\calF^{\pi_{k+1}} Q^k(s,a)$ above. On the other hand,
    \begin{align*}
        &\phantom{=\,\,\,}[Q^* - Q^{k+1}](s,a) \\
        &= [1-\alpha\sigma^{\pi_b}(s,a)] [Q^*(s,a) - Q^k(s,a)] + \alpha\sigma^{\pi_b}(s,a) [Q^*(s,a) - \calF^{\pi_{k+1}} Q^k(s,a)] - \alpha \bar\omega_k(s,a) \\
        &\geq [1-\alpha\sigma^{\pi_b}(s,a)] [Q^*(s,a) - Q^k(s,a)] + \alpha\sigma^{\pi_b}(s,a) [\calF Q^*(s,a) - \calF Q^k(s,a)] - \alpha \bar\omega_k(s,a),
    \end{align*}
    where we use the fact that $\calF Q^k(s,a) \geq \calF^{\pi_{k+1}} Q^k(s,a)$. Combining the upper and lower bounds for $[Q^* - Q^{k+1}](s,a)$ together, one has
    \begin{align*}
        &\phantom{=\,\,\,}\ls| Q^*(s,a) - Q^{k+1}(s,a) \rs| \\
        & \leq [1-\alpha\sigma^{\pi_b}(s,a)] \underbrace{\ls| Q^*(s,a) - Q^k(s,a) \rs|}_{\leq \| Q^* - Q^k \|_\infty} + \alpha\sigma^{\pi_b}(s,a) \underbrace{\ls| \calF Q^*(s,a) - \calF Q^k(s,a) \rs|}_{\leq \| \calF Q^* - \calF Q^k \|_\infty \leq \gamma\| Q^* -Q^k \|_\infty} \\
        &\;\;\;+ \alpha\gamma\eta_k^{-1} \underbrace{\sigma^{\pi_b}(s,a)}_{\leq 1} \| D^{\tilde\pi_k}_{\pi_k} \|_\infty + \alpha \underbrace{|\bar\omega_k(s,a)|}_{\leq \| \bar\omega_k \|_\infty} \\
        &\leq [1-(1-\gamma)\alpha\sigma^{\pi_b}(s,a)] \cdot \ls\| Q^* - Q^k \rs\|_\infty + \frac{\alpha\gamma}{\eta_k} \| D^{\tilde\pi_k}_{\pi_k} \|_\infty + \alpha \| \bar\omega_k \|_\infty \\
        &\leq [1-(1-\gamma)\alpha\tilde\sigma_b] \cdot \ls\| Q^* - Q^k \rs\|_\infty + \frac{\alpha\gamma}{\eta_k} \| D^{\tilde\pi_k}_{\pi_k} \|_\infty + \alpha \| \bar\omega_k \|_\infty,
    \end{align*}
   which completes the proof.
\subsection{Proof of Lemma~\ref{lem:Expected-TD-AC-PMD:error-control-2}}\label{sec:proof-stochastic-error-2}

    Recall that $\calG_k = \{ s_0^k, \, \pi_{k+1}, \, Q^k \}$ and let $o_t^k := (s_t^k, a_t^k, s_{t+1}^k)$. By Lemma~\ref{lem:Expected-TD-AC-PMD:bounded-value}, there holds $| \omega_t^k(s,a) | \leq 2(1-\gamma)^{-1}$ and $| \bar\omega_k(s,a) |\leq 2(1-\gamma)^{-1}$. For any $0 \leq i < j \leq B_k-1$, one has
    \begin{align*}
        \E\ls[ \omega_i^k(s,a) \omega_j^k(s,a) \, \big | \, \calG_k \rs] &= \E\ls[ \omega_i^k(s,a) \cdot \E \ls[ \omega_j^k(s,a) \, \big | \, \calG_k, \,o_i^k \rs] \; \big | \; \calG_k \rs] \\
        &\leq \E \Bigg[ \ls| \omega_t^k(s,a) \rs| \cdot \ls| \E\ls[ \omega_j^k(s,a) \, \big | \, \calG_k, \, o_i^k \rs] \rs| \; \Bigg | \; \calG_k  \Bigg] \\
        &\leq \frac{2}{1-\gamma} \E \Bigg[ \ls| \E\ls[ \omega_j^k(s,a) \, \big | \, \calG_k, \, o_i^k \rs] \rs| \; \Bigg | \; \calG_k  \Bigg].
    \end{align*}
    By  the same computation as in equation~\eqref{eq:Expected-TD-AC-PMD:delta-expectation}, we have
    \begin{align*}
        \E\ls[ \delta_j^k(s,a) \, \big | \, \calG_k,\, o_i^k  \rs] &= \P\ls[ (s^k_j, a^k_j)=(s,a)  \; \big | \; \calG_k, \, o_i^k \rs] \cdot [\calF^{\pi_{k+1}} Q^k(s,a) - Q^k(s,a)] \\
        &= \P\ls[ (s^k_j, a^k_j)=(s,a)  \; \big | \;  s^k_{i+1} \rs] \cdot [\calF^{\pi_{k+1}} Q^k(s,a) - Q^k(s,a)] \\
        &= \P \ls[ s_j^k = s \, | \, s_{i+1}^k \rs] \cdot \pi_b(a|s) \cdot [\calF^{\pi_{k+1}} Q^k(s,a) - Q^k(s,a)].
    \end{align*}
    Therefore,
    \begin{align*}
        &\phantom{=\,\,\,}\ls| \E\ls[ \omega_j^k(s,a) \, \big | \, \calG_k, \, o_i^k \rs] \rs| \\
        &= \ls| \E\ls[ \delta_j^k(s,a) \, \big | \, \calG_k, \, o_i^k \rs] - \ls[ \calF^{\pi_{k+1}}_{\pi_b} Q^k - Q^k \rs](s,a)  \rs| \\
        &= \ls| \P \ls[ s_j^k = s \, | \, s_{i+1}^k \rs] \cdot \pi_b(a|s) \cdot [\calF^{\pi_{k+1}} Q^k(s,a) - Q^k(s,a)] - \sigma^{\pi_b}(s,a) [\calF^{\pi_{k+1}} Q^k - Q^k](s,a) \rs| \\
        &= \ls| \P\ls[ s^k_j=s  \; \big | \; s^k_{i+1} \rs] - \nu^{\pi_b}(s) \rs | \cdot |\pi_b(a|s)| \cdot \ls | [\calF^{\pi_{k+1}} Q^k(s,a) - Q^k(s,a)] \rs| \\
        &\leq \underbrace{d_{TV}\ls( \nu^{\pi_b}(\cdot) , \; \P[s_j^k=\cdot \, | \, s^k_{i+1}]\rs)}_{\mbox{\small Applying equation~\eqref{eq:Expected-TD-AC-PMD:MDP-ergodicity}}} \cdot \underbrace{|\pi_b(a|s)|}_{\leq 1} \cdot \underbrace{\ls | [\calF^{\pi_{k+1}} Q^k(s,a) - Q^k(s,a)] \rs|}_{\leq (1-\gamma)^{-1} \mbox{\small by Lemma~\ref{lem:Expected-TD-AC-PMD:bounded-value}}}\\
        &\leq \frac{1}{1-\gamma} m_b \cdot \kappa_b^{j-i-1}.
    \end{align*}
    As a result,
    \begin{align*}
        \forall\, j > i: \quad \E\ls[ \omega_i^k(s,a) \omega_j^k(s,a) \; \big | \; \calG_k \rs] \leq \frac{2}{(1-\gamma)^2} m_b \kappa_b^{j-i-1}.
    \end{align*}
  It follows that
    \begin{align*}
        \E\ls[ \bar\omega_k(s,a)^2 \rs] &= \E \ls[ \E\ls[ \bar\omega_k(s,a)^2 \, | \, \calG_k \rs] \rs] \\
        &= \frac{1}{B_k^2} \E\ls[ \sum_{t=0}^{B_k-1} \E\ls[ \underbrace{\omega_t^k(s,a)^2}_{\leq 4(1-\gamma)^{-2}} \, \Big | \, \calG_k \rs] + 2\sum_{i<j} \E\ls[ \omega_i^k(s,a) \omega_j^k(s,a) \, | \, \calG_k \rs] \rs] \\
        &\leq \frac{1}{B_k^2} \ls[ \frac{4B_k}{(1-\gamma)^2} + 2B_k \cdot \frac{2}{(1-\gamma)^2} \frac{m_b}{1-\kappa_b} \rs] \\
        &\leq \frac{1}{B_k} \cdot \frac{4}{(1-\gamma)^2} \ls[ 1+ \frac{m_b}{1-\kappa_b} \rs].
    \end{align*}
   Thus there holds
    \begin{align*}
        \E\ls[ \| \bar\omega_k \|_\infty \rs] \leq \E\ls[ \| \omega_k \|_2 \rs] \leq \sqrt{\E\ls[ \| \omega_k \|_2^2 \rs]} = \sqrt{\E\ls[ \sum_{(s,a)\in\calS\times\calA} \bar\omega_k(s,a)^2 \rs]}.
    \end{align*}
\subsection{Proof of Lemma~\ref{lem:approximated-Ejk-bound}}
\label{sec:pf:lem:approximated-Ejk-bound}
We first bound $\| A_j - A_{j-1} \|_\infty$. To this end, for any $\pi, \tilde \pi \in \Pi$, let $\{ s_t \}_{t\geq0}$ and $\{ \tilde s_t \}_{t\geq 0}$ be the Markov chains with the transition kernels of $P^{\pi_b\circ\pi}_{\scriptscriptstyle\calS}$, $P^{\pi_b\circ\tilde\pi}_{\scriptscriptstyle\calS}$, respectively. By Assumption~\ref{ass:ergodicity-2} and Proposition~\ref{pro:MDP-ergodicity-2} we know that both of these two chains are ergodic and geometrically mixing with parameters $m > 0$ and $0 < \kappa < 1$. 
Notice that
\begin{align*}
    \ls\| P^{\pi_b\circ\pi}_{\scriptscriptstyle\calS} - P^{\pi_b\circ\tilde\pi}_{\scriptscriptstyle\calS} \rs\|_\infty &= \ls\| P^{\pi_b}_{\scriptscriptstyle\calS}P^{\pi}_{\scriptscriptstyle\calS} - P^{\pi_b}_{\scriptscriptstyle\calS}P^{\tilde\pi}_{\scriptscriptstyle\calS} \rs\|_\infty \\
    &\leq \ls\| P^{\pi}_{\scriptscriptstyle\calS} - P^{\tilde\pi}_{\scriptscriptstyle\calS} \rs\|_\infty \\
    &= \max_{s\in\calS} \, \sum_{s^\prime \in \calS} \ls| P^\pi_{\scriptscriptstyle\calS}(s,s^\prime) - P^{\tilde\pi}_{\scriptscriptstyle\calS}(s,s^\prime) \rs| \\
    &= \max_{s\in\calS} \, \sum_{s^\prime \in \calS} \ls| \sum_{a\in\calA} \ls(\pi(a|s) - \tilde\pi(a|s)\rs) P(s^\prime | s,a) \rs| \leq |\calA| \cdot \| \pi - \tilde\pi \|_\infty.
\end{align*}
The application of Lemma~\ref{lem:perturbated-chain-stationary-diff} yields
\begin{align*}
    \ls\| \Sigma^{\pi_b\circ\pi} - \Sigma^{\pi_b\circ\tilde\pi} \rs\|_\infty &\leq  \max_{s\in\calS} \ls| \nu^{\pi_b\circ\pi}(s) - \nu^{\pi_b\circ\tilde\pi}(s) \rs| \\
    &\leq d_{TV}(\nu^{\pi_b\circ\pi}(\cdot), \; \nu^{\pi_b\circ\tilde\pi}(\cdot))\\
    &\leq \ls( \ls\lceil \log_\kappa m^{-1} \rs\rceil + \frac{1}{1-\kappa} \rs) \ls\| P^{\pi_b\circ\pi}_{\scriptscriptstyle\calS} - P^{\pi_b\circ\tilde\pi}_{\scriptscriptstyle\calS} \rs\|_\infty \\
    &\leq |\calA|\ls( \ls\lceil \log_\kappa m^{-1} \rs\rceil + \frac{1}{1-\kappa} \rs) \ls\| \pi - \tilde\pi \rs\|_\infty = L\cdot \ls\| \pi - \tilde\pi \rs\|_\infty.
\end{align*}
By setting $\pi = \pi_j$ and $\tilde\pi = \pi_{j-1}$ we get
\begin{align*}
    \| A_j - A_{j-1} \|_\infty &= \alpha \ls\| \Sigma^{\pi_b\circ\pi_j}(I - \gamma P^{\pi_j}_{\scriptscriptstyle\calS\times\calA}) - \Sigma^{\pi_b\circ\pi_{j-1}}(I - \gamma P^{\pi_{j-1}}_{\scriptscriptstyle\calS\times\calA}) \rs\|_\infty \\
    &= \alpha \ls\| (\Sigma^{\pi_b\circ\pi_j} - \Sigma^{\pi_b\circ\pi_{j-1}})(I - \gamma P^{\pi_j}_{\scriptscriptstyle\calS\times\calA}) + \Sigma^{\pi_b\circ\pi_{j-1}}[(I - \gamma P^{\pi_{j}}_{\scriptscriptstyle\calS\times\calA})-(I - \gamma P^{\pi_{j-1}}_{\scriptscriptstyle\calS\times\calA})] \rs\|_\infty \\
    &\leq \alpha \underbrace{\ls\|\Sigma^{\pi_b\circ\pi_j} - \Sigma^{\pi_b\circ\pi_{j-1}}\rs\|_\infty}_{\leq L\cdot \| \pi_j - \pi_{j-1} \|_\infty} \cdot \underbrace{\ls\| I - \gamma P^{\pi_{j}} \rs\|_\infty}_{\leq 1 + \gamma} + \alpha \gamma \underbrace{\ls\| \Sigma^{\pi_b\circ\pi_{j-1}} \rs\|_\infty}_{\leq 1} \cdot \underbrace{\ls\| P^{\pi_{j}}_{\scriptscriptstyle\calS\times\calA} - P^{\pi_{j-1}}_{\scriptscriptstyle\calS\times\calA} \rs\|_\infty}_{\leq |\calA| \cdot \ls\| \pi_j - \pi_{j-1} \rs\|_\infty} \\
    &\leq \alpha ((1+\gamma) L + \gamma |\calA|) \cdot \underbrace{\ls\| \pi_j - \pi_{j-1} \rs\|_\infty}_{\mbox{\small Applying Lemma~\ref{lem:policy-shift} and Lemma~\ref{lem:Approximated-TD-AC-PMD:bounded-value}}}  \\
    &\leq \frac{\alpha |\calA| \cdot \ls[ (1+\gamma) L + \gamma |\calA| \rs]}{\lambda (1-\gamma)} \eta.
\end{align*}
Thus
\begin{align*}
    \ls\| (A_j)^{k-j+1} - (A_{j-1})^{k-j+1} \rs\|_\infty &\leq [1-(1-\gamma)\alpha\tilde\sigma] \cdot \ls\| (A_j)^{k-j} - (A_{j-1})^{k-j} \rs\|_\infty + [1-(1-\gamma)\alpha\tilde\sigma]^{k-j} \cdot \ls\| A_j - A_{j-1} \rs\|_\infty \\
    &\leq \frac{\alpha |\calA| \cdot \ls[ (1+\gamma) L + \gamma |\calA| \rs]}{\lambda (1-\gamma)} \eta \cdot (k-j+1) \cdot [1-(1-\gamma)\alpha\tilde\sigma]^{k-j}.
\end{align*}
It follows that
\begin{align*}
    \sum_{j=1}^k \ls| E_j^{(k)} \rs| &= \sum_{j=1}^k \ls| \inner{\ls[ (A_{j})^{k-j+1} - (A_{j-1})^{k-j+1} \rs]^\top J_s [\pi^* - \pi_{j-1}]}{[Q^{\pi_{j-1}} - Q^{j-1}]} \rs| \\
    &\leq \frac{2}{1-\gamma} \sum_{j=1}^k \ls\| (A_j)^{k-j+1} - (A_{j-1})^{k-j+1} \rs\|_\infty \\
    &\leq \frac{2|\calA| \cdot \ls[ (1+\gamma) L + \gamma |\calA| \rs]}{\alpha\lambda\tilde\sigma^2 (1-\gamma)^4} \eta \leq \frac{2|\calA|^2 \cdot \ls[ 2 L +  1 \rs]}{\alpha\lambda\tilde\sigma^2 (1-\gamma)^4} \eta.
\end{align*}

\subsection{Proof of Lemma~\ref{lem:approximated-Fjk-bound}}
\label{sec:pf:lem:approximated-Fjk-bound}
Recall that $\calG_k = \{ Q^k, \, \pi_{k+1}, \, s_0^k \}$. One has
\begin{align*}
    \E\ls[ \delta_t^k(s,a) \, | \, \calG_k \rs] &= \E\ls[ \mathds{1}_{(s_t^k, a_t^k) = (s,a)} \cdot \ls[ r_t^k + \gamma \, Q^k(s_t^{\prime\,k}, a_t^{\prime\,k}) - Q^k(s_t^k, a_t^k) \rs] \; \bigg | \; \calG_k \rs] \\
    &= \E\ls[ \mathds{1}_{(s_t^k, a_t^k) = (s,a)} \cdot \E \ls[ r_t^k + \gamma \, Q^k(s_t^{\prime\,k}, a_t^{\prime\,k}) - Q^k(s_t^k, a_t^k) \; \big | \; \calG_k, \, s_t^k, a_t^k \rs] \; \bigg | \; \calG_k \rs] \\
    &= \E \ls[ \mathds{1}_{(s_t^k, a_t^k)=(s,a)} \cdot \ls[ \calF^{\pi_{k+1}} Q^k(s_t^k, a_t^k) - Q^k(s_t^k, a_t^k) \rs] \; \bigg | \; \calG_k \rs] \\
    &= \P\ls[ (s_t^k, a_t^k)=(s,a) \; \big | \; \calG_k \rs] \cdot \ls[ \calF^{\pi_{k+1}} Q^k(s,a) - Q^k(s,a) \rs] \\
    &= \P\ls[ s_t^k = s\, | \, \pi_{k+1}, \, s_0^k \rs] \cdot \pi_b(a|s) \cdot \ls[ \calF^{\pi_{k+1}} Q^k(s,a) - Q^k(s,a) \rs]. \numberthis \label{eq:Approximated-TD-AC-PMD:delta-expectation}
\end{align*}
It follows that
\begin{align*}
    \ls| \E\ls[ \omega_t^k(s,a) \, | \, \calG_k \rs] \rs| &= \ls| \P\ls[ s_t^k=s \, | \, \pi_{k+1}, \, s_0^k \rs] - \nu^{\pi_b\circ\pi_{k+1}}(s) \rs| \cdot \pi_b(a|s) \cdot \ls| \calF^{\pi_{k+1}} Q^k(s,a) - Q^k(s,a) \rs| \\
    &\leq \frac{1}{1-\gamma} \underbrace{d_{TV}(\P\ls[ s_t^k=\cdot \, | \, s_0^k \rs], \; \nu^{\pi_b\circ\pi_{k+1}}(\cdot ) \; \big | \; \pi_{k+1})}_{\mbox{\small Applying equation~\eqref{eq:Approximated-TD-AC-PMD:MDP-ergodicity}}} \\
    &\leq \frac{1}{1-\gamma} m \kappa^t.
\end{align*}
{The rest of the proof is the same with that for Lemma~\ref{lem:expected-Fjk-bounds}.}

\section{Conclusion}
\label{sec:conclusion}
In this paper, we investigate the sample complexity of policy mirror descent with temporal difference learning under online Markov sampling. Two algorithms including Expected TD-PMD (off-policy) and Approximate TD-PMD (mixed-policy) have been considered, and  a comprehensive sample complexity analysis have been conducted. Under the constant policy update step size, both algorithms can find the average-time $\varepsilon$-optimal solution with the $\tilde O(\varepsilon^{-2})$ sample complexity, which is further improved to   the last-iterate $\varepsilon$-optimal solution  with $O(\varepsilon^{-2})$ sample complexity when utilizing adaptive step sizes. 

For future work, it is interesting to extend our analysis to the RL problem with entropy regularization. In addition, it is also possible to consider the more general setting with policy or critic function approximation. Any progress towards these directions will be reported separately. 
\section*{Acknowledgment}
The authors would like to thank Gen Li and Luo Luo for fruitful discussion regarding Q-learning and stochastic gradient methods, respectively.
\bibliographystyle{plainnat}
\bibliography{refs}

\begin{thebibliography}{68}
\providecommand{\natexlab}[1]{#1}
\providecommand{\url}[1]{\texttt{#1}}
\expandafter\ifx\csname urlstyle\endcsname\relax
  \providecommand{\doi}[1]{doi: #1}\else
  \providecommand{\doi}{doi: \begingroup \urlstyle{rm}\Url}\fi

\bibitem[Agarwal et~al.(2019)Agarwal, Jiang, and
  Kakade]{Agarwal2019ReinforcementLT}
Alekh Agarwal, Nan Jiang, and Sham~M. Kakade.
\newblock \emph{Reinforcement Learning: Theory and Algorithms (Draft)}.
\newblock 2019.

\bibitem[Agarwal et~al.(2021)Agarwal, Kakade, Lee, and
  Mahajan]{Agarwal_Kakade_Lee_Mahajan_2019}
Alekh Agarwal, Sham~M. Kakade, Jason~D. Lee, and Gaurav Mahajan.
\newblock On the theory of policy gradient methods: {O}ptimality,
  approximation, and distribution shift.
\newblock \emph{Journal of Machine Learning Research}, 22\penalty0
  (98):\penalty0 1--76, 2021.

\bibitem[Alfano et~al.(2023)Alfano, Yuan, and Rebeschini]{yuan2023general}
Carlo Alfano, Rui Yuan, and Patrick Rebeschini.
\newblock A novel framework for policy mirror descent with general
  parameterization and linear convergence.
\newblock In \emph{Advances in Neural Information Processing Systems}, 2023.

\bibitem[Barakat et~al.(2022)Barakat, Bianchi, and
  Lehmann]{barakat2022analysis}
Anas Barakat, Pascal Bianchi, and Julien Lehmann.
\newblock Analysis of a target-based actor-critic algorithm with linear
  function approximation.
\newblock In \emph{International Conference on Artificial Intelligence and
  Statistics}, 2022.

\bibitem[Beck(2017)]{beck2017first}
Amir Beck.
\newblock \emph{First-order methods in optimization}.
\newblock Society for Industrial and Applied Mathematics, 2017.

\bibitem[Bhandari and Russo(2021)]{Bhandari_Russo_2021}
Jalaj Bhandari and Daniel Russo.
\newblock On the linear convergence of policy gradient methods for finite
  {MDP}s.
\newblock In \emph{International Conference on Artificial Intelligence and
  Statistics}, volume 130, pages 2386--2394, 2021.

\bibitem[Cayci et~al.(2024{\natexlab{a}})Cayci, He, and
  Srikant]{cayci2022finite}
Semih Cayci, Niao He, and R~Srikant.
\newblock Finite-time analysis of entropy-regularized neural natural
  actor--critic algorithm.
\newblock In \emph{Transactions on Machine Learning Research},
  2024{\natexlab{a}}.

\bibitem[Cayci et~al.(2024{\natexlab{b}})Cayci, He, and
  Srikant]{cayci2024entropyNPG}
Semih Cayci, Niao He, and R.~Srikant.
\newblock Convergence of entropy-regularized natural policy gradient with
  linear function approximation.
\newblock \emph{SIAM Journal on Optimization}, 34\penalty0 (3):\penalty0
  2729--2755, 2024{\natexlab{b}}.

\bibitem[Cen et~al.(2022)Cen, Cheng, Chen, Wei, and
  Chi]{Cen_Cheng_Chen_Wei_Chi_2022}
Shicong Cen, Chen Cheng, Yuxin Chen, Yuting Wei, and Yuejie Chi.
\newblock Fast global convergence of natural policy gradient methods with
  entropy regularization.
\newblock \emph{Operations Research}, 70\penalty0 (4):\penalty0 2563--2578,
  2022.

\bibitem[Chelu and Precup(2024)]{chelu2024functional}
Veronica Chelu and Doina Precup.
\newblock Functional acceleration for policy mirror descent.
\newblock \emph{arXiv preprint arXiv:2407.16602}, 2024.

\bibitem[Chen and Teboulle(1993)]{chen1993convergence}
Gong Chen and Marc Teboulle.
\newblock Convergence analysis of a proximal-like minimization algorithm using
  bregman functions.
\newblock \emph{SIAM Journal on Optimization}, 3\penalty0 (3):\penalty0
  538--543, 1993.

\bibitem[Chen et~al.(2021)Chen, Sun, and Yin]{chen2021closing}
Tianyi Chen, Yuejiao Sun, and Wotao Yin.
\newblock Closing the gap: tighter analysis of alternating stochastic gradient
  methods for bilevel problems.
\newblock In \emph{Advances in Neural Information Processing Systems}, 2021.

\bibitem[Chen and Zhao(2023)]{chen2023finite}
Xuyang Chen and Lin Zhao.
\newblock Finite-time analysis of single-timescale actor--critic.
\newblock In \emph{Advances in Neural Information Processing Systems}, 2023.

\bibitem[Fatkhullin et~al.(2023)Fatkhullin, Barakat, Kireeva, and
  He]{pmlr-v202-fatkhullin23a}
Ilyas Fatkhullin, Anas Barakat, Anastasia Kireeva, and Niao He.
\newblock Stochastic policy gradient methods: Improved sample complexity for
  {F}isher-non-degenerate policies.
\newblock In Andreas Krause, Emma Brunskill, Kyunghyun Cho, Barbara Engelhardt,
  Sivan Sabato, and Jonathan Scarlett, editors, \emph{Proceedings of the 40th
  International Conference on Machine Learning}, volume 202 of
  \emph{Proceedings of Machine Learning Research}, pages 9827--9869. PMLR,
  23--29 Jul 2023.

\bibitem[Fawzi et~al.(2022)Fawzi, Balog, Huang, Hubert, Romera-Paredes,
  Barekatain, Novikov, R.~Ruiz, Schrittwieser, Swirszcz, Silver, Hassabis, and
  Kohli]{ref-AlphaTensor}
Alhussein Fawzi, Matej Balog, Aja Huang, Thomas Hubert, Bernardino
  Romera-Paredes, Mohammadamin Barekatain, Alexander Novikov, Francisco~J.
  R.~Ruiz, Julian Schrittwieser, Grzegorz Swirszcz, David Silver, Demis
  Hassabis, and Pushmeet Kohli.
\newblock Discovering faster matrix multiplication algorithms with
  reinforcement learning.
\newblock \emph{Nature}, 610:\penalty0 47--53, 2022.

\bibitem[Feng et~al.(2024)Feng, Wei, and Chen]{feng2024global}
Jie Feng, Ke~Wei, and Jinchi Chen.
\newblock Global convergence of natural policy gradient with hessian-aided
  momentum variance reduction.
\newblock \emph{Journal of Scientific Computing}, 101\penalty0 (2):\penalty0
  44, 2024.

\bibitem[Fu et~al.(2021)Fu, Yang, and Wang]{fu2021single}
Zuyue Fu, Zhuoran Yang, and Zhaoran Wang.
\newblock Single-timescale actor--critic provably finds globally optimal
  policy.
\newblock In \emph{International Conference on Learning Representations}, 2021.

\bibitem[Gaur et~al.(2024)Gaur, Singh~Bedi, Wang, and
  Aggarwal]{gaur2024closing}
Mudit Gaur, Amrit Singh~Bedi, Di~Wang, and Vaneet Aggarwal.
\newblock Closing the gap: {A}chieving global convergence (last iterate) of
  actor--critic under {M}arkovian sampling with neural network
  parameterization.
\newblock \emph{International Conference on Machine Learning}, 2024.

\bibitem[Geist et~al.(2019)Geist, Scherrer, and
  Pietquin]{Geist_Scherrer_Pietquin_2019}
Matthieu Geist, Bruno Scherrer, and Olivier Pietquin.
\newblock A theory of regularized markov decision processes.
\newblock In \emph{Proceedings of the 36th International Conference on Machine
  Learning}, pages 2160--2169, 2019.

\bibitem[Huang et~al.(2022)Huang, Gao, and Huang]{huang2022bregman}
Feihu Huang, Shangqian Gao, and Heng Huang.
\newblock Bregman gradient policy optimization.
\newblock In \emph{International Conference on Learning Representations}, 2022.

\bibitem[Hwangbo et~al.(2019)Hwangbo, Lee, Dosovitskiy, Bellicoso, Tsounis,
  Koltun, and Hutter]{robot1}
Jemin Hwangbo, Joonho Lee, Alexey Dosovitskiy, Dario Bellicoso, Vassilios
  Tsounis, Vladlen Koltun, and Marco Hutter.
\newblock Learning agile and dynamic motor skills for legged robots.
\newblock \emph{Science Robotics}, 4\penalty0 (26), 2019.

\bibitem[Johnson et~al.(2023)Johnson, Pike-Burke, and
  Rebeschini]{Johnson_Pike-Burke_Rebeschini_2023}
Emmeran Johnson, Ciara Pike-Burke, and Patrick Rebeschini.
\newblock Optimal convergence rate for exact policy mirror descent in
  discounted markov decision processes.
\newblock In \emph{Advances in Neural Information Processing Systems}, 2023.

\bibitem[Jumper et~al.(2021)Jumper, Evans, Pritzel, Green, Figurnov,
  Ronneberger, Tunyasuvunakool, Bates, Žídek, Potapenko, Bridgland, Meyer,
  Kohl, Ballard, Cowie, Romera-Paredes, Nikolov, Jain, Adler, Back, Petersen,
  Reiman, Clancy, Zielinski, Steinegger, Pacholska, Berghammer, Bodenstein,
  Silver, Vinyals, Senior, Kavukcuoglu, Kohli, and Hassabis]{ref-AlphaFold}
John Jumper, Richard Evans, Alexander Pritzel, Tim Green, Michael Figurnov,
  Olaf Ronneberger, Kathryn Tunyasuvunakool, Russ Bates, Augustin Žídek, Anna
  Potapenko, Alex Bridgland, Clemens Meyer, Simon A.~A. Kohl, Andrew~J.
  Ballard, Andrew Cowie, Bernardino Romera-Paredes, Stanislav Nikolov, Rishub
  Jain, Jonas Adler, Trevor Back, Stig Petersen, David Reiman, Ellen Clancy,
  Michal Zielinski, Martin Steinegger, Michalina Pacholska, Tamas Berghammer,
  Sebastian Bodenstein, David Silver, Oriol Vinyals, Andrew~W. Senior, Koray
  Kavukcuoglu, Pushmeet Kohli, and Demis Hassabis.
\newblock Highly accurate protein structure prediction with alphafold.
\newblock \emph{Nature}, 596:\penalty0 583--589, 2021.

\bibitem[Kakade and Langford(2002)]{kakade2002approximately}
Sham~M. Kakade and John Langford.
\newblock Approximately optimal approximate reinforcement learning.
\newblock In \emph{International Conference on Machine Learning}, pages
  267--274, 2002.

\bibitem[Khodadadian et~al.(2021{\natexlab{a}})Khodadadian, Chen, and
  Maguluri]{khodadadian2021finite}
Sajad Khodadadian, Zaiwei Chen, and Siva~Theja Maguluri.
\newblock Finite-sample analysis of off-policy natural actor--critic algorithm.
\newblock In \emph{International Conference on Machine Learning}, pages
  5420--5431. PMLR, 2021{\natexlab{a}}.

\bibitem[Khodadadian et~al.(2021{\natexlab{b}})Khodadadian, Jhunjhunwala,
  Varma, and Maguluri]{Khodadadian_Jhunjhunwala_Varma_Maguluri_2021}
Sajad Khodadadian, Prakirt~Raj Jhunjhunwala, Sushil~Mahavir Varma, and
  Siva~Theja Maguluri.
\newblock On the linear convergence of natural policy gradient algorithm.
\newblock In \emph{IEEE Conference on Decision and Control}, pages 3794--3799,
  2021{\natexlab{b}}.

\bibitem[Khodadadian et~al.(2022)Khodadadian, Doan, Romberg, and
  Maguluri]{khodadadian2022finite}
Sajad Khodadadian, Thinh~T Doan, Justin Romberg, and Siva~Theja Maguluri.
\newblock Finite-sample analysis of two-time-scale natural actor--critic
  algorithm.
\newblock \emph{IEEE Transactions on Automatic Control}, 68\penalty0
  (6):\penalty0 3273--3284, 2022.

\bibitem[Konda and Tsitsiklis(2000)]{konda2000ac}
Vijay~R. Konda and John~N. Tsitsiklis.
\newblock Actor-critic algorithms.
\newblock In \emph{Advances in Neural Information Processing Systems}, 2000.

\bibitem[Kumar et~al.(2023)Kumar, Koppel, and Ribeiro]{kumar2023ac}
Harshat Kumar, Alec Koppel, and Alejandro Ribeiro.
\newblock On the sample complexity of actor--critic method for reinforcement
  learning with function approximation.
\newblock \emph{Machine Learning}, 112:\penalty0 2433--2467, 2023.

\bibitem[Lan(2021)]{Lan_2021}
Guanghui Lan.
\newblock Policy mirror descent for reinforcement learning: {L}inear
  convergence, new sampling complexity, and generalized problem classes.
\newblock \emph{Mathematical Programming}, 198\penalty0 (1):\penalty0
  1059--1106, 2021.

\bibitem[Lee et~al.(2020)Lee, Hwangbo, Wellhausen, Koltun, and Hutter]{robot2}
Joonho Lee, Jemin Hwangbo, Lorenz Wellhausen, Vladlen Koltun, and Marco Hutter.
\newblock Learning quadrupedal locomotion over challenging terrain.
\newblock \emph{Science Robotics}, 5\penalty0 (47), 2020.

\bibitem[Levin and Peres(2017)]{levin2017markov}
David~A Levin and Yuval Peres.
\newblock \emph{Markov chains and mixing times}, volume 107.
\newblock American Mathematical Soc., 2017.

\bibitem[Li et~al.(2021)Li, Cai, Chen, Wei, and Chi]{Li2021IsQM}
Gen Li, Changxiao Cai, Yuxin Chen, Yuting Wei, and Yuejie Chi.
\newblock Is q-learning minimax optimal? a tight sample complexity analysis.
\newblock \emph{Operations Research}, 72:\penalty0 222--236, 2021.

\bibitem[Li et~al.(2023{\natexlab{a}})Li, Wei, Chi, and
  Chen]{li2023exponential}
Gen Li, Yuting Wei, Yuejie Chi, and Yuxin Chen.
\newblock Softmax policy gradient methods can take exponential time to
  converge.
\newblock \emph{Mathematical Programming}, 201:\penalty0 707--802,
  2023{\natexlab{a}}.

\bibitem[Li et~al.(2024)Li, Wu, and Lan]{li2024stochastic}
Tianjiao Li, Feiyang Wu, and Guanghui Lan.
\newblock Stochastic first-order methods for average-reward markov decision
  processes.
\newblock \emph{Mathematics of Operations Research}, 2024.

\bibitem[Li et~al.(2025)Li, Liu, and Wei]{li2025phi}
Wenye Li, Jiacai Liu, and Ke~Wei.
\newblock $\phi$-update: A class of policy update methods with policy
  convergence guarantee.
\newblock In \emph{International Conference on Learning Representations}, 2025.

\bibitem[Li and Lan(2025)]{li2025policy}
Yan Li and Guanghui Lan.
\newblock Policy mirror descent inherently explores action space.
\newblock \emph{SIAM Journal on Optimization}, 35\penalty0 (1):\penalty0
  116--156, 2025.

\bibitem[Li et~al.(2023{\natexlab{b}})Li, Lan, and Zhao]{Li_Zhao_Lan_2022}
Yan Li, Guanghui Lan, and Tuo Zhao.
\newblock Homotopic policy mirror descent: {P}olicy convergence, algorithmic
  regularization, and improved sample complexity.
\newblock \emph{Mathematical Programming}, 2023{\natexlab{b}}.

\bibitem[Liu et~al.(2023)Liu, Chen, and Wei]{hadamard_pg}
Jiacai Liu, Jinchi Chen, and Ke~Wei.
\newblock On the linear convergence of policy gradient under {H}adamard
  parameterization.
\newblock \emph{arXiv:2305.19575}, 2023.

\bibitem[Liu et~al.(2024)Liu, Li, and Wei]{pg-liu}
Jiacai Liu, Wenye Li, and Ke~Wei.
\newblock Elementary analysis of policy gradient methods.
\newblock \emph{arxiv:2404.03372}, 2024.

\bibitem[Liu et~al.(2025{\natexlab{a}})Liu, Li, Lin, Wei, and Zhang]{ppgliu}
Jiacai Liu, Wenye Li, Dachao Lin, Ke~Wei, and Zhihua Zhang.
\newblock On the convergence of projected policy gradient for any constant step
  sizes.
\newblock \emph{Journal of Machine Learning Research}, 26:\penalty0 1--35,
  2025{\natexlab{a}}.

\bibitem[Liu et~al.(2025{\natexlab{b}})Liu, Li, and Wei]{liu2025tdpmd}
Jiacai Liu, Wenye Li, and Ke~Wei.
\newblock On the convergence of policy mirror descent with temporal difference
  evaluation.
\newblock \emph{arXiv preprint arXiv:2509.18822}, 2025{\natexlab{b}}.

\bibitem[Mei et~al.(2020{\natexlab{a}})Mei, Xiao, Dai, Li, Szepesvári, and
  Schuurmans]{Mei_Xiao_Dai_Li_Szepesvari_Schuurmans_2020}
Jincheng Mei, Chenjun Xiao, Bo~Dai, Lihong Li, Csaba Szepesvári, and Dale
  Schuurmans.
\newblock Escaping the gravitational pull of softmax.
\newblock In \emph{Advances in Neural Information Processing Systems},
  2020{\natexlab{a}}.

\bibitem[Mei et~al.(2020{\natexlab{b}})Mei, Xiao, Szepesvári, and
  Schuurmans]{Mei_Xiao_Szepesvari_Schuurmans_2020}
Jincheng Mei, Chenjun Xiao, Csaba Szepesvári, and Dale Schuurmans.
\newblock On the global convergence rates of softmax policy gradient methods.
\newblock In \emph{International Conference on Machine Learning}, pages
  6820--6829, 2020{\natexlab{b}}.

\bibitem[Miki et~al.(2022)Miki, Lee, Hwangbo, Wellhausen, Koltun, and
  Hutter]{robot3}
Takahiro Miki, Joonho Lee, Jemin Hwangbo, Lorenz Wellhausen, Vladlen Koltun,
  and Marco Hutter.
\newblock Learning robust perceptive locomotion for quadrupedal robots in the
  wild.
\newblock \emph{Science Robotics}, 7\penalty0 (62), 2022.

\bibitem[Mitrophanov(2005)]{Mitrophanov_2005}
A.~Yu. Mitrophanov.
\newblock Sensitivity and convergence of uniformly ergodic markov chains.
\newblock \emph{Journal of Applied Probability}, 42\penalty0 (4):\penalty0
  1003–1014, 2005.

\bibitem[Olshevsky and Gharesifard(2023)]{olshevsky2023small}
Alex Olshevsky and Bahman Gharesifard.
\newblock A small gain analysis of single timescale actor critic.
\newblock \emph{SIAM Journal on Control and Optimization}, 61\penalty0
  (2):\penalty0 980--1007, 2023.

\bibitem[Peters and Schaal(2008)]{peters2008natural}
Jan Peters and Stefan Schaal.
\newblock Natural actor--critic.
\newblock \emph{Neurocomputing}, 71\penalty0 (7-9):\penalty0 1180--1190, 2008.

\bibitem[Protopapas and Barakat(2024)]{protopapas2024policy}
Kimon Protopapas and Anas Barakat.
\newblock Policy mirror descent with lookahead.
\newblock \emph{Advances in Neural Information Processing Systems},
  37:\penalty0 26443--26481, 2024.

\bibitem[Qiu et~al.(2021)Qiu, Yang, Ye, and Wang]{qiu2021on}
Shuang Qiu, Zhuoran Yang, Jieping Ye, and Zhaoran Wang.
\newblock On finite-time convergence of actor-critic algorithm.
\newblock \emph{IEEE Journal on Selected Areas in Information Theory},
  2\penalty0 (2):\penalty0 652--664, 2021.

\bibitem[Shani et~al.(2020)Shani, Efroni, and Mannor]{Shani_Efroni_Mannor_2019}
Lior Shani, Yonathan Efroni, and Shie Mannor.
\newblock Adaptive trust region policy optimization: {G}lobal convergence and
  faster rates for regularized {MDP}s.
\newblock In \emph{Proceedings of the AAAI Conference on Artificial
  Intelligence}, pages 5668--5675, 2020.

\bibitem[Suttle et~al.(2023)Suttle, Bedi, Patel, Sadler, Koppel, and
  Manocha]{suttle2023beyond}
Wesley~A. Suttle, Amrit~Singh Bedi, Bhrij Patel, Brian~M. Sadler, Alec Koppel,
  and Dinesh Manocha.
\newblock Beyond exponentially fast mixing in average-reward reinforcement
  learning via multi-level {M}onte {C}arlo actor-critic.
\newblock In \emph{International Conference on Machine Learning}, 2023.

\bibitem[Sutton and Barto(2018)]{suttonRL}
Richard~S. Sutton and Andrew~G. Barto.
\newblock \emph{Reinforcement Learning: An Introduction}.
\newblock MIT Press, Cambridge, 2018.

\bibitem[Tian et~al.(2023)Tian, Olshevsky, and
  Paschalidis]{tian2023convergence}
Haoxing Tian, Alex Olshevsky, and Yannis Paschalidis.
\newblock Convergence of actor-critic with multi-layer neural networks.
\newblock In \emph{Advances in Neural Information Processing Systems}, 2023.

\bibitem[Wang et~al.(2020)Wang, Cai, Yang, and Wang]{wang2019neural}
Lingxiao Wang, Qi~Cai, Zhuoran Yang, and Zhaoran Wang.
\newblock Neural policy gradient methods: Global optimality and rates of
  convergence.
\newblock In \emph{International Conference on Learning Representations}, 2020.

\bibitem[Wang et~al.(2024)Wang, Wang, Zhou, and Zou]{wang2024non}
Yudan Wang, Yue Wang, Yi~Zhou, and Shaofeng Zou.
\newblock Non-asymptotic analysis for single-loop (natural) actor--critic with
  compatible function approximation.
\newblock In \emph{International Conference on Machine Learning}, 2024.

\bibitem[Wu et~al.(2020)Wu, Zhang, Xu, and Gu]{wu2020finite}
Yue Wu, Weitong Zhang, Pan Xu, and Quanquan Gu.
\newblock A finite-time analysis of two time-scale actor--critic methods.
\newblock \emph{Advances in Neural Information Processing Systems}, 2020.

\bibitem[Xiao(2022)]{Xiao_2022}
Lin Xiao.
\newblock On the convergence rates of policy gradient methods.
\newblock \emph{Journal of Machine Learning Research}, 23\penalty0
  (282):\penalty0 1--36, 2022.

\bibitem[Xu et~al.(2020{\natexlab{a}})Xu, Wang, and Liang]{xu2020improving}
Tengyu Xu, Zhe Wang, and Yingbin Liang.
\newblock Improving sample complexity bounds for (natural) actor--critic
  algorithms.
\newblock \emph{Advances in Neural Information Processing Systems},
  33:\penalty0 4358--4369, 2020{\natexlab{a}}.

\bibitem[Xu et~al.(2020{\natexlab{b}})Xu, Wang, and Liang]{xu2020non}
Tengyu Xu, Zhe Wang, and Yingbin Liang.
\newblock Non-asymptotic convergence analysis of two time-scale (natural)
  actor--critic algorithms.
\newblock \emph{arXiv preprint arXiv:2005.03557}, 2020{\natexlab{b}}.

\bibitem[Xu et~al.(2021)Xu, Yang, Wang, and LIANG]{xu2021doubly}
Tengyu Xu, Zhuoran Yang, Zhaoran Wang, and Yingbin LIANG.
\newblock Doubly robust off-policy actor-critic: Convergence and optimality.
\newblock In \emph{International Conference on Machine Learning}, 2021.

\bibitem[Yang et~al.(2019)Yang, Zhang, Zheng, Zheng, Li, Huang, and
  Pan]{Yang2019PolicyOW}
Long Yang, Yu~Zhang, Gang Zheng, Qian Zheng, Pengfei Li, Jianhang Huang, and
  Gang Pan.
\newblock Policy optimization with stochastic mirror descent.
\newblock In \emph{AAAI Conference on Artificial Intelligence}, 2019.
\newblock URL \url{https://api.semanticscholar.org/CorpusID:195584163}.

\bibitem[Yuan et~al.(2022)Yuan, Gower, and Lazaric]{yuan2022general}
Rui Yuan, Robert~M Gower, and Alessandro Lazaric.
\newblock A general sample complexity analysis of vanilla policy gradient.
\newblock In \emph{International Conference on Artificial Intelligence and
  Statistics}, pages 3332--3380. PMLR, 2022.

\bibitem[Yuan et~al.(2023)Yuan, Du, Gower, Lazaric, and Xiao]{yuan2022linear}
Rui Yuan, Simon~S Du, Robert~M Gower, Alessandro Lazaric, and Lin Xiao.
\newblock Linear convergence of natural policy gradient methods with log-linear
  policies.
\newblock \emph{International Conference on Learning Representations}, 2023.

\bibitem[Zhan et~al.(2023)Zhan, Cen, Huang, Chen, Lee, and
  Chi]{Zhan_Cen_Huang_Chen_Lee_Chi_2021}
Wenhao Zhan, Shicong Cen, Baihe Huang, Yuxin Chen, Jason~D. Lee, and Yuejie
  Chi.
\newblock Policy mirror descent for regularized reinforcement learning: {A}
  generalized framework with linear convergence.
\newblock \emph{SIAM Journal on Optimization}, 33\penalty0 (2):\penalty0
  1061--1091, 2023.

\bibitem[Zhang et~al.(2025{\natexlab{a}})Zhang, Huang, Gu, Ma, and
  Zhang]{LZ-robot}
Jiahui Zhang, Ze~Huang, Chun Gu, Zipei Ma, and Li~Zhang.
\newblock Reinforcing action policies by prophesying.
\newblock \emph{arXiv preprint arXiv:2511.20633}, 2025{\natexlab{a}}.

\bibitem[Zhang et~al.(2020)Zhang, Koppel, Zhu, and Basar]{zhang2020global}
Kaiqing Zhang, Alec Koppel, Hao Zhu, and Tamer Basar.
\newblock Global convergence of policy gradient methods to (almost) locally
  optimal policies.
\newblock \emph{SIAM Journal on Control and Optimization}, 58\penalty0
  (6):\penalty0 3586--3612, 2020.

\bibitem[Zhang et~al.(2025{\natexlab{b}})]{Zhang2025ASO}
Kaiyan Zhang et~al.
\newblock A survey of reinforcement learning for large reasoning models.
\newblock \emph{arxiv preprint arXiv:2509.08827}, 2025{\natexlab{b}}.

\end{thebibliography}

\end{document}